\newtheorem{thm}{Theorem}[section]
\newtheorem{cor}[thm]{Corollary}
\newtheorem{lem}[thm]{Lemma}
\newtheorem{prop}[thm]{Proposition}
\newtheorem{conj}[thm]{Conjecture}
\newtheorem*{thm*}{Theorem}
\newtheorem{defn}[thm]{Definition}
\theoremstyle{remark}
\newtheorem{rem}[thm]{Remark}
\newtheorem*{defn*}{Definition}
\numberwithin{equation}{section}
\newcommand{\dx}{\,{\rm d}x}
\newcommand{\dt}{\,{\rm d}t}
\newcommand{\rd}{{\rm d}}
\newcommand{\HH}{\mathbb{H}}
\newcommand{\NN}{\mathbb{N}}
\newcommand{\RR}{\mathbb{R}}
\newcommand\EE{{\mathcal{E}}}
\newcommand\JJ{{\mathcal{J}}}
\newcommand\KK{{\mathcal{K}}}
\newcommand\LL{{\mathcal{L}}}
\newcommand\UU{{\mathcal{U}}}
\newcommand\WW{{\mathcal{W}}}
\newcommand\ed{_{\varepsilon,\delta}}
\newcommand{\vep}{\varepsilon}
\newcommand{\nb}{\nabla}
\newcommand{\dv}{{\rm div}}
\def\dist{\mathrm{dist}} 
\def\supp{\mathrm{supp}} 
\newcommand{\loc}{\mathrm{loc}}
\renewcommand{\le}{\leqslant}
\renewcommand{\leq}{\leqslant}
\renewcommand{\ge}{\geqslant}
\renewcommand{\geq}{\geqslant}
\begin{document}

\title[]{On the existence of solutions to some singular\\ parabolic free boundary problems}

\author[A. Audrito]{Alessandro Audrito}
\address{Alessandro Audrito \newline \indent
Politecnico di Torino, DISMA, Corso Duca degli Abruzzi 24, 10129, Torino, Italia.
 }
\email{alessandro.audrito@polito.it}

\author[T. Sanz-Perela]{Tom\'as Sanz-Perela}
\address{T. Sanz-Perela \textsuperscript{1,2}
\newline
\textsuperscript{1}
Departament de Matem\`atiques i Inform\`atica,
Universitat de Barcelona,
Gran Via de les Corts Catalanes 585, 08007, Barcelona, Spain
\newline
\textsuperscript{2}
Centre de Recerca Matem\`atica, Edifici C, Campus Bellaterra, 08193 Bellaterra, Spain}
\email{tomas.sanz.perela@ub.edu}


\subjclass[2010] {
35R35, 
35B44, 
35K55, 
58J35. 
}

\keywords{Parabolic free boundary problems, Uniform Estimates, Monotonicity formulas, Blow-up.}
%
%
%
%
%
%
%
%
%
%
%

\thanks{This project has received funding from the European Research Council (ERC) under the grant agreement 948029 and the Istituto Nazionale di Alta Matematica INdAM.
This work is supported by the Spanish State Research Agency, through the Severo Ochoa and Mar\'ia de Maeztu Program for Centers and Units of Excellence in R\&D (CEX2020-001084-M).
T. S.-P. is supported by the AEI grants PID2024-156429NB-I00 and PID2021-123903NB-I00.
}

\begin{abstract}
We construct nonnegative weak solutions to the singular parabolic free boundary problem
\[
\partial_t u - \Delta u = - \frac{\mathrm{d}}{\mathrm{d} u} u_+^\gamma , 
\]
where $\gamma \in (0,1]$, $u_+ := \max\{u,0\}$, and the term in the right-hand side denotes the formal derivative of the \emph{non-smooth} function $u \mapsto u_+^\gamma$. 
Weak solutions are obtained as limits of a suitable approximation procedure. We show uniform optimal regularity, optimal growth and nondegeneracy estimates, and a Weiss-type monotonicity formula for solutions to the approximating problem. 
Such uniform estimates are then passed to limit: we prove the existence of a class of weak solutions to the free boundary problem which is \emph{closed under blow-up} and whose weak formulation encodes the \emph{sharp} free boundary condition. 
Finally, we construct several examples of weak solutions with \emph{self-similar} and \emph{traveling wave} form.
\end{abstract}

\maketitle

\setcounter{tocdepth}{1}
\tableofcontents



%
%
%
%
\section{Introduction}
We study nonnegative weak solutions to the following singular semilinear parabolic equation 
\begin{equation}\label{eq:EQSing}\tag{E}
\partial_t u - \Delta u = - \frac{\rd}{\rd u} u_+^\gamma  , 
\end{equation}
where $\gamma \in (0,1]$, $u_+ := \max\{u,0\}$, and the right-hand side denotes the formal derivative of the \emph{non-smooth} function $u \mapsto u_+^\gamma$. 
Several versions of the above equation appear in numerous applications such as combustion theory (limit case $\gamma = 0$, see~\cite{BuckLud:book,BerLar1989:art,CafVaz95}), ice-melting (limit case $\gamma = 1$, see~\cite{Stefan,FriedKinder1975:art}), chemical engineering \cite{Phillips1987:art}, transport of thermal energy in plasma \cite{Martinson1976:art}, and can be interpreted as the \emph{gradient flow} of the energy
\begin{equation}\label{eq:EllipticEnergy}
\JJ_\gamma(u) =  \int |\nb u|^2 + 2 u_+^\gamma \, \rd x,
\end{equation}
originally studied in \cite{Phillips1983:art,Phillips1983bis:art,AltPhillips1986:art} in the elliptic setting. The limit cases $\gamma = 0$ and $\gamma = 1$ correspond to the two most classical \emph{free boundary problems}: the \emph{one-phase problem} (see Alt\&Caffarelli~\cite{AC81}) and the \emph{obstacle problem} (see Caffarelli~\cite{Caf78}), respectively. 

\

As it is well-known, these problems are characterized by the presence of some \emph{free interface}, called \emph{free boundary} (FB). For instance, in the elliptic framework, if $u$ is a nonnegative minimizer of the functional $\JJ_\gamma$ (in a suitable sense, see \cite{AltPhillips1986:art}), then
\begin{equation}
\Delta u = \gamma u^{\gamma-1} \quad \text{ in } \{u > 0\}, \quad \quad \text{ and } \quad \quad |\nb (u^{1/\beta})| = \frac{\sqrt{2}}{\beta}  \quad \text{ in } \partial \{u > 0\},  
\end{equation}
where
\begin{equation}
    \label{eq:Beta}
    \beta := \dfrac{2}{2-\gamma} \in (1,2]
\end{equation}
is the natural scaling power of the problem, and the FB condition must be intended in a suitable weak sense (see \cite{AltPhillips1986:art,KarakhanyanSanzPerela:art} or \Cref{rem:FBcondition} below). 
Thus, the space can be decomposed as the union of the sets $\{u > 0\}$ and $\{u = 0\}$ and, since neither of them is prescribed \emph{a priori}, the boundary $\partial\{u > 0\}$ is an \emph{unknown} of the problem (that is, a free boundary). The study of FB problems is usually very hard since \emph{a priori} both the solutions and their FB's can be very irregular: the main advances in the theory have been obtained through the combined use of techniques developed in different fields, like PDE's, Calculus of Variations, and Geometric Measure Theory.  

\

The mathematical study of the regularity of the solutions and their free interfaces was initiated in the elliptic framework with the seminal papers by Alt\&Caffarelli \cite{AC81} (case $\gamma = 0$) and Caffarelli \cite{Caf78} (case $\gamma = 1$), then followed by numerous outstanding works by several authors. When $\gamma \in (0,1)$, the first results appeared in the works of Phillips \cite{Phillips1983:art,Phillips1983bis:art} and Alt\&Phillips \cite{AltPhillips1986:art}. The development of the theory in the range $\gamma \in (0,1)$ have recently flourished: we mention the FB regularity theory developed in \cite{DeSilSav2021:art,FatKoch2023:art,RestrepoRos2025:art}, the ``generic regularity'' properties established in \cite{FerYu2023:art}, the stability condition obtained in \cite{KarakhanyanSanzPerela:art}, and the construction of singular minimizing cones in \cite{SavinYu25:art,SavinYu25Bis:art}. Lastly, we quote the papers \cite{DeSilSav2022:art,DeSilSav2022bis:art,DiKarVal2022:art,CarTor2025:art} treating the ``very singular'' range $\gamma \in (-2,0)$: the limit case $\gamma = -2$ is strongly connected with the theory of \emph{minimal surfaces}, cf. \cite{DeSilSav2022bis:art,CarTor2025:art} (see also \cite{art:CafYu18,art:Yu19} in the parabolic setting).

\

The parabolic framework is less studied. Various versions of the equation \eqref{eq:EQSing} appear in the theory of \emph{quenching} (see for instance \cite{AckWal78,Lev85}): the literature in this context is concerned with the existence of quenching points\footnote{Essentially, points at which $u$ vanish but $\partial_t u $ blows up, see \cite{AckWal78,Lev85}.} and continuation after quenching, but does not treat the FB theory at all. Some partial results about the existence of weak solutions and corresponding monotonicity formulas were obtained by Phillips \cite{Phillips1987:art} and Weiss \cite{Weiss1999:art}, respectively, and we refer the reader to the paragraphs after Theorem \ref{thm:MAIN1} for a more detailed discussion. In the special cases $\gamma = 0,1$, the theory has been developed in many papers, see for instance \cite{FriedKinder1975:art,Caf78,CafVaz95,art:Weiss2003,CafPetSha04,AndWeiss2009,FigRosSerra24} and partially extended to the range $\gamma \in [1,2)$ by Weiss \cite{Weiss1999:art,Weiss2000:art} (see also \cite{ChoeWeiss2003} and our recent work \cite{AudritoSanz2022:art}). 

To the best of our knowledge, if $\gamma \in (0,1)$, there are not further references except the recent papers \cite{Araujo24,Jeon24} where the authors study the existence, regularity, and quasi-convexity properties of viscosity solutions to versions of \eqref{eq:EQSing} with ``fully nonlinear'' diffusion. Both their approaches and ours are based on a regularization of the nonlinearity in \eqref{eq:EQSing} (see Subsection \ref{subsub:RegApproach}), but strongly differ in the techniques and results: we develop several methods having a ``variational-energetic'' flavor and the results we obtain are \emph{sharp} from different viewpoints (see the paragraphs below for further details).
\subsection{Leading ideas and the notion of weak solutions} 
In this paper, we study the \emph{existence} of nonnegative weak solutions to \eqref{eq:EQSing}, following two different approaches: on the one hand, we construct weak solutions to the initial-value problem
\begin{equation}\label{eq:ParReacDiff}\tag{P}
\begin{cases}
\partial_t u - \Delta u = - \frac{\rd}{\rd u} u_+^\gamma  \quad &\text{ in }  Q := \mathbb{R}^n \times (0,\infty) \\
u|_{t=0} = u_\circ     \quad &\text{ in } \mathbb{R}^n,
\end{cases}
\end{equation}
where $n \geq 1$ and $u_\circ$ is a suitable nonnegative initial data (\Cref{sec:GlobalUnifEnEst} to  \Cref{sec:WeissMonotonicity}); on the other hand, we drop the initial condition and prove existence of solutions with \emph{self-similar} and \emph{traveling wave} form  (\Cref{sec:SelfSimilar} and \Cref{sec:TravelingWaves}). In doing so, there are some critical aspects that one has to take into account:

\

(A) In contrast with the elliptic setting, weak solutions cannot be constructed by direct minimization of a suitable energy functional. Instead, we proceed with a regularization procedure of~\eqref{eq:ParReacDiff} in the spirit of Phillips \cite{Phillips1987:art},  Caffarelli\&V\'azquez \cite{CafVaz95}, and Weiss \cite{art:Weiss2003} (cf. Ilmanen \cite{Imanen93:art} for the Mean Curvature Flow framework): weak solutions will be obtained as \emph{limits} of a family of singular perturbation problems, see \eqref{eq:ProbEps} below.    

\    

(B) Since the function $u \mapsto u_+^\gamma$ is smooth when $u>0$, any weak solution $u$ must satisfy 
    \begin{equation}\label{eq:ParEqPosSetIntro}
    \partial_tu - \Delta u = -\gamma u^{\gamma-1} \quad \text{ in } \{u > 0\}.
    \end{equation}
    Further, any notion of weak solution must encode the FB condition characterizing the underlying FB problem. In our setting, we will show that
\begin{equation}\label{eq:FBCondIntro}
    |\nb (u^{1/\beta})| = \frac{\sqrt{2}}{\beta}  \quad \text{ in } \partial \{u > 0\},
\end{equation}
    in a suitable weak sense, where $\nb$ denotes the spatial gradient of $u$; see \Cref{rem:FBcondition}, \Cref{lem:FBCondition} and \Cref{lem:FBConditionBis}.
    We stress that such properties are quite delicate in the parabolic setting, whilst reasonably natural for minimizers in the elliptic one, since they can be deduced via standard variations of the functional $\JJ_\gamma$. Actually, the fact that \eqref{eq:FBCondIntro} is the natural (and sharp) FB condition is unknown, at least in the parabolic framework (range $\gamma > 0$): the weak solutions constructed in \cite{Araujo24,Jeon24} satisfy $|\nb u| = 0$ on $\partial\{u > 0\}$, a weaker and non-sharp formulation of the FB condition.

\    

(C) Once weak solutions are constructed, the main issue is to study the \emph{regularity} properties of the FB. The usual strategy is to \emph{blow-up} solutions around FB points: if $u$ is a weak solution and $(x_\circ,t_\circ) \in \partial\{u > 0\}$, one defines the \emph{blow-up family}
    \begin{equation}\label{eq:BlowUpFamIntro}
            u_r^{(x_\circ,t_\circ)}(x,t) := \frac{u(x_\circ +rx,t_\circ + r^2t)}{r^\beta}, \qquad r > 0,
    \end{equation}
    and studies the limits of $u_r^{(x_\circ,t_\circ)}$ as $r \downarrow 0$. From the analysis of the \emph{blow-up limits} (and a lot of extra work!) it is often possible to prove that $\partial\{u > 0\}$ is regular, in some suitable sense, see for instance \cite{AndWeiss2009} (case $\gamma = 0$) and \cite{FigRosSerra24} (case $\gamma = 1$).

    However, some subtle and delicate properties play a key role in the blow-up analysis. First of all, $u$ must satisfy some (optimal) regularity and non-degeneracy estimates to actually define the blow-up limits via compactness arguments and, moreover, the FB must be (at least) a closed set with Lebesgue measure zero (if not, one cannot expect $\partial\{u > 0\}$ to be regular, in any reasonable sense). Second, the class of weak solutions must be \emph{closed under blow-ups}, in the sense that any blow-up limit of a weak solution is still a weak solution: besides being quite natural, this property is crucial in the study of the FB (for example, in the majority of the \emph{dimension reduction} arguments). Third, since blow-up limits are expected to be (parabolically) $\beta$\emph{-homogeneous}, weak solutions must carry some information yielding the homogeneity of their blow-up limits. 
    We accomplish this by showing that the weak solutions that we construct enjoy a Weiss-type monotonicity formula in the spirit of the case $\gamma = 0$ (see \cite{art:Weiss2003} and \Cref{sec:WeissMonotonicity} below). In the parabolic framework, the proofs are quite involved: the monotonicity formula is derived at the level of the approximation ---i.e., for solutions to the singular perturbation problem \eqref{eq:ProbEps}--- and then pushed to the limit in a final step.

\    

(D) Lastly, the special solutions we construct (self-similar solutions and traveling waves) must be weak solutions. This is not obvious since such special solutions are obtained by solving \eqref{eq:ParEqPosSetIntro} and imposing the FB condition \eqref{eq:FBCondIntro}: in a second step, one has to show they are weak solutions as well, see \Cref{remark:solutionsAreWeakSol}. 

\

In light of the discussion above, we give the definition of weak solutions to \eqref{eq:EQSing} and \eqref{eq:ParReacDiff}. 
Regarding the equation, we consider:
\begin{defn}\label{def:WeakSolEIntro}
Let $n \geq 1$, $\gamma \in (0,1]$, and let $\mathcal{A} \subseteq \RR^{n+1}$ be an open set. 
We say that $u$ is a weak solution to \eqref{eq:EQSing} in $\mathcal{A}$ if for every open bounded interval $I$ and every open bounded set $\Omega \subset \RR^n$ such that $\Omega \times I \subset\subset \mathcal{A}$, we have
\begin{itemize}
    \item $u \in L^2(I;H^1(\Omega))$ with $\partial_tu \in L^2(\Omega\times I)$ and $u_+^{\gamma-1} \in L^1(\Omega\times I)$.\footnote{By definition, $u_+^{\gamma-1} := \chi_{\{u > 0\}} u^{\gamma-1}$, where $\chi_E$ deontes the characteristic function of the set $E$.}
    %
    \item $u$ satisfies 
\begin{equation}\label{eq:FirstOVLimIntro}
\int_{\Omega\times I} \partial_t u \varphi + \nabla u \cdot\nabla\varphi + \gamma u_+^{\gamma-1} \varphi = 0,
\end{equation}
for every $\varphi \in C_c^\infty(\Omega\times I)$.
    \item $u$ satisfies
\begin{equation}\label{eq:FirstIVLimIntro}
\int_{\Omega\times I} \big( |\nabla u|^2 + 2u_+^\gamma \big) \dv_x \Phi  -  2 \nb u \cdot D_x \Phi \cdot \nabla u - 2\partial_t u \,(\nb u \cdot \Phi) = 0,
\end{equation}
for every $\Phi \in C_c^\infty(\Omega\times I;\RR^{n+1})$.\footnote{Here and throughout the paper, we use the notation $v\cdot M\cdot v := v\cdot M\cdot v^\intercal$, for every vector $v \in \RR^{1\times n}$ and every matrix $M \in \RR^{n\times n}$. Further, see Subsection \ref{Subsec:Notation} for the definition of $\dv_x \Phi$ and $D_x \Phi$.}
\end{itemize}
\end{defn}

Some comments regarding the above definition are in order. Concerning what was discussed in paragraph (B) above, formulas \eqref{eq:FirstOVLimIntro} and \eqref{eq:FirstIVLimIntro} are usually referred to as the \emph{weak formulation} and the \emph{weak formulation with respect to domain variations} of \eqref{eq:EQSing}, respectively. We will show that, under suitable regularity assumptions  on $u$ and the FB $\partial\{u > 0\}$, they are equivalent in the sense that both imply \eqref{eq:ParEqPosSetIntro} and \eqref{eq:FBCondIntro}, and vice versa (see \Cref{rem:FBcondition}, \Cref{lem:FBCondition}, and \Cref{lem:FBConditionBis} for all the details).

However, without some \emph{a priori} regularity assumptions, the two weak formulations seem to be independent. This may be related with the assumption in \eqref{eq:FirstOVLimIntro}, requiring $u_+^{\gamma-1}$ being locally integrable only. Indeed, this implies that, when $\gamma \in (0,1)$, \eqref{eq:EQSing} can be interpreted as a parabolic equation with $L^1$ right-hand side, the critical energy space for the regularity theory, in a sense, similar to the \emph{Harmonic Maps Flow} framework, see \cite{ChStr89,LinWang08:book}; we recall that in the \emph{Harmonic Maps} theory, both the weak formulation and the weak formulation with respect to domain variations are necessary to develop any partial regularity results in dimension $n \geq 3$, see \cite{Riviere95}. Furthermore, while superfluous when $\gamma = 1$, checking that a candidate solution satisfies $u_+^{\gamma-1}$ locally in $L^1$ is highly nontrivial for $\gamma \in (0,1)$ ---note that in the case $\gamma = 0$ one must replace $\gamma u_+^{\gamma-1}$ by a measure, and also that solutions $u$ may not satisfy $u_+^{-1}$ being locally integrable, since they grow linearly from the FB; see \cite{KrivenWeiss25:art,art:Weiss2003}.

Finally, in connection to paragraph (C) above, we will show that the class of weak solutions defined above is closed under blow-up limits (see \Cref{thm:MAIN2Blowups}).

\

For what concerns the initial value problem \eqref{eq:ParReacDiff}, we consider weak solutions to \eqref{eq:EQSing} in $Q$ which are continuous flows in $L^2(\RR^n)$, up to the initial time $t = 0$. 
\begin{defn}\label{def:WeakSolPIntro}
Let $n \geq 1$, $\gamma \in (0,1]$, and let $u_\circ \in L^2(\RR^n)$.  
We say that $u$ is a weak solution to \eqref{eq:ParReacDiff} if $u$ is a weak solution to \eqref{eq:EQSing} in $Q$ and, in addition, $u \in \cap_{R>0} C([0,R]:L^2(\RR^n))$ with $u|_{t=0} = u_\circ$ in $L^2(\RR^n)$.   
\end{defn}
\subsection{Setting of the problem and main results}\label{subsub:RegApproach} 
As already mentioned, weak solutions will be obtained as limits of a suitable approximation procedure that we present next.

We consider $h \in C_c^1([0,1])$ satisfying 
\begin{equation}
\label{Eq:Propertiesh}
h \geq 0, \qquad h'(0) >0 \qquad \text{ and } \qquad \int_0^1 h(v)\rd v = 1, 
\end{equation}
and its integral function
\begin{equation}\label{eq:AprxChi}
H(u) := 
\begin{cases}
0 \quad &\text{if } u \leq 0 \\
\int_0^u h(v) \, \rd v \quad &\text{if } u > 0.
\end{cases}
\end{equation}
Then, for $\vep > 0$, we set
\begin{equation}
\label{eq:Hepsilon}
H_\vep(u) := H(u/\vep^\beta) \qquad 
\text{ and } 
\qquad
h_\vep(u) := \frac{\rd}{\rd u} H_\vep(u) = \vep^{-\beta} h(u/\vep^\beta).
\end{equation}
Notice that $H_\vep$ converges pointwise to $\chi_{(0,\infty)}$ in $\RR$ as $\vep \downarrow 0$ (as above, $\chi_E$ denotes the characteristic function of the set $E$). Using $H_\vep$, we define 
\begin{equation}
\label{eq:Fepsilon}
    F_\vep(u) := H_\vep(u)u^\gamma 
    \qquad \text{ and } \qquad
    f_\vep(u) := \frac{\rd}{\rd u} F_\vep(u) = \vep^{-\beta} h(u/\vep^\beta) u^\gamma +  \gamma H(u/\vep^\beta) u^{\gamma - 1}.
\end{equation}
Note that for every $r>0$ we have the following scaling relations:
\begin{equation}\label{eq:Scalingfeps}
    F_\vep (r^\beta u) = r^{\gamma \beta} F_{\vep/r} (u)
    \quad \text{ and } \quad 
    f_\vep(r^\beta u) = r^{\beta - 2} f_{\vep/r} (u).
\end{equation}
\begin{figure}[!ht]\label{fig:NLRHS}
\includegraphics[scale=0.6]{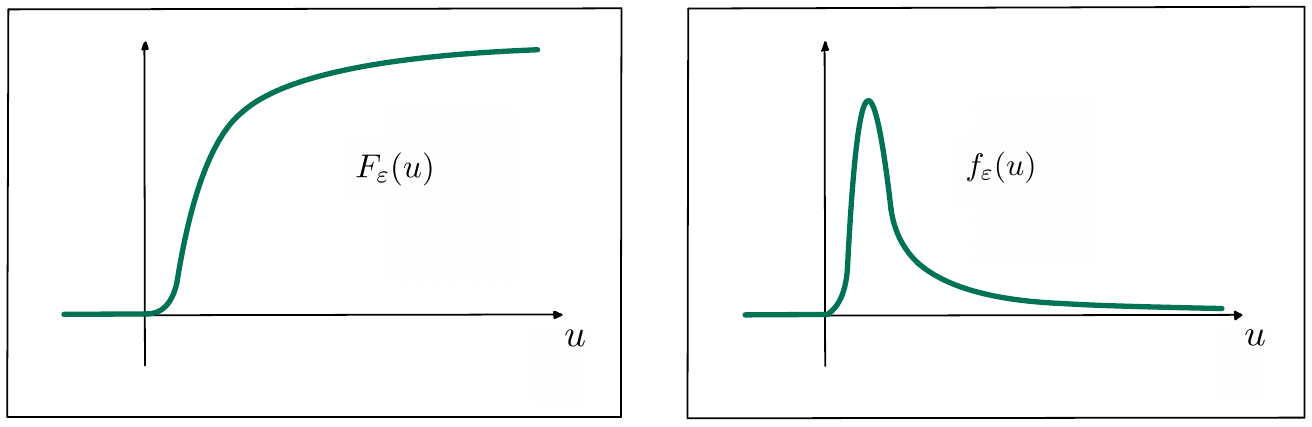}
\caption{The qualitative graphs of $F_\vep$ (left) and $f_\vep$ (right).}
\end{figure}

Taking the previous definitions into account, we introduce the approximating initial value problem that we will consider through most part of the article:
\begin{equation}
\label{eq:ProbEps}
\tag{P$_\vep$}
\begin{cases}
\partial_t u_\vep - \Delta u_\vep = - f_\vep(u_\vep) \quad &\text{ in }  Q \\
u_\vep|_{t=0} = u_\circ    \quad &\text{ in } \mathbb{R}^n.
\end{cases}
\end{equation}
Note that, by \eqref{Eq:Propertiesh}, $f_\vep \in C^{1+\gamma}$ (for every $\vep > 0$ fixed) and, in the limit as $\vep \downarrow 0$, it formally approximates the singular right-hand side in our FB problem \eqref{eq:EQSing} and/or \eqref{eq:ParReacDiff}. Therefore, for every $\vep > 0$, the solutions $u_\vep$ to \eqref{eq:ProbEps} are classical and positive in $Q$ (whenever $u_\circ$ is nontrivial and nonnegative, see Section \ref{sec:GlobalUnifEnEst}) and are expected to approach a weak solution $u$ to \eqref{eq:ParReacDiff} as $\vep \downarrow 0$: the leading idea is that, at the approximating level, the roles of $\{u>0\}$ and $\{u=0\}$ are played by $\{u_\vep > \vep^\beta\}$ and $\{u_\vep \leq \vep^\beta\}$, respectively, and a careful analysis of their properties is required to actually pass to the limit as $\vep \downarrow 0$. 

The main results of this paper, regarding the construction of solutions via the problem \eqref{eq:ProbEps}, are gathered in the following theorem.
\begin{thm}\label{thm:MAIN1}
Let $n \geq 1$, $\gamma \in (0,1]$, $\alpha \in (0,1)$, and let $u_\circ \in C_c^{2+\alpha}(\RR^n)$ be nonnegative and nontrivial. Let $\{u_\vep\}_{\vep > 0}$ be a family of solutions to \eqref{eq:ProbEps}.

Then there exist $\nu \in (0,1)$, $\vep_j \downarrow 0$, and a nonnegative weak solution $u \in C^\nu(\overline{Q})$ to \eqref{eq:ParReacDiff} such that\footnote{See Subsection \ref{Subsec:Notation} for our notation and terminology regarding local convergence.}
\[
u_{\vep_j} \to u \quad \text{ locally in } C^\nu(\overline{Q}) \cap L^2(0,\infty;H^1(\RR^n)),
\]
and
\[
\begin{aligned}
&\{u_{\vep_j} \geq \vep_j^\beta\} \to \overline{\{u > 0\}} \quad \text{ locally Hausdorff in } Q, \\
&H_{\vep_j}(u_{\vep_j}) \to \chi_{\{u > 0\}}  \qquad\;\, \text{ locally in } L^1(Q),    
\end{aligned}
\]
as $j \uparrow \infty$. Furthermore:  
\begin{enumerate}
\item[(i)] $u$ satisfies the energy estimates \eqref{eq:EnBound1Lim} and \eqref{eq:EnBound2Lim}, stated in \Cref{lem:StrCmp};

\medskip

\item[(ii)] $u$ satisfies the regularity estimates \eqref{eq:OptRegSpaceLim}, \eqref{eq:OptRegTimeLim}, and \eqref{eq:BdrHolderLim}, stated in  \Cref{lem:StrCmp};

\medskip

\item[(iii)] $u$ satisfies the optimal growth and non-degeneracy estimates \eqref{eq:OptGrLimLem} and \eqref{eq:NonDegLimLem}, stated in \Cref{lem:HausConv};

\medskip

\item[(iv)] $\{u>0\}$ has positive density and $\mathcal{L}^{n+1}(\partial\{u > 0\}) = 0$, see  \Cref{lem:L1ConvFBMeas0};

\medskip

\item[(v)] $u$ satisfies the Weiss-type monotonicity formula \eqref{eq:WeissForLim}, stated in \Cref{prop:WeissForSemLim}.

\medskip

\item[(vi)] $u$ has compact support in $\overline{Q}$, see Corollary \ref{cor:CmptSupp}.
\end{enumerate}
\end{thm}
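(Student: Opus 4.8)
The plan is to obtain $u$ as a subsequential limit of the solutions $u_\vep$ of \eqref{eq:ProbEps} and to transfer to it, one at a time, a list of estimates uniform in $\vep$. First, since $f_\vep\in C^{1+\gamma}$ with $f_\vep(0)=0$ and $f_\vep(u)=o(u)$ as $u\to0^+$ (by \eqref{Eq:Propertiesh}), standard parabolic theory gives a unique classical solution $u_\vep$ of \eqref{eq:ProbEps}; comparison with the constants $0$ and $\|u_\circ\|_{L^\infty}$, together with the strong maximum principle, yields $0<u_\vep\le\|u_\circ\|_{L^\infty}$ in $Q$. Testing the equation with $\partial_t u_\vep$ shows that $t\mapsto\int|\nabla u_\vep(\cdot,t)|^2+2F_\vep(u_\vep(\cdot,t))$ is nonincreasing, giving $\vep$-uniform bounds on $\|\nabla u_\vep\|_{L^\infty(0,\infty;L^2)}$, $\|F_\vep(u_\vep)\|_{L^\infty(0,\infty;L^1)}$ and $\|\partial_t u_\vep\|_{L^2(Q)}$; testing with a nonnegative cutoff then bounds $\|f_\vep(u_\vep)\|_{L^1_{loc}(Q)}$. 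The deeper $\vep$-uniform estimates --- optimal interior regularity (equivalently, $u_\vep$ grows no faster than $(\dist+\vep)^\beta$ away from $\{u_\vep\le\vep^\beta\}$), the matching nondegeneracy near $\{u_\vep\le\vep^\beta\}$, H\"older continuity up to $\{t=0\}$ with modulus controlled by $u_\circ$, and a parabolic Weiss-type monotone quantity with an error vanishing as $\vep\downarrow0$ --- are the contents of \Cref{lem:StrCmp}, \Cref{lem:HausConv} and \Cref{prop:WeissForSemLim}, proved in \Cref{sec:GlobalUnifEnEst}--\Cref{sec:WeissMonotonicity}.

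With these in hand, the uniform $L^\infty$, H\"older, $L^2(0,\infty;H^1)$ and $\partial_t\in L^2$ bounds give, along a subsequence $\vep_j\downarrow0$, $u_{\vep_j}\to u$ locally in $C^\nu(\overline{Q})$ (by Arzel\`a--Ascoli) and weakly in $L^2(0,\infty;H^1)$, with $\partial_t u_{\vep_j}\rightharpoonup\partial_t u$ in $L^2_{loc}(Q)$; strong $L^2_{loc}$ convergence of $\nabla u_{\vep_j}$ is then recovered by testing the equation for $u_{\vep_j}$ against $(u_{\vep_j}-u)$ times a cutoff. The uniform growth and nondegeneracy estimates upgrade this to the Hausdorff convergence $\{u_{\vep_j}\ge\vep_j^\beta\}\to\overline{\{u>0\}}$ and, together with $0\le H_{\vep_j}\le1$ and dominated convergence, to $H_{\vep_j}(u_{\vep_j})\to\chi_{\{u>0\}}$ and $F_{\vep_j}(u_{\vep_j})=H_{\vep_j}(u_{\vep_j})u_{\vep_j}^\gamma\to u_+^\gamma$ in $L^1_{loc}(Q)$. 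Positive density of $\{u>0\}$ and $\mathcal{L}^{n+1}(\partial\{u>0\})=0$ follow from the limiting nondegeneracy by a covering argument (\Cref{lem:L1ConvFBMeas0}), so $\{u>0\}$ and $\mathrm{int}\{u=0\}$ fill $Q$ up to a null set.

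To conclude that $u$ is a weak solution, I would pass to the limit in the two weak formulations. The identity \eqref{eq:FirstOVLimIntro} reduces to $f_{\vep_j}(u_{\vep_j})\to\gamma u_+^{\gamma-1}$ in $L^1_{loc}(Q)$: on compact subsets of $\{u>0\}$ one has $u_{\vep_j}>\vep_j^\beta$ for $j$ large, so $f_{\vep_j}(u_{\vep_j})=\gamma u_{\vep_j}^{\gamma-1}\to\gamma u^{\gamma-1}$ locally uniformly; on compact subsets of $\mathrm{int}\{u=0\}$, testing the equation against nonnegative cutoffs and using $\nabla u=\partial_t u=0$ a.e. there forces $f_{\vep_j}(u_{\vep_j})\to0$ in $L^1$; the remaining contribution, supported in a shrinking neighborhood of $\partial\{u>0\}$ (where $\gamma u_{\vep_j}^{\gamma-1}\chi_{\{u_{\vep_j}>\vep_j^\beta\}}$ is equi-integrable since $\beta(\gamma-1)>-1$) and in the transition layer $\{0<u_{\vep_j}\le\vep_j^\beta\}$ (over which $\int f_{\vep_j}(u_{\vep_j})=O(\vep_j^{\beta-1})\to0$, by the scaling \eqref{eq:Scalingfeps}, $\beta>1$, and a uniform bound on the layer's measure), is negligible; in particular $u_+^{\gamma-1}\in L^1_{loc}(Q)$. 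For \eqref{eq:FirstIVLimIntro} I would start from the classical domain-variation identity satisfied by $u_{\vep_j}$ (with $F_{\vep_j}$ in place of $u_+^\gamma$) and pass to the limit using $F_{\vep_j}(u_{\vep_j})\to u_+^\gamma$ in $L^1_{loc}$ and the strong $L^2_{loc}$ convergence of $\nabla u_{\vep_j}$ for the quadratic terms. The uniform H\"older bound up to $\{t=0\}$ gives $u\in C^\nu(\overline{Q})$ with $u|_{t=0}=u_\circ$, and $u\in\cap_{R>0}C([0,R];L^2(\RR^n))$ follows from the equation and the bounds. Finally, (i)--(iii) and (v) are obtained by passing the estimates of the first step to the limit (lower semicontinuity for the energy and Weiss inequalities, stability of pointwise bounds for the rest), (iv) is \Cref{lem:L1ConvFBMeas0}, and (vi) follows by comparing $u_\vep$ with a compactly supported supersolution modeled on the $\beta$-power profile, stable in $\vep$ (\Cref{cor:CmptSupp}).

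The technical heart --- and the main obstacle --- is the identification $\lim_j f_{\vep_j}(u_{\vep_j})=\gamma u_+^{\gamma-1}$ in $L^1_{loc}$, i.e.\ ruling out concentration of mass on $\partial\{u>0\}$ and showing $u_+^{\gamma-1}$ is locally integrable, which for $\gamma\in(0,1)$ is genuinely nontrivial; this rests entirely on the $\vep$-uniform optimal growth and nondegeneracy estimates, themselves the most delicate part. A secondary difficulty is upgrading the weak $H^1$ convergence of $\nabla u_{\vep_j}$ to strong convergence, needed for the quadratic terms in \eqref{eq:FirstIVLimIntro}.
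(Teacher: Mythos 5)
Your overall architecture coincides with the paper's: classical solutions of \eqref{eq:ProbEps}, $\vep$-uniform energy, optimal regularity, growth and nondegeneracy estimates, compactness, Hausdorff convergence of the level sets, $\mathcal{L}^{n+1}(\partial\{u>0\})=0$, the Weiss formula at the $\vep$-level pushed to the limit, and compact support by comparison. Two of your shortcuts are in fact fine and slightly simpler than the paper's: testing the equation of $u_{\vep_j}$ against $(u_{\vep_j}-u)\eta$ to upgrade to strong $L^2_{\loc}$ convergence of $\nabla u_{\vep_j}$ (the paper instead compares with the limit equation on $\{u>\sigma\}$ via Sard's lemma), and your duality argument --- testing with nonnegative cutoffs supported in $\mathrm{int}(\{u=0\})$ and using $\partial_t u=\nabla u=0$ there --- which correctly gives $f_{\vep_j}(u_{\vep_j})\to 0$ in $L^1$ on compact subsets of $\mathrm{int}(\{u=0\})$, where the paper instead builds the delicate barrier of \Cref{Lemma:ODELowerBoundPowers}--\Cref{lem:ConvergengeNonlinearityAE}.

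The genuine gap is exactly at the step you call the technical heart: your derivation of \eqref{eq:FirstOVLimIntro} requires $f_{\vep_j}(u_{\vep_j})\to\gamma u_+^{\gamma-1}$ in $L^1_{\loc}(Q)$, i.e.\ a no-concentration statement on $\partial\{u>0\}$, and neither mechanism you invoke is justified. (a) Equi-integrability of $\gamma u_{\vep_j}^{\gamma-1}\chi_{\{u_{\vep_j}>\vep_j^\beta\}}$ near the free boundary ``since $\beta(\gamma-1)>-1$'' presupposes a \emph{pointwise} lower bound $u_{\vep_j}\gtrsim \mathrm{dist}^\beta$, but the available nondegeneracy (\Cref{lem:NonDeg}, \Cref{lem:HausConv}) is only of sup-type; even for the limit, $u_+^{\gamma-1}\in L^1_{\loc}$ is obtained indirectly via Fatou and the uniform $L^1_{\loc}$ bound on $f_{\vep_j}(u_{\vep_j})$, not from boundary behaviour. (b) On the layer $\{0<u_{\vep_j}\le\vep_j^\beta\}$ the bounds \eqref{Eq:Propertiesh}, \eqref{eq:Fepsilon} only give $f_{\vep_j}(u_{\vep_j})\le C\vep_j^{\beta-2}$, which diverges for $\gamma\in(0,1)$; moreover the layer's measure does \emph{not} vanish --- since $u_{\vep_j}>0$ everywhere, the layer eventually covers compact subsets of $\mathrm{int}(\{u=0\})$, a set of positive measure --- so ``a uniform bound on the layer's measure'' yields $O(\vep_j^{\beta-2})$, not $O(\vep_j^{\beta-1})$, and your duality argument controls only compacta of $\mathrm{int}(\{u=0\})$, leaving the part of the layer in any fixed neighbourhood of $\partial\{u>0\}$ (small measure, but integrand of size $\vep_j^{\beta-2}$) uncontrolled. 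The paper circumvents all of this: it proves only \emph{a.e.} convergence of the nonlinearity (the barrier argument shows $u_{\vep_j}^\gamma/\vep_j^\beta\to0$ on $\mathrm{int}(\{u=0\})$), and then derives \eqref{eq:FirstOVLimIntro} in \Cref{lem:FirstOVLim} by testing with $\psi_\sigma'(u_{\vep_j})\varphi$ and letting $j\uparrow\infty$, $\sigma\downarrow 0$, using only $u_+^{\gamma-1}\in L^1_{\loc}$, $|\nabla u|^2\le C u^\gamma$ and \eqref{eq:FB0Meas}; no $L^1_{\loc}$ convergence of $f_{\vep_j}(u_{\vep_j})$ is ever needed. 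To make your proposal work you should either adopt this truncation device or supply an actual proof of non-concentration at the free boundary, which your sketch does not contain.
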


The proofs of the results gathered in \Cref{thm:MAIN1} are presented in \Cref{sec:convergence} and \Cref{sec:WeissMonotonicity}, using the preliminary results from \Cref{sec:GlobalUnifEnEst} and \Cref{sec:OptRegNonDeg}. 
In particular, the statements regarding the convergence of the solution are given in \Cref{lem:StrCmp}, \Cref{lem:HausConv}, and \Cref{lem:L1ConvFBMeas0}.
The fact that the solutions we obtain are weak solutions to \eqref{eq:ParReacDiff} in the sense of \Cref{def:WeakSolPIntro} is given in \Cref{lem:L2H1StgConv} and \Cref{lem:FirstOVLim}, and the proof of each of the properties $(i)$-$(vi)$ is given in the results referred in the above statement.
Before proceeding, let us comment some important features that the above statement bears.

\

$\bullet$ First, it proves the existence of nontrivial weak solutions to \eqref{eq:ParReacDiff} and, furthermore, it establishes their basic, yet crucial, properties $(i)$-$(vi)$. In this regard, Theorem \ref{thm:MAIN1} generalizes and extends two previous works by Phillips \cite{Phillips1987:art} and Weiss \cite{Weiss1999:art}. 
    
    On the one hand, Phillips proved the existence of weak solutions to \eqref{eq:ParReacDiff} in the sense of \eqref{eq:FirstOVLimIntro}, using a special approximation term $f_\vep$ in the right-hand side of \eqref{eq:ProbEps} (see Remark \ref{rem:PhilProb}). However, even in the framework of \cite{Phillips1987:art}, a key step in the study of the limit of $u_\vep$ as $\vep \downarrow 0$ lacks of a complete justification (see Remark \ref{rem:PhilProb} again). Here we present a full proof, working in a more general setting (that is, under the only assumptions \eqref{Eq:Propertiesh} on the approximating term): to do so, we combine some of the Phillips' techniques with new fine uniform estimates for solutions to the approximating problem \eqref{eq:ProbEps} and their level sets $\{u_\vep \sim \vep^\beta\}$, and a delicate barrier argument. 

    On the other hand, the validity of a Weiss-type monotonicity formula was shown in \cite{Weiss1999:art} for a class of solutions, called ``variational solutions'', under the assumption $\gamma \in (2/3,1]$ (see \Cref{sec:WeissMonotonicity}): Weiss introduced the notion of variational solutions and proved that they satisfy the monotonicity formula; in a second step, he showed that the weak solutions constructed by Phillips in \cite{Phillips1987:art} are variational solutions whenever $\gamma \in (2/3,1]$. Here, we proceed in the spirit of \cite{art:Weiss2003}, establishing a monotonicity formula for the solutions of the approximating problem \eqref{eq:ProbEps} and obtaining, in the limit, a monotonicity formula for weak solutions to \eqref{eq:ParReacDiff}, valid for every $\gamma \in (0,1]$.   

\

$\bullet$ Second, the value $\gamma = 0$ is a critical threshold. Some remarkable, yet partial, results were obtained in \cite{CafVaz95,art:Weiss2003}. Nevertheless, the weak formulations of the equation of the limit solutions are derived only under stronger assumptions on the solutions themselves. This is essentially because the case $\gamma = 0$ is ``more degenerate'' and the family of approximating solutions $u_\vep$ may converge locally uniformly to $0$ as $\vep \downarrow 0$ (see \cite{art:Weiss2003}). This does not happen if $\gamma \in (0,1]$. Actually, assuming that $u_\vep$ satisfy the uniform non-degeneracy estimate \eqref{eq:NonDeg}, our proofs directly extend to the case $\gamma = 0$ and allow to show that the limit $u$ is a weak solution in the sense of \eqref{eq:FirstIVLimIntro} and the properties $(i)$-$(v)$ hold true as well.

\

$\bullet$ Finally, as already anticipated, we prevalently work at the level of the approximation, establishing uniform energy, regularity, and non-degeneracy estimates for the solutions $u_\vep$ to \eqref{eq:ProbEps}, which are then pushed to the limit as $\vep \downarrow 0$. This approach has two main features. The first is that it allows to \emph{quantitatively} describe how the solutions $u_\vep$ converge to a limit solution $u$ and how the sets $\{u_\vep \sim \vep^\beta\}$ converge to the FB $\partial\{u>0\}$. The second is the \emph{robustness} of the methods: the techniques we develop throughout the paper ``pass to the limit'' as $\vep \downarrow 0$, that is, they essentially apply when one studies the limit solution $u$. For example, in relation to point (C) above, the same techniques used to show  \Cref{thm:MAIN1} allow to prove that the class of weak solutions we consider is closed under blow-up limits, as stated in the following corollary (see \Cref{Subsec:Blowups} for the proof).
\begin{cor} \label{thm:MAIN2Blowups}
    Let $n \geq 1$, $\gamma \in (0,1]$, $\alpha \in (0,1)$, and let $u_\circ \in C_c^{2+\alpha}(\RR^n)$ be nonnegative and nontrivial. 
    Let $u$ be a nonnegative weak solution to \eqref{eq:ParReacDiff} given by \Cref{thm:MAIN1}, $(x_\circ,t_\circ) \in \partial\{ u > 0\}\cap Q$ and let $u_r := u_r^{(x_\circ,t_\circ)}$ be the blow-up family defined in \eqref{eq:BlowUpFamIntro}.

Then for every $\nu \in (0,\frac{\beta}{2})$, there exist $r_j \downarrow 0$ and a nonnegative weak solution $u_0 \in C^{\beta/2}(\RR^{n+1})$ to \eqref{eq:EQSing} in $\RR^{n+1}$ such that
\[
u_{r_j} \to u_0 \quad \text{ locally in } C^\nu(\RR^{n+1}) \cap L^2(\RR;H^1(\RR^n)),
\]
and
\[
\begin{aligned}
&\overline{\{u_{r_j} > 0\}} \to \overline{\{u_0 > 0\}} \quad \text{ locally Hausdorff in } \RR^{n+1}, \\
&\chi_{\{u_{r_j} > 0\}} \to \chi_{\{u_0 > 0\}}  \qquad\, \text{ locally in } L^1(\RR^{n+1}),    
\end{aligned}
\]
as $j \uparrow \infty$. Furthermore:  
\begin{enumerate}
\item[(i)] $u_0$ satisfies the energy estimates \eqref{eq:UnifEnBdBUFam1+2};

\medskip

\item[(ii)] $u_0$ satisfies the regularity estimates \eqref{eq:OptRegEStBU};

\medskip

\item[(iii)] $u_0$ satisfies the optimal growth and non-degeneracy estimates  \eqref{eq:OptGrNonDegBU};

\medskip

\item[(iv)] $\{u_0 > 0\}$ has positive density and $\mathcal{L}^{n+1}(\partial\{u_0 > 0\}) = 0$, see \eqref{eq:FB0MeasBU}.

\medskip

\item[(v)] $u_0$ is parabolically $\beta$-homogeneous with respect to $(0,0)$ ``backward in time'', that is,
    \begin{equation}\label{eq:HomogBUIntro}
    u_0(rx,r^2t) = r^\beta u_0(x,t),
    \end{equation}
    for every $(x,t) \in \RR^n \times(-\infty,0)$ and every $r > 0$, or, equivalently, 
    \[
    u_0(x,t) = |t|^{\frac{\beta}{2}} U\big(|t|^{-\frac{1}{2}}x\big),
    \]
    in $\RR^n \times(-\infty,0)$, for some $U: \RR^n \to [0,\infty)$.
\end{enumerate}
\end{cor}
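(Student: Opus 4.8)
The plan is to establish Corollary~\ref{thm:MAIN2Blowups} by re-running the convergence machinery behind Theorem~\ref{thm:MAIN1}, but this time applied to the blow-up family $\{u_r\}_{r>0}$ rather than to the approximating family $\{u_\vep\}_{\vep>0}$. The starting point is that, by property~(iii) of Theorem~\ref{thm:MAIN1}, $u$ satisfies the optimal growth estimate near $(x_\circ,t_\circ)\in\partial\{u>0\}$, namely $u(x,t)\lesssim (\dist((x,t),(x_\circ,t_\circ)))^\beta$ in the parabolic sense, together with the non-degeneracy estimate from below. By the scaling~\eqref{eq:BlowUpFamIntro} these translate into \emph{uniform in $r$} bounds on $u_r$: a uniform $C^{\beta/2}$ bound on compact subsets of $\RR^{n+1}$ (via the regularity estimates of part~(ii), which are invariant under the parabolic rescaling since $\beta$ is the natural scaling power), a uniform energy bound inherited from~\eqref{eq:EnBound1Lim}--\eqref{eq:EnBound2Lim} after rescaling, and uniform growth/non-degeneracy on balls. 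One then extracts, for a subsequence $r_j\downarrow 0$, a limit $u_0$ with $u_{r_j}\to u_0$ locally in $C^\nu$ for every $\nu<\beta/2$ and weakly (then strongly, by the same argument as in \Cref{lem:L2H1StgConv}) in $L^2_{\loc}(\RR;H^1_{\loc}(\RR^n))$. The Hausdorff convergence of the positivity sets and the $L^1_{\loc}$ convergence of the characteristic functions follow exactly as in \Cref{lem:HausConv} and \Cref{lem:L1ConvFBMeas0}, using the uniform non-degeneracy to rule out collapse of $\{u_{r_j}>0\}$ and the measure-zero property of the free boundary to upgrade a.e.\ convergence to $L^1$.

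Next, I would verify that $u_0$ is a weak solution to~\eqref{eq:EQSing} in $\RR^{n+1}$ in the sense of Definition~\ref{def:WeakSolEIntro}. The interior equation~\eqref{eq:ParEqPosSetIntro} passes to the limit on $\{u_0>0\}$ because the convergence $u_{r_j}\to u_0$ is locally uniform (hence locally uniform on compact subsets of the open set $\{u_0>0\}$, by Hausdorff convergence of the complements), so $u_{r_j}^{\gamma-1}\to u_0^{\gamma-1}$ locally uniformly there and classical parabolic estimates give the needed convergence of derivatives. The local integrability of $(u_0)_+^{\gamma-1}$ and the two weak identities~\eqref{eq:FirstOVLimIntro}--\eqref{eq:FirstIVLimIntro} are obtained by the very same limiting arguments used in \Cref{lem:FirstOVLim}: the key point is that $u$ satisfies these identities with uniform constants, and each term (the time derivative, the gradient quadratic term, the $u_+^{\gamma-1}$ term via the uniform $L^1$ bound coming from non-degeneracy and growth) is stable under the rescaling and the passage to the limit. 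This yields parts~(i)--(iv) of the corollary essentially for free, since all of those estimates are scale-invariant and closed under the convergences just established.

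The genuinely new content is part~(v): the backward-in-time parabolic $\beta$-homogeneity of $u_0$. This is where the Weiss-type monotonicity formula of part~(v) of Theorem~\ref{thm:MAIN1} enters. The Weiss energy $W(u,(x_\circ,t_\circ),r)$ (defined in \Cref{prop:WeissForSemLim}) is monotone nondecreasing in $r$ and bounded (below by zero or a fixed constant, above by the value at a fixed scale), hence has a limit $W_\circ$ as $r\downarrow 0$. Under the blow-up rescaling one has the identity $W(u,(x_\circ,t_\circ),\rho r)=W(u_r,(0,0),\rho)$, so for the blow-up limit $W(u_0,(0,0),\rho)\equiv W_\circ$ is constant in $\rho$. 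The monotonicity formula's derivative identity shows that the $\rho$-derivative of $W(u_0,(0,0),\rho)$ is (a positive multiple of) an integral of the squared Euclidean derivative of the homogeneity defect $x\cdot\nabla u_0 + 2t\,\partial_t u_0 - \beta u_0$ against the backward heat kernel; constancy of $W$ forces this defect to vanish, i.e.\ $u_0$ is parabolically $\beta$-homogeneous backward in time in the sense of~\eqref{eq:HomogBUIntro}. The separated-variables form $u_0(x,t)=|t|^{\beta/2}U(|t|^{-1/2}x)$ is then just a restatement of~\eqref{eq:HomogBUIntro} restricted to $t<0$.

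The main obstacle I anticipate is \emph{not} the compactness or the homogeneity argument, which are fairly mechanical once Theorem~\ref{thm:MAIN1} is in hand, but rather the careful justification that the blow-up limit $u_0$ still satisfies the \emph{sharp} weak formulation~\eqref{eq:FirstIVLimIntro} (the domain-variation identity encoding the free boundary condition~\eqref{eq:FBCondIntro}), as opposed to merely the weaker $|\nabla u_0|=0$ condition on $\partial\{u_0>0\}$. This requires controlling the quadratic gradient term $|\nabla u_{r_j}|^2$ and the $2(u_{r_j})_+^\gamma$ term \emph{up to the free boundary} simultaneously, and the only route is the uniform (in $\vep$, hence in $r$) energy identity established at the approximating level in \Cref{sec:WeissMonotonicity} together with the strong $L^2_{\loc}H^1_{\loc}$ convergence and the $L^1_{\loc}$ convergence of $H_{\vep}(u_\vep)$, all of which must be shown to survive the double limit (first $\vep\downarrow 0$ to get $u$, then $r\downarrow 0$ to get $u_0$) — or, more efficiently, re-derived in a single limiting scheme exploiting the robustness emphasized in the third bullet after Theorem~\ref{thm:MAIN1}. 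One must also take a small amount of care that the non-degeneracy estimate, which in Theorem~\ref{thm:MAIN1}(iii) may hold only locally in $Q$ with constants depending on the distance to the parabolic boundary, is applied at interior free boundary points $(x_\circ,t_\circ)\in\partial\{u>0\}\cap Q$ and at scales $r$ small enough that the rescaled parabolic cylinders stay inside $Q$, so that the constants are genuinely uniform along the blow-up sequence.
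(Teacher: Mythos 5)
Your overall architecture coincides with the paper's proof: transfer the properties of $u$ to the rescaled family $u_r$ by scale invariance, extract a limit via Arzel\`a--Ascoli plus local energy compactness, redo the Hausdorff/$L^1$ convergence and the two weak formulations exactly as in Section 4 (so there is no need to revisit the $\vep$-approximation, contrary to the worry in your last paragraph: the domain-variation identity for $u_0$ is obtained directly by passing to the limit in the rescaled version of \eqref{eq:FirstIVLim}, using strong $L^2_{\loc}$ convergence of $\nabla u_{r_j}$, weak convergence of $\partial_t u_{r_j}$, and $\chi_{\{u_{r_j}>0\}}\to\chi_{\{u_0>0\}}$ in $L^1_{\loc}$), and finally use the Weiss monotonicity to force $Zu_0=0$. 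A minor imprecision: for the limit solution only the monotonicity \emph{inequality} \eqref{eq:WeissForLim} is available, not an exact derivative identity, but constancy of the Weiss energy along the blow-up together with that inequality is enough, as in the paper.

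There is, however, one genuine gap: you claim the uniform energy bound for $\{u_r\}$ is ``inherited from \eqref{eq:EnBound1Lim}--\eqref{eq:EnBound2Lim} after rescaling''. This fails. Since $\partial_t u_r(x,t)=r^{2-\beta}(\partial_t u)(x_\circ+rx,t_\circ+r^2t)$, one has
\begin{equation}
\int_{Q_R}(\partial_t u_r)^2\,\rd x\,\rd t \;=\; r^{\,2(2-\beta)-(n+2)}\int_{Q_{rR}(x_\circ,t_\circ)}(\partial_t u)^2\,\rd x\,\rd t,
\end{equation}
and the prefactor blows up as $r\downarrow 0$ (likewise for $\nabla u_r$), so the global estimates give nothing uniform; what is needed is the local decay $\int_{Q_\rho(x_\circ,t_\circ)}(\partial_t u)^2\lesssim \rho^{\,n+2(\beta-1)}$ and $\int_{Q_\rho(x_\circ,t_\circ)}|\nabla u|^2\lesssim \rho^{\,n+2\beta}$, which do not follow from \eqref{eq:EnBound1Lim}--\eqref{eq:EnBound2Lim}. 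The paper obtains the gradient bound from the scale-invariant pointwise estimates (optimal growth \eqref{eq:OptGrLimLem} combined with \eqref{eq:OptRegSpaceLim}, giving $|\nabla u|\lesssim u^{\gamma/2}\lesssim\rho^{\beta-1}$ near the free boundary point), and the $\partial_t$ bound from the local energy (Caccioppoli-type) inequality \eqref{eq:LocalEstPartialTu} of \Cref{lem:L2H1StgConv} with a spatial cut-off, fed by those same pointwise bounds; this is exactly the content of \eqref{eq:UnifEnBdBUFam1+2}. Without this correction, the weak $H^1_{\loc}$ compactness of $u_{r_j}$, the passage to the limit in the domain-variation identity (which contains the term $\partial_t u_{r_j}\,(\nabla u_{r_j}\cdot\Phi)$), and the Fatou/weak-convergence argument showing $Zu_0=0$ all lack their key hypothesis, so you should replace the appeal to the rescaled global energy estimates by the local argument just described.
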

The above corollary closes the study of weak solutions to the initial-value problem \eqref{eq:ParReacDiff}. The last two sections of this paper are devoted to the study of the existence of weak solutions to \eqref{eq:EQSing} with \emph{self-similar} and \emph{traveling wave} form, respectively.

\

In \Cref{sec:SelfSimilar} we construct \emph{radial} weak solutions to \eqref{eq:EQSing} in $Q$ with \emph{self-similar form} ``forward in time'', that is, solutions with form 
\begin{equation}\label{eq:SelfSimIntro}
u(x,t) = t^{\frac{\beta}{2}} U\big(t^{-\frac{1}{2}}|x|\big), \qquad (x,t) \in Q,
\end{equation}
for some $U: [0,\infty) \to [0,\infty)$, called the self-similar profile of $U$. By definition, $U$ characterizes the corresponding self-similar solution. In  \Cref{thm:ExSSProfiles}, we show that for every $\gamma \in (0,1]$ and every $R>0$, there exists a profile $U$ satisfying $\{ U(r) = 0\} = \{r \leq R\}$ and such that the function $u$ defined as in \eqref{eq:SelfSimIntro} is a weak solution to \eqref{eq:EQSing} in $Q$. Consequently, we deduce the existence of self-similar solutions $u$ with \emph{unbounded support} and \emph{expanding contact set}:
\[
\{u = 0\} = \{(x,t): t > 0, \,|x|^2 \leq R^2t\}.
\]
The FB of $u$ is the paraboloid $R^2t = |x|^2$ and, at each time-slice $t > 0$, $\{u = 0\}$ is a ball of radius $R\sqrt{|t|}$.

The construction of such self-similar profiles is obtained by combining fine ODE's methods and comparison arguments. The cases $\gamma = 0$ and $\gamma = 1$ are treated separately, since the equation of the profile is equivalent to a Confluent Hypergeometric Equation and the solutions are explicit in terms of special functions. Further, when $\gamma = 0$, the same techniques allow us to complement the construction of self-similar solutions with \emph{bounded-shrinking support} carried out in \cite[Section 1]{CafVaz95} while, when $\gamma = 1$, to show non-existence of this class of self-similar solutions; see also \cite{HadzicRaphael2019:art, FioravantiRosTorres2025:art} for related results. 
 Unfortunately, when $\gamma\in (0,1)$, the techniques we use to study self-similar solutions ``forward in time'' seem not to apply to construct self-similar solutions ``backward in time''. Actually, some partial analytic results and numerical computations had led us to propose a nonexistence conjecture in this range, which is left as an open problem.
We refer the reader to \Cref{sec:SelfSimilar} for all the details.

 \

Finally, in \Cref{sec:TravelingWaves} we construct weak solutions to \eqref{eq:EQSing} in $\RR^{n+1}$ with traveling wave (TW) form, that is, solutions with form 
\begin{equation}\label{eq:TWAnsatzIntro}
u(x,t) = \phi(e\cdot x-ct),
\end{equation}
where $\phi: \RR \to [0,\infty)$ is the wave's profile, $c \in \RR$ is the profile's speed, and $e \in \RR^n$ is a fixed unit vector: $u$ is an \emph{eternal} solution (i.e., defined for all times $t\in \RR$) identified by the fixed profile $\phi$ traveling along the direction $e$ with speed $c$.

In \Cref{thm:ExTW}, we classify the \emph{admissible} profiles via a phase-plane analysis while, in  \Cref{rem:CollTW}, we use the admissible TW profiles to build examples of ``colliding TW solutions'' (see \cite{art:Weiss2003}). They are weak solutions exhibiting non-standard singular FB points. An easy way to visualize them, is imagining two planar fronts with disjoint supports traveling in opposite directions, and ``colliding'' at some time $t = T$: for every $t < T$, the FB is made by two disjoint parallel lines that collapse to a ``multiplicity 2'' line at time $t=T$. It is worthwhile to notice that such kind of singularities do \emph{not} appear in the Mean Curvature Flow theory (see Remark \ref{rem:CollTW}): this is related to the ``multiplicity 1 conjecture'' (see \cite{Bamler2024:art}) and the validity of the Strong Maximum Principle (see for instance \cite{Mantegazza:book}). 
\subsection{Notation and terminology} 
\label{Subsec:Notation}
We recall here the notations we adopt throughout the paper.

We will always denote $Q := \RR^n \times (0,\infty)$ and, as usual, for every $r > 0$, we set
\begin{equation}
    \label{eq:DefQr}
    Q_r := B_r \times (-r^2, r^2), \qquad Q_r^+ := B_r \times (0, r^2) \quad \text{ and } \qquad Q_r^- := B_r \times (-r^2, 0).
\end{equation}
For $(x_\circ, t_\circ)\in \RR^{n + 1}$, as customary, we denote $Q_r(x_\circ, t_\circ) := (x_\circ, t_\circ) + Q_r$ and, analogously, $Q_r^\pm(x_\circ, t_\circ):= (x_\circ, t_\circ) + Q_r^\pm$.

Concerning the H\"older spaces, for $k=0,1,2,\ldots$ and $\alpha \in (0,1)$, we will consider the usual notation $C^{k + \alpha} = C^{k, \alpha}$.
Sometimes when using regularity results, we will refer to the H\"older spaces $\HH_{k + \alpha}$, defined as $H_{k + \alpha}$ in \cite[Chapter IV, Section 1]{Lieberman1996:book} (we will not use nor need the definition, just recall that these are essentially H\"older spaces in the natural parabolic metric).

Throughout the paper, whenever we say that $w_{j} \to w$ \textit{locally } in $C^\alpha(\Omega)$ it means that, for every compact set $\mathcal{K} \subset\subset \Omega$, we have $w_{j} \to w \text{ in } C^\alpha(\mathcal{K})$.
We use the analogous definitions of local convergence in $L^p(\Omega)$ for $p\geq 1$.
When we say that $w_{j} \to w$ \textit{ locally } in $L^2(I;H^1(\Omega))$, it means that, for every open bounded interval $J \subset\subset I$ and every open bounded $\omega \subset\subset \Omega$, we have $w_{j} \to w \text{ in } L^2(J;H^1(\omega))$. 

Finally, if $\mathcal{A} \subseteq \RR^{n+1}$ is an open set and $\Phi \in C_c^\infty(\mathcal{A};\RR^{n+1})$ with $\Phi = (\Phi^1,\ldots,\Phi^{n+1})$, we set
\[
\dv_x \Phi := \sum_{j=1}^n \partial_{x_j}\Phi^j, \qquad D_x\Phi := (\partial_{x_j}\Phi^i)_{i,j=1,\ldots,n}.
\]
%
%
%
%

%
%
%
%
%
%
%
%
\section{Energy estimates for the approximating problem}\label{sec:GlobalUnifEnEst}
This introductory section is devoted to show existence of weak solutions to \eqref{eq:ProbEps} together with some uniform energy estimates. 
The main result is  \Cref{prop:LimDelta} below, and the proof is an application of the energy estimates we obtained in \cite[Proposition 2.3]{AudritoSanz2022:art} for the range $\gamma \in [1,2)$, taking into account that they work also in our setting ---see \cite[Remark~ 2.6]{AudritoSanz2022:art}.
The tool used is \emph{elliptic regularization}, and formally works as follows (see also \cite{AudritoSerraTilli2021:art}). 
We approximate solutions to the parabolic problem \eqref{eq:ProbEps} by using suitable minimizers of the functional
\begin{equation}\label{eq:Functional0}
\EE\ed(u) = \int_0^\infty \frac{e^{- t/\delta}}{\delta} \int_{\RR^n} \delta|\partial_t u|^2 + |\nabla u|^2 + 2 F_\vep(u) \dx \dt,
\end{equation}
where $\vep,\delta > 0$ are free parameters. Under appropriate assumptions on $u_\circ$, it is not difficult to check that any minimizer $u\ed$ satisfies 
\begin{equation}\label{eq:PerProbEps}
\begin{cases}
-\delta \partial_{tt} u\ed + \partial_t u\ed - \Delta u\ed = - f_\vep(u\ed) \quad &\text{ in }  Q \\
u\ed|_{t=0} = u_\circ     \quad &\text{ in } \mathbb{R}^n,
\end{cases}
\end{equation}
in the weak sense, and thus, under appropriate energy boundedness assumptions, one can hope to pass to the limit as $\delta \downarrow 0$ and obtain a weak solution $u_\vep$ to \eqref{eq:ProbEps}. As mentioned above, this plan can be successfully carried out using the same techniques in \cite{AudritoSanz2022:art}, with minor changes. 
Therefore we omit the details\footnote{We only remark that the second estimate in \eqref{eq:EnBound2Eps} is not stated in the results but is established in the proofs in \cite{AudritoSanz2022:art}, and that \eqref{eq:LinftyBoundEps} is not established in \cite{AudritoSanz2022:art} but follows easily by a comparison argument.} of the proof and in the next result we state the existence of weak solutions and the uniform energy estimates we will use later on in the paper.

\begin{prop}\label{prop:LimDelta} 
Let $\gamma \in [0,1]$ and let $u_\circ \in H^1(\RR^n)$ be nonnegative with $u_\circ^\gamma \in L^1(\RR^n)$. Set
\[
\UU_\circ := \Big\{u \in  \bigcap_{R > 0} L^2((0,R): H^1(\RR^n))\cap C([0,R]:L^2(\RR^n)): \partial_tu \in L^2(Q), \; u|_{t=0} = u_\circ \,\text{ in } L^2(\RR^n) \Big\}.
\]
Then, for every $\vep > 0$, there exists a nonnegative weak solution $u_\vep$ to \eqref{eq:ProbEps}, in the sense that $u_\vep \in \UU_\circ$ and
\begin{equation}\label{eq:FirstOVvep}
\int_Q \partial_t u_\vep \varphi + \nabla u_\vep \cdot\nabla\varphi + f_\vep(u_\vep) \varphi = 0,
\end{equation}
for every $\varphi \in C_c^\infty(Q)$. Furthermore, for every $\vep > 0$,  
\begin{equation}\label{eq:EnBound1Eps}
\int_Q |\partial_t u_\vep|^2 \dx \rd t \leq \|u_\circ\|_{H^1(\RR^n)}^2 + 2\|u_\circ^\gamma\|_{L^1(\RR^n)},
\end{equation}
and, for every $R > 0$, there exists $C(u_\circ,R) > 0$, depending only on $\|u_\circ\|_{H^1(\RR^n)}$, $\|u_\circ^\gamma\|_{L^1(\RR^n)}$, and $R$, such that
\begin{equation}\label{eq:EnBound2Eps}
\begin{aligned}
&\int_0^R \int_{\mathbb{R}^n} u_\vep^2 + |\nabla u_\vep|^2 + F_\vep(u_\vep) \dx\rd t \leq C(u_\circ,R), \\
& \max_{t \in [0,R]} \int_{\mathbb{R}^n} u_\vep^2(t) \dx \leq C(u_\circ,R).
\end{aligned}
\end{equation}
If, in addition, $u_\circ \in L^\infty(\RR^n)$, then $u_\vep \in L^\infty(\RR^n)$ and
\begin{equation}\label{eq:LinftyBoundEps}
\|u_\vep\|_{L^\infty(Q)} \le \|u_\circ\|_{L^\infty(\RR^n)}.
\end{equation}
\end{prop}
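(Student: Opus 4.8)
The plan is to carry out the \emph{elliptic regularisation} announced after the statement: for fixed $\vep,\dd>0$ I first produce a minimiser $u\ed$ of the functional $\EE\ed$ in \eqref{eq:Functional0}, derive the Euler--Lagrange equation \eqref{eq:PerProbEps} together with estimates that are \emph{uniform in $\dd$}, and finally let $\dd\downarrow0$ along a subsequence to get $u_\vep$ and \eqref{eq:FirstOVvep}. For the first step I would minimise $\EE\ed$ by the direct method over the time-weighted Sobolev space of maps with $u|_{t=0}=u_\circ$: the quadratic part is coercive and weakly lower semicontinuous, $F_\vep\ge0$ is continuous and --- for $\vep$ fixed --- globally bounded, and the trace constraint at $t=0$ survives passage to the limit along a minimising sequence because $\|u(t)-u_\circ\|_{L^2(\RR^n)}\le\sqrt t\,\|\partial_t u\|_{L^2(Q)}$. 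Comparing with the constant-in-time competitor $w\equiv u_\circ$ (and using $\int_0^\infty\dd^{-1}e^{-t/\dd}\dt=1$, $F_\vep\le u_\circ^\gamma$) gives $\EE\ed(u\ed)\le\|u_\circ\|_{H^1}^2+2\|u_\circ^\gamma\|_{L^1}$. Varying $u\ed+s\varphi$ with $\varphi|_{t=0}=0$ and integrating by parts in $t$ shows $u\ed$ solves \eqref{eq:PerProbEps} weakly; since $f_\vep\in C^{1+\gamma}$ and $-\dd\partial_{tt}-\Delta+\partial_t$ is a nondegenerate operator on $\RR^{n+1}$, $u\ed$ is classical, and $u\ed\ge0$ follows by comparison ($F_\vep$, hence $f_\vep$, vanishes on $\{u\le0\}$ and $u_\circ\ge0$); the estimates below then show $u\ed\in\UU_\circ$.

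Next I would extract the $\dd$-uniform estimates. Writing $\psi_\dd(t):=\|\partial_t u\ed(t)\|_{L^2(\RR^n)}^2$ and $\phi_\dd(t):=\int_{\RR^n}(|\nb u\ed(t)|^2+2F_\vep(u\ed(t)))\dx$, multiplying \eqref{eq:PerProbEps} by $\partial_t u\ed$ and integrating by parts in space produces the identity $\phi_\dd'=\dd\,\psi_\dd'-2\psi_\dd$; integrating on $(0,T)$, using $\phi_\dd(T)\ge0$, $\psi_\dd(0)\ge0$ and $\psi_\dd(T)\to0$ as $T\to\infty$ (the regularised flow relaxes to a stationary state), yields the $\dd$-uniform bound $\int_Q|\partial_t u\ed|^2\le\phi_\dd(0)\le\|u_\circ\|_{H^1}^2+2\|u_\circ^\gamma\|_{L^1}$, i.e.\ \eqref{eq:EnBound1Eps}. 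Minimality moreover forces $t\mapsto\phi_\dd(t)$ to be non-increasing (energy dissipation), so $\int_0^R\phi_\dd\,\dt\le R\,\phi_\dd(0)$, which is the first line of \eqref{eq:EnBound2Eps}; testing \eqref{eq:PerProbEps} with $u\ed$ and using $f_\vep(u\ed)\,u\ed\ge0$ gives $\|u\ed(t)\|_{L^2}\le\|u_\circ\|_{L^2}$, the second line. If in addition $u_\circ\in L^\infty$, the constant $M:=\|u_\circ\|_{L^\infty}$ is a supersolution of \eqref{eq:PerProbEps} (because $f_\vep(M)\ge0$), hence \eqref{eq:LinftyBoundEps} by comparison.

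Finally, for the limit $\dd\downarrow0$: the bounds above make $\{u\ed\}$ bounded in $L^2_{\loc}(0,\infty;H^1(\RR^n))$ with $\{\partial_t u\ed\}$ bounded in $L^2(Q)$, so Aubin--Lions on cylinders $B_R\times(0,R)$ gives $\dd_k\downarrow0$ with $u_{\vep,\dd_k}\to u_\vep$ strongly in $L^2_{\loc}(Q)$ and a.e., weakly in $L^2_{\loc}(0,\infty;H^1)$, and $\partial_t u_{\vep,\dd_k}\rightharpoonup\partial_t u_\vep$ in $L^2(Q)$. In the weak form of \eqref{eq:PerProbEps} tested against $\varphi\in C_c^\infty(Q)$, the term $-\dd_k\int\partial_{tt}u_{\vep,\dd_k}\,\varphi=-\dd_k\int\partial_t u_{\vep,\dd_k}\,\partial_t\varphi=O(\dd_k)\to0$, the linear terms pass by weak convergence, and $f_\vep(u_{\vep,\dd_k})\to f_\vep(u_\vep)$ in $L^1_{\loc}(Q)$ by a.e.\ convergence and dominated convergence ($f_\vep$ continuous and bounded for fixed $\vep$); this yields \eqref{eq:FirstOVvep}. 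That $u_\vep\in\UU_\circ$ follows as above (with $u_\vep|_{t=0}=u_\circ$ from the uniform $\partial_t$-bound), $u_\vep\ge0$ is preserved, and \eqref{eq:EnBound1Eps}--\eqref{eq:LinftyBoundEps} are inherited by weak lower semicontinuity.

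The hard part is not the scheme itself but the \emph{$\dd$-uniformity}: closing the $\partial_t$-estimate needs the (routine but not automatic) asymptotic analysis of $u\ed$ as $t\to\infty$, and the passage to the limit in the nonlinear term must, for $\gamma\in(0,1)$, accommodate the singular factor $u^{\gamma-1}$ in $f_\vep$. These are precisely the points settled in \cite[Proposition~2.3 and Remark~2.6]{AudritoSanz2022:art}, which is why the proof here follows from that reference with only minor modifications.
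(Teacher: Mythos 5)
Your proposal follows exactly the route the paper takes: elliptic regularization via the functional $\EE\ed$ in \eqref{eq:Functional0}, $\delta$-uniform energy estimates for the minimizers solving \eqref{eq:PerProbEps}, and a compactness passage $\delta \downarrow 0$, with the delicate points (uniform control of $\partial_t u\ed$, handling of $f_\vep$, the $L^\infty$ comparison) deferred to \cite[Proposition 2.3 and Remark 2.6]{AudritoSanz2022:art} — which is precisely what the paper does, as it omits the details and cites that reference. So the approach is essentially the same and the sketch is consistent with the paper's (omitted) proof.
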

\begin{rem}\label{rem:InteriorReg+Inner}
We remark that, since $f_\vep \in C^{1+\gamma}$ for every $\vep > 0$ fixed, every bounded weak solution $u_\vep$ to \eqref{eq:ProbEps} in the sense of \eqref{eq:FirstOVvep} is actually a classical solution in $Q$: $u_\vep$ is of class $\HH_{2+\alpha}$ in $Q$ for some $\alpha \in (0,1]$ (see \cite[Chapter IV, Section 1]{Lieberman1996:book} for the definition of the spaces $\HH_{k+\alpha}$ and \cite[Theorem 4.8 and Theorem 4.9]{Lieberman1996:book} for the regularity estimates). As a consequence, we may test \eqref{eq:FirstOVvep} with $\varphi := \nb u_\vep \cdot\Phi$ and check that each $u_\vep$ satisfies
\begin{equation}\label{eq:FirstIVvep}
\int_Q \big( |\nabla u_\vep|^2 + 2H_\vep(u_\vep)u_\vep^\gamma \big) \dv_x \Phi  -  2\nb u_\vep \cdot D_x \Phi \cdot\nabla u_\vep - 2\partial_tu_\vep \,(\nb u_\vep \cdot \Phi) = 0,
\end{equation}
where $\Phi \in C_c^\infty(Q;\RR^{n+1})$ is arbitrarily fixed. As common in the elliptic theory (see also \cite{Weiss1999:art,art:Weiss2003} for the parabolic setting), we call \eqref{eq:FirstIVvep} the \emph{weak formulation in the sense of domain variations} of the equation in \eqref{eq:ProbEps}.
\end{rem}
We next show continuity of $u_\vep$ and $\nb u_\vep$ up to $t=0$, needed in the proof of Lemma \ref{thm:OptRegSpace}.
\begin{lem}\label{lem:BdryEstimates}
Let $\gamma \in [0,1]$ and $\alpha \in (0,1)$, let $u_\circ \in C^{2+\alpha}_c(\RR^n)$ be nonnegative and let $\{u_\vep\}_{\vep > 0}$ be a family of weak solutions to \eqref{eq:ProbEps} as in  \Cref{prop:LimDelta}. Then, for every $\vep > 0$, both $u_\vep$ and $\nb u_\vep$ can be continuously extended up to $t=0$ by setting $u_\vep|_{t=0} = u_\circ$ and $\nb u_\vep|_{t=0} = \nb u_\circ$.    
\end{lem}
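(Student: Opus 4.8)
\textbf{Proof plan for Lemma \ref{lem:BdryEstimates}.}

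The goal is to upgrade the interior parabolic regularity of $u_\vep$ (valid in $Q = \RR^n\times(0,\infty)$ by \Cref{rem:InteriorReg+Inner}) to continuity of $u_\vep$ and $\nb u_\vep$ all the way down to the initial slice $\{t=0\}$, using that the data $u_\circ \in C^{2+\alpha}_c(\RR^n)$ is smooth enough and compactly supported. The natural strategy is to view the problem as a Cauchy problem with nice data and right-hand side, and invoke the standard Schauder-type theory for the heat operator up to the initial boundary. First I would fix $\vep>0$ and note that, since $f_\vep \in C^{1+\gamma}$ and $u_\vep$ is bounded (by \eqref{eq:LinftyBoundEps}), the function $g(x,t) := f_\vep(u_\vep(x,t))$ is a bounded function on $Q$; combined with the interior $\HH_{2+\alpha}$-regularity this gives that $g$ is at least bounded and locally H\"older in $Q$. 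Thus $u_\vep$ solves $\partial_t u_\vep - \Delta u_\vep = -g$ with $g \in L^\infty \cap C^{\alpha'}_{\loc}$ for some $\alpha' \in (0,1)$, and with initial datum $u_\circ \in C^{2+\alpha}_c(\RR^n)$.

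The key step is then to represent $u_\vep$ via Duhamel's formula,
\[
u_\vep(x,t) = \int_{\RR^n} \Gamma(x-y,t) u_\circ(y)\dy - \int_0^t \int_{\RR^n} \Gamma(x-y,t-s) g(y,s)\dy\ds,
\]
where $\Gamma$ is the heat kernel, and to show this candidate is actually $u_\vep$ by uniqueness for the bounded Cauchy problem (the difference solves the heat equation with zero data and bounded right-hand side, hence vanishes by the maximum principle / Tychonoff-type uniqueness in the class of bounded functions). The first term is $C^\infty$ for $t>0$ and, because $u_\circ \in C^{2+\alpha}_c$, extends together with its gradient continuously to $t=0$ with boundary values $u_\circ$ and $\nb u_\circ$ (this is the classical statement that the heat semigroup acting on $C^2$ data is continuous up to $t=0$ in $C^1$). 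For the Duhamel (volume-potential) term one uses the standard estimates for the Newtonian/heat potential: $g \in L^\infty$ gives that the potential and its spatial gradient are bounded and continuous up to $t=0$, with value $0$ on $\{t=0\}$ since the time integral is over $(0,t)$ and $\partial_x\Gamma(\cdot,t-s)$ is integrable with a uniform-in-$s$ $L^1$ bound that produces a factor $\sqrt{t}\to 0$. Hence $u_\vep \to u_\circ$ and $\nb u_\vep \to \nb u_\circ$ uniformly on compact sets as $t\downarrow 0$, so both extend continuously by the stated values.

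The main obstacle I anticipate is the gradient of the volume potential near $t=0$: the kernel $\partial_{x_i}\Gamma(x-y,t-s)$ is only in $L^1_y$ with norm $\sim (t-s)^{-1/2}$, which is integrable in $s$ on $(0,t)$ and yields the vanishing factor $\sqrt t$, but one must be a little careful to justify differentiation under the integral sign and the continuity of the resulting expression up to $t=0$ (this is exactly where boundedness of $g$, rather than mere integrability, is used, and where one could alternatively quote the parabolic Schauder estimates up to the initial boundary from \cite[Chapter IV]{Lieberman1996:book} once one observes $g$ is locally H\"older and $u_\circ \in C^{2+\alpha}$, which directly give $u_\vep \in \HH_{2+\alpha}$ up to $\{t=0\}$ and hence the claim). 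A secondary point to be careful about is that these regularity statements are for fixed $\vep$ only --- no uniformity in $\vep$ is claimed or needed here --- so the dependence of constants on $\vep$ (through $\|f_\vep\|_{C^{1+\gamma}}$) is harmless.
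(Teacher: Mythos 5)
Your route is viable and, in its second half, close in spirit to the paper's own argument: the paper also represents the solution through the heat potential and extracts the gradient bound $|\nabla(u_\vep-u_\circ)|\le C\|g\|_{L^\infty}\sqrt t$ from $\|\nabla_x G(\cdot,\tau)\|_{L^1}\sim\tau^{-1/2}$. The main structural difference is how the representation formula is reached. The paper first proves, by a barrier-plus-energy argument (testing the weak formulation with cut-offs $\chi_{(s,\tau)}\psi^2 v_+$ and letting $s\downarrow0$ using the $C([0,R];L^2)$ attainment of the data), that $|u_\vep-u_\circ|\le C_\vep t$, whence $u_\vep$ is continuous (indeed of class $\HH_1$) up to $t=0$ and the right-hand side $g$ is H\"older up to $t=0$; only then does it invoke the classical representation theorems (Friedman) to write $u_\vep-u_\circ$ as a heat potential and differentiate. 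You instead write the Duhamel formula immediately, with $g=f_\vep(u_\vep)$ merely bounded, and get both the function and the gradient statements from the potential estimates; this is leaner, and the $L^\infty$ bound on $g$ is indeed enough for continuity of the potential and its spatial gradient up to $t=0$ with values $0$.

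The one step you should not wave through is the identification of $u_\vep$ with the mild solution. As written, you appeal to ``maximum principle / Tychonoff-type uniqueness in the class of bounded functions,'' but those classical uniqueness theorems are for solutions that are continuous on $\RR^n\times[0,T]$ and attain the initial data pointwise --- and continuity of $u_\vep$ up to $t=0$ is precisely what the lemma is asserting, so invoking them directly is circular. What you actually know from \Cref{prop:LimDelta} is that $u_\vep\in\UU_\circ$, i.e. the difference $v$ between $u_\vep$ and your Duhamel candidate is a bounded weak solution of the homogeneous heat equation (in the sense of \eqref{eq:FirstOVvep} with $f_\vep$ removed) whose initial trace vanishes only in the $C([0,R];L^2(\RR^n))$ sense. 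Uniqueness in that class is true, but it must be proved by an energy argument: test with $\chi_{(s,\tau)}(t)\psi^2(x)v_\pm$ for spatial cut-offs $\psi$, integrate by parts in time, let the cut-off exhaust $\RR^n$, and then let $s\downarrow0$ using the $L^2$-continuity of the trace --- which is exactly the computation the paper carries out in its first step (there applied to $v=\tilde u-\|g\|_{L^\infty}t$ rather than to the difference with the mild solution). Once this substitution is made your argument closes; your parenthetical alternative of quoting Schauder estimates up to $t=0$ has the same circularity (H\"older continuity of $g$ up to $t=0$ is not known beforehand) unless it is run as a second, bootstrap step after continuity of $u_\vep$ at $t=0$ has been obtained. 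Finally, your remark that no uniformity in $\vep$ is needed is correct and matches the paper.
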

\begin{proof} Let $\vep > 0$ and $u_\vep$ as in the statement. We divide the proof in two main steps as follows, although the strategy in both cases consists of combining standard regularity results with a barrier for $|u-u_\circ|$ or $|\nabla u - \nabla u_\circ|$ of the form $t^\alpha$ for some $\alpha>0$.

\

\emph{Step 1: we show that $u_\vep \in C(\overline{Q})$.} Let us set $\tilde{u} := u_\vep - u_\circ$. Then, $\tilde{u}(t)$ satisfies $\tilde{u} \to 0$ in $L^2(\RR^n)$ as $t \downarrow 0$ and
\[
\int_Q \partial_t \tilde{u} \varphi + \nabla \tilde{u} \cdot\nabla\varphi = \int_Q g \varphi,
\]
for every $\varphi \in C_c^\infty(Q)$, where $g := \Delta u_\circ - f_\vep(\tilde{u}+u_\circ)$. Notice that $g \in L^\infty(Q)$ since $u_\vep$ and $f_\vep$ are so and $u_\circ \in C^{2+\alpha}_c(\RR^n)$. 
Now let $w := \|g\|_{L^\infty(Q)}t$ and $v := \tilde{u}-w$. 
First, it is not difficult to check that $v$ satisfies    
\[
\int_Q \partial_t v \varphi + \nabla v \cdot\nabla\varphi \le 0,
\]
for every nonnegative $\varphi \in C_c^\infty(Q)$.
Furthermore, since $w \ge 0$ and $u_\vep \in L^2((0,R): H^1(\RR^n)) \cap C([0,R]:L^2(\RR^n))$ for every $R>0$ with $\partial_tu_\vep \in L^2(Q)$, the same regularity holds for $v_+ = \max\{v,0\}$. 
Therefore, given $0 < s < \tau$ and $\psi \in W^{1,\infty}(\RR^n)$ with compact support, a standard approximation argument (see for instance \cite[Theorem 6.1]{Lieberman1996:book}) shows that we can test the above inequality with $\varphi := \chi_{(s,\tau)}(t)\psi^2(x)v_+$ to obtain 
\[
 \int_s^{\tau}\int_{\RR^n} \frac{1}{2} \partial_t (v_+^2) \psi^2 + |\nabla v_+|^2 \psi^2 + 2(\psi \nabla v_+)\cdot(v_+ \nb\psi)  \, \rd x \rd t \le 0,
\]
and thus, integrating by parts in time, 
\[
\int_{\RR^n} v_+^2(\tau) \psi^2 - \int_{\RR^n} v_+^2(s) \psi^2 + 4\int_s^{\tau}\int_{\RR^n}(\psi \nabla v_+)\cdot(v_+ \nb\psi) \, \rd x \rd t \le 0
\]
Now, for $\sigma \in (0,1)$, take $\psi = \eta_\sigma$, where $\eta_\sigma(x) := \eta(\sigma x)$ and $\eta(x) := \min\{1,(2-|x|)_+\}$. Noticing that $\eta_\sigma \to 1$ and $|\nb \eta_\sigma| \to 0$ locally uniformly in $\RR^n$ as $\sigma \downarrow 0$, we may pass to the limit as $\sigma \downarrow 0$ into the above inequality to deduce, by dominated convergence, that
\[
\int_{\RR^n} v_+^2(\tau) \, \rd x \leq \int_{\RR^n} v_+^2(s) \, \rd x.
\]
Letting $s\downarrow 0$ and using that $v(s)\to 0$ in $L^2(\RR^n)$ as $s \downarrow 0$, it follows that $v_+ = 0$ a.e. in $Q$ (and thus, by regularity, everywhere in $Q$). In particular, we deduce $u_\vep - u_\circ \leq \|g\|_{L^\infty(Q)}t$ in $Q$. The same argument, applied to $\tilde{u} := u_\circ - u_\vep$, $g := f_\vep(u_\circ-\tilde{u}) - \Delta u_\circ$, $w:= \|g\|_{L^\infty(Q)}t$ and $v := \tilde{u}-w$, shows that $u_\circ - u_\vep \leq \|g\|_{L^\infty(Q)}t$ in $Q$ and therefore $|u_\vep - u_\circ| \leq C_\vep t$ in $Q$ for some $C_\vep > 0$. 
This, combined with the interior estimates and a standard boundary regularity argument, shows that $u_\vep$ is of class $\HH_1$ up to $t = 0$ (again, see \cite[Chapter IV, Section 1]{Lieberman1996:book} for the definition of the space $\HH_1$). In particular, this implies that $u_\vep$ can be continuously extended up to $t = 0$, that is, $u_\vep \in C(\overline{Q})$. 

\

\emph{Step 2: we show that $\nb u_\vep \in C(\overline{Q})$.} As in the step above, we consider $\tilde{u} := u_\vep - u_\circ$ and $g := \Delta u_\circ - f_\vep(\tilde{u}+u_\circ)$. Since $u_\circ \in C^{2+\alpha}_c(\RR^n)$, $u_\vep$ is of class $\HH_1$ up to $t=0$ and $f_\vep \in C^{1+\gamma}$, $g$ is locally H\"older continuous up to $t=0$ and thus, by \cite[Theorem 10 and Theorem 16, Chapter 1]{Fried1964:book}, it follows that
\begin{equation}\label{eq:Gaussian}
\tilde{u}(x,t) = \int_0^t\int_{\RR^n} G(x-y,t-s)g(y,s) \,\rd y\, \rd s, \quad \text {where} \quad 
G(x,t) = \frac{1}{(4\pi t)^\frac{n}{2}} e^{-\frac{|x|^2}{4t}}
\end{equation}
is the fundamental solution to the heat equation. Then, for every $i \in \{1,\ldots,n\}$, we have
\[
\begin{aligned}
|\partial_{x_i} \tilde{u}(x,t)| &\leq \int_0^t\int_{\RR^n} |\partial_{x_i}G(x-y,t-s)g(y,s)| \rd y\, \rd s \leq \|g\|_{L^\infty(Q)} \int_0^t\int_{\RR^n} \frac{|y|}{s} G(y,s) \, \rd y\, \rd s \\
&= \|g\|_{L^\infty(Q)} \int_0^t s^{-\frac{1}{2}} \Big(\int_{\RR^n} |x| G(x,1) \, \rd x \Big)\rd s \leq C\|g\|_{L^\infty(Q)} \, t^\frac{1}{2},
\end{aligned}
\]
for some $C > 0$ depending only on $n$. Therefore, as in \emph{Step 1}, $\partial_{x_i} \tilde{u}$ can be continuously extended up to $t=0$ by setting $\partial_{x_i} \tilde{u}|_{t=0} = 0$. Our claim follows since $i \in \{1,\ldots,n\}$ is arbitrary and $\partial_{x_i}u_\vep = \partial_{x_i} \tilde{u} + \partial_{x_i}u_\circ$.  
\end{proof}
\begin{rem}\label{rem:ExpDecay}
By \Cref{rem:InteriorReg+Inner} and \Cref{lem:BdryEstimates}, we obtain decay estimates for $u_\vep$ and its derivatives, under suitable assumptions on $u_\circ$. More precisely, if $u_\circ \in C_c^{2+\alpha}(\RR^n)$, then
\begin{equation}\label{eq:GaussianBounduEps}
u_\vep(x,t) \leq M G(x,t+T) \quad \text{ in } Q,    
\end{equation}
where $G$ is the Gaussian defined in \eqref{eq:Gaussian} and $M,T > 0$ are chosen such that $u_\circ(x) \leq MG(x,T)$ in $\RR^n$. The bound \eqref{eq:GaussianBounduEps} is obtained by comparison since $f_\vep \geq 0$ (that is, $u_\vep$ is sub-caloric) and $u_\vep$ is continuous up to $t=0$.  Note also that, combining \eqref{eq:GaussianBounduEps} with the classical parabolic Schauder estimates and that $f_\vep(0) = 0$, one can easily prove that for every $t > 0$, $|\nb u_\vep(x,t)|$ also decays exponentially fast as $|x| \to \infty$ (this fact will be used later on in the proof of \Cref{thm:OptRegSpace}).
 We conclude the remark by noticing that, since $f_\vep$ is Lipschitz, a comparison principle holds in the class of nonnegative bounded solutions, and therefore the solutions $u_\vep$ that we build in \Cref{prop:LimDelta} are the unique solutions of \eqref{eq:ProbEps} in this class. In particular, if $u_\circ = 0$, then $u_\vep = 0$ for every $\vep > 0$; thus, the assumption $u_\circ$ nontrivial is natural to obtain meaningful results.
\end{rem}
%
%
%
%
%
%
%
\section{Uniform regularity estimates and non-degeneracy}\label{sec:OptRegNonDeg}
In this section, we establish some optimal regularity, optimal growth, and non-degeneracy estimates for weak solutions to the problem \eqref{eq:ProbEps}. 
Our main interest is to obtain bounds which are either uniform in $\vep$, or in which we can track explicitly the dependence on $\vep$ ---taking into account that we will take the limit $\vep\downarrow0 $ in the next section. We first recall how to establish an optimal regularity estimate in space independent of $\vep\in (0,1)$.
For this, we follow \cite[Lemma 2]{Phillips1987:art}.
\begin{lem}[\cite{Phillips1987:art}, Lemma 2]
\label{thm:OptRegSpace}  
Let $\gamma \in [0,1]$ and let $u_\circ \in C_c^{2+\alpha}(\RR^n)$ be nonnegative. 
Then there exists $C_\circ > 0$, depending only on  $n$, $\gamma$, $\|u_\circ\|_{L^\infty(\RR^n)}$, and $\|D^2 u_\circ\|_{L^\infty(\RR^n)}$,
such that for every $\vep \in (0,1)$ and every nonnegative weak solution $u_\vep$ to \eqref{eq:ProbEps} given by \Cref{prop:LimDelta}, we have
\begin{equation}\label{eq:OptRegSpace}
\sup_Q \, u_\vep^{2/\beta} + \sup_Q \, |\nabla (u_\vep^{1/\beta})|^2 \leq C_\circ.
\end{equation}
\end{lem}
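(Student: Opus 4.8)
The plan is to prove the estimate via the standard trick for the porous-medium–type nonlinearity: pass to the new unknown $v_\vep := u_\vep^{1/\beta}$, for which the equation in the positivity set becomes uniformly parabolic with bounded coefficients, and then establish $C^{0,1}$ bounds in space (independent of $\vep$) by a maximum-principle / Bernstein-type argument applied to $|\nabla v_\vep|^2$. Recall $\beta = 2/(2-\gamma) \in (1,2]$, so $u = v^\beta$ with $\beta \geq 1$; a direct computation gives $\partial_t u - \Delta u = \beta v^{\beta-1}\partial_t v - \beta v^{\beta-1}\Delta v - \beta(\beta-1)v^{\beta-2}|\nabla v|^2$, hence from \eqref{eq:ProbEps}, dividing by $\beta v^{\beta-1}$ (valid where $v>0$, i.e. always, since $u_\vep>0$ in $Q$ by \Cref{prop:LimDelta} and \Cref{rem:ExpDecay}),
\[
\partial_t v_\vep - \Delta v_\vep = (\beta-1)\frac{|\nabla v_\vep|^2}{v_\vep} - \frac{f_\vep(v_\vep^\beta)}{\beta v_\vep^{\beta-1}}.
\]
Using the scaling relation \eqref{eq:Scalingfeps} and the explicit form \eqref{eq:Fepsilon} of $f_\vep$, the last term is $-\tfrac{1}{\beta}\big(\vep^{-\beta}h(v^\beta/\vep^\beta)v + \gamma H(v^\beta/\vep^\beta)v^{1-\beta+\gamma-1+\beta}\big)$ — wait, more cleanly: $f_\vep(v^\beta)/(\beta v^{\beta-1})$ is bounded below by $0$ and above by $C/v$ plus a bounded term, since $h,H,h'$ are bounded and $\beta - 2 = -\gamma\beta/2 \cdot(\ldots)$; the key point is that $f_\vep(v^\beta) \le C v^{\beta-2}\cdot v^{?}$... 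I will instead record the two facts I actually need: $0 \le f_\vep \le C_1 \vep^{-2}$ pointwise, and $F_\vep(u) \le C_1 u$ — but for the $v$-equation the right scaling is $0 \le f_\vep(v^\beta)/(\beta v^{\beta-1}) \le C/v$ (this is where $\beta$ is chosen: $f_\vep(u)\sim u^{\gamma-1}$ for $u$ away from $0$, and $(\gamma-1)\beta = -(2-\gamma)\beta/\beta\cdot\ldots = 1-\beta$, so $u^{\gamma-1} = v^{\beta(\gamma-1)} = v^{1-\beta}$ and dividing by $v^{\beta-1}$ gives $v^{2-2\beta}$; hmm, this is $\le C/v$ only for $\beta\le 3/2$). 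I would therefore keep the argument at the level of $u_\vep$ directly rather than rely on too-clean a form, following \cite[Lemma 2]{Phillips1987:art} verbatim.

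Concretely, the steps I would carry out are: (1) Set $w_\vep := u_\vep^{2/\beta}$ and compute its equation; one gets $\partial_t w_\vep - \Delta w_\vep = -a_\vep |\nabla w_\vep|^2 w_\vep^{-1} - b_\vep(w_\vep)$ with $a_\vep \ge 0$ a fixed constant depending on $\beta$ and $0 \le b_\vep \le C$ (here the choice of exponent $2/\beta$ is made precisely so the reaction term $f_\vep$ contributes something \emph{bounded}, using that $f_\vep(u) \le C_\circ' u^{(2/\beta) - 2 + 1} = C_\circ' u^{2/\beta -1}$ on the support of $h_\vep$ — this is exactly the content of the scaling \eqref{eq:Scalingfeps}). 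Together with the $L^\infty$ bound \eqref{eq:LinftyBoundEps} this gives $\sup_Q u_\vep^{2/\beta} \le \|u_\circ\|_{L^\infty}^{2/\beta} =: C_\circ/2$, the first term in \eqref{eq:OptRegSpace}. (2) For the gradient term, differentiate the $w_\vep$-equation and test/apply the maximum principle to $|\nabla w_\vep|^2$ along the lines of the classical Bernstein estimate for semilinear heat equations; the boundary values at $t=0$ are controlled by $\|D^2 u_\circ\|_{L^\infty}$ via \Cref{lem:BdryEstimates} (which is exactly why it was proved), and the spatial decay at $|x|\to\infty$ from \Cref{rem:ExpDecay} lets one localize / apply the maximum principle on all of $Q$. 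The favorable sign of $-a_\vep|\nabla w_\vep|^2 w_\vep^{-1}$ is what makes the Bernstein argument close with a constant independent of $\vep$.

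The main obstacle is step (2): making the Bernstein argument rigorous for the degenerate-at-$\{u=0\}$ equation while keeping all constants $\vep$-independent. Two subtleties arise. First, $w_\vep = u_\vep^{2/\beta}$ is only $C^1$ (not $C^2$) where $u_\vep$ could vanish — but since $u_\vep > 0$ strictly in $Q$ for each fixed $\vep > 0$, $w_\vep$ is classical \emph{in} $Q$, and one just has to ensure the final bound does not blow up as the minimum of $u_\vep$ tends to $0$; the sign of the quadratic term is what saves this. Second, one needs the auxiliary function $g_\vep := |\nabla w_\vep|^2 + \lambda w_\vep$ (for a suitable fixed $\lambda$ absorbing the $b_\vep$ term) to satisfy a differential inequality with no bad zeroth-order contribution, then compare $g_\vep$ to its values on the parabolic boundary of a large cylinder $Q_R^+ := B_R\times(0,R^2)$ and send $R\to\infty$ using the decay from \Cref{rem:ExpDecay}. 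I would follow \cite[Lemma 2]{Phillips1987:art} for the precise algebra, noting only that the hypotheses \eqref{Eq:Propertiesh} on $h$ (boundedness of $h$ and of $h'$ — the latter needed to control $\partial_u f_\vep$ after differentiation) are exactly what is used, and checking that every constant is controlled by $n$, $\gamma$, $\|u_\circ\|_{L^\infty}$, and $\|D^2u_\circ\|_{L^\infty}$ as claimed, yielding \eqref{eq:OptRegSpace}.
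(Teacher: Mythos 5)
There is a genuine gap in your step (2): you are bounding the wrong quantity. With $w_\vep := u_\vep^{2/\beta}$ one has $|\nabla w_\vep|^2 = (2-\gamma)^2\beta^2\, w_\vep\, |\nabla (u_\vep^{1/\beta})|^2$, so the estimate \eqref{eq:OptRegSpace} is equivalent to $|\nabla w_\vep|^2 \leq C\, w_\vep$ (equivalently $|\nabla u_\vep|^2 \leq C u_\vep^{\gamma}$), a bound that \emph{degenerates} at the zero set. A Bernstein/maximum-principle bound on $|\nabla w_\vep|^2$, or on $g_\vep = |\nabla w_\vep|^2 + \lambda w_\vep$, can only deliver $|\nabla w_\vep| \leq C$, i.e.\ a spatial Lipschitz bound for $u_\vep^{2/\beta}$; near $\{u_\vep \approx 0\}$ this reads $|\nabla u_\vep| \leq C u_\vep^{\gamma-1}$, which is strictly weaker than what the lemma claims and is not enough for the later non-degeneracy/optimal-growth arguments. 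Moreover, your plan to differentiate the $w_\vep$-equation runs into the term $|\nabla(u_\vep^{1/\beta})|^2$ sitting in its right-hand side (precisely the quantity not yet controlled), and the sign structure there is not obviously favorable; your appeal to a bound on $h'$ signals this difficulty but does not resolve it. Also, the side computation in your first paragraph is off: where $u \geq \vep^\beta$ one has $f_\vep(v^\beta)/(\beta v^{\beta-1}) = \gamma/(\beta v)$ exactly, for every $\gamma\in(0,1]$, not only for $\beta \leq 3/2$; and your final claim to ``follow Phillips verbatim'' is inconsistent with the outline you actually give, since Phillips' Lemma 2 is not a Bernstein argument on $w_\vep$.

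The argument the paper (and Phillips) uses works at the level of $u_\vep$ itself with the auxiliary function $\psi := |\nabla u_\vep|^2 - 2F_\vep(u_\vep) - M u_\vep$, $M = (\|D^2u_\circ\|_\infty+1)^2$, and proves the pointwise inequality $\psi \leq 0$, i.e.\ $|\nabla u_\vep|^2 \leq 2F_\vep(u_\vep) + M u_\vep \leq 2u_\vep^\gamma + Mu_\vep$, from which \eqref{eq:OptRegSpace} follows after dividing by $u_\vep^\gamma$ and using \eqref{eq:LinftyBoundEps}. The subtraction of $2F_\vep(u_\vep)$ (absent from your $g_\vep$) is exactly what makes the maximum principle close: one computes $\partial_t\psi - \Delta\psi = -2\sum_i|\nabla \partial_{x_i}u_\vep|^2 + 2f_\vep(u_\vep)^2 + Mf_\vep(u_\vep)$, notes $\psi|_{t=0}\leq 0$ because $|\nabla u_\circ|^2 \leq M u_\circ$ for nonnegative $C^2$ data, and at an interior positive maximum uses the \emph{first-order} condition $\partial_e \psi = 0$ in the direction $e = \nabla u_\vep/|\nabla u_\vep|$ to get $\partial_{ee}u_\vep = f_\vep(u_\vep) + M/2$, whence $0 \geq \Delta\psi - \partial_t\psi \geq 2(f_\vep(u_\vep)+M/2)^2 - 2f_\vep(u_\vep)^2 - Mf_\vep(u_\vep) = M^2/2 > 0$, a contradiction; no differentiation of $f_\vep$ (hence no control of $h'$ beyond \eqref{Eq:Propertiesh}) is needed. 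Your ingredients for the boundary behaviour (continuity of $u_\vep,\nabla u_\vep$ up to $t=0$ from \Cref{lem:BdryEstimates}, spatial decay from \Cref{rem:ExpDecay}) are the right ones, but the core of your step (2) as written would not yield \eqref{eq:OptRegSpace}.
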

\begin{proof} 
Since the $L^\infty$ estimate follows from \eqref{eq:LinftyBoundEps}, it suffices to prove the gradient bound.
For this, fix $\vep > 0$, $M := (\|D^2 u_\circ\|_\infty + 1)^2$, set $u := u_\vep$ and define
\[
\psi := |\nb u|^2 - 2F_\vep (u) - Mu.
\]
We will prove that $\psi \leq 0$ in $\overline{Q}$. 
Once this is established, a straightforward computation combined with \eqref{eq:LinftyBoundEps} shows that
\[
|\nb (u^{1/\beta})|^2 = \frac{1}{\beta^2} \frac{|\nb u|^2}{u^\gamma} \leq \frac{1}{\beta^2} \big[ 2H_\vep(u) + M u^{1-\gamma} \big] \leq \frac{1}{\beta^2} \big[ 2 + M \|u_\circ\|_{L^\infty(\RR^n)}^{1-\gamma} \big] \quad \text{ in } Q,
\]
and our statement follows.

It remains to show that $\psi \leq 0$ in $\overline{Q}$. 
This will follow from a maximum principle argument using that, since $u$ is a classical solution to $\partial_t u - \Delta u = - f_\vep(u)$ in $Q$ (see \Cref{rem:InteriorReg+Inner}), the function $\psi$ satisfies
\[
\partial_t\psi - \Delta \psi = - 2\sum_{i=1}^n |\nb u_i|^2 + 2 f_\vep(u)^2 + M f_\vep(u) \quad \text{in } Q, 
\]
where $u_i := \partial_{x_i} u$.
First, a general fact about nonnegative $C^2$ functions (see, for example, \cite[Lemma~1]{Phillips1987:art}) yields that $|\nb u_\circ(x)|^2 \leq M u_\circ(x)$ for $x\in \RR^n$, and thus since both $u$ and $\nabla u$ are continuous up to $t=0$ (see  \Cref{lem:BdryEstimates}), we deduce $\psi|_{t=0} \leq 0$.
Now, assume by contradiction that $\psi > 0$ somewhere in $Q$.
Then, since $\psi$ is continuous and, for each $t>0$, $\psi(\cdot, t)$ decays to zero at infinity by virtue of \Cref{rem:ExpDecay}, it follows that given $T>0$ large enough the function $\psi$ achieves a positive maximum in the strip $\RR^n \times [0, T]$ at some $(x_0,t_0) \in Q$ with $t_0 \in (0, T]$.
In particular, we have $|\nb u(x_0,t_0)| > 0$ according to the definition of $\psi$.
Thus, if we differentiate $\psi$ in the direction $e := \nb u(x_0,t_0)/ |\nb u(x_0,t_0)|$, at the maximum point $(x_0,t_0)$ we get
\[
0 = \partial_e \psi = 2 |\nb u| \partial_{ee}u - 2f_\vep(u)\partial_e u - M\partial_e u = |\nb u| (2 \partial_{ee}u - 2f(u) - M), 
\]
and thus
\[
\partial_{ee}u(x_0,t_0) = f_\vep(u(x_0,t_0)) + \frac{M}{2}. 
\]
Combining this with the equation of $\psi$ and the fact that $(x_0,t_0)$ is a maximum point for $\psi$, we deduce that, at $(x_0,t_0)$, 
\begin{equation}
    \begin{split}
        0 \geq \Delta \psi - \partial_t \psi &= 2\sum_{i=1}^n |\nb u_i|^2 - 2 f_\vep(u)^2 - M f_\vep(u) \geq 2 (\partial_{ee}u)^2 - 2 f_\vep(u)^2 - M f_\vep(u) \\
        &= 2\left(f_\vep(u) + \tfrac{M}{2}\right)^2 - 2 f_\vep(u)^2 - M f_\vep(u) = \tfrac{M^2}{2} + M f_\vep(u) > 0,
    \end{split}
\end{equation}
which is a contradiction.
\end{proof}

Note that the previous estimate is enough to obtain uniform Hölder bounds up to $t=0$, as proved in \cite[Lemma 4]{Phillips1987:art}. 

\begin{cor}[\cite{Phillips1987:art}, Lemma 4]\label{cor:UniformHolder}
Let $\gamma \in [0,1]$ and let $u_\circ \in C_c^{2+\alpha}(\RR^n)$ be nonnegative. 
Then, for every $\vep \in (0,1)$, every nonnegative weak solution $u_\vep$ to \eqref{eq:ProbEps} given by \Cref{prop:LimDelta}, and every compact set $\KK \subset \overline{Q}$, there exists $C > 0$, depending only on $\KK$ and the constant $C_\circ$ given in \Cref{thm:OptRegSpace}, such that
\begin{equation}\label{eq:BdrHolderEps}
\| u_\vep \|_{C^\frac{1}{3n}(\KK)} \leq C.
\end{equation}

In particular, for every $\nu \in (0,\frac{1}{3n})$, the sequence $\{u_\vep\}_{\vep>0}$ converges locally in $C^\nu(\overline{Q})$ to some function $u$ as $\vep \downarrow 0$, along a suitable subsequence.
\end{cor}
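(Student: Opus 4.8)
The plan is to upgrade the $\vep$-uniform pointwise bounds of \Cref{thm:OptRegSpace} to a $\vep$-uniform parabolic H\"older bound, and then conclude by compactness.

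First I would read off uniform spatial Lipschitz regularity from \eqref{eq:OptRegSpace}: since $\|u_\vep\|_{L^\infty(Q)}\le C_\circ^{\beta/2}$ and $\|\nabla(u_\vep^{1/\beta})\|_{L^\infty(Q)}\le C_\circ^{1/2}$, and since $\beta=\tfrac{2}{2-\gamma}\in[1,2]$ while $u_\vep>0$ in $Q$, the chain rule gives $|\nabla u_\vep|=\beta\,u_\vep^{(\beta-1)/\beta}\,|\nabla(u_\vep^{1/\beta})|\le\beta\,C_\circ^{\beta/2}$ in $Q$, a bound that extends to $\overline{Q}$ by the continuity up to $\{t=0\}$ established in \Cref{lem:BdryEstimates}. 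Hence $x\mapsto u_\vep(x,t)$ is Lipschitz with a constant $L=L(C_\circ,\gamma)$ independent of $\vep$ and of $t\ge0$.

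Next I would produce a $\vep$-uniform H\"older modulus in time by an averaging argument that plays the spatial Lipschitz bound against the $L^2$-in-time bound \eqref{eq:EnBound1Eps} on $\partial_t u_\vep$. Fixing $x_0\in\RR^n$ and $0\le t_0<t_1$, with $\tau:=t_1-t_0$, I would write, for $\rho>0$,
\[
u_\vep(x_0,t_1)-u_\vep(x_0,t_0) = \tfrac{1}{|B_\rho|}\int_{B_\rho(x_0)}\!\big(u_\vep(x_0,t_1)-u_\vep(x,t_1)\big)\dx + \tfrac{1}{|B_\rho|}\int_{B_\rho(x_0)}\!\int_{t_0}^{t_1}\!\partial_t u_\vep\dt\dx + \tfrac{1}{|B_\rho|}\int_{B_\rho(x_0)}\!\big(u_\vep(x,t_0)-u_\vep(x_0,t_0)\big)\dx ,
\]
bound the first and third terms by $L\rho$, and bound the middle one, by Cauchy--Schwarz and \eqref{eq:EnBound1Eps}, by $|B_\rho|^{-1/2}\tau^{1/2}\|\partial_t u_\vep\|_{L^2(Q)}\le C\rho^{-n/2}\tau^{1/2}$. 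Thus $|u_\vep(x_0,t_1)-u_\vep(x_0,t_0)|\le C(\rho+\rho^{-n/2}\tau^{1/2})$, and optimizing $\rho=\tau^{1/(n+2)}$ for $\tau\le1$ (and using $\|u_\vep\|_{L^\infty}\le C_\circ^{\beta/2}$ for $\tau\ge1$) yields the modulus $C\tau^{1/(n+2)}\le C\tau^{1/(3n)}$, since $n+2\le3n$ for $n\ge1$; for $t_0=0$ the middle integral equals $\tfrac{1}{|B_\rho|}\int_{B_\rho(x_0)}(u_\vep(x,t_1)-u_\circ(x))\dx$ by \Cref{lem:BdryEstimates}, so the argument is unchanged up to the initial slice. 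Combining the two moduli with the triangle inequality, and comparing the Euclidean and parabolic distances on the bounded set $\KK$, gives $\|u_\vep\|_{C^{1/(3n)}(\KK)}\le C$ with $C$ depending only on $\KK$, on $C_\circ$, and, through \eqref{eq:EnBound1Eps}, on the fixed data $u_\circ$ --- in particular not on $\vep$. The final assertion then follows from Arzel\`a--Ascoli and the compact embedding $C^{1/(3n)}(\KK)\hookrightarrow C^\nu(\KK)$ for $\nu<\tfrac{1}{3n}$: one extracts a convergent subsequence on each set of a compact exhaustion of $\overline{Q}$ and diagonalizes.

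The hard part is the time estimate. Because the reaction term $f_\vep(u_\vep)$ is \emph{not} bounded uniformly in $\vep$ when $\gamma\in(0,1)$ --- it behaves like a negative power of $\vep$ on $\{u_\vep\le\vep^\beta\}$ --- one cannot treat \eqref{eq:ProbEps} as a heat equation with bounded right-hand side. The sign condition $f_\vep\ge0$ only makes $u_\vep$ sub-caloric, which by itself yields a one-sided modulus of continuity in time; the reverse direction must come from the variational, $\vep$-uniform bound \eqref{eq:EnBound1Eps} on $\partial_t u_\vep$, which is also the reason the estimate survives the passage to the limit $\vep\downarrow0$.
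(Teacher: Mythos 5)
Your proof is correct and takes essentially the same route as the paper: both combine the uniform spatial Lipschitz bound coming from \eqref{eq:OptRegSpace} (plus the $L^\infty$ bound) with the $\vep$-uniform $L^2(Q)$ estimate \eqref{eq:EnBound1Eps} on $\partial_t u_\vep$, trading them off over a spatial ball whose radius is tied to the time increment. The only difference is in the bookkeeping: the paper selects a mean-value point $x_\star$ in the ball of radius $r=|x-y|+|t-s|^{1/(3n)}$ and applies Cauchy--Schwarz in time there, whereas you average the increment over the ball and optimize in $\rho$, which even yields the slightly better time exponent $\tfrac{1}{n+2}\geq\tfrac{1}{3n}$; both give \eqref{eq:BdrHolderEps}, with the constant depending (as in the paper's own proof) also on the energy of $u_\circ$ through \eqref{eq:EnBound1Eps}, and in particular not on $\vep$.
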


\begin{proof}
    Let $t> s\geq 0$, $x,y\in \RR^n$, and set $r := |x-y| + |t-s|^{\frac{1}{3n}}$. 
    First, note that there exists $x_\star\in B_r(x)$, depending on $s$ and $t$,
    such that
    \begin{equation}
        \dfrac{1}{|B_r|}\int_{B_r(x)} \left ( \int_s^t |\partial_t u_\vep (z, \tau)|^2 \, \rd \tau \right)  \rd z = \int_s^t |\partial_t u_\vep (x_\star, \tau)|^2 \, \rd \tau.
    \end{equation}
    Then, by the energy estimates \eqref{eq:EnBound1Eps} and using that $r^n \geq |t-s|^\frac{1}{3}$, we obtain that 
    \begin{equation}
        \int_s^t |\partial_t u_\vep (x_\star, \tau)|^2 \, \rd \tau \leq \dfrac{C_\star}{r^n} \leq C_\star |t-s|^{-\frac{1}{3}}
    \end{equation}
    for some constant $C_\star>0$ depending only on $n$, $u_\circ$, and $\gamma$ (and in particular independent of $s$ and $t$).
    Now, from this and the fact that, since by \eqref{eq:OptRegSpace} and \eqref{eq:LinftyBoundEps}, we have $|\nabla u_\vep | \leq \beta C_\circ^{1/2} \|u_\circ\|_\infty^{\gamma/2} =: C_1$ in~$Q$, we get
    \begin{equation}
    \begin{split}
        |u_\vep (x,t) - u_\vep (y,s)| 
        &\leq C_1|x - x_\star| + |u_\vep (x_\star,t) - u_\vep (x_\star,s)| + C_1|y - x_\star| \\
        & \leq 3C_1 r + \left |\int_s^t \partial_t u_\vep (x_\star, \tau) \, \rd \tau \right | \\
        &\leq 3C_1 r + |t-s|^{1/2} \left( \int_s^t |\partial_t u_\vep (x_\star, \tau)|^2 \, \rd \tau \right)^{1/2} \\
        &\leq 3C_1 r + C_\star^{1/2} |t-s|^{1/3} ,
    \end{split}
    \end{equation}
    from which a local $C^\frac{1}{3n}$ estimate follows for $x$ and $y$ close enough (and thus the stated estimate with the constant also depending on $\KK$ is obtained from an $L^\infty$ estimate as usual).
\end{proof}

\begin{rem}
    \label{rem:TransfLinftyRHS}
    A useful transformation in the following results will consist of considering the function $w_\vep:= u_\vep^{2/\beta} = u_\vep^{2 - \gamma}$.
    Indeed, it is not difficult to check that
    \begin{equation}\label{eq:EqwOG}
    \partial_t w_\vep - \Delta w_\vep = - g_\vep \quad \text{ in } Q,
    \end{equation}
    where 
    \begin{equation}\label{eq:DefRHSg}
    g_\vep := (2-\gamma) \left[ h_\vep(u_\vep)u_\vep + \gamma H_\vep(u_\vep) + (1-\gamma)\beta^2 |\nabla (u_\vep^{1/\beta})|^2 \right].
    \end{equation}
    Notice that since $\gamma \in [0,1]$, $g_\vep \geq 0$ and, by the definition of $h_\vep$ and $H_\vep$, we have
    \begin{equation}\label{eq:LinftygOG}
    \|g_\vep\|_{L^\infty(Q)} \leq (2-\gamma) \left[ \max_{[0,1]}h  + \gamma + (1-\gamma)\beta^2 C_\circ \right] =: K_\circ,
    \end{equation}
    where $C_\circ$ is as in \Cref{thm:OptRegSpace}.
    This uniform bound will be useful in the next results in this section. 
    Note also that from this and the uniform $L^\infty$ bound of $u_\vep$ \eqref{eq:LinftyBoundEps}, the classical Schauder parabolic estimates (see \cite[Theorem 4.8]{Lieberman1996:book}) yield uniform Hölder bounds for $w_\vep$ in any compact set contained in $Q$ for every Hölder exponent in $(0,1)$. 
\end{rem}
With the gradient estimate from \Cref{thm:OptRegSpace} at hand, we can now establish further uniform bounds on weak solutions to \eqref{eq:ProbEps}. It is important to stress that, differently from \Cref{thm:OptRegSpace}, the proofs below have a \emph{local} nature, in the sense that the estimates do not depend on the initial data $u_\circ$, but only on the constant $C_\circ > 0$ in the $C^1$ estimate \eqref{eq:OptRegSpace}. In other words, we will consider classical nonnegative solutions $u_\vep$ to 
\begin{equation}\label{eq:ProbEpsloc}
\partial_t u_\vep - \Delta u_\vep = - f_\vep(u_\vep) \quad \text{ in }  Q_1
\end{equation}
satisfying \eqref{eq:OptRegSpace} in $Q_1$ and we will show the aforementioned uniform bounds. We begin with an \emph{optimal growth} estimate: the proof combines the parabolic Harnack's inequality and a comparison argument.
\begin{lem}\label{lem:OptGr} 
Let $\gamma \in [0,1]$ and $\vartheta > 0$. 
Then there exists a constant $C > 0$ such that for every $\vep > 0 $, every nonnegative classical solution $u_\vep$ to~\eqref{eq:ProbEpsloc} satisfying the estimate \eqref{eq:OptRegSpace} in $Q_1$ for some $C_\circ >0$, every $r \in (0,\tfrac{1}{4})$, and every $(x_\circ,t_\circ) \in \{u_\vep \leq \vartheta\vep^\beta\}\cap Q_{1/2}$, we have
\begin{equation}\label{eq:OptGr}
\sup_{Q_r(x_\circ,t_\circ)} u_\vep \leq C(\vep^2 + r^2)^{\frac{1}{2-\gamma}}.
\end{equation}
The constant $C$ depends only on $n$, $\gamma$, $\vartheta$, $\max_{[0,1]}h$, and $C_\circ$.
\end{lem}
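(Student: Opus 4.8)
The plan is to prove the optimal growth estimate \eqref{eq:OptGr} by a contradiction–compactness argument combined with the transformed equation \eqref{eq:EqwOG} for $w_\vep := u_\vep^{2/\beta}$. Working with $w_\vep$ is advantageous because it solves a uniformly parabolic equation with a \emph{bounded, nonnegative} right-hand side $g_\vep$, thanks to \eqref{eq:LinftygOG} in \Cref{rem:TransfLinftyRHS}. Suppose the estimate fails: then there exist sequences $\vep_k > 0$, solutions $u_k := u_{\vep_k}$ satisfying \eqref{eq:OptRegSpace} in $Q_1$, radii $r_k \in (0,\tfrac14)$, and points $(x_k,t_k) \in \{u_k \le \vartheta \vep_k^\beta\}\cap Q_{1/2}$ such that
\[
\sup_{Q_{r_k}(x_k,t_k)} u_k \; \ge \; k\,(\vep_k^2 + r_k^2)^{1/(2-\gamma)}.
\]
The natural rescaling is $v_k(x,t) := u_k(x_k + \rho_k x, t_k + \rho_k^2 t)/\rho_k^\beta$ with $\rho_k^2 := \vep_k^2 + r_k^2$; by the scaling relations \eqref{eq:Scalingfeps} the function $v_k$ solves the same type of equation with parameter $\vep_k/\rho_k \le 1$, and since $r_k \le 1/4$ the rescaled domain exhausts a fixed neighborhood of the origin. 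The gradient bound \eqref{eq:OptRegSpace} is scale-invariant for $u_\vep^{1/\beta}$, so $|\nabla(v_k^{1/\beta})| \le C_\circ^{1/2}$ and $v_k^{1/\beta}$ is uniformly Lipschitz in space; combined with parabolic estimates for the transformed function $v_k^{2/\beta}$ this gives local uniform $C^\alpha$ bounds. The key quantitative facts to extract are: (a) $v_k(0,0) \le \vartheta (\vep_k/\rho_k)^\beta \le \vartheta$ (so $v_k$ is bounded at the origin), and (b) $\sup_{Q_{r_k/\rho_k}} v_k \ge k \to \infty$.

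The heart of the argument is to derive a contradiction from (a), (b), and the Harnack inequality. Here I would distinguish two regimes. If $r_k/\rho_k \to 0$ (i.e.\ $\vep_k$ dominates), then $\vep_k/\rho_k \to 1$, the right-hand side $f_{\vep_k/\rho_k}$ stays nondegenerate, and a standard interior parabolic gradient estimate for $v_k^{1/\beta}$ on $Q_{r_k/\rho_k}(0,0)$ forces $\sup_{Q_{r_k/\rho_k}} v_k \le v_k(0,0) + C r_k/\rho_k \le \vartheta + C$, contradicting (b) for large $k$. If instead $r_k/\rho_k$ stays bounded below, then $\vep_k/\rho_k \le r_k/\rho_k \cdot (\text{const})$ but more importantly the rescaled solutions are defined and bounded in a fixed neighborhood, so up to a subsequence $v_k \to v_\infty$ locally uniformly, with $v_\infty \ge 0$, $v_\infty(0,0) \le \vartheta$; the transformed functions $w_k := v_k^{2/\beta}$ converge to $w_\infty := v_\infty^{2/\beta}$ which is sub-caloric (since $g_{\vep_k} \ge 0$) in a fixed cylinder. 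Applying the parabolic Harnack inequality to the \emph{nonnegative caloric-type} majorant, or more directly estimating $v_k$ via the comparison function built from the heat kernel with bounded forcing, one bounds $\sup_{Q_{1/8}} v_k$ in terms of an average of $v_k$, which in turn is controlled using the Lipschitz bound on $v_k^{1/\beta}$ and the pointwise bound at the origin. This again contradicts (b).

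Concretely, for the comparison step I would fix the reference cylinder $Q_{1/8}(0,0)$ (using $r_k \le 1/4$ to guarantee it sits inside the domain of $v_k$ after rescaling), let $\bar w$ solve the heat equation in $Q_{1/8}^+$ with $\bar w = w_k$ on the parabolic boundary, and use $w_k \le \bar w$ (since $-g_{\vep_k}\le 0$) together with the interior estimate $\sup_{Q_{1/16}} \bar w \le C \fint_{\partial_p Q_{1/8}} \bar w = C\fint w_k$; since $w_k = v_k^{2/\beta}$ and $v_k^{1/\beta}$ is Lipschitz with $v_k^{1/\beta}(0,0) = v_k(0,0)^{1/\beta}\le \vartheta^{1/\beta}$, we get $w_k \le C(\vartheta^{2/\beta} + 1)$ throughout $Q_{1/16}$, hence $v_k \le C'$ there, contradicting (b) once $k > C'$. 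The main obstacle I anticipate is handling the interaction between the two length scales $\vep_k$ and $r_k$ cleanly — in particular making sure the rescaled equation's nonlinearity $f_{\vep_k/\rho_k}$ does not cause trouble (it won't, because $f_{\vep/\rho} \ge 0$ always, so the sub-caloricity used in the comparison is automatic regardless of the regime), and verifying that the Lipschitz bound on $v_k^{1/\beta}$ genuinely transfers the pointwise smallness at $(x_k,t_k)$ to a sup bound on a cylinder of size comparable to $\rho_k$ — which is exactly where the hypothesis $r_k < 1/4$ and the scale-invariance of \eqref{eq:OptRegSpace} are used.
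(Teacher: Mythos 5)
There is a genuine gap, and it sits exactly where you flag your ``main obstacle'': transferring the pointwise smallness at $(x_\circ,t_\circ)$ to a full space--time cylinder. The only uniform information available at this stage of the paper is the \emph{spatial} bound $|\nabla(u_\vep^{1/\beta})|\le C_\circ^{1/2}$ together with $\sup_Q u_\vep^{2/\beta}\le C_\circ$; there is no control on time oscillations (the optimal time regularity, \Cref{lem:OptRegTime}, is proved \emph{after} and uses this lemma). Consequently, in your rescaled picture the bound $v_k(0,0)\le\vartheta$ plus the scale-invariant spatial Lipschitz bound on $v_k^{1/\beta}$ only controls $w_k=v_k^{2/\beta}$ on the single time slice $t=0$; on the lateral part of $\partial_p Q_{1/8}^+$ (times $t\neq 0$) you have no bound on the boundary data, so the comparison estimate $\sup_{Q_{1/16}}\bar w\le C\fint_{\partial_p Q_{1/8}} w_k\le C(\vartheta^{2/\beta}+1)$ is unjustified. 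The global bound does not rescue you either: after the rescaling by $\rho_k$ the uniform bound becomes $w_k\le C_\circ/\rho_k^2\to\infty$, so the claimed local uniform convergence $v_k\to v_\infty$ has no a priori compactness behind it. The same defect appears in your first regime: the ``interior gradient estimate'' you invoke compares values at different times, which the purely spatial bound \eqref{eq:OptRegSpace} cannot do.

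The paper's proof supplies precisely the two ingredients your scheme is missing, and it does so \emph{without rescaling}. First, a Harnack-type estimate for $w=u_\vep^{2/\beta}$ (which solves $\partial_t w-\Delta w=-g_\vep$ with $0\le g_\vep\le K_\circ$, see \Cref{rem:TransfLinftyRHS}; the estimate is \cite[Lemma~5.1]{Weiss1999:art}) propagates the smallness $w(x_\circ,t_\circ)\le\vartheta^{2/\beta}\vep^2$ \emph{backward in time} to the whole paraboloidal region $P_r^\delta$, giving $\sup_{P_\rho^\delta}w\le K_\delta(\vep^2+\rho^2)$ simultaneously for all $\rho\le \tfrac14$. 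Second, the forward-in-time and off-paraboloid part of $Q_r$ is handled by the explicit caloric barrier $\phi(x,t)=a(t+b|x|^2)+\tfrac{ab}{2}\vep^2$, which is matched against the Harnack bound on the paraboloid surface and against the \emph{original-scale} bound $w\le C_\circ$ on $|x|=\tfrac14$; the quadratic-in-$|x|$, linear-in-$t$ structure of $\phi$ is what converts an $O(1)$ bound at the fixed outer scale into the sharp $C(\vep^2+r^2)$ bound at every small scale $r$. If you want to salvage a blow-up argument, you would first need to prove (or import) this backward Harnack propagation and find a substitute for the lost $L^\infty$ anchor at the unit scale -- at which point you have essentially reconstructed the paper's barrier argument.
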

\begin{proof} 
Fix $\vep > 0$, $\vartheta > 0$, and set $u := u_\vep$. Let $(x_\circ,t_\circ) \in \{u \leq \vartheta\vep^\beta\}\cap Q_{1/2}$ as in the statement and consider $w := u^{2/\beta}$. 
As in \Cref{rem:TransfLinftyRHS}, since $u$ satisfies the $C^1$ estimate \eqref{eq:OptRegSpace} for some constant $C_\circ>0$, we have that $\| w \|_{L^\infty(Q_1)} \leq C_\circ $ and $\partial_t w - \Delta w = - g_\vep $ in $Q_1$, where $g_\vep$ is given by \eqref{eq:DefRHSg} and is uniformly bounded in $L^\infty(Q_1)$ by the constant $K_\circ$ given in \eqref{eq:LinftygOG}.
Now, the function $w^{(x_\circ,t_\circ)}(x,t) := w(x+x_\circ,t+t_\circ)$ satisfies the same equation as $w$ with right-hand side $\tilde{g}_\vep(x,t) := g_\vep(x+x_\circ,t+t_\circ)$ which, in turn, satisfies $\|\tilde{g}_\vep\|_{L^\infty(Q_1)} \leq K_\circ$ in light of \eqref{eq:LinftygOG}. Since the next part of the proof just uses the equation of $w^{(x_\circ,t_\circ)}$, the $L^\infty$ bound for $\tilde{g}_\vep$ and that $\tilde{g}_\vep \geq 0$, we may assume $(x_\circ,t_\circ) = (0,0)$ and recover the general case by translation. 

\

\emph{Step 1:} By \cite[Lemma~5.1]{Weiss1999:art} and \eqref{eq:LinftygOG}, we have the following Harnack inequality:
\begin{equation}\label{eq:HarnackIneqParbola}
\sup_{P_r^\delta} w \leq C_\delta ( w(0,0) + K_\circ r^2 ) \leq C_\delta ( \vartheta^{2/\beta}\vep^2 + K_\circ r^2 ) \leq K_\delta(\vep^2 + r^2),
\end{equation}
for every $\delta\in (0,1)$, $r \in (0,\tfrac{1}{4})$, where $C_\delta > 0$ depends only on $n$ and $\delta$ (while $K_\delta > 0$ also depends on $\vartheta$ and $K_\circ$), and
\begin{equation}\label{eq:ParabolaDeltaDef}
P_r^\delta := \{(x,t) \in Q_r^-: t < -\delta|x|^2\}.
\end{equation}
\emph{Step 2:} Thanks to \eqref{eq:HarnackIneqParbola}, we are left to bound $w$ in the set $Q_r \setminus P_r^\delta$. 
This will be done using a comparison argument and choosing $\delta$ appropriately.

Let $a := \tfrac{2}{b} \cdot \max\{K_\delta, 8 C_\circ \}$, $b := \tfrac{1}{2n}$
and consider
\[
\phi(x,t) := a (t + b|x|^2) + \tfrac{ab}{2} \vep^2,
\]
which is a caloric function by the definition of $b$. 
We want to show that $w \leq \phi$ in $Q_{1/4} \setminus P_{1/4}^\delta$.
On the one hand, if we choose $\delta := \tfrac{b}{2} \in (0,1)$, in the set $\partial_p Q_{1/4} \setminus P_{1/4}^\delta$ (where $t\geq -\delta |x|^2$ and $|x|^2 = 1/16$) we get
\begin{equation}
    \phi(x,t) \geq  a \big (-\tfrac{\delta}{16}  + \tfrac{b}{16} \big) = \tfrac{ab}{16}    \quad \text{ in } \partial_p Q_{1/2} \setminus  P_{1/2}^\delta.
\end{equation}
Thanks to the bound $\|w\|_{L^\infty(Q_1)} \leq C_\circ$ and the definition of~$a$, which gives $ C_\circ \leq ab/16$, we obtain $w \leq \phi$ in $\partial_p Q_{1/4} \setminus  P_{1/4}^\delta$. 
On the other hand, by the definition of $\phi$ and the previous choice of $\delta$, we have 
\begin{equation}\label{eq:OptGrowBdBelowSuper}
\phi(x,t)|_{t = -\delta|x|^2} = \tfrac{ab}{2} (\vep^2 + \rho^2) \quad \text{ in } \partial B_\rho,
\end{equation}
for every $\rho > 0$.
Since by \eqref{eq:HarnackIneqParbola} we have 
\begin{equation}
\sup_{x \in \partial B_\rho,\,t=-\delta \rho^2} w(x,t) \leq K_\delta (\vep^2 + \rho^2),
\end{equation}
for every $\rho \in (0,\tfrac{1}{4})$, the definition of $a$ gives $ab/2 \geq K_\delta$ and thus $w \leq \phi$ in $\partial P_{1/4}^\delta \cap \{t > -\delta/4\}$.
Since $w$ is sub-caloric, we deduce $w \leq \phi$ in $Q_{1/4} \setminus P_{1/4}^\delta$ by comparison.

Since $\phi \leq \frac{a(b+1)}{2}(\varepsilon^2 + r^2)$ in $Q_r \setminus P^\delta_r$ for every $r\in (0,\frac{1}{4})$, the bound $w \leq \phi$ combined with \eqref{eq:HarnackIneqParbola} gives
\begin{equation}
    \sup_{Q_r} w \leq C(\vep^2 + r^2), \quad \text{ where } C = \max \{K_\delta, a(b+1)/2\},
\end{equation}
which is exactly \eqref{eq:OptGr} written in terms of $w$, up to a translation.
\end{proof}
Now, we establish an optimal regularity estimate in time. The proof is a sort of generalization of \cite[Lemma 5.2]{Weiss1999:art}: a more careful analysis is needed to obtain bounds which are uniform in $\vep$.
\begin{lem}\label{lem:OptRegTime} 
Let $\gamma \in [0,1]$ and $\UU \subset \subset Q_1$ an open set. 
Then there exists a constant $C > 0$ such that for every $\vep > 0 $, every nonnegative classical solution $u_\vep$ to \eqref{eq:ProbEpsloc} satisfying the  estimate \eqref{eq:OptRegSpace} in $Q_1$ for some $C_\circ>0$, we have
\begin{equation}\label{eq:OptRegTime}
\sup_\UU \, |\partial_t (u_\vep^{2/\beta})| \leq C.
\end{equation}
The constant $C$ depends only on $n$, $\gamma$, $C_\circ$, and $\UU$.
\end{lem}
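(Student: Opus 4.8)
The plan is to use the transformed function $w_\vep := u_\vep^{2/\beta}$, which by \Cref{rem:TransfLinftyRHS} solves the heat equation with a bounded nonnegative right-hand side, $\partial_t w_\vep - \Delta w_\vep = -g_\vep$ in $Q_1$ with $0 \le g_\vep \le K_\circ$, and additionally satisfies the optimal growth bound $\sup_{Q_r(x_\circ,t_\circ)} w_\vep \le C(\vep^2 + r^2)$ at every point $(x_\circ,t_\circ)$ of the (approximate) contact set $\{u_\vep \le \vartheta\vep^\beta\}$ by \Cref{lem:OptGr}. Fix a point $(x_\circ,t_\circ) \in \UU$ and let $d := \dist_{\mathrm{par}}((x_\circ,t_\circ), \partial_p Q_1) > 0$, which is bounded below uniformly since $\UU \subset\subset Q_1$. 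The estimate for $\partial_t w_\vep(x_\circ,t_\circ)$ splits into two regimes according to how close $(x_\circ,t_\circ)$ is, in the parabolic metric, to the set $\{u_\vep \le \vartheta\vep^\beta\}$ — equivalently, according to how large $w_\vep(x_\circ,t_\circ)$ is compared to $\vep^2$.

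First I would handle the regime where $(x_\circ,t_\circ)$ is "far" from the contact set, i.e. where $u_\vep(x_\circ,t_\circ) \ge \vartheta\vep^\beta$ fails to be improvable — more precisely, set $\rho := \rho(x_\circ,t_\circ)$ to be the largest radius $r \le d/2$ such that $Q_r^-(x_\circ,t_\circ)$ contains a point of $\{u_\vep \le \vartheta\vep^\beta\}$; if no such point exists for any $r \le d/2$, put $\rho := d/2$. In the cylinder $Q_{\rho/2}(x_\circ,t_\circ)$ the function $u_\vep$ is bounded below by $\vartheta\vep^\beta$, so $H_\vep(u_\vep)$ and $h_\vep(u_\vep)u_\vep$ are under control and, more importantly, $u_\vep$ stays away from the degeneracy so the equation is uniformly parabolic with smooth right-hand side there; interior parabolic estimates (Schauder, \cite[Theorem 4.8]{Lieberman1996:book}) applied on $Q_{\rho/2}(x_\circ,t_\circ)$ give $|\partial_t w_\vep(x_\circ,t_\circ)| \le C\rho^{-2}\|w_\vep\|_{L^\infty(Q_\rho(x_\circ,t_\circ))}$. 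By the choice of $\rho$ there is a contact point within parabolic distance $\rho$, so \Cref{lem:OptGr} gives $\|w_\vep\|_{L^\infty(Q_\rho(x_\circ,t_\circ))} \le C(\vep^2 + \rho^2) \le C(1 + 1)\rho^2$ once $\vep \le \rho$ (and if $\vep > \rho$ we are in the second regime below); hence $|\partial_t w_\vep(x_\circ,t_\circ)| \le C$. If instead $\rho = d/2$, there is no contact point nearby and we simply use $\|w_\vep\|_{L^\infty} \le C_\circ$ together with interior estimates on the fixed-size cylinder $Q_{d/4}(x_\circ,t_\circ)$, which is harmless since $d$ is bounded below.

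It remains to treat the regime $\vep > \rho$, i.e. $(x_\circ,t_\circ)$ within parabolic distance $\lesssim \vep$ of the contact set. Here I would use the scaling relations: set $v(x,t) := \vep^{-2} w_\vep(x_\circ + \vep x, t_\circ + \vep^2 t)$, which by \eqref{eq:Scalingfeps} (applied at level $r = \vep$, where $F_\vep(r^\beta u) = r^{\gamma\beta}F_{\vep/r}(u) = r^{\gamma\beta}F_1(u)$) solves $\partial_t v - \Delta v = -\bar g$ in a unit cylinder with $\bar g = g_1$-type bounded right-hand side, and by \Cref{lem:OptGr} satisfies $\|v\|_{L^\infty(Q_2)} \le C$ (a universal constant, since in rescaled variables the contact set sits at distance $O(1)$ and the growth bound reads $\sup_{Q_r} v \le C(1 + r^2)$). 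Then $|\partial_t v(0,0)| \le C$ by interior estimates for the heat equation with bounded forcing on $Q_2$, and unscaling gives $|\partial_t w_\vep(x_\circ,t_\circ)| = |\partial_t v(0,0)| \le C$. Combining the two regimes over all $(x_\circ,t_\circ) \in \UU$ yields \eqref{eq:OptRegTime} with $C = C(n,\gamma,C_\circ,\UU)$.

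The main obstacle I expect is the matching between the two regimes: one must verify that the "natural scale" $\rho(x_\circ,t_\circ)$ really is comparable to $\max\{\vep, \dist_{\mathrm{par}}((x_\circ,t_\circ), \{u_\vep \le \vartheta\vep^\beta\})\}$ and that in the far regime the lower bound $u_\vep \ge \vartheta\vep^\beta$ persists on a full cylinder of radius $\gtrsim \rho$ (not just at a single point) — this requires the optimal growth estimate \Cref{lem:OptGr} to control how fast $u_\vep$ can decrease, together with the gradient bound \eqref{eq:OptRegSpace}. A secondary technical point is bookkeeping the $\vep$-dependence in the Schauder constants: one must always rescale to a unit cylinder before invoking \cite[Theorem 4.8]{Lieberman1996:book} so that the constant is genuinely independent of $\vep$.
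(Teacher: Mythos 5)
Your plan founders on the step you use in both regimes: bounding $\partial_t w_\vep$ (or $\partial_t v$) by ``interior parabolic estimates'' for an equation whose right-hand side is only known to be bounded in $L^\infty$ uniformly in $\vep$. For the heat equation, an $L^\infty$ forcing term yields only $\HH_{1+\alpha}$ control (H\"older gradient, H\"older-$\tfrac{1+\alpha}{2}$ in time); it does \emph{not} give a pointwise bound on $\partial_t w$ — for that one needs a uniform H\"older (or Dini) modulus of the right-hand side, exactly as in the elliptic fact that $\Delta w \in L^\infty$ does not imply $D^2 w \in L^\infty$. The H\"older norm of $g_\vep$ in \eqref{eq:DefRHSg} is not controlled uniformly in $\vep$: it contains $|\nabla (u_\vep^{1/\beta})|^2$ (only bounded, by \eqref{eq:OptRegSpace}) and $h_\vep(u_\vep)u_\vep$ (whose oscillation lives at scale $\vep^\beta$). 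So the claimed estimate $|\partial_t w_\vep(x_\circ,t_\circ)|\le C\rho^{-2}\|w_\vep\|_{L^\infty(Q_\rho)}$, and likewise ``$|\partial_t v(0,0)|\le C$ by interior estimates for the heat equation with bounded forcing'', are not justified and are false in general. This is precisely why the paper never estimates $\partial_t w_\vep$ from the $w$-equation: it passes to the equation for $u_\vep$ (or the rescaled $U_j$ with nonlinearity $f_{\rho_j}$), where the right-hand side becomes a smooth function of the solution once one has a \emph{positive lower bound for the solution on a whole cylinder}, and only then applies \cite[Theorems 4.8 and 4.9]{Lieberman1996:book} in two steps (bounded RHS $\Rightarrow$ uniform H\"older norm of $u$, smoothness of $s\mapsto s^{\gamma-1}$ away from $0$ $\Rightarrow$ uniformly H\"older RHS $\Rightarrow$ bounded $\partial_t$).

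The second, related gap is that your choice of scale cannot deliver that lower bound. In your ``far'' regime you only know $u_\vep\ge\vartheta\vep^\beta$ near $(x_\circ,t_\circ)$, which gives $\gamma u_\vep^{\gamma-1}\lesssim \vep^{\beta-2}$ — useless when $\vep\ll\rho$ — and the value $w_\vep(x_\circ,t_\circ)$ at the point may be anywhere between $\sim\vep^2$ and $\sim\rho^2$, so the distance $\rho$ to $\{u_\vep\le\vartheta\vep^\beta\}$ is not the relevant scale. The correct scale is the one determined by the value of the solution, $r^2:=w_\vep(x_\circ,t_\circ)$, and the heart of the paper's proof is the uniform lower bound $W_j\ge c_\circ$ on a fixed backward cylinder after this rescaling (its \eqref{eq:BdBelOptRegTime}), obtained from the Harnack inequality of \cite[Lemma~5.1]{Weiss1999:art}, the spatial estimate \eqref{eq:OptRegSpace} on the time slice, uniform H\"older bounds, and a compactness argument; one must then also distinguish the three regimes of $\rho_j=\vep_j/r_j$ to see that the rescaled nonlinearity $f_{\rho_j}(U_j)$ is uniformly bounded and H\"older. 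Nothing in your proposal supplies this cylinder lower bound (continuity and \Cref{lem:OptGr} alone do not, uniformly in $\vep$), and you yourself flag the matching of regimes as an ``obstacle'' without resolving it — but it is the main point of the lemma, not a technicality. (A minor remark: your $\rho$ should be the \emph{smallest} radius whose backward cylinder meets the contact set, i.e.\ the parabolic distance, not the largest; as written it is ill-defined.)
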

\begin{proof} 
Set $w_\vep := u_\vep^{2/\beta} = u_\vep^{2-\gamma}$ and assume, by contradiction, that there exist an open bounded set $\UU \subset \subset Q_1$ and two sequences $\vep_j \downarrow 0$ and $\{(x_j,t_j)\}_{j\in \NN} \subset \UU$ such that 
\begin{equation}
    \label{eq:ContradOptRegTime}
    |\partial_t w_j(x_j,t_j)| \to \infty, \quad \text{ as } j \uparrow \infty,
\end{equation}
where $w_j := w_{\vep_j}$ (we also set $u_j := u_{\vep_j}$). 
After taking a subsequence, we may assume that $(x_j,t_j) \to (x_\star, t_\star)\subset \overline{\UU}$ and that $w_j(x_j, t_j) \to w_\star \in [0, C_\circ]$.

\

\emph{Step 1: The case $w_\star>0$.} If $w_\star>0$, then, by \Cref{rem:TransfLinftyRHS}, the functions $w_j$ converge uniformly in $Q_R(x_\star, t_\star)$ for some $R\in (0,1)$ and thus, by the uniform convergence, we have
\begin{equation}\label{eq:UnifLower}
    w_j \geq w_\star/2 >0 \quad \text{ in } Q_r(x_\star, t_\star)
\end{equation}
for some $r<R$ and for $j$ large enough.
As a consequence, we have $u_j \geq (w_\star/2)^{\beta/2} >0 $ in $Q_r(x_\star, t_\star)$ for $j$ large enough.
From this we show that, for $j$ large enough, $\partial_t u_j$ is bounded in $\overline{Q_{r/4}(x_\star, t_\star))}$ uniformly in $j$, and this will be a contradiction with \eqref{eq:ContradOptRegTime} since $\partial_t w_j = \frac{2}{\beta} u_j^{(2-\beta)/2}\partial_t u_j$.

To do this, it suffices to apply the classical parabolic Schauder to the equation of $u_j$ in $Q_r(x_\star, t_\star)$ and using that, thanks to the uniform lower bound for $u_j$, the right-hand side of the equation is Hölder continuous, with bounded Hölder norm uniformly in $j$ (for $j$ large enough).
Indeed, since
\begin{equation}
    0 \leq f_{\vep_j} (u_j) = \vep_j^{-\beta} h(u_j \vep_j^{-\beta}) u_j^\gamma + \gamma H(u_j \vep_j^{-\beta}) u_j^{\gamma - 1},
\end{equation}
we have $u_j \vep_j^{-\beta}\geq 1$ for $j$ large enough and thus $f_{\vep_j} (u_j) = \gamma u_j^{\gamma - 1} \in L^\infty(Q_r(x_\star, t_\star))$ uniformly in $j$. Then, by \cite[Theorem 4.8]{Lieberman1996:book}, $u_j \in \HH_{1+\alpha}(Q_{r/2}(x_\star, t_\star))$ uniformly in $j$ and thus, since the function $s \to s^{\gamma - 1}$ is $C^\infty(0,\infty)$, \cite[Theorem 4.9]{Lieberman1996:book} yields $\partial_t u_j \in C(\overline{Q_{r/4}(x_\star, t_\star)})$ uniformly in $j$, as claimed. 

\

\emph{Step 2: The case $w_\star=0$.} Assume now $w_\star = 0$.
In this case the argument is essentially the same as before, but since $w_j(x_j, t_j) \to 0$, we consider blow-up sequences to obtain suitable lower bounds as in~\eqref{eq:UnifLower}. Set $r_j^2 := w_j(x_j,t_j)$ ---which satisfies $r_j \to 0$ as $j \uparrow \infty$--- and consider the blow-up sequence 
\[
W_j(x,t) := \frac{1}{r_j^2} w_j(x_j + r_jx,t_j + r_j^2t),
\]
which is well defined in $Q_1$ for $j$ large enough (and from now on we restrict ourselves to this subsequence of $j$).
Note that each $W_j$ satisfies $\partial_t W_j - \Delta W_j = \tilde{g}_j$ in $Q_1$ with $\tilde{g}_j(x,t) := g_{\vep_j}(x_j + r_jx,t_j + r_j^2t)$, where the function $g_\vep$ is defined in \eqref{eq:DefRHSg}. 
In particular, $\| \tilde{g}_j \|_{L^\infty(Q_1) }\leq K_\circ$, where $K_\circ$ is given by \eqref{eq:LinftygOG} and is independent of $j$.

We first claim that there are $c_\circ,r \in (0,1)$ such that, up to passing to a suitable subsequence, 
\begin{equation}\label{eq:BdBelOptRegTime}
W_j \geq c_\circ > 0 \quad \text{ in }  Q_r^-,
\end{equation}
for every $j$ large enough. Since $W_j(0,0)=1$, it is enough to show that $W_j$ converge uniformly near the origin. Note first that, by the estimate \eqref{eq:OptRegSpace}, the family $\{|\nb ( W_j^{1/2})|\}_{j\in\NN}$ is uniformly bounded in $L^\infty(Q_1)$ and thus
\[
W_j^{1/2}(x,0) =  \frac{1}{r_j} \left[ w_j^{1/2}(x_j + r_jx,t_j) - w_j^{1/2}(x_j,t_j) + w_j^{1/2}(x_j,t_j)   \right] \leq C_\circ( |x| + 1),
\]
for every $j$ and every $x\in B_1$, where $C_\circ > 0$ is as in \eqref{eq:OptRegSpace}. 
Hence, as in \eqref{eq:HarnackIneqParbola}, the Harnack inequality implies that for every $x\in B_{1/2}$ 
\begin{equation}
    \label{eq:OptRegTimeProofHarnack}
    \sup_{P^1_{1/4}(x,0)} W_j \leq C_1 ( W_j(x,0) + K ) \leq C_1 \left[ C_\circ^2 (|x| + 1)^2 + K_\circ \right],
\end{equation}
where $C_1 > 0$ and $K_\circ > 0$ are as in \eqref{eq:HarnackIneqParbola} and \eqref{eq:LinftygOG}, respectively, and $P^1_{1/4}(x,0)= \{(y,t) \in Q_{1/4}^-(x,0) : t < -|x-y|^2\}$. 
Since \eqref{eq:OptRegTimeProofHarnack} holds for all $x\in B_{1/2}$, we deduce that, for every $j$,
\begin{equation}
    \|W_j\|_{L^\infty(Q_{1/4}^-)} \leq C_2
\end{equation}
for some $C_2 > 0$ (depending only on $C_\circ$, $C_1$, and $K_\circ$) and so, by \cite[Theorem 4.8]{Lieberman1996:book} (recall that each $W_j$ satisfies $\partial_t W_j - \Delta W_j = \tilde{g}_j$ in $Q_1$ with $\tilde{g}_j$ bounded independently of $j$), the family $\{W_j\}_j$ is uniformly bounded in $C^\nu (\overline{Q_{1/8}^-})$ for some $\nu \in (0,1)$. 
As a consequence, there exists $W \in C(\overline{Q_{1/8}^-})$ such that $W_j \to W$ uniformly in $\overline{Q_{1/8}^-}$ (up to subsequence) and with $W(0,0)= 1$ by construction. 
From this, claim \eqref{eq:BdBelOptRegTime} follows.

\

Now, recall that  $u_j = u_{\vep_j}$ and consider the sequence 
\[
U_j(x,t) := W_j^{\beta/2}(x,t) = \frac{1}{r_j^\beta} u_j(x_j + r_jx,t_j + r_j^2t).
\]
Thanks to \eqref{eq:BdBelOptRegTime}, $U_j \geq c_\circ^{\beta/2}$ in $Q_r^-$ for every $j$ large enough. In addition, each $U_j$ satisfies
\[
\partial_t U_j - \Delta U_j = - f_{\rho_j}(U_j) \quad \text{ in } Q_1^-,
\]
where $\rho_j := \vep_j/r_j$, recall \eqref{eq:Scalingfeps}. As in the step above, we next show that $\partial_t U_j(0,0)$ is uniformly bounded (using standard parabolic estimates \cite[Theorem 4.8 and Theorem 4.9]{Lieberman1996:book}): again, the key step is to obtain uniform H\"older estimates of the right-hand side of the equation. 
To do so, we use that
\begin{equation}
    0\leq f_{\rho_j}(U_j) = \rho_j^{-\beta} h(U_j \rho_j^{-\beta}) U_j^\gamma + \gamma H(U_j \rho_j^{-\beta}) U_j^{\gamma-1}
\end{equation}
and distinguish three cases:

\

(i) $\rho_j \to 0$ as $j \uparrow \infty$ up to passing to a subsequence: We have $U_j \rho_j^{-\beta} \geq 1$ for $j$ large enough and thus $f_{\rho_j} (U_j)= \gamma U_j^{\gamma - 1}$ in $Q_r^-$. Hence, proceeding as in the first part of the proof, the lower bound of $U_j$ provides a uniform bound in $L^\infty(Q_r^-)$ of the right-hand side.

\

(ii) $\rho_j \to \rho_\star \in (0,\infty)$ as $j \uparrow \infty$ up to passing to a subsequence: to obtain an $L^\infty$ bound we proceed as in the previous case, since $c_\circ^{\beta/2} \leq U_j \leq C_2^{\beta/2} $. 
    Note here in addition that $U_j \rho^{-\beta}$ is uniformly bounded by above and below by positive constants.

\

(iii) $\rho_j \to \infty$ as $j \uparrow \infty$: In this case, the uniform bound for $U_j$ gives that $U_j \rho_j^{-\beta} \to 0$ uniformly in $Q_r^-$ and thus, since $h'(0) > 0$, $f_{\rho_j}(U_j) \sim (1 + \frac{\gamma}{2})\rho_j^{-2\beta} U_j^{1+\gamma}$ in $Q_r^-$ for $j$ large enough. In particular, $f_{\rho_j}(U_j)$ is uniformly bounded in $Q_r^-$.

\

The uniform bound in $L^\infty(Q_r^-)$ of the right-hand side yields Hölder regularity of $U_j$ in $Q_{r/2}^-$ uniform in $j$. 
Consequently, in each of the three cases one sees that $f_{\rho_j} (U_j)$ is of class $\HH_\alpha$ uniformly in $j$ (for $j$ large enough) for some $\alpha >0$ and thus $\partial_t U_j(0,0)$ is uniformly bounded. 
Therefore, since $U_j(0,0) = 1$ by definition, we have
\[
|\partial_t w_j(x_j,t_j)| = |\partial_t (u_j^{2/\beta})(x_j,t_j)|
=\frac{2}{\beta} U_j^{\frac{2-\beta}{\beta}}(0,0) \, |\partial_t U_j(0,0)| = \frac{2}{\beta} |\partial_t U_j(0,0)|\leq C,
\]
for some  $C > 0$ independent of $j$. This contradicts our assumption $|\partial_t w_j(x_j,t_j)| \to \infty$ in~\eqref{eq:ContradOptRegTime} and concludes the proof in the case $w_\star = 0$.
\end{proof}

Finally, we establish a non-degeneracy result in the range $\gamma \in (0,1]$ which, differently from \Cref{lem:OptGr} and  \Cref{lem:OptRegTime}, has a completely local nature (the estimate does depend on $C_\circ$). 
Note that in this case the result cannot hold in general when $\gamma = 0$, even in the elliptic setting, see \cite{art:Weiss2003,KrivenWeiss25:art} ---indeed, in the stationary case one needs the solution to be a minimizer or, at least, to be stable; see \cite{KamburovWang}.

\begin{lem}\label{lem:NonDeg} 
Let $\gamma \in (0,1]$ and $\vartheta > 0$. Then there exists $c > 0$, depending only on $n$, $\gamma$, and $\vartheta$, such that for every $\vep > 0$, every nonnegative classical solution $u_\vep$ to \eqref{eq:ProbEpsloc}, every $r \in (0,\tfrac{1}{4})$ and $(x_\circ,t_\circ) \in \{u_\vep \geq \vartheta\vep^\beta\}\cap Q_{\frac{1}{2}}$, we have
\begin{equation}\label{eq:NonDeg}
\sup_{Q_r^-(x_\circ,t_\circ)} u_\vep \geq c(\vep^2 + r^2)^{\frac{1}{2-\gamma}}.
\end{equation}
\end{lem}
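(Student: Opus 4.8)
The plan is to prove the non-degeneracy estimate \eqref{eq:NonDeg} by a contradiction/compactness argument, running it at two scales according to whether $r$ is comparable to $\vep$ or much larger than $\vep$. Fix $\gamma \in (0,1]$ and $\vartheta > 0$. The heart of the matter is the following: if $(x_\circ,t_\circ) \in \{u_\vep \ge \vartheta\vep^\beta\}$, then by the definition of $f_\vep$ and \eqref{eq:Fepsilon}, whenever $u_\vep \ge \vep^\beta$ we have $H_\vep(u_\vep) = 1$ (since $\int_0^1 h = 1$), so that $f_\vep(u_\vep) = \gamma u_\vep^{\gamma-1}$ on the set $\{u_\vep \ge \vep^\beta\}$; in particular $u_\vep$ solves the genuine singular equation $\partial_t u_\vep - \Delta u_\vep = -\gamma u_\vep^{\gamma-1}$ there. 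The intuition is that this reaction term is \emph{strong} near small values of $u_\vep$ and forces $u_\vep$ to grow at the rate $(\vep^2+r^2)^{1/(2-\gamma)}$ away from its zero set; equivalently, in terms of $w_\vep := u_\vep^{2/\beta}$, which by \Cref{rem:TransfLinftyRHS} satisfies $\partial_t w_\vep - \Delta w_\vep = -g_\vep$ with $g_\vep \ge c_1 > 0$ on $\{u_\vep \ge \vep^\beta\}$ (the $h_\vep(u_\vep)u_\vep$ term vanishes there but $\gamma H_\vep(u_\vep) = \gamma > 0$ remains), $w_\vep$ is a supersolution of a heat equation with strictly negative forcing, which cannot decay too fast.

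First I would set up the rescaling. Given $r \in (0,\tfrac14)$ and a point $(x_\circ,t_\circ) \in \{u_\vep \ge \vartheta \vep^\beta\} \cap Q_{1/2}$, consider
\[
v(x,t) := \frac{u_\vep(x_\circ + \rho x,\, t_\circ + \rho^2 t)}{\rho^\beta}, \qquad \rho := \sqrt{\vep^2 + r^2},
\]
which, by the scaling relation \eqref{eq:Scalingfeps}, solves $\partial_t v - \Delta v = -f_{\vep/\rho}(v)$ in a neighborhood of the origin (note $\vep/\rho \le 1$), satisfies the gradient bound \eqref{eq:OptRegSpace} with the same $C_\circ$, and obeys $v(0,0) \ge \vartheta\, (\vep/\rho)^\beta$ — but crucially $\vep/\rho = \vep/\sqrt{\vep^2+r^2}$ need not be bounded below. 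This is why the argument splits: if $\vep \ge r$ (so $\vep/\rho \ge \tfrac{1}{\sqrt2}$) then $v(0,0) \ge \vartheta 2^{-\beta/2} =: c_2 > 0$ and we need a lower bound for $\sup_{Q_{r/\rho}^-} v = \sup_{Q_{r/\rho}^-} v$; if $\vep \le r$, then $\rho \le \sqrt2\, r$ so $Q_{r/\rho}^- \supseteq Q_{1/2}^-$ (up to constants), and we need a lower bound for $\sup_{Q_{1/2}^-} v$ knowing only that $v$ is a nonnegative solution of the approximating equation that is \emph{not identically small} near $0$ — here I would argue that since $u_\vep(x_\circ,t_\circ) \ge \vartheta\vep^\beta$, i.e. $v(0,0) \ge \vartheta (\vep/\rho)^\beta$, and $v$ solves $\partial_t v - \Delta v = -\gamma v^{\gamma-1}$ on $\{v \ge \vep/\rho\}$, a barrier/ODE comparison forces $v$ up to order one on the unit scale. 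I expect it is cleanest to handle both regimes uniformly by building a subsolution: look for a function of the self-similar form $\Psi(x,t) = (A|x|^2 - B t + D\sigma^2)_+^{1/(2-\gamma)}$ (with $\sigma := \vep/\rho$), tuned so that $\partial_t \Psi - \Delta \Psi + \gamma \Psi^{\gamma-1} \le 0$ on $\{\Psi \ge \sigma\}$ and $\Psi(0,0) \le v(0,0)$; then comparison on the parabolic boundary of a small cylinder gives $\sup v \gtrsim 1$, hence $\sup u_\vep \gtrsim \rho^\beta = (\vep^2+r^2)^{1/(2-\gamma)}$ after undoing the scaling.

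The concrete steps, in order: \textbf{(1)} record that $H_\vep(u_\vep) = 1$ and $f_\vep(u_\vep) = \gamma u_\vep^{\gamma-1}$ on $\{u_\vep \ge \vep^\beta\}$, and derive from \eqref{eq:DefRHSg} that $g_\vep \ge (2-\gamma)\gamma > 0$ there; \textbf{(2)} rescale by $\rho = \sqrt{\vep^2+r^2}$ as above to reduce to a unit-scale statement for $v$ with parameter $\sigma = \vep/\rho \in (0,1)$ and $v(0,0) \ge \vartheta\sigma^\beta$; \textbf{(3)} construct the barrier $\Psi$ — either in the $u$-variable directly, solving the ODE for a radial self-similar subsolution of $\partial_t\Psi - \Delta\Psi = -\gamma\Psi^{\gamma-1}$, or more robustly in the $w = u^{2/\beta}$ variable, where one only needs a subsolution of $\partial_t w - \Delta w = -c_1$ that equals $0$ at the origin when $\sigma = 0$ and is $O(\sigma^2)$ at the origin in general, e.g. $\Psi(x,t) = c_1(-t + \tfrac{1}{2n}|x|^2) + c_1\sigma^2/(2n)$, wait — that is a caloric function, not a strict subsolution; instead take $\Psi(x,t) = -\tfrac{c_1}{2}t + (\text{small quadratic}) + \tfrac{c_1}{4n}\sigma^2$ so that $\partial_t\Psi - \Delta\Psi = -\tfrac{c_1}{2} - \tfrac{c_1}{2n}\cdot(\text{curvature}) \le -c_1/2 < -g_\vep$... this needs care; \textbf{(4)} compare $v$ (resp. $w$) with the barrier on $Q_\mu^-$ for a suitable universal $\mu \in (0,\tfrac14)$, using that $v$ is super-caloric with forcing $\le -c_1$ on the relevant region and that $v(0,0)$ exceeds the barrier's value at the origin, to conclude $v > 0$ forces $\sup_{Q_\mu^-} v \ge c_3 > 0$; \textbf{(5)} undo the scaling.

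\textbf{Main obstacle.} The delicate point is the barrier construction and the comparison, precisely because the favorable sign $g_\vep \ge c_1 > 0$ (equivalently $f_\vep(u_\vep) = \gamma u_\vep^{\gamma-1}$ with the full strength of the singular term) is only available on $\{u_\vep \ge \vep^\beta\}$, i.e. on $\{w_\vep \ge \vep^2\}$ — on the complement the forcing can degenerate. So the subsolution must be designed to live inside the region $\{v \ge \sigma\}$ (in the $u$-scale), which shrinks as $\sigma \to 0$; one must check the barrier touches $v$ from below at the origin \emph{and} stays below the good region's threshold, and that the comparison region $Q_\mu^-$ is chosen small enough (universally) that $v$ stays above $\vep^\beta$-scale wherever we invoke the equation. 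When $\gamma = 1$ this is the classical obstacle-problem non-degeneracy and the barrier is an explicit quadratic; for $\gamma \in (0,1)$ the singular term $v^{\gamma-1}$ in the barrier ODE makes the profile $(\cdots)_+^{1/(2-\gamma)}$ the right ansatz, and verifying the differential inequality for this profile — keeping all constants independent of $\vep$, $r$, and the base point — is the technical crux. A secondary subtlety is that for $\vep \gg r$ the point $(x_\circ,t_\circ)$ only satisfies $u_\vep(x_\circ,t_\circ) \ge \vartheta\vep^\beta$, which after rescaling is $v(0,0) \ge \vartheta$, an order-one quantity, so that regime is actually the easy one; the genuinely worst case is $\vep \sim r$, where $\sigma \sim 1$ but one still must get a \emph{clean} constant $c$ depending only on $n,\gamma,\vartheta$.
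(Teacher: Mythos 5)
Your overall instinct—pass to $w_\vep = u_\vep^{2/\beta}$, exploit that its forcing $g_\vep$ in \eqref{eq:DefRHSg} is bounded below by a positive constant on a suitable superlevel set, and conclude by comparison with a quadratic-type function—is the right one, and it is the spirit of the paper's proof. But as written the proposal has two genuine gaps. First, you only secure the forcing bound on $\{u_\vep \ge \vep^\beta\}$ (where $H_\vep(u_\vep)=1$), while the lemma allows an arbitrary $\vartheta>0$: for $\vartheta<1$ the base point only satisfies $u_\vep(x_\circ,t_\circ)\ge \vartheta\vep^\beta$ and may lie strictly below the level $\vep^\beta$, so your comparison cannot even be anchored at $(x_\circ,t_\circ)$. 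The missing (simple) observation is that, since $H$ is nondecreasing with $H(\vartheta)>0$ (here $h\ge 0$ and $h'(0)>0$ are used), \eqref{eq:DefRHSg} gives $g_\vep \ge \gamma(2-\gamma)H_\vep(u_\vep) \ge \gamma(2-\gamma)H(\vartheta) > 0$ on the \emph{whole} set $\{u_\vep > \vartheta\vep^\beta\}$; this is precisely where the dependence of the constant on $\vartheta$ enters, and it removes the need to distinguish $\{u_\vep\ge\vep^\beta\}$ from $\{u_\vep\ge\vartheta\vep^\beta\}$ at all.

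Second, the step you yourself flag as the crux—the barrier—is never constructed, and the plan of forcing the subsolution to ``live inside'' $\{v\ge\sigma\}$ is the wrong way around: you cannot arrange a comparison cylinder to sit inside that set, which may be thin near the base point. The paper's argument avoids this entirely. With $w=u_\vep^{2/\beta}$ and $c_0:=\gamma(2-\gamma)H(\vartheta)/(2n+1)$, one considers $\phi(x,t):=w(x,t)-w(x_\circ,t_\circ)-c_0\bigl(|x-x_\circ|^2+t_\circ-t\bigr)$ (anchored, to be precise, at points of the open set $\{u_\vep>\vartheta\vep^\beta\}$ approaching $(x_\circ,t_\circ)$, passing to the limit at the end). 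Then $\partial_t\phi-\Delta\phi\le 0$ on $\{u_\vep>\vartheta\vep^\beta\}\cap Q_1$, $\phi$ vanishes at the anchor point, and $\phi<0$ on $\partial\{u_\vep>\vartheta\vep^\beta\}$ because there $w\le\vartheta^{2/\beta}\vep^2\le w$ at the anchor. Applying the maximum principle on $Q_r^-\cap\{u_\vep>\vartheta\vep^\beta\}$, the positive supremum of $\phi$ must occur on $\partial_p Q_r^-$, where the quadratic is at least $c_0r^2$; this yields $\sup_{Q_r^-}w\ \ge\ \vartheta^{2/\beta}\vep^2+c_0 r^2$, i.e.\ \eqref{eq:NonDeg}, with the $\vep^2$ part supplied by the value at the anchor and the $r^2$ part by the quadratic—no rescaling by $\rho=\sqrt{\vep^2+r^2}$, no case split $\vep\gtrless r$, and no profile of the form $(\cdot)_+^{1/(2-\gamma)}$ is needed. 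Until you either carry out your barrier construction (including the confinement issue) or adopt an argument of this intersect-with-the-superlevel-set type, the proof is incomplete at its central step.
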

\begin{proof} 
Fix $\vep > 0$, $\vartheta > 0$ and set $u := u_\vep$. Let $(x_\circ,t_\circ) \in \{u \geq \vartheta\vep^\beta\} \cap Q_{1/2}$ as in the statement and consider $w := u^{2/\beta}$. 
As in \Cref{rem:TransfLinftyRHS}, $w$ satisfies \eqref{eq:EqwOG}, where $g_\vep$ is defined in \eqref{eq:DefRHSg}. 
In particular, by the definition of $g_\vep$ and since $h_\vep\geq 0$, we have
\begin{equation}
    \partial_t w - \Delta w \leq - \gamma(2-\gamma) H_\vep(u) \quad \text{ in } Q_1,
\end{equation}
and, further, since $H_\vep$ is non-decreasing, it follows that
\begin{equation}
    \partial_t w - \Delta w \leq - \gamma(2-\gamma) H(\vartheta) \quad \text{ in } \{u > \vartheta\vep^\beta\} \cap Q_1.
\end{equation}
As we did in \Cref{lem:OptGr}, we may assume $(x_\circ,t_\circ) = (0,0)$ and recover the general case by translation. 

Let $\{(x_k,t_k)\}_{k\in \NN} \subset \{u > \vartheta\vep^\beta\}\cap Q_{1/2}$ be such that $(x_k,t_k) \to (0,0)$, and define
\begin{equation}
    \phi_k(x,t) := w(x,t) - w(x_k,t_k) - c_0 \big( |x - x_k|^2 + t_k - t \big), \qquad \text{ with } \quad c_0 := \frac{\gamma(2-\gamma) H(\vartheta)}{2n+1}.
\end{equation}
Then, by the definition of $\phi_k$ and $c_0$, 
\begin{equation}
    \partial_t \phi_k - \Delta \phi_k = - \gamma(2-\gamma) H(\vartheta) + c_0(2n + 1) \leq 0 \quad \text{ in } \{u > \vartheta\vep^\beta\} \cap Q_1,
\end{equation}
and, furthermore, $\phi_k(x_k,t_k) = 0$.
Consequently, by the maximum principle, for $r\in (0,\frac{1}{4})$ we have
\begin{equation}
    0=  \phi_k(x_k, t_k) \leq \sup_{Q_r^-(x_k,t_k) \cap \{u>\vartheta \vep^\beta\}} \phi_k = \sup_{\partial_p (Q_r^-(x_k,t_k) \cap \{u>\vartheta \vep^\beta\})} \phi_k ,
\end{equation}
where $\partial_p \Omega$ denotes the parabolic boundary of a set $\Omega \subset \RR^{n+1}$. 
Since $\phi_k < 0$ in $\partial\{ u > \vartheta \vep^\beta\} \cap Q_r^-(x_k,t_k)$, we obtain
\begin{equation}
    0 \leq \sup_{\partial_p Q_r^-(x_k,t_k)\cap \{u>\vartheta \vep^\beta\} } \phi_k \leq \sup_{\partial_p Q_r^-(x_k,t_k) } \phi_k.
\end{equation}
Now, since $\phi_k \leq w - w(x_k,t_k) - c_0 r^2$ in $\partial_p Q_r^-(x_k,t_k)$, it follows that
\begin{equation}
    0 \leq \sup_{\partial_p Q_r^-(x_k,t_k)} \phi_k \leq \sup_{\partial_p Q_r^-(x_k,t_k)} w  - w(x_k,t_k) - c_0 r^2\leq \sup_{Q_r^-(x_k,t_k)} w  - w(x_k,t_k) - c_0 r^2,
\end{equation}
and, recalling that $\{(x_k,t_k)\}_{k\in \NN} \subset \{u > \vartheta\vep^\beta\} = \{ w^{\beta/2} > \vartheta\vep^\beta\}$, we obtain
\[
\sup_{Q_r^-(x_k,t_k)} w \geq \vartheta^{2/\beta}\vep^2  + c_0 r^2 \geq c(\vep^2 + r^2),
\]
where $c := \min\{\vartheta^{2/\beta},c_0\}$.
Passing to the limit as $k \uparrow \infty$, we deduce \eqref{eq:NonDeg} written in terms of $w$, up to a translation.
\end{proof}

Note that the previous proof fails for $\gamma = 0$ (as we mentioned) since then $c_0 = 0$.
Note also that the assumption $h'(0)>0$  it is crucial to guarantee $c_0>0$.
%
%
%
%
%
%
%
\section{Convergence to the free boundary problem}\label{sec:convergence}

In this section, we address the study of the problem one obtains in \eqref{eq:ProbEps} when taking $\varepsilon\downarrow0$.
First, we will pass to the limit using our uniform regularity estimates for solutions $u_\vep$ to obtain a limit function $u$ satisfying the expected regularity and energy bounds.
Then, we will show appropriate convergence of the sets $\{u\geq \vep^\beta\}$ and $\{u\leq \vep^\beta\}$ towards $\overline{\{u>0\}}$ and $\overline{\{u=0\}}$ respectively, as well as non-degeneracy and optimal growth estimates.
After this, we will need to obtain a finer control of the free boundary $\partial \{u>0\}$ and study the convergence of $H_\vep(u_\vep)$ towards $\chi_{\{u>0\}}$.
Even more crucial, we show the convergence of $f_\vep(u_\vep)$ towards $\gamma u_+^{\gamma - 1}$. This delicate step will require fine barriers in the set $\{ u \leq \vep^\beta\}$ (see~\Cref{rem:ExpDecay} below).
Finally, after all this analysis is done, we will show that $u$ is a solution of the FB problem, in the sense of \cref{def:WeakSolPIntro}.
In particular, this will allow us to obtain the FB condition.
\subsection{Convergence of the solutions} We begin with the following lemma.
\begin{lem}\label{lem:StrCmp}
Let $\gamma \in (0,1]$, $\alpha \in (0,1)$ and $\nu \in (0,\frac{1}{3n})$. Let $u_\circ \in C_c^{2+\alpha}(\RR^n)$ be nonnegative and let $\{u_\vep\}_{\vep > 0}$ be a family of nonnegative weak solutions to \eqref{eq:ProbEps} as in  \Cref{prop:LimDelta}. Then, there exist a nonnegative function $u \in \UU_\circ \cap C_{\loc}^{1/(3n)}(\overline{Q})$ and a sequence $\vep_j \downarrow 0$ such that $u_{\vep_j} \to u$ locally in $C^\nu(\overline{Q})$ and weakly in $\UU_\circ$ as $j \uparrow \infty$. Furthermore:

\

$\bullet$ \textbf{Energy estimates:} We have
\begin{equation}\label{eq:EnBound1Lim}
\int_Q |\partial_t u|^2  \leq \|u_\circ\|_{H^1(\RR^n)}^2 + 2\|u_\circ^\gamma\|_{L^1(\RR^n)},
\end{equation}
and
\begin{equation}\label{eq:EnBound2Lim}
\int_0^R \int_{\mathbb{R}^n} ( u^2 + |\nabla u|^2 ) \, \rd x \rd t + \max_{t \in [0,R]} \int_{\mathbb{R}^n} u^2(t) \, \rd x \leq 2C(u_\circ,R), 
\end{equation}
where $C(u_\circ,R) > 0$ is as in \Cref{prop:LimDelta}.

\ 

$\bullet$ \textbf{Regularity estimates:} We have
\begin{equation}\label{eq:OptRegSpaceLim}
\sup_Q \, u^{2/\beta} + \sup_Q \, |\nabla (u^{1/\beta})|^2 \leq C_\circ,
\end{equation}
where $C_\circ > 0$ is as in \Cref{thm:OptRegSpace}. Furthermore, for every open bounded set $\UU \subset\subset Q$, there exists $C > 0$ as in  \Cref{lem:OptRegTime} such that
\begin{equation}\label{eq:OptRegTimeLim}
\sup_\UU \, |\partial_t (u^{2/\beta})| \leq C.
\end{equation}
Finally, for every compact set $\KK \subset \overline{Q}$, there exists $C > 0$ as in \Cref{cor:UniformHolder} such that
\begin{equation}\label{eq:BdrHolderLim}
\| u \|_{C^\frac{1}{3n}(\KK)} \leq C.
\end{equation}
\end{lem}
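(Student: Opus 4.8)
\emph{Strategy.} The plan is to obtain $u$ as the limit of a subsequence of $\{u_\vep\}$ and then transfer every uniform bound established in \Cref{sec:OptRegNonDeg} to the limit by compactness and lower semicontinuity.

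\emph{Step 1: compactness and extraction.} By \Cref{cor:UniformHolder}, for every compact $\KK \subset \overline{Q}$ the family $\{u_\vep\}_{\vep\in(0,1)}$ is bounded in $C^{1/(3n)}(\KK)$, hence equibounded and equicontinuous on $\KK$. Exhausting $\overline{Q}$ by an increasing sequence of compact sets and combining Arzelà--Ascoli with a diagonal extraction, we find $\vep_j \downarrow 0$ and a function $u \in C^{1/(3n)}_{\loc}(\overline{Q})$ with $u_{\vep_j} \to u$ locally uniformly in $\overline{Q}$; by interpolation the convergence holds locally in $C^\nu(\overline{Q})$ for every $\nu < \tfrac{1}{3n}$, and $u \geq 0$ since each $u_{\vep_j}\geq 0$. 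On the other hand, \Cref{prop:LimDelta} gives that $\{u_{\vep_j}\}$ is bounded in $L^2((0,R);H^1(\RR^n))$ for each $R>0$ and $\{\partial_t u_{\vep_j}\}$ is bounded in $L^2(Q)$; passing to a further subsequence (not relabelled) we may assume $u_{\vep_j}\rightharpoonup u$ weakly in $L^2((0,R);H^1(\RR^n))$ for every $R$ and $\partial_t u_{\vep_j}\rightharpoonup \partial_t u$ weakly in $L^2(Q)$, the distributional limit of the time derivatives being the time derivative of the pointwise limit $u$. This is precisely the asserted weak convergence in $\UU_\circ$, once we verify $u\in\UU_\circ$.

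\emph{Step 2: the limit lies in $\UU_\circ$ and the energy bounds.} From the weak limits just obtained, $u\in L^2((0,R);H^1(\RR^n))$ for every $R$ and $\partial_t u\in L^2(Q)$, so $u$ admits a representative in $C([0,R];L^2(\RR^n))$ for every $R$. Weak lower semicontinuity of the $L^2$ norm yields \eqref{eq:EnBound1Lim} from \eqref{eq:EnBound1Eps}, and $\int_0^R\!\!\int_{\RR^n} u^2+|\nabla u|^2 \le C(u_\circ,R)$ from the first line of \eqref{eq:EnBound2Eps} after dropping the nonnegative term $F_{\vep_j}(u_{\vep_j})$; moreover, for each $t\in[0,R]$, Fatou's lemma applied to the pointwise convergence $u_{\vep_j}(\cdot,t)\to u(\cdot,t)$ together with the second line of \eqref{eq:EnBound2Eps} gives $\int_{\RR^n}u^2(t)\le C(u_\circ,R)$, and adding the two contributions produces \eqref{eq:EnBound2Lim}. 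Finally, since $u_{\vep_j}|_{t=0}=u_\circ$ and $u_{\vep_j}\to u$ locally uniformly up to $t=0$, the continuous representative of $u$ satisfies $u(\cdot,0)=u_\circ$ pointwise, hence in $L^2(\RR^n)$; therefore $u\in\UU_\circ$.

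\emph{Step 3: regularity estimates, and the main obstacle.} For \eqref{eq:OptRegSpaceLim} we note that $u_{\vep_j}^{2/\beta}\to u^{2/\beta}$ and $u_{\vep_j}^{1/\beta}\to u^{1/\beta}$ locally uniformly in $Q$ (composition with $s\mapsto s^{2/\beta}$ and $s\mapsto s^{1/\beta}$, using the uniform bound \eqref{eq:LinftyBoundEps}); since by \eqref{eq:OptRegSpace} the functions $u_{\vep_j}^{1/\beta}$ are uniformly bounded in $W^{1,\infty}(Q)$ with $\|\nabla(u_{\vep_j}^{1/\beta})\|_{L^\infty(Q)}\le C_\circ^{1/2}$, a locally uniform limit of maps with Lipschitz constant $\le C_\circ^{1/2}$ in $x$ again has Lipschitz constant $\le C_\circ^{1/2}$, so $\|\nabla(u^{1/\beta})\|_{L^\infty(Q)}\le C_\circ^{1/2}$, while the $L^\infty$ bound for $u^{2/\beta}$ passes to the limit directly. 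Likewise, for fixed $\UU\subset\subset Q$, \Cref{lem:OptRegTime} makes $\{u_{\vep_j}^{2/\beta}\}$ uniformly Lipschitz in $t$ on $\UU$ with constant $C$, hence $|\partial_t(u^{2/\beta})|\le C$ a.e.\ in $\UU$, which is \eqref{eq:OptRegTimeLim}; and \eqref{eq:BdrHolderLim} follows from $\|u_{\vep_j}\|_{C^{1/(3n)}(\KK)}\le C$ (\Cref{cor:UniformHolder}) together with lower semicontinuity of the Hölder seminorm under locally uniform convergence. The only genuinely delicate point in all of the above is the bookkeeping of the subsequence: one must extract a single $\vep_j\downarrow0$ realizing simultaneously the local $C^\nu(\overline{Q})$ convergence and the weak convergence in $\UU_\circ$, and check that the weak $L^2$-limit of the time derivatives (resp.\ the weak $L^2((0,R);H^1)$-limit) is consistent with the locally uniform limit $u$, so that all the uniform estimates of \Cref{sec:OptRegNonDeg} descend to the \emph{same} function $u$; the remaining arguments are routine applications of Arzelà--Ascoli, weak compactness in Hilbert spaces, and lower semicontinuity.
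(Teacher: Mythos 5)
Your proposal is correct and follows essentially the same route as the paper: extract one subsequence realizing both the local $C^\nu(\overline{Q})$ convergence (Arzel\`a--Ascoli plus a diagonal argument from \Cref{cor:UniformHolder}) and the weak convergences coming from \Cref{prop:LimDelta}, then pass every uniform estimate of \Cref{sec:OptRegNonDeg} to the limit by (weak or weak-$\star$) lower semicontinuity. The only cosmetic differences are that the paper obtains $u\in\UU_\circ$ and $u|_{t=0}=u_\circ$ via the Aubin--Lions lemma (strong convergence in $C([0,R];L^2)$) where you use the embedding $H^1((0,R);L^2)\hookrightarrow C([0,R];L^2)$ together with uniform convergence up to $t=0$, and that to get the sum in \eqref{eq:OptRegSpaceLim} bounded by $C_\circ$ (rather than $2C_\circ$) one should pass both terms to the limit along a common subsequence (liminf of the sum), exactly as the weak-$\star$ argument in the paper does.
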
 
\begin{proof} Let $\{u_\vep\}_{\vep > 0}$ be a family of nonnegative weak solutions to \eqref{eq:ProbEps} as in the statement and let $R > 0$. By \Cref{prop:LimDelta}, we deduce the existence of $u \in L^2((0,R): H^1(\RR^n))\cap C([0,R]:L^2(\RR^n))$ with $\partial_tu \in L^2(Q)$ and a sequence $\vep_j \downarrow 0$ such that, setting $u_j := u_{\vep_j}$, we have $u_j \rightharpoonup u$ in $L^2((0,R): H^1(\RR^n))$ and $\partial_t u_j \rightharpoonup \partial_t u$ in $L^2(Q)$ as $j \uparrow \infty$. Thus, by the Aubin-Lions lemma (see, for example, \cite{Simon1987:art}), we have $u_j \to u$ in $C([0,R]:L^2(\RR^n))$ as $j \to \infty$ (through a subsequence), and thus a standard diagonal argument shows that $u \in \UU_\circ$. The energy estimates \eqref{eq:EnBound1Lim} and \eqref{eq:EnBound2Lim} then follow by \eqref{eq:EnBound1Eps} and \eqref{eq:EnBound2Eps}, and lower semicontinuity of the $L^2$ norm under weak convergence.

Now, combining the Hölder estimate \eqref{eq:BdrHolderEps} with another diagonal argument, we obtain $u_j \to u$ locally uniformly in $\overline{Q}$, up to passing to another subsequence. Furthermore, from the uniform $C^1$ estimates \eqref{eq:OptRegSpace} and \eqref{eq:OptRegTime}, we deduce $u_j \rightharpoonup^\star u$ in $L^\infty(Q)$, $\nb (u_j^{1/\beta}) \rightharpoonup^\star \nb (u^{1/\beta})$ in $L^\infty(Q)^n$, and $\partial_t (u_j^{2/\beta}) \rightharpoonup^\star \partial_t (u^{2/\beta})$ in $L^\infty(\UU)$ as $j \uparrow \infty$ (up to passing to another subsequence), where $\UU \subset\subset Q$ is a fixed open bounded set: hence \eqref{eq:OptRegSpaceLim} and \eqref{eq:OptRegTimeLim} then follow by lower semicontinuity of the $L^\infty$ norm under weak-$\star$ convergence. Finally, \eqref{eq:BdrHolderLim} follows by \eqref{eq:BdrHolderEps} and uniform convergence.
\end{proof}
\begin{rem}
We anticipate here that the regularity estimates \eqref{eq:OptRegSpaceLim} and \eqref{eq:OptRegTimeLim} are optimal, see \Cref{Subsec:SpecialSolutions}.  
\end{rem}

\

In the following result, we establish the appropriate convergence of the sets $\{u_\vep \geq \vep^\beta \}$ and $\{u_\vep \leq \vep^\beta\}$, as well as the optimal growth and non-degeneracy estimates of the limit function $u$. 
\begin{lem}\label{lem:HausConv} 
Let $\gamma \in (0,1]$ and $\alpha \in (0,1)$. 
Let $u_\circ \in C_c^{2+\alpha}(\RR^n)$ be nonnegative and nontrivial, and let $\vep_j$, $u_{\vep_j}$, and $u$ as in \Cref{lem:StrCmp}. 
Then, for every $\vartheta > 0$.
\begin{equation}\label{eq:HausConv}
\{u_{\vep_j} \geq \vartheta\vep_j^\beta\} \to \overline{\{u > 0\}} \quad \text{ and } \quad \{u_{\vep_j} \leq \vartheta\vep_j^\beta\} \to \{u = 0\}
\end{equation}
locally Hausdorff in $Q$ as $j \to +\infty$. Furthermore, there exist two  constants $C >c > 0$, where $C$ depends only on $n$, $\gamma$, $\max_{[0,1]}h$, and $C_\circ$ (where $C_\circ > 0$ is as in \Cref{thm:OptRegSpace}), and where $c$ depends only on $n$, $\gamma$, and $H$, such that: 

\

$\bullet$ \textbf{Optimal growth:}
    for every $r \in (0,\tfrac{1}{4})$ and every $(x_\circ,t_\circ) \in \{u = 0\}$ such that $Q_{4r}(x_\circ,t_\circ) \subset\subset Q$, we have
    \begin{equation}\label{eq:OptGrLimLem} 
    \sup_{Q_r(x_\circ,t_\circ)} u \leq C r^\beta.
    \end{equation}

\

$\bullet$ \textbf{Non-degeneracy:} for every $r\in (0,\frac{1}{4})$ and for every $(x_\circ,t_\circ) \in \overline{\{u > 0\}}$ such that $Q_{4r}(x_\circ,t_\circ) \subset\subset Q$, we have
    \begin{equation}\label{eq:NonDegLimLem} 
    \sup_{Q_r^-(x_\circ,t_\circ)} u \geq c r^\beta.
    \end{equation}
In particular, $u$ is nontrivial.
\end{lem}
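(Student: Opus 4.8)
The plan is to deduce Lemma \ref{lem:HausConv} from the uniform estimates of Section \ref{sec:OptRegNonDeg}, transferring them to the limit $u$ via the local uniform convergence $u_{\vep_j}\to u$ established in Lemma \ref{lem:StrCmp}. I would split the argument into three parts: first the optimal growth bound \eqref{eq:OptGrLimLem}, then the non-degeneracy bound \eqref{eq:NonDegLimLem}, and finally the Hausdorff convergence of the level sets, which is essentially a soft consequence of the first two together with uniform convergence.

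\textbf{Optimal growth.} Fix $(x_\circ,t_\circ)\in\{u=0\}$ with $Q_{4r}(x_\circ,t_\circ)\subset\subset Q$. Since $u_{\vep_j}(x_\circ,t_\circ)\to u(x_\circ,t_\circ)=0$ and $\vep_j\downarrow 0$, for $j$ large we have $u_{\vep_j}(x_\circ,t_\circ)\le \vep_j^\beta$, i.e. $(x_\circ,t_\circ)\in\{u_{\vep_j}\le\vartheta\vep_j^\beta\}$ with $\vartheta=1$. After rescaling $Q_{4r}(x_\circ,t_\circ)$ to the unit cylinder (which preserves the structure of \eqref{eq:ProbEpsloc} up to replacing $\vep_j$ by $\vep_j/(4r)$, recall \eqref{eq:Scalingfeps}) and using that $u_{\vep_j}$ satisfies the uniform $C^1$ estimate \eqref{eq:OptRegSpace}, Lemma \ref{lem:OptGr} gives $\sup_{Q_r(x_\circ,t_\circ)}u_{\vep_j}\le C(\vep_j^2+r^2)^{1/(2-\gamma)}$ with $C$ independent of $j$. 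Letting $j\uparrow\infty$ and using local uniform convergence, $\sup_{Q_r(x_\circ,t_\circ)}u\le Cr^{\beta}$, since $\beta=2/(2-\gamma)$ and $\vep_j\to 0$.

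\textbf{Non-degeneracy.} Here one must be slightly careful: Lemma \ref{lem:NonDeg} applies at points of $\{u_{\vep_j}\ge\vartheta\vep_j^\beta\}$. Take $(x_\circ,t_\circ)\in\overline{\{u>0\}}$ with $Q_{4r}(x_\circ,t_\circ)\subset\subset Q$. By continuity of $u$ and positivity near $(x_\circ,t_\circ)$ along a sequence, there are points $(y_k,s_k)\to(x_\circ,t_\circ)$ with $u(y_k,s_k)>0$; then $u_{\vep_j}(y_k,s_k)\to u(y_k,s_k)>0$, so for $j$ large $u_{\vep_j}(y_k,s_k)\ge\vep_j^\beta$, placing $(y_k,s_k)$ in $\{u_{\vep_j}\ge\vartheta\vep_j^\beta\}$ with $\vartheta=1$. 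Applying Lemma \ref{lem:NonDeg} (again after rescaling to the unit cylinder) at $(y_k,s_k)$ yields $\sup_{Q_r^-(y_k,s_k)}u_{\vep_j}\ge c(\vep_j^2+r^2)^{1/(2-\gamma)}$; sending $j\uparrow\infty$ gives $\sup_{\overline{Q_r^-(y_k,s_k)}}u\ge c r^\beta$, and then $k\uparrow\infty$ together with $\sup_{\overline{Q_r^-(y_k,s_k)}}u\to\sup_{\overline{Q_r^-(x_\circ,t_\circ)}}u$ (continuity of $u$ and shrinking symmetric difference of cylinders) gives \eqref{eq:NonDegLimLem}. The claim that $u$ is nontrivial follows because $\overline{\{u>0\}}\ne\emptyset$: indeed $u_\circ\not\equiv0$ and the continuity of $u$ up to $t=0$ with $u|_{t=0}=u_\circ$ forces $\{u>0\}\ne\emptyset$.

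\textbf{Hausdorff convergence.} I would prove the two inclusions defining local Hausdorff convergence on a fixed compact $\KK\subset Q$. For the ``limsup'' inclusion: if $(z_j,\tau_j)\in\{u_{\vep_j}\ge\vartheta\vep_j^\beta\}\cap\KK$ converge to $(z,\tau)$, then uniform convergence gives $u(z,\tau)=\lim u_{\vep_j}(z_j,\tau_j)\ge\lim\vartheta\vep_j^\beta=0$, which only says $u(z,\tau)\ge0$; to land in $\overline{\{u>0\}}$ I would instead argue that if $(z,\tau)\notin\overline{\{u>0\}}$ then $u\equiv0$ on a small cylinder around $(z,\tau)$, hence by optimal growth $u=O(r^\beta)$ there, and more importantly $u_{\vep_j}\to0$ uniformly there; but then the non-degeneracy estimate at scale $\sim\vep_j$ (Lemma \ref{lem:NonDeg}) applied at $(z_j,\tau_j)$ would force $\sup_{Q_{\vep_j}^-(z_j,\tau_j)}u_{\vep_j}\ge c\vep_j^\beta$, consistent, so one actually needs the contrapositive via optimal growth: if $(z,\tau)\notin\overline{\{u>0\}}$, pick a cylinder $Q_\rho(z,\tau)$ on which $u\equiv0$; then for $(z_j,\tau_j)\in\{u_{\vep_j}\ge\vep_j^\beta\}$ close to $(z,\tau)$, Lemma \ref{lem:NonDeg} at $(z_j,\tau_j)$ gives $\sup_{Q_{\rho/2}^-}u_{\vep_j}\ge c(\rho/2)^\beta>0$ uniformly in $j$, contradicting $u_{\vep_j}\to u\equiv0$ uniformly on $Q_\rho(z,\tau)$. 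For the ``liminf'' inclusion ($\overline{\{u>0\}}$ is contained in the limit of the level sets): given $(z,\tau)\in\overline{\{u>0\}}\cap\KK$ and $\eta>0$, there is $(z',\tau')\in\{u>0\}$ with $|(z',\tau')-(z,\tau)|<\eta$, and since $u_{\vep_j}(z',\tau')\to u(z',\tau')>0$ we get $u_{\vep_j}(z',\tau')\ge\vartheta\vep_j^\beta$ for $j$ large, so $\dist\big((z,\tau),\{u_{\vep_j}\ge\vartheta\vep_j^\beta\}\big)<\eta$. The analogous statements for $\{u_{\vep_j}\le\vartheta\vep_j^\beta\}\to\{u=0\}$ follow symmetrically, using optimal growth for one inclusion and uniform convergence for the other.

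\textbf{Main obstacle.} The delicate point is the ``limsup'' half of the Hausdorff convergence, i.e. ruling out that $\{u_{\vep_j}\ge\vartheta\vep_j^\beta\}$ develops ``spurious'' accumulation points outside $\overline{\{u>0\}}$; this is exactly where non-degeneracy (Lemma \ref{lem:NonDeg}, valid precisely because $\gamma>0$ and $h'(0)>0$) is indispensable, as it prevents $u_{\vep_j}$ from sitting just above $\vep_j^\beta$ on a region where $u$ vanishes. Everything else is a routine transfer of uniform estimates through local uniform convergence, with only bookkeeping of the rescalings $\vep_j\mapsto\vep_j/(4r)$ and the elementary identity $\beta=2/(2-\gamma)$.
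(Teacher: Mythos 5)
Your overall strategy (transfer the uniform estimates of Lemmas \ref{lem:OptGr} and \ref{lem:NonDeg} to $u$ via local uniform convergence, and use non-degeneracy to rule out spurious accumulation of the level sets) is the paper's strategy, and your non-degeneracy step and the ``limsup'' half of the Hausdorff convergence essentially coincide with the paper's \emph{Step 1} and \emph{Step 2(i)}. But your optimal growth step has a genuine gap, and it is precisely the subtlety this section is built around. You claim that, for $(x_\circ,t_\circ)\in\{u=0\}$, the convergences $u_{\vep_j}(x_\circ,t_\circ)\to 0$ and $\vep_j\downarrow 0$ give $u_{\vep_j}(x_\circ,t_\circ)\le \vep_j^\beta$ for $j$ large. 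This does not follow: both quantities tend to zero, but with a priori unrelated rates (e.g.\ nothing at this stage excludes $u_{\vep_j}(x_\circ,t_\circ)\sim \vep_j^{\beta/2}$), so $(x_\circ,t_\circ)$ need not lie in $\{u_{\vep_j}\le\vartheta\vep_j^\beta\}$ for any fixed $\vartheta$, and Lemma \ref{lem:OptGr} cannot be invoked at that point. This is exactly the issue emphasized in Remark \ref{rem:PhilProb} (the possibility $u_\vep\to 0$ with $u_\vep^{2-\gamma}\gg\vep$ on $\{u=0\}$), and it is why the paper proves the results in the opposite order: first non-degeneracy for $u$, then the Hausdorff convergence \eqref{eq:HausConv} (which uses only non-degeneracy and uniform convergence), and only then optimal growth, by using the Hausdorff convergence to produce points $(x_j,t_j)\in\{u_{\vep_j}\le\vartheta\vep_j^\beta\}$ with $(x_j,t_j)\to(x_\circ,t_\circ)$ and applying Lemma \ref{lem:OptGr} at those nearby points before passing to the limit. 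Your ordering (optimal growth first) cannot be repaired without some such device, so as written the proof of \eqref{eq:OptGrLimLem} fails.

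Two smaller points. In the ``liminf'' inclusion of the Hausdorff convergence you only obtain, for each fixed $(z,\tau)\in\overline{\{u>0\}}$, a threshold $j$ depending on the point; local Hausdorff convergence requires this uniformly on compact sets, which you should get either by a finite covering of the compact set (your pointwise argument does upgrade this way) or, as in the paper, by a contradiction argument using the non-degeneracy of the limit $u$ proved beforehand. Finally, for the pair $\{u_{\vep_j}\le\vartheta\vep_j^\beta\}\to\{u=0\}$ you say one inclusion ``uses optimal growth'': be careful, since at that stage of your argument optimal growth is not available (see above), and in fact this convergence is what the paper uses to \emph{prove} optimal growth; the paper instead deduces it from the convergence of the superlevel sets within a fixed compact set.
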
 
\begin{proof} Fix $\vartheta > 0$, a compact set $\KK \subset\subset Q$, and set  $u_j := u_{\vep_j}$. Define
\begin{equation}
    U_j := \{ u_j \geq \vartheta \varepsilon_j^\beta \} \cap \KK,
    \quad \text{ and } \quad 
    U := \overline{\{ u > 0 \}} \cap \KK,
\end{equation} 
and for a given $\sigma \in (0,1)$ we denote $\sigma$-neighborhoods of these sets by
\begin{equation}
    U_{j,\sigma} := \{(x,t): \text{dist}((x,t),U_j) \leq \sigma \},
    \quad \text{ and } \quad 
    U_\sigma := \{(x,t): \text{dist}((x,t),U) \leq \sigma \}.
\end{equation}

\ 

\emph{Step 1: Non-degeneracy.} Let us first prove the non-degeneracy estimate \eqref{eq:NonDegLimLem}. 
For this, consider a point $(x_\circ,t_\circ) \in \overline{\{u > 0\}}$ such that $Q_1(x_\circ,t_\circ) \subset\subset Q$, and let $\{(x_k, t_k)\}_k \subset \{u > 0\}$ be such that $(x_k,t_k) \to (x_\circ,t_\circ)$ as $k \to +\infty$. 
Fix $k \in \NN$ and notice that, since $u(x_k,t_k) > 0$, by uniform convergence we have $u_j(x_k,t_k) \geq \vartheta \vep_j^\beta$ for every $j$ large enough.
Consequently, by the non-degeneracy estimate of \Cref{lem:NonDeg}, for each $j$ large enough and every $r\in (0,\frac{1}{4})$, 
\begin{equation}
    u_j(y_j,\tau_j) \geq c(\vep_j^2 + r^2)^\frac{1}{2-\gamma}, 
\end{equation}
for some $(y_j,\tau_j) \in \overline{Q_r^-(x_k,t_k)}$, where $c>0$ depends only on $n$, $\gamma$, and $\vartheta$. Up to passing to a subsequence, $(y_j,\tau_j) \to (y_\star,\tau_\star) \in \overline{Q_r^-(x_k,t_k)}$ and thus, by uniform convergence, we have $u(y_\star,\tau_\star) \geq c r^\beta$ which, in turn, implies 
\begin{equation}
    \sup_{Q_r^-(x_k,t_k)} u \geq c r^\beta.
\end{equation}
Passing to the limit as $k \to +\infty$, \eqref{eq:NonDegLimLem} follows for $(x_\circ,t_\circ) \in \overline{\{u > 0\}}$ such that $Q_1(x_\circ,t_\circ) \subset\subset Q$.
To get the result also for points with $t_\circ \in (0,1]$, we proceed in the same way, simply noticing that the proof of \Cref{lem:NonDeg} works also if we require $r$ to be small enough so that $Q_{4r}(x_\circ,t_\circ) \subset\subset Q$.

\

\emph{Step 2: Hausdorff convergence.} With the previous estimate at hand, we can prove the local Hausdorff convergence stated in \eqref{eq:HausConv}.
For this, it suffices to show that the sets $U_j$ converge to $U$ in the Hausdorff sense. 
Indeed, since both $U_j$ and $U$ are subsets of a compact set $\KK$, it follows from the definition of Hausdorff convergence that $U_j\to U$ yields $\{u_j \leq \vartheta \vep_j^\beta\} \cap \KK  \to \{u = 0\} \cap \KK $ in the Hausdorff sense, and since $\KK$ is arbitrary we obtain~\eqref{eq:HausConv}. To establish the Hausdorff convergence of $U_j$ to $U$, we need to prove the next two statements:

\

(i) $U_j \subset U_\sigma$ for $j$ large enough (and $\sigma$ small enough). Assume by contradiction that there is a sequence $(x_j,t_j) \in U_j$, but $(x_j,t_j) \not \in U_\sigma$. 
    Then, by the non-degeneracy estimate~\eqref{eq:NonDeg} in \Cref{lem:NonDeg} applied to each $u_j$ (taking $\sigma$ small enough if needed),  there exists $(y_j,\tau_j) \in \overline{Q_{\sigma/2}(x_j,t_j)}$ such that
    \begin{equation}
        u_j(y_j,\tau_j) := \sup_{Q_{\sigma/2}(x_j,t_j)} u_j \geq c(\vep_j^2 + \sigma^2)^{\frac{1}{2-\gamma}} \geq c \sigma^\beta,
    \end{equation}
    for every $j$ and some $c > 0$ independent of $j$.
    Up to passing to a subsequence, we may assume $(x_j,t_j) \to (x_\star,t_\star) \in \KK$, $(y_j,\tau_j) \to (y_\star,\tau_\star)  \in \overline{Q_{\sigma/2}(x_\star,t_\star)}$, and $u_j(y_j,\tau_j) \to u(y_\star,\tau_\star)$ as $j \uparrow \infty$, by uniform convergence. 
    By construction, $\dist((x_\star, t_\star), U) \geq \sigma$ and, therefore, $(y_\star,\tau_\star)\not \in U = \overline{\{u>0\}}\cap \KK$.
    Thus, we have $u(y_\star,\tau_\star) = 0$, which contradicts the inequality above.

\

(ii) $U \subset U_{j,\sigma}$ for $j$ large enough (and $\sigma$ small enough). Assume by contradiction that there is a sequence $(x_j,t_j) \in U$, but $(x_j,t_j) \not\in U_{j,\sigma}$.
    Then, by the non-degeneracy estimate \eqref{eq:NonDegLimLem} proved above,  we have
    \begin{equation}
        u(y_j,\tau_j) \geq c \sigma^\beta,        
    \end{equation}
    for some $(y_j,\tau_j) \in \overline{Q_{\sigma/2}(x_j,t_j)}$ and for some $c > 0$ independent of $j$ (taking $\sigma$ small enough if needed).
    However, by construction, $u_j < \vartheta \varepsilon_j^\beta$ in $\overline{Q_{\sigma/2}(x_j,t_j)}$, which contradicts the inequality above for $j$ large enough.

\ 

\emph{Step 3: Optimal growth.} 
We are left to show the optimal growth estimate \eqref{eq:OptGrLimLem}. 
To do so, let us fix $(x_\circ,t_\circ) \in \{u = 0\}$ with $t_\circ > 1$. 
By local Hausdorff convergence, it is not difficult to check that there is a sequence $(x_j,t_j) \in \{u_j \leq \vartheta \vep_j^\beta\}$ such that $(x_j,t_j) \to (x_\circ ,t_\circ)$ as $j \to +\infty$. 
Consequently, the optimal growth estimate \eqref{eq:OptGr} in \Cref{lem:OptGr} for $u_j$ yields
\[
\sup_{Q_r(x_j,t_j)} u_j \leq C(\vep_j^2 + r^2)^{\frac{1}{2-\gamma}},
\]
for every $r \in (0, \frac{1}{4})$ and some $C > 0$ independent of $j$. Then, \eqref{eq:OptGrLimLem} follows (for points $(x_\circ,t_\circ) \in \{u = 0\}$ with $t_\circ > 1$) by taking the limit as $j \to \infty$ and uniform convergence.
As before, to get the result also for points with $t_\circ \in (0,1]$, we proceed in the same way, simply noticing that the proof of \Cref{lem:OptGr} works also if we require $r$ to be small enough so that $Q_{4r}(x_\circ,t_\circ) \subset\subset Q$.
\end{proof}

\subsection{Convergence of the nonlinearity}
Our next goal is passing to the limit in the weak formulations of the equation for $u_\vep$, to derive the equations of $u$ (that is, \eqref{eq:FirstOVLimIntro} and \eqref{eq:FirstIVLimIntro}).
For this, we first have to pay special attention to the convergence of $H_{\vep_j}(u_j)$ towards $ \chi_{\{u > 0\}}$, which is the content of the following lemma.
To prove the desired convergence, we first need to show that the set $\partial\{u > 0\}$ has measure zero in $Q$: this crucial property (missing in \cite{Phillips1987:art}) follows from the non-degeneracy and optimal growth estimates of \Cref{lem:HausConv}.
\begin{lem}\label{lem:L1ConvFBMeas0} 
Let $\gamma \in (0,1]$, $\alpha \in (0,1)$. Let $u_\circ \in C_c^{2+\alpha}(\RR^n)$ be nonnegative and nontrivial, and let $u_{\vep_j}$ and $u$ as in \Cref{lem:StrCmp}. 
Then $\{u>0\}$ has positive density in $Q$.
As a consequence,
\begin{equation}\label{eq:FB0Meas}
\mathcal{L}^{n+1}(\partial\{u > 0\}) = 0,
\end{equation}
where $\mathcal{L}^{n+1}$ denotes the $(n+1)$-dimensional Lebesgue measure.
Furthermore, 
\begin{equation}\label{eq:L1FvepCh}
H_{\vep_j}(u_j) \to \chi_{\{u > 0\}} \quad \text{in } L^1_{\loc}(Q),
\end{equation}
as $j \uparrow \infty$.
\end{lem}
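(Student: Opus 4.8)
The plan is to prove the three assertions in order, each building on the previous.

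\textbf{Positive density of $\{u>0\}$.} First I would show that there exists $c_1 > 0$ such that for every $(x_\circ,t_\circ) \in \overline{\{u>0\}}$ and every sufficiently small $r>0$ with $Q_{4r}(x_\circ,t_\circ) \subset\subset Q$,
\[
\mathcal{L}^{n+1}\big(\{u>0\}\cap Q_r(x_\circ,t_\circ)\big) \geq c_1 \, \mathcal{L}^{n+1}(Q_r).
\]
The idea is classical: by the non-degeneracy estimate \eqref{eq:NonDegLimLem} there is a point $(y,\tau) \in \overline{Q_r^-(x_\circ,t_\circ)}$ with $u(y,\tau) \geq c r^\beta$. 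Combining this with the optimal growth estimate \eqref{eq:OptGrLimLem} (applied at any zero of $u$) and the uniform spatial Lipschitz bound on $u^{1/\beta}$ coming from \eqref{eq:OptRegSpaceLim}, together with the time-regularity \eqref{eq:OptRegTimeLim} for $u^{2/\beta}$, one deduces that $u$ stays comparable to $r^\beta$ on a full parabolic subcylinder of controlled size around $(y,\tau)$: indeed if $u^{2/\beta}(y,\tau) \geq c^{2/\beta} r^2$ then, since $|\partial_t(u^{2/\beta})|$ and $|\nabla(u^{2/\beta})|$ are bounded by constants on compact sets, $u^{2/\beta}$ remains positive (in fact $\gtrsim r^2$) on a cylinder $Q_{\kappa r}(y,\tau)$ for a dimensional constant $\kappa>0$. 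That cylinder is contained in $Q_{2r}(x_\circ,t_\circ)$, giving the density bound (after adjusting radii by a harmless factor).

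\textbf{The free boundary has measure zero.} From positive density of $\{u>0\}$ at every point of its closure, I would conclude that $\{u>0\}$ is \emph{locally porous} near its topological boundary — equivalently, the complement $\{u=0\}$ also enjoys a density estimate along $\partial\{u>0\}$ by the optimal growth estimate. More directly: both $\{u>0\}$ and (by optimal growth together with the same Lipschitz/time-derivative argument applied to a zero of $u$) its complement fail to have full density at every point of $\partial\{u>0\}$, so every point of $\partial\{u>0\}$ is a non-Lebesgue point of $\chi_{\{u>0\}}$. By the Lebesgue density theorem $\mathcal{L}^{n+1}(\partial\{u>0\})=0$. Alternatively one invokes the standard fact that a porous set in $\RR^{n+1}$ has measure zero (indeed Hausdorff dimension strictly less than $n+1$); I would phrase it via porosity since the density estimate above gives porosity of $\partial\{u>0\}$ immediately.

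\textbf{$L^1_{\mathrm{loc}}$ convergence of $H_{\vep_j}(u_j)$.} Fix a compact $\KK\subset Q$ and write $\chi_j := H_{\vep_j}(u_j)$. Since $0\le \chi_j \le 1$, it suffices to show convergence in measure, or simply pointwise a.e.\ convergence together with dominated convergence. I would split $\KK$ into the open set $\{u>0\}$, the interior of $\{u=0\}$, and $\partial\{u>0\}$ (null by the previous step). On any compact subset of $\{u>0\}$, uniform convergence $u_j \to u$ (from \Cref{lem:StrCmp}) forces $u_j \geq \vep_j^\beta$ eventually, hence $\chi_j \equiv 1$ there for $j$ large, since $H_{\vep}(s)=1$ for $s\ge\vep^\beta$ (as $h$ is supported in $[0,1]$ with integral $1$, so $H\equiv 1$ on $[1,\infty)$). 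On a compact subset $\KK'$ of the interior of $\{u=0\}$: by local Hausdorff convergence $\{u_{\vep_j}\le\vartheta\vep_j^\beta\}\to\{u=0\}$ with, say, $\vartheta=1$, we get $u_j \le \vep_j^\beta$ on $\KK'$ for $j$ large, whence $\chi_j \equiv 0$ there. Thus $\chi_j \to \chi_{\{u>0\}}$ pointwise on $\KK\setminus\partial\{u>0\}$, i.e.\ a.e.\ on $\KK$, and $|\chi_j|\le 1 \in L^1(\KK)$; dominated convergence yields \eqref{eq:L1FvepCh}.

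\textbf{Main obstacle.} The only genuinely delicate point is the density estimate underpinning everything: one must convert the \emph{supremum} non-degeneracy bound \eqref{eq:NonDegLimLem} into a \emph{measure} lower bound. This requires using the quantitative regularity of $u$ (both the uniform spatial gradient bound on $u^{1/\beta}$ and the uniform bound on $\partial_t(u^{2/\beta})$ on compacts) in a careful geometric way to propagate positivity of $u$ from a single near-maximizing point to a full parabolic subcylinder, with all radii tracked by dimensional constants. Once this is in place, the porosity of the free boundary and the dominated-convergence argument for $H_{\vep_j}(u_j)$ are routine. I should also be slightly careful that the Hausdorff convergence of the level sets $\{u_{\vep_j}\le\vartheta\vep_j^\beta\}$ is used with a fixed $\vartheta$ and that $\{u=0\}$ is used as a closed set, so the ``interior'' part of the splitting is handled by an open neighbourhood argument; but these are bookkeeping matters.
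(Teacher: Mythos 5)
Your proposal follows the same three-step route as the paper (interior density estimate, then nullity of the free boundary, then a.e.\ convergence plus dominated convergence), but two of the concrete steps fail as written. In the density estimate, the mechanism you actually spell out --- ``$|\nabla(u^{2/\beta})|$ and $|\partial_t(u^{2/\beta})|$ are bounded by constants on compact sets, hence $u^{2/\beta}\gtrsim r^2$ on $Q_{\kappa r}(y,\tau)$ with $\kappa$ dimensional'' --- does not work: a constant bound on $|\nabla(u^{2/\beta})|$ only controls the spatial oscillation over a distance $\kappa r$ by $C\kappa r$, which dominates the lower bound $c^2 r^2$ for small $r$, so it would force $\kappa\lesssim r$ and the density would degenerate. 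You need the extra factor $r$ in the spatial gradient bound: either, as in the paper, use the optimal growth \eqref{eq:OptGrLimLem} at the free boundary point to get $u^{1/\beta}\le Cr$ in $Q_{2r}(x_\circ,t_\circ)$, hence $|\nabla(u^{2/\beta})| = 2u^{1/\beta}|\nabla(u^{1/\beta})|\le Cr$ there; or propagate the lower bound at the level of $v:=u^{1/\beta}$ (from $v(y,\tau)\ge cr$ and $|\nabla v|\le C_\circ^{1/2}$ one gets $v\ge cr/2$ on $B_{\kappa r}(y)$ at time $\tau$), and only then use $|\partial_t(v^2)|\le C$ to move in time by $(\kappa r)^2$. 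Also, your parenthetical claim that $\{u=0\}$ has positive density at free boundary points ``by optimal growth'' is false in general (e.g.\ $u=c|x_1|^\beta$, or the colliding travelling waves of Remark \ref{rem:CollTW}, where $\{u=0\}$ is lower dimensional); fortunately it is not needed, since porosity of $\partial\{u>0\}$, i.e.\ the fact that the open set $\{u>0\}$ occupies a fraction $c_0$ of every small cylinder centered at a boundary point, already rules out density-one points of $\partial\{u>0\}$ --- just make sure the differentiation/covering argument is run with parabolic cylinders (this is precisely the covering procedure of the references quoted in the paper).

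The second failure is in the convergence on $\mathrm{int}(\{u=0\})$: with your choice $\vartheta=1$, the level-set convergence only yields $u_j\le \vep_j^\beta$ on $\KK'$, whence $H_{\vep_j}(u_j)=H(u_j/\vep_j^\beta)\le H(1)=1$, which is vacuous; and ``$\chi_j\equiv 0$'' is simply false there, since $u_j>0$ everywhere and hence $H_{\vep_j}(u_j)>0$. The repair is to keep $\vartheta$ small and free, as the paper does: for each fixed $\vartheta\in(0,1)$, the Hausdorff convergence \eqref{eq:HausConv} of $\{u_j\ge\vartheta\vep_j^\beta\}$ to $\overline{\{u>0\}}$, together with $\dist(\KK',\overline{\{u>0\}})>0$, gives $u_j<\vartheta\vep_j^\beta$ on $\KK'$ for all large $j$, hence $\sup_{\KK'}H_{\vep_j}(u_j)\le H(\vartheta)$; then let $\vartheta\downarrow 0$ and use that $H$ is continuous with $H(0)=0$. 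With these two corrections your argument coincides with the paper's proof; the part on $\{u>0\}$ (where $u_j\ge\vep_j^\beta$ eventually gives $H_{\vep_j}(u_j)=1$) and the dominated convergence conclusion are fine.
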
 
\begin{proof} 
We first show \eqref{eq:FB0Meas}, proceeding in the spirit of \cite[Theorem 1.3]{AudritoSanz2022:art} and \cite[Theorem 5.1]{Weiss1999:art}. Note that we require $u_\circ$ nontrivial to have $\{u>0\} \not=\varnothing$. 

\

\emph{Step 1: Density estimate.} First, we prove that, for every open bounded set $U \subset\subset Q$, there exists $c_0 > 0$ such that for every $(x,t) \in \partial\{u>0\}\cap U$ and every $r > 0$ small enough, we have 
\begin{equation}\label{eq:Density}
	\dfrac{\mathcal{L}^{n+1}(\{u>0\}\cap Q_r(x,t))}{\mathcal{L}^{n+1}(Q_r(x,t))} \geq c_0.
\end{equation}
By the non-degeneracy estimate \eqref{eq:NonDegLimLem}, there exists $(y,\tau)\in \{u>0\}\cap \overline{Q_{r/2}(x,t)}$ such that
\begin{equation}\label{eq:NonDegFinal}
u^{1/\beta}(y,\tau) \geq c r,
\end{equation}
for some $c > 0$ independent of~$r$ and $(x,t)$. 
Now, we claim that there exists $\eta \in (0,\tfrac{1}{2})$, independent of~$r$ and $(x,t)$, such that
\[
Q_{\eta r}(y,\tau) \subset \{u>0\} \cap  Q_r(x,t).
\]
Indeed, setting $w := u^{2/\beta}$ and using the optimal growth estimate \eqref{eq:OptGrLimLem} and the $C^1$ bounds \eqref{eq:OptRegSpaceLim} and \eqref{eq:OptRegTimeLim}, we easily see that
\begin{equation}
    |\nabla w| = |\nabla (u^{1/\beta}\cdot u^{1/\beta}) | = 2 u^{1/\beta} |\nabla (u^{1/\beta}) | \leq C r 
    \quad \text{ and } \quad 
    |\partial_t w | \leq C
    \quad \text{ in } Q_r(x,t)
\end{equation}
for some constant $C>0$ independent of $r$.
Thus, for every $(z,\theta) \in Q_{\eta r}(y,\tau)$, setting $\kappa(s) := s(y,\tau) + (1-s)(z,\theta)$ with $s \in [0,1]$, we easily see that
\[
\begin{aligned}
w(y,\tau) - w(z,\theta) &= \int_0^1 (\nabla w (\kappa(s)), \partial_t w (\kappa(s)) \cdot (y-z,\tau-\theta) \, \rd s \\
&\leq \sup_{s \in [0,1]} |\nabla w (\kappa(s))| \, |y-z| + \sup_{s \in [0,1]} |\partial_t w (\gamma(s))| \, |\tau-\theta| \leq C r \cdot \eta r + C \, (\eta r )^2  \leq 2C \eta r^2.
\end{aligned}
\]
Combining the bound above with \eqref{eq:NonDegFinal}, which rewrites as $w(y,\tau) \geq c^2 r^2$ we deduce
\[
w(z,\theta) \geq \big(c^2 - 2C\eta \big) r^2 > 0,
\]
if $\eta > 0$ is small enough. 
Our claim is proved and \eqref{eq:Density} immediately follows. 

\

\emph{Step 2: Measure of the free boundary.}
The  proof of \eqref{eq:FB0Meas} relies on a contradiction argument which combines the density estimate \eqref{eq:Density} with a fine covering procedure: it works exactly as in \cite[Theorem 1.3]{AudritoSanz2022:art} and \cite[Theorem 5.1]{Weiss1999:art}, and we skip it.

\

\emph{Step 3: Proof of \eqref{eq:L1FvepCh}.} We will show that
$H_{\vep_j}(u_j) \to \chi_{\{u > 0\}}$ a.e. in $Q$ as $j \uparrow\infty$, and thus the convergence in $L^1_\loc(Q)$  follows from the dominated convergence theorem. 
Furthermore, using that $\mathcal{L}^{n+1}(\partial\{u > 0\}) = 0$, it suffices to show that $H_{\vep_j}(u_j) \to 1$ a.e. in $\{u>0\}$ and that $H_{\vep_j}(u_j) \to 0$ a.e. in $\text{int}(\{u = 0\})$.

To prove that $H_{\vep_j}(u_j) \to 1$ a.e. in $\{u>0\}$, take $(x,t) \in \{u > 0\}$. 
Then, since $u_j \to u$ locally uniformly, we have $u_j(x,t)\geq u(x,t)/2$ for $j$ large enough and thus, since $H$ is nondecreasing, 
\begin{equation}
    H_{\vep_j}(u_j(x,t)) \geq H (u(x,t)/(2\vep_j^\beta)) = 1
\end{equation}
for every $j$ large enough. 
Thus, $H_{\vep_j}(u_j) \to 1$ pointwise in $\{u > 0\}$ as $j \uparrow\infty$.

To show that $H_{\vep_j}(u_j) \to 0$ in $\text{int}(\{u = 0\})$, assume by contradiction that there exists $\sigma > 0$ and $(x,t) \in \{u = 0\}$ with $\dist((x,t),\{u > 0\}) \geq \sigma$ such that $H_{\vep_j}(u_j(x,t)) \geq \sigma$ for some (not-relabeled) subsequence. 
Since $H$ is continuous with $H(0) = 0$, there exists $\vartheta \in (0,1)$ such that $H(\vartheta) < \sigma$ while, by the Hausdorff convergence \eqref{eq:HausConv}, it follows that $(x,t) \in \{u_j \leq \vartheta\vep_j^\beta\}$ for every $j$ large enough. Thus, by monotonicity of $H$, we obtain   
\[
\sigma \leq H_{\vep_j}(u_j(x,t)) = H(u_j(x,t)/\vep_j^\beta) \leq H(\vartheta) < \sigma,
\]
taking $j$ large enough, a contradiction. 
\end{proof}

Having the previous result at hand, we are getting closer to derive the limit equations. However, a crucial result is still needed, that is, the fact that $f_\vep (u_\vep) \to \gamma u_+^{\gamma - 1}$ a.e. in $Q$ as $\vep \downarrow 0$, along a suitable subsequence.

\begin{rem}\label{rem:PhilProb}
 The a.e. convergence of $f_\vep (u_\vep)$ towards $\gamma u_+^{\gamma - 1}$ is implicitly stated in \cite{Phillips1987:art} (see the proof of Theorem 1, pp. 259), without a detailed proof. We notice that this is not obvious at all, even in the framework in which Phillips works: in that case, the nonlinearity is given by
\[
f_\vep(u_\vep) = \gamma \, \frac{u_\vep}{\vep + u_\vep^{2-\gamma}},
\]
up to the multiplicative constant $\gamma$ (see \cite[Formula (0.3)]{Phillips1987:art}). Then, by uniform convergence, $f_\vep(u_\vep) \to \gamma u^{\gamma-1}$ in $\{u > 0\}$ (see \cite[Proof of Theorem 1]{Phillips1987:art} and/or \emph{Step 1} of  \Cref{lem:ConvergengeNonlinearityAE} below), but it is not clear that $f_\vep(u_\vep) \to 0$ in $\{u = 0\}$ (or in $\text{int}(\{u = 0\})$). Actually, without a stronger control over $u_\vep$, $f_\vep(u_\vep)$ may blow-up at points where $u=0$: for example, if $(x,t) \in \{u=0\}$ and $u_\vep(x,t) \to 0$ with $u_\vep^{2-\gamma}(x,t) \gg \vep$, then $f_\vep(u_\vep(x,t)) \sim \gamma \, u_\vep^{\gamma-1}(x,t) \to \infty$ as $\vep \downarrow 0$. 
\end{rem}
In the spirit of the remark above, we study the pointwise limit of the family $\{f_\vep(u_\vep)\}_{\vep > 0}$, along a suitable subsequence. We begin with two technical results (\Cref{Lemma:ODELowerBoundPowers} and \Cref{Lemma:ODEGrowth}) that we will use in the proof of \Cref{lem:ConvergengeNonlinearityAE}.

\begin{lem}\label{Lemma:ODELowerBoundPowers}
    Let $n \geq 1$, $\lambda,R > 0$ and $\alpha > 1$. Let $\phi:[0,R] \to \RR$ be a nonnegative solution to
    \begin{equation}
        \begin{cases}
            r^{1-n} (r^{n-1} \phi')' = \lambda \phi^\alpha & \text{ in } (0,R), \\
            \phi(0) = 1, \\
            \phi'(0) = 0.
        \end{cases}
    \end{equation}
    Then, $\phi'>0$ in $(0,R)$ and for every $k =1,2,\ldots$, there exists $c_{n,k}>0$, depending only on $n$ and $k$, such that
    \begin{equation}
    \label{Eq:ODELowerBoundPowers}
        \phi(r) \geq c_{n,k} \lambda^k r^{2k} \quad \text{ for every }r\in (0,R).
    \end{equation}
\end{lem}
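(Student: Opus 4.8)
The plan is to prove the bound \eqref{Eq:ODELowerBoundPowers} by induction on $k$, bootstrapping the fact that $\phi$ is increasing and convex (in the radial sense) from the differential equation. First I would establish the basic qualitative properties: since $\phi(0)=1>0$ and $\phi$ is continuous, $\phi>0$ near $0$, and on this interval $(r^{n-1}\phi')' = \lambda r^{n-1}\phi^\alpha > 0$, so $r^{n-1}\phi'$ is strictly increasing; as it vanishes at $r=0$, we get $\phi'>0$ there, hence $\phi\geq 1$ and the positivity propagates to all of $(0,R)$. Thus $\phi\geq 1$ and $\phi'>0$ throughout $(0,R)$.

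Next I would prove the base case $k=1$. From $(r^{n-1}\phi')' = \lambda r^{n-1}\phi^\alpha \geq \lambda r^{n-1}$ (using $\phi\geq 1$ and $\alpha>1$, so $\phi^\alpha\geq\phi\geq1$), integrating once gives $r^{n-1}\phi'(r) \geq \lambda r^n/n$, i.e. $\phi'(r)\geq \lambda r/n$; integrating again yields $\phi(r)\geq 1 + \lambda r^2/(2n) \geq \lambda r^2/(2n)$, so $c_{n,1}=1/(2n)$ works. For the inductive step, suppose $\phi(r)\geq c_{n,k}\lambda^k r^{2k}$ on $(0,R)$. Since $\alpha>1$ we have $\phi^\alpha\geq\phi$ (as $\phi\geq1$), so $(r^{n-1}\phi')' = \lambda r^{n-1}\phi^\alpha \geq \lambda r^{n-1}\phi \geq c_{n,k}\lambda^{k+1} r^{n-1+2k}$. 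Integrating from $0$ to $r$: $r^{n-1}\phi'(r)\geq c_{n,k}\lambda^{k+1} r^{n+2k}/(n+2k)$, hence $\phi'(r)\geq \frac{c_{n,k}}{n+2k}\lambda^{k+1} r^{2k+1}$, and integrating once more gives $\phi(r)\geq \frac{c_{n,k}}{(n+2k)(2k+2)}\lambda^{k+1} r^{2k+2}$. Setting $c_{n,k+1} := c_{n,k}/\big((n+2k)(2k+2)\big)$ closes the induction. A clean closed form is $c_{n,k} = \prod_{j=1}^{k}\frac{1}{(n+2j-2)(2j)}$, which depends only on $n$ and $k$ as required.

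The only subtlety — and the step I expect to need the most care — is justifying that the solution stays positive and that $\phi\geq1$ on the \emph{whole} interval $(0,R)$, since a priori the ODE is only assumed to have a nonnegative solution and one must rule out $\phi$ touching zero or dipping below $1$; but this follows immediately from the monotonicity of $r^{n-1}\phi'$ established above (it is nonnegative and increasing from $0$, so $\phi$ is nondecreasing, hence $\phi\geq\phi(0)=1$). Everything else is a routine double integration of the differential inequality at each stage of the induction. Note that the crude step $\phi^\alpha\geq\phi$ wastes the exponent $\alpha$ entirely, but that is fine: the statement only asks for the polynomial lower bounds \eqref{Eq:ODELowerBoundPowers}, and using $\phi^\alpha\geq\phi$ is exactly what makes the iteration linear and the constants explicit.
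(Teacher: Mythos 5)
Your proof is correct and follows essentially the same route as the paper: both establish $\phi'>0$ (hence $\phi\geq 1$) from the sign of $(r^{n-1}\phi')'$, replace $\phi^\alpha$ by $\phi$, and iterate the double integration of the resulting differential inequality to gain a factor $\lambda r^2$ at each step; your induction with the explicit constant $c_{n,k+1}=c_{n,k}/((n+2k)(2k+2))$ is just the tidier bookkeeping of the paper's "iterating this procedure", and the constants agree (e.g.\ $\lambda^2 r^4/(8n(n+2))$ at $k=2$).
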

\begin{proof}
    First, we notice that $\phi'>0$ in $(0,R)$. Indeed, integrating the equation of $\phi$, we have
    \begin{equation}\label{Eq:AppODEIntegrated}
        r^{n-1} \phi'(r) = \lambda \int_0^r \rho^{n-1} \phi(\rho)^\alpha \, \rd \rho
    \end{equation}
    and thus, since $\phi$ is not identically zero in $(0,R)$ (this is a consequence of the assumption $\phi(0) = 1$), \eqref{Eq:AppODEIntegrated} yields our claim. 

    Now, since $\phi$ is increasing, we have $\phi\geq 1$ in $(0,R)$. Thus, from \eqref{Eq:AppODEIntegrated} using that $\alpha>1$, we obtain
    \begin{equation}
        \label{Eq:AppODEIntegratedIneq}
        r^{n-1} \phi'(r) \geq  \lambda \int_0^r \rho^{n-1} \phi(\rho) \, \rd \rho .
    \end{equation}
    From this, using that $\phi\geq 1$, we get $r^{n-1} \phi'(r) \geq  (\lambda/n) r^n$, which yields
    \begin{equation}
    \label{Eq:AppODEQuadraticLB}
        \phi(r) \geq 1 + \dfrac{\lambda}{2n} r^2 \geq  \dfrac{\lambda}{2n} r^2 \quad r\in (0,R).
    \end{equation}
    Using this last inequality in the right-hand side of \eqref{Eq:AppODEIntegratedIneq}, we obtain $r^{n-1} \phi'(r) \geq  \lambda^2 /(2n(n+2)) r^{n+2}$, which gives
    \begin{equation}
    \label{Eq:AppODEQuadraticLB}
        \phi(r) \geq \dfrac{\lambda^2}{8n(n + 2)} r^4 \quad r\in (0,R).
    \end{equation}
    Iterating this procedure, that is, inserting the last inequality in \eqref{Eq:AppODEIntegratedIneq} and integrating, \eqref{Eq:ODELowerBoundPowers} easily follows.
\end{proof}
\begin{lem}\label{Lemma:ODEGrowth}
    Let $n \geq 1$, $\gamma,\delta \in (0,1]$, and $c >0$. Set $j:= \lceil 1/\gamma \rceil$, $j-1/\gamma =: \theta \in [0,1)$ and define
    \begin{equation}\label{eq:OmegaRstar}
        \omega_\star := \delta^{\frac{1 + \theta}{2 (\theta + 2/\gamma)}} \quad \text{ and } \quad R_\star:= \bigg (\dfrac{\delta^{\frac{1 + \theta}{2}}}{c^j c_{n,j}} \bigg)^{\frac{1}{2j}},
    \end{equation}
    where $c_{n,j}$ is the constant given by \Cref{Lemma:ODELowerBoundPowers} for $k=j$.
    Assume that $\varphi:[0,R_\star] \to \RR$ satisfies
    \begin{equation}
        \begin{cases}
            r^{1-n} (r^{n-1} \varphi')' = \dfrac{c\,\omega_\star}{\delta} \varphi^{1+\gamma} & \text{ in } (0,R_\star),\\
            \varphi(0) = 1, \\
            \varphi'(0) = 0.
        \end{cases}
    \end{equation}
    Then,
    \begin{equation}
        \label{Eq:ODEGrowth}
        (\omega_\star \delta)^{1/\gamma} \varphi(R_\star)  \geq \delta.
    \end{equation}
\end{lem}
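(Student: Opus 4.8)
The plan is to invoke Lemma~\ref{Lemma:ODELowerBoundPowers} directly and then observe that the exponents in the definitions of $\omega_\star$ and $R_\star$ have been calibrated so that the resulting estimate is exactly \eqref{Eq:ODEGrowth}. First I would note that $\varphi$ is a nonnegative solution of the ODE in Lemma~\ref{Lemma:ODELowerBoundPowers} with $\lambda := c\,\omega_\star/\delta$ and $\alpha := 1+\gamma$; since $\gamma\in(0,1]$ we have $\alpha>1$, so the hypotheses are satisfied. Applying \eqref{Eq:ODELowerBoundPowers} with the choice $k := j = \lceil 1/\gamma\rceil$ yields
\[
\varphi(r)\ \geq\ c_{n,j}\,\Big(\tfrac{c\,\omega_\star}{\delta}\Big)^{j} r^{2j}\qquad\text{for every }r\in(0,R_\star),
\]
and, by continuity, also at $r=R_\star$.

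Next I would evaluate this at $r=R_\star$. Since $R_\star^{2j}=\delta^{(1+\theta)/2}/(c^{j}c_{n,j})$ by \eqref{eq:OmegaRstar}, the constant $c_{n,j}$ and the powers of $c$ cancel, leaving
\[
\varphi(R_\star)\ \geq\ \omega_\star^{\,j}\,\delta^{\frac{1+\theta}{2}-j}.
\]
It then suffices to check that the right-hand side equals $\delta/(\omega_\star\delta)^{1/\gamma}=\omega_\star^{-1/\gamma}\delta^{1-1/\gamma}$, which is precisely \eqref{Eq:ODEGrowth}. This is a bookkeeping computation with exponents: from $j-1/\gamma=\theta$ we obtain $1/\gamma=j-\theta$ and hence $j+1/\gamma=2j-\theta=\theta+2/\gamma$, so that $\omega_\star=\delta^{(1+\theta)/(2(2j-\theta))}$ and therefore
\[
\omega_\star^{\,j+1/\gamma}\ =\ \omega_\star^{\,2j-\theta}\ =\ \delta^{\frac{1+\theta}{2}}.
\]

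Finally I would combine these identities: writing $\omega_\star^{\,j}=\omega_\star^{-1/\gamma}\,\omega_\star^{\,j+1/\gamma}$ and substituting the last display, then using $\tfrac{1+\theta}{2}+\big(\tfrac{1+\theta}{2}-j\big)=1+\theta-j=1-1/\gamma$ to collect the powers of $\delta$, I would conclude $\varphi(R_\star)\geq \omega_\star^{-1/\gamma}\delta^{1-1/\gamma}$, i.e. $(\omega_\star\delta)^{1/\gamma}\varphi(R_\star)\geq\delta$. There is no genuine analytic obstacle here: the only thing to recognize is that $k=j=\lceil1/\gamma\rceil$ is the correct (smallest) integer to feed into Lemma~\ref{Lemma:ODELowerBoundPowers}, and that $\omega_\star$ and $R_\star$ are reverse-engineered precisely so that, with this choice of $k$, the inequality in \eqref{Eq:ODEGrowth} is saturated.
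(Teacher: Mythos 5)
Your proposal is correct and follows essentially the same route as the paper: apply Lemma~\ref{Lemma:ODELowerBoundPowers} with $\lambda = c\,\omega_\star/\delta$, $\alpha = 1+\gamma$, $k=j$, evaluate at $r=R_\star$ so that $c^j$ and $c_{n,j}$ cancel by the definition of $R_\star$, and then use $j=\theta+1/\gamma$ (equivalently $\theta+2/\gamma=2j-\theta$) to match exponents of $\omega_\star$ and $\delta$ with \eqref{Eq:ODEGrowth}. Your exponent bookkeeping is accurate, and the extra remark that the bound extends to $r=R_\star$ by continuity is a harmless refinement of the paper's argument.
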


\begin{proof}
    Applying \Cref{Lemma:ODELowerBoundPowers} with $\phi = \varphi$, $\alpha=1 + \gamma$, $\lambda = c\, \omega_\star / \delta$, $R = R_\star$, and $k=j$, we get
    \begin{equation}
        \varphi(R_\star) \geq c_{n,j} \left ( c \frac{\omega_\star}{\delta} \right)^j R_\star^{2j} = \left ( \frac{\omega_\star}{\delta} \right)^j \delta^{\frac{1 + \theta}{2}} = \dfrac{\omega_\star^j}{\delta^j} \dfrac{\delta^{1 + \theta}}{\delta^{\frac{1 + \theta}{2}}} = \dfrac{\omega_\star^j}{\omega_\star^{\theta + 2/\gamma}} \dfrac{\delta^{1 + \theta}}{\delta^j},
    \end{equation}
    where in the first equality we have used the definition of $R_\star$ and, in the last one, the definition of $\omega_\star$.
    Using that $j = \theta + 1/\gamma$, \eqref{Eq:ODEGrowth} follows.
\end{proof}
With the previous results at hand, we can now prove that $f_{\vep_j}(u_j) \to \gamma u_+^{\gamma-1} $ a.e. in $Q$.
As mentioned, the delicate step will be to have a suitable control of $u_{\vep}$ in $\textrm{int}(\{u=0\})$ in terms of $\vep$, which will follow from a suitable barrier argument using the above technical lemmas.
\begin{lem}\label{lem:ConvergengeNonlinearityAE}
    Let $\gamma \in (0,1]$, $\alpha \in (0,1)$. Let $u_\circ \in C_c^{2+\alpha}(\RR^n)$ be nonnegative and nontrivial, and let $u_{\vep_j}$ and $u$ as in \Cref{lem:StrCmp}. 
    Then
    \begin{equation}\label{eq:AEConvfvep}
        f_{\vep_j}(u_j) \to \gamma u_+^{\gamma-1} \quad \text{ a.e. in } Q,
    \end{equation}
    as $j \uparrow \infty$.
\end{lem}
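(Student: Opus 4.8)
The plan is to split $Q$ into the three sets $\{u>0\}$, $\mathrm{int}(\{u=0\})=Q\setminus\overline{\{u>0\}}$ and $\partial\{u>0\}$, to prove the pointwise convergence $f_{\vep_j}(u_j)\to\gamma u_+^{\gamma-1}$ on the first two of them, and to conclude for a.e.\ point of $Q$ using that $\mathcal{L}^{n+1}(\partial\{u>0\})=0$ by \eqref{eq:FB0Meas}. Recall that each $u_j=u_{\vep_j}$ is classical and positive in $Q$ (\Cref{rem:InteriorReg+Inner}), so pointwise evaluation makes sense.

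\emph{On $\{u>0\}$.} Fix $(x,t)\in\{u>0\}$. Since $u_j\to u$ locally uniformly (\Cref{lem:StrCmp}), $u_j(x,t)/\vep_j^\beta\to+\infty$; because $h$ is supported in $[0,1]$ with $\int_0^1 h=1$, this gives $h(u_j(x,t)/\vep_j^\beta)=0$ and $H(u_j(x,t)/\vep_j^\beta)=1$ for $j$ large, whence $f_{\vep_j}(u_j(x,t))=\gamma u_j(x,t)^{\gamma-1}\to\gamma u(x,t)^{\gamma-1}=\gamma u_+^{\gamma-1}(x,t)$. This is the easy half, already used (without proof) in \cite{Phillips1987:art}.

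\emph{On $\mathrm{int}(\{u=0\})$.} Here the limit must be $0$, and the whole content of the lemma is to upgrade the qualitative fact $u_j(x_0,t_0)=o(\vep_j^\beta)$ (which follows at once from \Cref{lem:HausConv}) to a quantitative rate, cf.\ \Cref{rem:PhilProb}. First I would record, from the behaviour of $h$ near the origin encoded in \eqref{Eq:Propertiesh} together with \eqref{eq:Fepsilon}--\eqref{eq:Scalingfeps}, the two-sided bound
\[
c_1\,\vep^{-2\beta}u^{1+\gamma}\ \le\ f_\vep(u)\ \le\ c_2\,\vep^{-2\beta}u^{1+\gamma}\qquad\text{for }0<u\le\vartheta_0\vep^\beta,
\]
with $\vartheta_0\in(0,1)$ small and $0<c_1\le c_2$ depending only on $h$ and $\gamma$ (on this range $f_\vep$ is moreover increasing). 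Fix $(x_0,t_0)\in\mathrm{int}(\{u=0\})$ and $\rho>0$ with $\overline{Q_{2\rho}(x_0,t_0)}\subset\mathrm{int}(\{u=0\})\cap Q$; by \Cref{lem:HausConv} (Hausdorff convergence with parameter $\vartheta_0$) we have $0<u_j\le\vartheta_0\vep_j^\beta$ on $\overline{Q_{2\rho}(x_0,t_0)}$ for $j$ large, and then the lower bound above turns $u_j$, on this cylinder, into a nonnegative subsolution of the absorption equation $\partial_t v-\Delta v+\Lambda_j v^{1+\gamma}=0$ with the \emph{diverging} coefficient $\Lambda_j:=c_1\vep_j^{-2\beta}$. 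This is a Keller--Osserman situation: using the radial ODE analysis of \Cref{Lemma:ODELowerBoundPowers} and \Cref{Lemma:ODEGrowth} (it is exactly here that the integer $\lceil1/\gamma\rceil$ and the exponent $2/\gamma$ enter) one builds a supersolution of this equation on $B_\rho(x_0)\times(t_0-\rho^2,t_0+\rho^2)$, roughly of the form $\overline v(x,t)=C(n,\gamma)\Lambda_j^{-1/\gamma}\,[(\rho-|x-x_0|)^{-2/\gamma}+(t-t_0+\rho^2)^{-1/\gamma}]$, which equals $+\infty$ on the \emph{entire} parabolic boundary and satisfies $\overline v(x_0,t_0)\le C(n,\gamma)\Lambda_j^{-1/\gamma}\rho^{-2/\gamma}$. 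Comparison --- valid because $v\mapsto\Lambda_j v^{1+\gamma}$ is nondecreasing, so on $\{u_j>\overline v\}$, where both functions stay $\le\vartheta_0\vep_j^\beta$, the difference $u_j-\overline v$ is a subsolution of a linear parabolic equation with nonnegative zeroth order term --- then gives
\[
u_j(x_0,t_0)\ \le\ C(n,\gamma)\,\Lambda_j^{-1/\gamma}\rho^{-2/\gamma}\ =\ C(n,\gamma)\,c_1^{-1/\gamma}\rho^{-2/\gamma}\,\vep_j^{2\beta/\gamma}.
\]
Plugging this into the upper bound for $f_{\vep_j}$ and using $\tfrac{2\beta(1+\gamma)}{\gamma}-2\beta=\tfrac{2\beta}{\gamma}$, we get $f_{\vep_j}(u_j(x_0,t_0))\le c_2\,\vep_j^{-2\beta}u_j(x_0,t_0)^{1+\gamma}\le C'\rho^{-2(1+\gamma)/\gamma}\,\vep_j^{2\beta/\gamma}\to0$, which is the claim since $\gamma u_+^{\gamma-1}(x_0,t_0)=0$.

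\emph{Main obstacle.} The delicate step is clearly the barrier in $\mathrm{int}(\{u=0\})$: one has to run the Keller--Osserman comparison in the genuinely parabolic setting, where (unlike the elliptic case) $u_j$ is a priori controlled only on the lateral part of the cylinder and not on its bottom, so the barrier must be forced to blow up on \emph{all} of the parabolic boundary; constructing such a barrier with the sharp vertex value $\lesssim\Lambda_j^{-1/\gamma}\rho^{-2/\gamma}$ is precisely what \Cref{Lemma:ODELowerBoundPowers} and \Cref{Lemma:ODEGrowth} are designed for (via the iterated lower bound $\phi(r)\ge c_{n,k}\lambda^k r^{2k}$ and the calibration of $\omega_\star,R_\star$). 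I also expect that one cannot reduce to a stationary problem on time slices: near $\mathrm{int}(\{u=0\})$ the term $\partial_t u_j$ is not negligible compared with $f_{\vep_j}(u_j)$, since the bound on $\partial_t(u_j^{2/\beta})$ from \Cref{lem:OptRegTime} only yields $|\partial_t u_j|\le C u_j^{\gamma-1}$, which degenerates exactly in the regime $u_j\ll\vep_j^\beta$ of interest.
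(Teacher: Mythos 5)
Your proposal is correct in substance and shares the paper's overall architecture: you split $Q$ into $\{u>0\}$, $\mathrm{int}(\{u=0\})$ and $\partial\{u>0\}$, treat the first set by locally uniform convergence exactly as in the paper, invoke $\mathcal{L}^{n+1}(\partial\{u>0\})=0$ from \Cref{lem:L1ConvFBMeas0}, and on $\mathrm{int}(\{u=0\})$ you use the bounds on $h,H$ near the origin (the paper's \eqref{Eq:BoundshH}) plus the Hausdorff convergence of \Cref{lem:HausConv} to turn $u_j$ into a subsolution of the absorption inequality $\partial_t v-\Delta v\le -c\,\vep_j^{-2\beta}v^{1+\gamma}$, which you then dominate by a supersolution of separated form (temporal ODE piece plus spatial radial piece, glued via $(a+b)^{1+\gamma}\ge\tfrac12\,(a^{1+\gamma}+b^{1+\gamma})$) — this is precisely the paper's mechanism \eqref{Eq:FineConvSubsolution}--\eqref{eq:SeparateSuperSol}. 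Where you genuinely diverge is in the barrier. The paper works on an $\vep_j$-dependent shrinking cylinder with boundary datum $\vep_j^\beta$, calibrating the two pieces via \Cref{Lemma:ODELowerBoundPowers} and \Cref{Lemma:ODEGrowth} so that they start from $\omega(\vep_j)\vep_j^{\beta/\gamma}$ at the center and reach $\vep_j^\beta$ on the parabolic boundary; this yields $u_j(x_0,t_0)=o(\vep_j^{\beta/\gamma})$, which suffices after the reduction to $u_j^\gamma/\vep_j^\beta\to0$. You instead run the universal (Keller--Osserman) estimate on a fixed cylinder with a barrier blowing up on the entire parabolic boundary, obtaining the stronger rate $u_j(x_0,t_0)\lesssim\rho^{-2/\gamma}\vep_j^{2\beta/\gamma}$; your exponent bookkeeping $\tfrac{2\beta(1+\gamma)}{\gamma}-2\beta=\tfrac{2\beta}{\gamma}$ is right, so $f_{\vep_j}(u_j(x_0,t_0))\to0$ follows at once. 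Note, however, that \Cref{Lemma:ODELowerBoundPowers} and \Cref{Lemma:ODEGrowth} are designed for the paper's finite-boundary-datum calibration (solutions growing from a prescribed center value), not for a boundary-blow-up barrier; for your route they are not needed, since the explicit barrier can be checked by hand. Your version is arguably cleaner and quantitatively sharper; its price is that comparison with a barrier that is $+\infty$ on the parabolic boundary must be run by exhaustion on slightly smaller cylinders (where the barrier dominates $\vartheta_0\vep_j^\beta$), which you correctly gesture at.

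One technical repair is needed in your spatial piece: $(\rho-|x-x_0|)^{-2/\gamma}$ has a conical minimum at $x=x_0$ and fails to be a supersolution there (a smooth function touching it from below at the corner can have arbitrarily large Laplacian, so an interior maximum of $u_j-\overline v$ located on the segment $\{x_0\}\times(t_0-\rho^2,t_0]$ is not excluded by your linearization argument). Replace it by the standard smooth Osserman barrier $V(x)=A\,(\rho^2-|x-x_0|^2)^{-2/\gamma}$ with $A=C(n,\gamma)\,\Lambda_j^{-1/\gamma}\rho^{2/\gamma}$: a direct computation gives $\Delta V\le\tfrac{\Lambda_j}{2}V^{1+\gamma}$ in $B_\rho(x_0)$, $V=+\infty$ on $\partial B_\rho(x_0)$, and the same center value $V(x_0)=C\,\Lambda_j^{-1/\gamma}\rho^{-2/\gamma}$, so all your estimates go through unchanged once combined with the temporal piece $\Theta(t)=\big(\tfrac{\gamma\Lambda_j}{2}(t-t_0+\rho^2)\big)^{-1/\gamma}$.
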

\begin{proof}
Recall that
\begin{equation}
    f_{\vep_j}(u_j) = \vep_j^{-\beta} h(u_j/\vep_j^\beta) u_j^\gamma + \gamma H (u_j/\vep_j^\beta) u_j^{\gamma - 1},
\end{equation}
for every $j$. We will separately study the convergence in $\{u > 0\}$ and in $\text{int}(\{u = 0\})$, showing pointwise convergence to $u^{\gamma - 1}$ in $\{u > 0\}$ and to $0$ in $\text{int}(\{u = 0\})$.
Then, since $\mathcal{L}^{n+1}(\partial\{u > 0\}) = 0$ by \Cref{lem:L1ConvFBMeas0}, we will obtain the desired a.e. convergence in $Q$.

\

\emph{Step 1:} We start with the easiest case, in which we can actually show that $f_{\vep_j}(u_j) \to \gamma u^{\gamma-1} $ locally uniformly in $\{u > 0\}\cap Q$.
For this, given a compact set $\KK \subset\subset \{u > 0\}\cap Q$, we can find $\delta_\KK > 0$ such that $u_j \geq \delta_\KK$ in $\KK$ for $j$ large enough (recall that $u_j \to u$ locally uniformly).
Consequently, recalling that $\supp \, h \subset [0,1]$ and $H \equiv 1$ in $[1,\infty)$, 
\[
f_{\vep_j}(u_j) = \vep^{-\beta} h(u_j/ \vep^{\beta})u_j^\gamma + \gamma  H(u_j/\vep_j^\beta) u_j^{\gamma-1} = \gamma u_j^{\gamma-1} \quad \text{ in } \KK,
\]
for $j$ large enough, which implies $f_{\vep_j}(u_j) \to \gamma u^{\gamma-1}$ uniformly in $\KK$ as $j \to +\infty$.

\

\emph{Step 2:} Next we consider points $(x,t) \in \text{int}(\{u = 0\})$ and we fix $\vartheta\in (0,1)$ small enough such that
\begin{equation}
\label{Eq:BoundshH}
    c_\circ s \leq h(s) \leq C_\circ s \quad
    \text{ and } \quad 
    \dfrac{c_\circ}{2} s^2 \leq H(s) \leq \dfrac{C_\circ}{2} s^2 \quad
    \text{ for } s\in (0,\vartheta),
\end{equation}
for some constants $c_\circ, C_ \circ>0$ depending only on $h$ ---recall that $h(0)=0$ and $h'(0)>0$, see \eqref{Eq:Propertiesh}. 
Then, by the Hausdorff convergence \eqref{eq:HausConv}, we have $(x,t) \in \{u_j \leq \vartheta \vep_j^\beta\}$ for every $j$ large enough, and therefore
\begin{equation}
0 \leq  \vep_j^{-\beta} h(u_j(x,t)/\vep_j^\beta) u_j^\gamma(x,t) \leq C_\circ \dfrac{u_j(x,t)}{\vep_j^\beta} \dfrac{u_j(x,t)^\gamma}{\vep_j^\beta}  \leq C_\circ \vartheta \dfrac{u_j(x,t)^\gamma}{\vep_j^\beta}
\end{equation}
and
\begin{equation}
0 \leq H(u_j(x,t)/\vep_j^\beta)u_j^{\gamma-1}(x,t) 
\leq \dfrac{C_\circ}{2} \dfrac{u_j(x,t)}{\vep_j^\beta} \dfrac{u_j(x,t)^\gamma}{\vep_j^\beta} 
\leq \dfrac{C_\circ}{2} \vartheta \dfrac{u_j(x,t)^\gamma}{\vep_j^\beta}.  
\end{equation}
Thus, to conclude the proof, it is enough to show that 
\begin{equation}
\label{Eq:FineConvToZero}
    \dfrac{u_j(x,t)^\gamma}{\vep_j^\beta} \to 0,
\end{equation}
as $j \uparrow \infty$, which is what we do in the next paragraphs.
From now on, since the point $(x,t)$ does not play any role and we do not use the initial condition, we may assume that $(x,t) = (0,0)$.

\

\emph{Step 2.1:} The proof of \eqref{Eq:FineConvToZero} is a delicate barrier argument as follows. First, notice that, thanks to the bounds \eqref{Eq:BoundshH}, we have that
\begin{equation}
\label{Eq:FineConvSubsolution}
    \partial_t u_j - \Delta u_j \leq - c_\circ \left( 1 + \dfrac{\gamma}{2} \right) \dfrac{u_j^{\gamma + 1}}{\vep_j^{2\beta}} =: - c_\star \dfrac{u_j^{\gamma + 1}}{\vep_j^{2\beta}} \quad \text{ in } \{u_j \leq \vartheta \vep_j^\beta\}.
\end{equation}
In particular, for $j$ large enough, this equation holds in $B_R\times (-T, 0)$ for any $R>0$ and $T>0$ small enough (recall the Hausdorff convergence \eqref{eq:HausConv} and that $(0,0) \in \text{int}(\{u = 0\})$).

Then our goal is to find $R=R(\vep_j)$ and $T= T(\vep_j)$ such that $R(\vep_j),T(\vep_j) \to 0$ as $j \uparrow \infty$, and a function $\overline{u}$ satisfying
\begin{equation}
\label{Eq:FineConvSupersolution}
    \begin{cases}
        \partial_t \overline{u} - \Delta \overline{u} \geq - c_\star \dfrac{\overline{u}^{\gamma + 1}}{\vep_j^{2\beta}} & \text{ in } B_R\times (-T, 0),\\
        \overline{u} \geq \vep_j^\beta & \text{ in } (\partial B_R\times [-T, 0]) \cup (B_R \times \{-T\}),
    \end{cases}
\end{equation}
and
\begin{equation}
\label{Eq:FineConvBoundZERO}
    \overline{u}(0,0) \leq \omega (\vep_j) \vep_j^{\beta/\gamma},
\end{equation}
for some $\omega = \omega(\vep_j)$ satisfying $\omega(\vep_j) \to 0$ as $j \uparrow \infty$. If so, for $j$ large enough, both \eqref{Eq:FineConvSubsolution} and \eqref{Eq:FineConvSupersolution} hold in $B_R\times (-T, 0)$ and thus $u_j \leq \overline{u}$ in $B_R\times (-T, 0)$, by the comparison principle. Then, \eqref{Eq:FineConvToZero} immediately follows from \eqref{Eq:FineConvBoundZERO}.

\

The rest of the proof consists of building such supersolution $\overline{u}$.
For the sake of clarity, and since $j$ will not change, we will denote $\vep = \vep_j$.
We will construct $\overline{u}$ of the form
\begin{equation}
    \overline{u}(x,t) = \phi(t) + X(x),
\end{equation}
with $\phi$ and $X$ nonnegative functions satisfying
\begin{equation}\label{eq:SeparateSuperSol}
\begin{cases}
    \phi' = - \dfrac{c_\star}{2 \vep^{2\beta}} \phi^{\gamma + 1}, \\
    \phi(0) = \omega_{\mathrm{T}} \, \vep^{\beta/\gamma},
\end{cases}
\quad \text{ and } \qquad 
\begin{cases}
    \Delta X = \dfrac{c_\star}{2 \vep^{2\beta}} X^{\gamma + 1}, \\
    X(0) = \omega_{\mathrm{X}} \, \vep^{\beta/\gamma},
\end{cases}
\end{equation}
for some $\omega_{\mathrm{T}}$ and $\omega_{\mathrm{X}}$. Indeed, if such $\phi$ and $X$ exist, the first inequality of \eqref{Eq:FineConvSupersolution} is satisfied, since
\begin{equation}
    \partial_t \overline{u} - \Delta \overline{u} + c_\star \dfrac{\overline{u}^{\gamma + 1}}{\vep^{2\beta}} =
    \partial_t \phi - \Delta X + c_\star \dfrac{(\phi + X)^{\gamma + 1}}{\vep^{2\beta}} 
    \geq 
    \partial_t \phi + c_\star \dfrac{\phi^{\gamma + 1}}{2\vep^{2\beta}} 
    - \Delta X + c_\star \dfrac{X^{\gamma + 1}}{2\vep^{2\beta}} = 0.
\end{equation}
Moreover, if $\omega_{\mathrm{T}} = \omega_{\mathrm{T}}(\vep) $ and $\omega_{\mathrm{X}}=\omega_{\mathrm{X}}(\vep)$ are such that $\omega_{\mathrm{T}}(\vep)\to 0$ and $\omega_{\mathrm{X}}(\vep)\to 0$ as $\vep \downarrow 0$, then \eqref{Eq:FineConvBoundZERO} is satisfied with $\omega := \omega_{\mathrm{T}}+ \omega_{\mathrm{X}}$.

Therefore, it remains to appropriately choose $\omega_{\mathrm{T}}$ and  $\omega_{\mathrm{X}}$ in such a way $\phi:[-T, 0]\to \RR$ and $X:\overline{B_R} \to \RR$ satisfy
\begin{equation}
    \phi(-T) \geq \vep^\beta \quad \text{ and } \quad X \geq \vep^\beta \quad \text{ in } \partial B_R,
\end{equation}
so that the second inequality of \eqref{Eq:FineConvSupersolution} is satisfied as well. We study each problem separately.

\

\emph{Step 2.2:} For the time-dependent function $\phi$, we just use some elementary ODE analysis.
Indeed, it is not difficult to check that if $\phi' = - \lambda \phi^{1 + \gamma}$ with $\lambda>0$, and $\phi(0) = \phi_0$, then $\phi$ is defined as $\phi:(-t_\star, \infty) \to \RR$ by
\begin{equation}
    \phi(t) = \dfrac{\phi_0}{(1 + t/ t_\star )^{1/\gamma}}, \quad \text{ where} \quad t_\star :=  \dfrac{1}{\lambda \gamma \phi_0^\gamma}.
\end{equation}
In addition, for any $L>0$ we have that
\begin{equation}
    \phi(t) \geq L \quad \Leftrightarrow \quad t \leq t_\star\left[ (\phi_0/L)^\gamma - 1\right].
\end{equation}
Using these facts with $\phi_0 = \omega_{\mathrm{T}} \, \vep^{\beta/\gamma}$, $\lambda =c_\star/(2 \vep^{2\beta})$, and $L = \vep^\beta$, we obtain 
\begin{equation}
    \phi(t) \geq \vep^\beta \quad \Leftrightarrow \quad t \leq - \dfrac{2}{\gamma c_\star} \frac{\vep^\beta}{\omega_{\mathrm{T}}^\gamma}\bigg( 1 - \dfrac{\omega_{\mathrm{T}}^\gamma \vep^\beta}{\vep^{\beta\gamma}} \bigg).
\end{equation}
Thus, taking $\omega_{\mathrm{T}} := \vep^{\frac{\beta - 1}{\gamma}}$, we have that $\phi: (-2\vep/(\gamma c_\star)  , +\infty)\to \RR$ satisfies
\begin{equation}
    \phi(-T) = \vep^\beta, \qquad \text{where } T:= \dfrac{2}{\gamma c_\star} \vep (1 - \vep).
\end{equation}

\

\emph{Step 2.3:} We turn our attention to the space-dependent problem.
For this, we look for a radially symmetric solution to the second problem in \eqref{eq:SeparateSuperSol}, that is, a solution $X:[0,R] \to \RR$ to
\begin{equation}
    \begin{cases}
    r^{1 - n} (r^{n-1}  X')' = \dfrac{c_\star}{2 \vep^{2\beta}} X^{\gamma + 1}  & \text{ in } (0,R),\\
    X(0) = \omega_{\mathrm{X}} \, \vep^{\beta/\gamma} , \\ 
    X'(0) = 0,
\end{cases}
\end{equation}
and choose $\omega_{\mathrm{X}}$ and $R$ such that $X(R) \geq \vep^\beta$. The existence of solutions follows by standard methods. Further, by \Cref{Lemma:ODELowerBoundPowers}, we have $X' > 0$. 
In the arguments below, we assume that $X$ is defined in the whole interval $(0,R)$ for $R = R_\star$, where $R_\star$ is given by \Cref{Lemma:ODEGrowth}. Otherwise, $X$ blows-up at some point $R < R_\star$ and therefore the bound $X(R) \geq \vep^\beta$ is satisfied choosing $R$ smaller. 

Notice that the above initial value problem is equivalent to
\begin{equation}
    \begin{cases}
    r^{1 - n} (r^{n-1}  \varphi')' = \dfrac{c_\star}{2 \vep^{\beta}} \omega_{\mathrm{X}}^\gamma \varphi^{\gamma + 1}  & \text{ in } (0,R), \\
    \vspace{1mm}
    \varphi(0) = 1 ,\\ 
    \varphi'(0) = 0 ,
\end{cases}
\end{equation}
where $X(r) := \omega_{\mathrm{X}} \, \vep^{\beta/\gamma} \varphi(r)$. Therefore, we want to show that for some appropriate choice of $\omega_{\mathrm{X}}$ and $R$ (satisfying the desired smallness in $\vep$), it holds
\begin{equation}
\label{Eq:OmegaBoundsX}
    (\omega_{\mathrm{X}}^\gamma \vep^\beta)^{1/\gamma}\varphi(R) \geq \vep ^\beta.
\end{equation}
But this follows by \Cref{Lemma:ODEGrowth} with the choices $\delta = \vep^\beta$, $c = c_\star/2$, $\omega_\star = c_1 \vep^{\sigma_1}$, and $R_\star = c_2 \vep^{\sigma_2}$, where $c_1, c_2, \sigma_1, \sigma_2 >0$ depend only on $n$ and $\gamma$ ---they can be explicitly computed using \eqref{eq:OmegaRstar}---, and taking $\omega_{\mathrm{X}}^\gamma = \omega_\star$ and $R = R_\star$. This concludes the construction of the function $X$ and the proof of \eqref{eq:AEConvfvep}.
\end{proof}
\subsection{Limit equations}
In the next two results of this section, we derive the weak formulations of the equation of $u$. 
As a byproduct, we complete the proof of the first part of \Cref{thm:MAIN1}. 
First, we obtain the weak formulation with respect to domain variations.
\begin{lem}\label{lem:L2H1StgConv} 
Let $\gamma \in (0,1]$ and $\alpha \in (0,1)$. Let $u_\circ \in C_c^{2+\alpha}(\RR^n)$ be nonnegative and nontrivial, and let $u_{\vep_j}$ and $u$ as in \Cref{lem:StrCmp}. Then
\begin{equation}\label{eq:L2H1StgConv}
u_{\vep_j} \to u \quad \text{ locally in } L^2(0,\infty;H^1(\RR^n)),
\end{equation}
as $j \uparrow \infty$. Furthermore, $u$ satisfies
\begin{equation}\label{eq:FirstIVLim}
\int_Q \big( |\nabla u|^2 + 2u_+^\gamma \big) \dv_x \Phi  -  2 \nb u \cdot D_x\Phi \cdot \nabla u - 2\partial_t u \,(\nb u \cdot \Phi) = 0,
\end{equation}
for every $\Phi \in C_c^\infty(Q;\RR^{n+1})$ and
\begin{equation}\label{eq:LocalEstPartialTu}
\int_s^\tau \int_{\RR^n} (\partial_tu)^2\psi^2 + \frac{1}{2} \int_{\RR^n} \big( |\nb u|^2 + 2u_+^\gamma)\psi^2 \rd x \, \bigg|_{t=s}^{t=\tau}  + 2 \int_s^\tau \int_{\RR^n} \partial_tu  \psi \,(\nb u\cdot \nb \psi) \leq 0,
\end{equation}
for a.e. $0 < s < \tau$ and every $\psi \in C_c^\infty(\RR^n)$.
\end{lem}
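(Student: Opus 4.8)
The plan is to pass to the limit in the identities satisfied by the approximating solutions $u_{\vep_j}$, using the convergence results already established (\Cref{lem:StrCmp}, \Cref{lem:HausConv}, \Cref{lem:L1ConvFBMeas0}, and \Cref{lem:ConvergengeNonlinearityAE}). The first and most delicate issue is the \textbf{strong $L^2(H^1)$ convergence} \eqref{eq:L2H1StgConv}: weak convergence alone is not enough to pass to the limit in the quadratic terms $|\nabla u|^2$, $\nb u \cdot D_x\Phi\cdot\nb u$, and $\partial_t u\,(\nb u\cdot\Phi)$ appearing in \eqref{eq:FirstIVvep}. To upgrade the convergence, I would test the equation for $u_{\vep_j}$ (in the weak form \eqref{eq:FirstOVvep}) with $\varphi = (u_{\vep_j} - u)\psi^2$ for a suitable cutoff $\psi$, and compare with the corresponding identity for $u$ (which one first obtains in a weak form after the limit is taken, or else argue by a Cauchy-type estimate). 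The key point is that $\int f_{\vep_j}(u_{\vep_j})(u_{\vep_j}-u)\psi^2 \to 0$: this follows because $f_{\vep_j}(u_{\vep_j}) \to \gamma u_+^{\gamma-1}$ a.e.\ by \Cref{lem:ConvergengeNonlinearityAE}, the sequence is dominated (on the set $\{u>0\}$ locally uniformly, and on $\{u_{\vep_j}\le\vartheta\vep_j^\beta\}$ one has the uniform bound $f_{\vep_j}(u_{\vep_j})\le C u_{\vep_j}^\gamma/\vep_j^\beta \le C'$ via the optimal growth \eqref{eq:OptGrLimLem} applied near the free boundary, or more simply using $f_{\vep_j}\ge0$ and the $L^1$ control), and $u_{\vep_j}-u\to 0$ uniformly on compacts. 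Together with the weak convergence of $\partial_t u_{\vep_j}$ and $\nb u_{\vep_j}$, this yields $\int_K |\nb(u_{\vep_j}-u)|^2 \to 0$ on compact sets $K \subset\subset Q$, which is \eqref{eq:L2H1StgConv}.

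Once strong $H^1$-convergence on compacts is in hand, the passage to the limit in \eqref{eq:FirstIVvep} is routine. In the left-hand side of \eqref{eq:FirstIVvep}: $\nb u_{\vep_j}\cdot D_x\Phi\cdot\nb u_{\vep_j} \to \nb u\cdot D_x\Phi\cdot\nb u$ in $L^1_{\loc}$ by strong $L^2$ convergence of the gradients; $\partial_t u_{\vep_j}\,(\nb u_{\vep_j}\cdot\Phi)\to \partial_t u\,(\nb u\cdot\Phi)$ in $L^1_{\loc}$ by combining weak $L^2$ convergence of $\partial_t u_{\vep_j}$ with strong $L^2$ convergence of $\nb u_{\vep_j}$ (a weak--strong product argument); the term $|\nb u_{\vep_j}|^2\dv_x\Phi\to|\nb u|^2\dv_x\Phi$ in $L^1_{\loc}$ again by strong convergence of the gradients; and finally $2H_{\vep_j}(u_{\vep_j})u_{\vep_j}^\gamma\,\dv_x\Phi \to 2\chi_{\{u>0\}}u^\gamma\,\dv_x\Phi = 2u_+^\gamma\,\dv_x\Phi$ in $L^1_{\loc}$, using $u_{\vep_j}^\gamma\to u^\gamma$ uniformly on compacts, $H_{\vep_j}(u_{\vep_j})\to\chi_{\{u>0\}}$ in $L^1_{\loc}$ by \eqref{eq:L1FvepCh}, and that $H_{\vep_j}(u_{\vep_j})u_{\vep_j}^\gamma$ vanishes a.e.\ on $\{u=0\}$ while $\mathcal L^{n+1}(\partial\{u>0\})=0$. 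Since $\Phi\in C_c^\infty(Q;\RR^{n+1})$ is arbitrary, this gives \eqref{eq:FirstIVLim}.

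For the energy inequality \eqref{eq:LocalEstPartialTu}, I would start from the corresponding (in fact equality, or $\le 0$ after discarding a $\delta$-term) identity for $u_{\vep_j}$: testing the equation $\partial_t u_{\vep_j}-\Delta u_{\vep_j} = -f_{\vep_j}(u_{\vep_j})$ with $\varphi = \partial_t u_{\vep_j}\,\psi^2$ and integrating in $x$ and over $(s,\tau)$, one obtains, after integration by parts in space and recognizing $f_{\vep_j}(u_{\vep_j})\partial_t u_{\vep_j}=\partial_t(F_{\vep_j}(u_{\vep_j}))$,
\[
\int_s^\tau\!\!\int_{\RR^n}(\partial_t u_{\vep_j})^2\psi^2 + \tfrac12\int_{\RR^n}\big(|\nb u_{\vep_j}|^2+2F_{\vep_j}(u_{\vep_j})\big)\psi^2\,\rd x\,\Big|_{t=s}^{t=\tau} + 2\int_s^\tau\!\!\int_{\RR^n}\partial_t u_{\vep_j}\,\psi\,(\nb u_{\vep_j}\cdot\nb\psi) = 0.
\]
Then I pass to the limit: the first term is lower semicontinuous under weak $L^2$ convergence (so it only helps, giving $\le$), the boundary terms at $t=s,\tau$ converge for a.e.\ $s,\tau$ using strong $H^1$ convergence on time slices (via the Aubin--Lions-type control and $F_{\vep_j}(u_{\vep_j})\to u_+^\gamma$ from \eqref{eq:L1FvepCh}, again using $\mathcal L^{n+1}(\partial\{u>0\})=0$), and the last term converges by the weak--strong product argument as before. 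The main obstacle throughout is justifying the convergence of the nonlinear term $\int f_{\vep_j}(u_{\vep_j})(u_{\vep_j}-u)\psi^2\to 0$ in the strong-convergence step and the passage to the limit in the mixed product $\partial_t u_{\vep_j}\,(\nb u_{\vep_j}\cdot\Phi)$, both of which hinge critically on \Cref{lem:ConvergengeNonlinearityAE} together with a uniform integrability / domination bound for $\{f_{\vep_j}(u_{\vep_j})\}_j$ on compact subsets of $Q$; one must be careful that the domination near the free boundary uses the uniform optimal growth estimate rather than any pointwise bound that could blow up.
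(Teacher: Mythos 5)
Your proposal is correct in substance and follows the paper for the second and third assertions, but it takes a genuinely different route for the key step, the strong convergence \eqref{eq:L2H1StgConv}. The paper first shows that $u$ solves the equation classically in $\{u>0\}$, tests with $(u-\sigma)_+\eta$ on $\{u>\sigma\}$ (using Sard's lemma for a.e.\ $\sigma$), lets $\sigma\downarrow0$ to obtain the exact identity \eqref{eq:IntGrad2Lim} for $\int|\nb u|^2\eta$, and then matches it with the analogous identity for $u_j$ obtained by testing \eqref{eq:FirstOVvep} with $u_j\eta$. You instead test \eqref{eq:FirstOVvep} against $(u_j-u)\psi^2$ (with a space--time cutoff; the extension from $C_c^\infty$ test functions is a routine approximation) or run a Cauchy estimate on $u_j-u_k$. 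This works and is arguably more elementary: writing $\int|\nb(u_j-u)|^2\psi^2=\int\nb u_j\cdot\nb(u_j-u)\psi^2-\int\nb u\cdot\nb(u_j-u)\psi^2$, the second term vanishes by weak convergence alone, while the first is controlled through the equation of $u_j$ using the uniform $L^2(Q)$ bound \eqref{eq:EnBound1Eps} on $\partial_t u_j$, the uniform gradient bound coming from \eqref{eq:OptRegSpace}, the uniform $L^1_\loc$ bound on $f_{\vep_j}(u_j)$, and $u_j\to u$ uniformly on compacts; no identity for $u$ and no truncation/Sard step are needed. (The paper's route has the side benefit that \eqref{eq:IntGrad2Lim} is reused verbatim in the blow-up analysis of \Cref{thm:MAIN2Blowups}, but for this lemma both routes are sound.) The passage to the limit in \eqref{eq:FirstIVvep} and in the energy identity, as you describe it, coincides with the paper's: strong--strong for the gradient terms, weak--strong for $\partial_t u_j(\nb u_j\cdot\Phi)$, \eqref{eq:L1FvepCh} for $H_{\vep_j}(u_j)u_j^\gamma$, and a.e.\ time-slice convergence of $\nb u_j$ plus semicontinuity for \eqref{eq:LocalEstPartialTu}.

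Two cautions. First, your parenthetical domination on $\{u_j\le\vartheta\vep_j^\beta\}$, namely $f_{\vep_j}(u_j)\le C\,u_j^\gamma/\vep_j^\beta\le C'$, is false when $\gamma<1$: there $u_j^\gamma/\vep_j^\beta$ may be as large as $\vartheta^\gamma\vep_j^{\beta(\gamma-1)}\to\infty$, and showing that $u_j^\gamma/\vep_j^\beta\to0$ on $\mathrm{int}(\{u=0\})$ is exactly the delicate barrier argument of \Cref{lem:ConvergengeNonlinearityAE}; note also that \eqref{eq:OptGrLimLem} bounds $u_j$, not this quotient. The correct (and sufficient) justification is the alternative you mention: $f_{\vep_j}(u_j)\ge0$ together with the uniform $L^1_\loc$ bound on $\{f_{\vep_j}(u_j)\}_j$ (which follows from the equation and the energy estimates \eqref{eq:EnBound1Eps}--\eqref{eq:EnBound2Eps}, as in Step 1 of \Cref{lem:FirstOVLim}), combined with $\|u_j-u\|_{L^\infty(\supp\psi)}\to0$. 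Second, you cannot compare with ``the corresponding identity for $u$ obtained after the limit is taken'': the weak formulation for $u$ is only proved in \Cref{lem:FirstOVLim}, whose proof relies on \eqref{eq:L2H1StgConv}, so that variant would be circular. Stick to the direct argument sketched above (which needs no equation for $u$), to the Cauchy-sequence variant, or to the paper's truncation identity \eqref{eq:IntGrad2Lim}. With these fixes your proof is complete.
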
 
\begin{proof} Let us set $u_j := u_{\vep_j}$. We proceed in three steps as follows.

\

\emph{Step 1: Proof of \eqref{eq:L2H1StgConv}.} It is enough to check that, for every bounded open set $\UU \subset\subset Q$ and every $\eta \in C_c^\infty(\UU)$, we have
\begin{equation}\label{eq:StrongConvLoc}
\int_\UU |\nb u_j|^2 \eta  \to \int_\UU |\nb u|^2 \eta,
\end{equation}
as $j\uparrow\infty$. First, since $u_j \to u$ locally uniformly, we have that $\{u > 0\}$ is open and $u$ is a classical solution to
\begin{equation}\label{eq:EqPosSet}
\partial_t u - \Delta u = - \gamma u^{\gamma-1} \quad \text{ in } \{u > 0\}.
\end{equation}
Indeed, by \Cref{lem:ConvergengeNonlinearityAE} (\emph{Step1}) we have that, for every compact $\KK \subset\subset \{u > 0\}$, $f_{\vep_j}(u_j) \to \gamma u^{\gamma-1}$ uniformly in $\KK$ as $j \uparrow\infty$. So \eqref{eq:EqPosSet} follows by testing the equation $\partial_t u_j- \Delta u_j = -f_{\vep_j}(u_j) $ with an arbitrary $\varphi \in C_c^\infty(\KK)$, passing to the limit as $j \uparrow\infty$, and using the classical parabolic Schauder theory (\cite[Theorem 4.8 and Theorem 4.9]{Lieberman1996:book}), noticing that the function $\tau \to \tau^{\gamma - 1}$ is smooth in $(0,\infty)$.

Now, fix $\sigma \in (0,1)$ and set $u_\sigma := (u-\sigma)_+$. 
By \eqref{eq:EqPosSet}, $u_\sigma$ is a classical solution to 
\[
\partial_t u_\sigma - \Delta u_\sigma = - \gamma (u_\sigma + \sigma)^{\gamma-1} \quad \text{ in } \{u > \sigma\}:
\]
Now, we test this equation with $\varphi = u_\sigma \eta$, where $\UU \subset\subset Q$ is open and bounded and $\eta \in C_c^\infty(\UU)$ is nonnegative.
Note that $\supp\, \varphi \subset \overline{\{u > \sigma\}}$ by the definition of $u_\sigma$.
Integrating by parts in space (this is possible since, thanks to Sard's lemma, $\{u=\sigma\}$ is a smooth hypersurface for a.e. $\sigma >0$ ---recall that $u$ is a classical solution, and thus smooth, in $\{u>0\}$), we obtain 
\begin{equation}\label{eq:StrongConUsigmaRel}
\int_Q |\nb u_\sigma|^2 \eta \dx\dt = -\int_Q \big[u_\sigma \partial_t u_\sigma \eta + u_\sigma \nb u_\sigma \cdot \nb \eta \big]\dx\dt - \gamma \int_Q (u_\sigma + \sigma)^{\gamma-1}u_\sigma \eta \dx\dt,
\end{equation}
for a.e. $\sigma > 0$. Noticing that $0 \leq (u_\sigma + \sigma)^{\gamma-1}u_\sigma \leq u_\sigma^\gamma \leq u^\gamma \in L^\infty(Q)$, and $u_\sigma \to u$ locally uniformly in $Q$, $\partial_tu_\sigma \to \partial_tu$ and $\nabla u_\sigma \to \nb u$ locally in $L^2(Q)$ as $\sigma \downarrow 0$, we may pass to the limit into the equation above to deduce
\begin{equation}\label{eq:IntGrad2Lim}
\int_Q |\nb u|^2 \eta = -\int_Q \big[u \partial_t u \eta + u\nb u \cdot \nb \eta \big] - \gamma \int_Q u^\gamma \eta .
\end{equation}
Now, testing the equation for $u_j$, \eqref{eq:FirstOVvep}, with $\varphi = u_j \eta$, we obtain
\begin{equation}
    \int_Q |\nb u_j|^2\eta = - \int_Q (u_j \partial_t u_j \eta + u_j \nb u_j \cdot \nb \eta ) - \int_Q  u_j f_{\vep_j}(u_j) \eta .
\end{equation}
Therefore, \eqref{eq:StrongConvLoc} will follow if we show that the right-hand side of the previous equation converges to the right-hand side of \eqref{eq:IntGrad2Lim}.

We do this as follows. 
For the first integral, we simply use that for every open bounded $\tilde{\UU} \subset\subset Q$ it holds that $\nb u_j \rightharpoonup \nb u$ and $\partial_t u_j \rightharpoonup \partial_tu$ weakly in $L^2(\tilde{\UU})$, and $u_j \to u$ uniformly in $\tilde{\UU}$.
For the second integral, we have 
\begin{equation}
    \begin{split}
    \int_Q  u_j f_{\vep_j}(u_j) \eta &= \int_{\{u_j \leq \vep_j^\beta\}}  u_j f_{\vep_j}(u_j) \eta  + \int_{\{u_j > \vep_j^\beta\}}  u_j f_{\vep_j}(u_j) \eta \\
    &= O(\vep_j^\beta) +\gamma \int_Q  u_j^\gamma \chi_{\{u_j > \vep_j^\beta\}} \eta \to \gamma \int_Q u_+^\gamma \eta,
    \end{split}
\end{equation}
as $j \uparrow \infty$.
Here we have used that $\{f_{\vep_j}(u_j)\}_{j \in\NN}$ is uniformly bounded in $L_\loc^1(Q)$ (see \emph{Step 1} in \Cref{lem:FirstOVLim}) and that $\chi_{\{u_j > \vep_j^\beta\}} \to \chi_{\{u > 0\}}$ in $L^1_\loc(Q)$.
The proof  of this last fact follows the same lines of \eqref{eq:L1FvepCh}, using that if $x\in \{u>0\}$ and $y\in \textrm{int}(\{u=0\})$, then for $j$ large enough we have $u_j(x) > u(x)/2 > \vep_j^\beta$ and $y\in \{u\leq \vep_j\}$ and thus $\chi_{\{u_j > \vep_j^\beta\}} (x) = 1$ and $\chi_{\{u_j > \vep_j^\beta\}} (y) = 0$.
This completes the proof of \eqref{eq:StrongConvLoc}.

\

\emph{Step 2: Proof of \eqref{eq:FirstIVLim}.} For every $j \in \NN$, $u_j$ satisfies \eqref{eq:FirstIVvep}, that is, 
\[
\int_Q \big( |\nabla u_j|^2 + 2H_{\vep_j}(u_j)u_j^\gamma \big) \dv_x \Phi  -  2\nb u_j \cdot D_x \Phi \cdot \nabla u_j - 2\partial_tu_j \,(\nb u_j \cdot \Phi) = 0,
\]
for every $\Phi \in C_c^\infty(Q;\RR^{n+1})$. Then, \eqref{eq:FirstIVLim} follows by passing to the limit as $j \uparrow \infty$ in the above equation, using that $\partial_t u_j \rightharpoonup \partial_tu$ weakly in $L^2(Q)$, $\nb u_j \to \nb u$ locally in $L^2(Q)$ ---see \eqref{eq:L2H1StgConv}---, $H_{\vep_j}(u_j) \to \chi_{\{u>0\}}$ in $L^1_\loc(Q)$ ---by \eqref{eq:L1FvepCh} in \Cref{lem:L1ConvFBMeas0}---, and recalling that $u_j^\gamma \to u^\gamma$ locally uniformly in $Q$ (by  \Cref{lem:StrCmp}).

\

\emph{Step 3: Proof of \eqref{eq:LocalEstPartialTu}.} 
The limit in \eqref{eq:L2H1StgConv} implies that, up to passing to a suitable subsequence, the set $A := \{t \in (0,\infty): \nb u_j(t) \to \nb u(t) \text{ in } L^2_\loc(\RR^n)\}$ satisfies $\LL^1((0,\infty)\setminus A) = 0$. So, let $s,\tau \in A$ with $s < \tau$ and let $\psi \in C_c^\infty(\RR^n)$ be a spatial cut-off function. Testing the weak formulation of the equation satisfied by $u_j$, \eqref{eq:FirstOVvep}, with $\varphi := \chi_{(s,\tau)}(t) \psi^2(x) \partial_tu_j$, we easily obtain
\[
\int_s^\tau \int_{\RR^n} (\partial_tu_j)^2\psi^2 + \frac{1}{2} \int_{\RR^n} \big[ |\nb u_j|^2 + 2 H_{\vep_j} (u_j) u_j^\gamma \big] \psi^2 \rd x \, \bigg|_{t=s}^{t=\tau}  + 2 \int_s^\tau \int_{\RR^n} \partial_tu_j  \psi \,(\nb u_j\cdot \nb \psi) = 0.
\]
Then \eqref{eq:LocalEstPartialTu} follows by passing to the limit as $j \uparrow \infty$ in the above equation (exactly as in \emph{Step 2}) and using the lower semicontinuity of the $L^2(Q)$-norm.
\end{proof}
\begin{rem}
    \label{rem:FBcondition}
    Note that the variational formulation \eqref{eq:FirstIVLim} encodes the free boundary condition 
    \begin{equation}
    \label{eq:RemFBcondition}
        |\nabla(u^{1/\beta})| = \dfrac{\sqrt{2}}{\beta} \quad \text{ in } \partial \{u>0\}\cap Q,
    \end{equation}
    as shown by the following formal computation.
    Indeed, considering $\Phi = (\Phi^1, \ldots, \Phi^n, \Phi^{n+1})$, denoting derivatives as $u_j := \partial_j u$, and and using the summation convention over repeated indices (from $1$ to $n$), we can integrate by parts and compute formally
    \begin{equation}
    \label{eq:ComputationWeakSolution}
        \begin{split}
            0 &= \int_{\{u>0\}}\big( u_i^2 + 2u^\gamma \big)  \Phi_j^j  -  2 u_i u_j \Phi_i^j - 2\partial_t u \,(u_j \Phi^j) \\
            &= -\int_{\{u>0\}}\big( 2 u_i u_{ij} + 2\gamma u^{\gamma-1}u_j \big)  \Phi^j  -  (2 u_{ii} u_j + 2 u_i u_{ji} ) \Phi^j + 2\partial_t u \,(u_j \Phi^j) \\
            & \quad \quad + \int_{\partial \{u>0\}}\big( u_i^2 + 2u^\gamma \big)  \Phi^j \nu^j  -  2 u_i u_j \Phi^j \nu^i \\
            &= -\int_{\{u>0\}} 2 \big( \gamma u^{\gamma-1} - \Delta u + \partial_t u \big)(u_j \Phi^j) 
            + \int_{\partial \{u>0\}}\big( |\nabla u|^2 + 2u^\gamma \big)  \Phi^j \nu^j  -  2 u_i u_j \Phi^j \nu^i, \\
        \end{split}
    \end{equation}
    where $\nu = (\nu^1, \ldots, \nu^n, \nu^{n+1}) $ is the unit normal vector to $\partial \{u>0\}$.
    Assuming that $u$ satisfies \eqref{eq:EqPosSet} and that $\nu = -(\nabla u, \partial_t u)/(|\nabla u|^2 + (\partial_t u)^2)^{1/2}$, we obtain
    \begin{equation}
    \begin{split}
        0 &= \int_{\partial \{u>0\}}\big( |\nabla u|^2 + 2u^\gamma \big)  \Phi^j \nu^j  -  2 u_i u_j \Phi^j \nu^i
        = \int_{\partial \{u>0\}}\big(  2 u^\gamma - |\nabla u |^2 \big) \dfrac{\Phi^j u_j}{\sqrt{|\nabla u|^2 + (\partial_t u)^2}} \\
        & = \int_{\partial \{u>0\}}\left( 2  - \frac{|\nabla u |^2}{u^\gamma} \right) u^\gamma  \dfrac{\Phi \cdot \nabla u}{\sqrt{|\nabla u|^2 + (\partial_t u)^2}}
         = \int_{\partial \{u>0\}}\left( 2  - \beta^2 |\nabla (u^{1/\beta})|^2 \right) u^\gamma  \dfrac{\Phi \cdot \nabla u}{\sqrt{|\nabla u|^2 + (\partial_t u)^2}},
    \end{split}
    \end{equation}
    which unveils the FB condition \eqref{eq:RemFBcondition}.
    Of course, to show that a solution satisfies \eqref{eq:RemFBcondition}, one should make rigorous these formal computations, under appropriate regularity assumptions on $u$ and the free boundary, as shown in the next result.  
\end{rem}

\begin{lem}\label{lem:FBCondition} 
    Let $\gamma \in (0,1]$, let $u$ be a weak solution to \eqref{eq:EQSing} in the sense of \eqref{eq:FirstIVLim} and let $(x_\circ, t_\circ)\in \partial  \{u>0\}$. Assume that there exists $r > 0$ such that:
\

$\bullet$ The exists $\tau \in C^1(B_r(x_\circ))$ such that $|\nb \tau| \not=0$ in $B_r(x_\circ)$ and
        \[
        \{ u > 0\} \cap Q_r(x_\circ,t_\circ) = \{ (x,t): t > \tau(x) \}\cap Q_r(x_\circ,t_\circ). 
        \]
\

$\bullet$ $u^{1/\beta} \in C^1(\overline{\{u>0\}}\cap Q_r(x_\circ,t_\circ))$ and $|\nabla (u^{1/\beta})| \not= 0$ in $\overline{\{u>0\}} \cap Q_r(x_\circ,t_\circ)$.

\

    Then,
    \begin{equation}
        |\nabla(u^{1/\beta})|(x_\circ, t_\circ) = \dfrac{\sqrt{2}}{\beta}.
    \end{equation}
\end{lem}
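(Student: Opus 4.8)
The plan is to make rigorous the formal computation of \Cref{rem:FBcondition}, but one has to be careful: when $\beta>1$ that computation, taken literally, produces only the trivial identity $0=0$. Indeed, writing $u=(u^{1/\beta})^\beta$, the quantities $|\nabla u|^2$, $u^\gamma$ and the cross-term $u_iu_j$ all vanish on $\partial\{u>0\}$ (each to order $(u^{1/\beta})^{2(\beta-1)}$), so the boundary term obtained by integrating \eqref{eq:FirstIVLim} by parts over $\{u>0\}$ is pointwise zero on the free boundary. The remedy I would use is to perform the integration by parts over the superlevel sets $\Omega_\delta:=\{u^{1/\beta}>\delta\}$ instead, to keep track of the exact power of $\delta$ carried by the resulting boundary integral over $\{u^{1/\beta}=\delta\}$, and to check that the ``thin slab'' $\{0<u^{1/\beta}<\delta\}$ contributes at a strictly higher power of $\delta$; dividing by the right power of $\delta$ and sending $\delta\downarrow0$ then isolates the free boundary condition.

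First I would record the relevant algebra and geometry. Set $v:=u^{1/\beta}$; by \eqref{eq:Beta} one has $\gamma\beta=2(\beta-1)$, hence $u^\gamma=v^{2(\beta-1)}$ and $\nabla u=\beta v^{\beta-1}\nabla v$ in $\{u>0\}$. I will also use that $u$ is a classical solution of \eqref{eq:EqPosSet} in the open set $\{u>0\}$ (for a weak solution this follows from \eqref{eq:FirstOVLimIntro} and interior parabolic regularity; cf.\ the proof of \Cref{lem:L2H1StgConv}), so that $v\in C^\infty(\{u>0\})$, while by hypothesis $v\in C^1\big(\overline{\{u>0\}}\cap Q_r(x_\circ,t_\circ)\big)$ with $\nabla v\neq0$ there. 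From the graph representation $\{u>0\}\cap Q_r(x_\circ,t_\circ)=\{t>\tau(x)\}\cap Q_r(x_\circ,t_\circ)$ together with $v\equiv0$ on $\{t=\tau\}$, differentiating along the tangent vectors $(e_i,\partial_i\tau)$ gives $\nabla v=-(\partial_tv)\nabla\tau$ on $\partial\{u>0\}$; in particular $\partial_tv\neq0$ there (otherwise $\nabla v=0$, against the hypothesis), so the full space--time gradient $(\nabla v,\partial_tv)$ is nonzero, bounded below, in a neighborhood of $(x_\circ,t_\circ)$. By the implicit function theorem, for every small $\delta\ge0$ the level set $\{v=\delta\}$ is, near $(x_\circ,t_\circ)$, a $C^1$ graph $\{t=\tau_\delta(x)\}$ with $\tau_\delta\to\tau$ in $C^1$ as $\delta\downarrow0$; moreover $v$ grows at least linearly away from $\partial\{u>0\}$, so $\mathcal L^{n+1}(\{0<v<\delta\}\cap N)=O(\delta)$ for any fixed small cylinder $N\subset\subset Q_r(x_\circ,t_\circ)$.

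The core of the argument runs as follows. Fix $N$ as above (with $\sup_Nv$ small) and an arbitrary $\Phi\in C_c^\infty(N;\RR^{n+1})$, and insert it into \eqref{eq:FirstIVLim}. Since $\nabla u=\partial_tu=0$ a.e.\ on $\{u=0\}$ and $u_+^\gamma=0$ there, \eqref{eq:FirstIVLim} becomes $I=0$, where $I$ is the same integral restricted to $\{u>0\}\cap N$. I would split $I=I_\delta+(I-I_\delta)$, with $I_\delta$ the integral over $\Omega_\delta\cap N$. On the slab $\{0<v<\delta\}\cap N$ the bounds $|\nabla u|^2,\,u^\gamma=O(\delta^{2(\beta-1)})$ and $|\nabla u|,\,|\partial_tu|=O(\delta^{\beta-1})$, combined with $\mathcal L^{n+1}(\{0<v<\delta\}\cap N)=O(\delta)$, give $|I-I_\delta|\le C\delta^{2\beta-1}$. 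On $\Omega_\delta\cap N$, where $u$ is smooth and $\{v=\delta\}$ is a $C^1$ hypersurface, I would integrate by parts exactly as in the formal computation of \Cref{rem:FBcondition} (with $\Omega_\delta\cap N$ replacing $\{u>0\}$ and $\Phi$ compactly supported in $N$): the interior terms cancel by \eqref{eq:EqPosSet}, and the surviving boundary integral over $\{v=\delta\}\cap N$---evaluated using the outward normal $\nu_\delta=-(\nabla v,\partial_tv)/|(\nabla v,\partial_tv)|$ and the identities $|\nabla u|^2+2u^\gamma=\delta^{2(\beta-1)}(\beta^2|\nabla v|^2+2)$ and $\nabla u=\beta\delta^{\beta-1}\nabla v$ on $\{v=\delta\}$---equals
\begin{equation*}
I_\delta=\delta^{2(\beta-1)}J_\delta,\qquad
J_\delta:=\int_{\{v=\delta\}\cap N}\frac{(\nabla v\cdot\Phi_x)\,(\beta^2|\nabla v|^2-2)}{\sqrt{|\nabla v|^2+(\partial_tv)^2}}\,\rd\sigma,
\end{equation*}
where $\Phi_x:=(\Phi^1,\dots,\Phi^n)$. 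Therefore $0=I=\delta^{2(\beta-1)}J_\delta+O(\delta^{2\beta-1})$, so $J_\delta=O(\delta)\to0$ as $\delta\downarrow0$. By the $C^1$ convergence $\{v=\delta\}\to\partial\{u>0\}$ and continuity of $\nabla v,\partial_tv$ up to the free boundary, $J_\delta\to J_0:=\int_{\partial\{u>0\}\cap N}\frac{(\nabla v\cdot\Phi_x)(\beta^2|\nabla v|^2-2)}{\sqrt{|\nabla v|^2+(\partial_tv)^2}}\,\rd\sigma$, hence $J_0=0$. Finally, since $\nabla v\neq0$ on $\partial\{u>0\}\cap\overline N$, for any $g\in C_c^\infty(N)$ I can choose $\Phi$ with $\nabla v\cdot\Phi_x=g\sqrt{|\nabla v|^2+(\partial_tv)^2}$ on the free boundary; then $\int_{\partial\{u>0\}\cap N}g\,(\beta^2|\nabla v|^2-2)\,\rd\sigma=0$ for all such $g$, whence $\beta^2|\nabla v|^2\equiv2$ on $\partial\{u>0\}\cap N$ and, in particular, $|\nabla(u^{1/\beta})|(x_\circ,t_\circ)=\sqrt2/\beta$.

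The main obstacle is precisely the degeneracy highlighted at the outset: the free boundary condition is invisible at leading order and must be recovered from the subleading term of the boundary integral over the slices $\{u^{1/\beta}=\delta\}$. The quantitative crux is the strict inequality of exponents $2\beta-1>2(\beta-1)$, which makes the slab remainder $I-I_\delta$ negligible after the rescaling by $\delta^{2(\beta-1)}$. The remaining ingredients---the implicit-function-theorem description of the slices near the free boundary, the interior regularity of $u$ in $\{u>0\}$ needed to justify the integration by parts on $\Omega_\delta$, and the concluding localization in $\Phi$---are routine, so I expect no serious difficulty beyond bookkeeping once the level-set strategy is in place.
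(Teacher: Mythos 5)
Your level-set strategy correctly identifies the key degeneracy (the factor $v^{\beta\gamma}=v^{2(\beta-1)}$ makes the naive boundary term vanish pointwise on $\partial\{u>0\}$), and the $\delta$-power bookkeeping over $\{v=\delta\}$ is a legitimate alternative to what the paper does (the paper first concentrates in time to reduce to a spatial slice and then tests with the weighted field $\Phi=v^{-\beta\gamma}\Psi$, truncated at level $\vep$, which extracts the subleading term without ever integrating by parts inside $\{u>0\}$). However, as a proof of the lemma \emph{as stated} there is a genuine gap: your argument hinges on cancelling the interior term in the integration by parts over $\Omega_\delta\cap N$ via the pointwise equation \eqref{eq:EqPosSet}, and on $u$ being smooth (at least $C^2$/$H^2_{\loc}$) in $\{u>0\}$ so that this integration by parts is legitimate. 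You justify both by invoking \eqref{eq:FirstOVLimIntro} plus interior parabolic regularity --- but \eqref{eq:FirstOVLimIntro} is \emph{not} among the hypotheses of \Cref{lem:FBCondition}: the lemma assumes only the domain-variation identity \eqref{eq:FirstIVLim} together with $u^{1/\beta}\in C^1$ up to the free boundary. The inner-variation identity alone does not yield the outer Euler--Lagrange equation nor interior smoothness without extra regularity (for instance, any $u=u(t)$ with $\partial_t u\in L^2$ satisfies \eqref{eq:FirstIVLim} trivially whatever ODE it solves; and with $u$ merely $C^1$ the distributional identity $\partial_i(u_iu_j)-\partial_j\big(\tfrac12|\nabla u|^2+u^\gamma\big)=\partial_t u\,u_j$ cannot be expanded to give the pointwise PDE). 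This is precisely why the paper states two separate lemmas, \Cref{lem:FBCondition} for \eqref{eq:FirstIVLim} and \Cref{lem:FBConditionBis} for \eqref{eq:FirstOVLim}: each weak formulation is supposed to encode the free boundary condition on its own, and the paper's proof of \Cref{lem:FBCondition} never uses the equation in $\{u>0\}$.

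Concretely: your proof establishes the conclusion for solutions satisfying \emph{both} weak formulations (e.g.\ weak solutions in the sense of \Cref{def:WeakSolEIntro}, in particular those produced by \Cref{thm:MAIN1}), where the interior equation and smoothness in $\{u>0\}$ are indeed available; in that setting the remaining steps (the slab estimate $O(\delta^{2\beta-1})$ versus the boundary term $\delta^{2(\beta-1)}J_\delta$, the $C^1$ convergence of the level graphs $\tau_\delta\to\tau$, and the final localization using $\nabla v\neq0$) are sound modulo routine care with the regularity of the chosen $\Phi$. But under the hypotheses actually assumed in \Cref{lem:FBCondition}, the step ``the interior terms cancel by \eqref{eq:EqPosSet}'' is unjustified, and I do not see how to repair it within your scheme without effectively importing the paper's weighted test-field trick, which avoids second derivatives of $u$ altogether.
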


\begin{proof} 
By invariance under translations, we may assume $(x_\circ,t_\circ) = (0,0)$. Testing \eqref{eq:FirstIVLim} with the vector field $(x,t) \mapsto (\Phi(x)\,\eta_\sigma(t), 0)$, where $\Phi = (\Phi^1, \ldots, \Phi^n)\in C_c^\infty(B_r;\mathbb{R}^n)$ and $\eta_\sigma\in C_c^\infty(\mathbb{R})$ is a nonnegative temporal cut-off with $\operatorname{supp}\eta_\sigma = [-\sigma/2,\sigma/2]$ and $\eta_\sigma \rightharpoonup^{\star} \delta_0$ as $\sigma \downarrow 0$.  
    Then, thanks to our assumptions, a standard argument using the Mean Value Theorem leads to 
    \begin{equation}
        0 = \int_{\Omega}\big( u_i^2 + 2u^\gamma \big)  \Phi_j^j  -  2 u_i u_j \Phi_i^j - 2\partial_t u \,(u_j \Phi^j) \dx, \qquad \Omega := \{x: u(x,0) > 0\} \cap B_r.
    \end{equation}
    As before, we are using the summation convention over repeated indices (from $1$ to $n$), and the subindex denotes differentiation, that is, $w_i := \partial_{x_i} w$.
    Now, we set
    \begin{equation}
        v:=u^{1/\beta},\qquad\text{i.e.}\qquad u=v^\beta.
    \end{equation}
    We obtain
    \begin{equation}
    0 = \int_{\Omega}\big(\beta^2 v^{2\beta-2}|\nabla v|^2 + 2 v^{\beta\gamma}\big)\Phi_j^j
    -2\beta^2 v^{2\beta-2} v_i v_j \Phi_i^j  -2\beta^2 v^{2\beta-2} (\partial_t v)\, v_j\Phi^j \dx.
    \end{equation}
    Noticing that $2(\beta - 1) = \beta \gamma$, this rewrites as
    \begin{equation}
    0 = \int_{\Omega} v^{\beta\gamma} \left [ \big(|\nabla v|^2 + 2/\beta^2 \big)\Phi_j^j
    -2 v_i v_j \Phi_i^j  -2 (\partial_t v)\, v_j\Phi^j \right] \dx .
    \end{equation}
    Since $\Phi$ can be taken Lipschitz by a standard approximation, we fix $\varepsilon>0$ and take 
    \begin{equation}
    \Phi :=
    \begin{cases}
    v^{-\beta\gamma}\Psi & \text{ if } v\ge\varepsilon\\[4pt]
    \varepsilon^{-\beta\gamma}\,\Psi & \text{ if } v < \varepsilon,
    \end{cases}
    \end{equation}
    where $\Psi \in C_c^\infty(B_r;\mathbb{R}^n)$. Then, we obtain
    \begin{equation}
        \begin{split}
        0 &= 
        \int_{\Omega \cap \{v\ge\varepsilon\}}\Big[ \big(|\nabla v|^2+2/\beta^2\big)\Psi_j^j -2v_i v_j \Psi_i^j -2(\partial_t v)\, v_j\Psi^j \Big]\, \dx \\
        &\quad
        +\beta\gamma \int_{\Omega \cap\{v\ge\varepsilon\}} v^{-1}  \big(|\nabla v|^2-2/\beta^2\big)v_j\Psi^j
        \,\dx \\
        &\quad
        +\int_{\Omega \cap \{0<v<\varepsilon\}} (v/\varepsilon)^{\beta\gamma}\Big[ \big(|\nabla v|^2+2/\beta^2\big)\Phi_j^j -2v_i v_j\Phi_i^j - 2(\partial_t v)\, v_j\Phi^j\Big]\,\dx.
        \end{split}
    \end{equation}
    Now, by our regularity assumptions, we can let $\vep \downarrow0$ in the previous expression and, by dominated convergence, the last integral converges to zero, while the first integral  in $\Omega \cap \{v\ge\varepsilon\}$ converges to the same integral in $\Omega$.
    Consequently, we deduce that $v^{-1}  \big(|\nabla v|^2-2/\beta^2\big)v_j\Psi^j$ must be integrable in $\Omega$. This, combined with the regularity of $|\nabla v|$ and that $|\nb v| \not=0$ in $\overline{\Omega}$, shows that $|\nabla v|^2=2/\beta^2$ on $\partial \Omega$ (this is because $v$ behaves like the distance to the free boundary $\partial \Omega$ and thus the weight $v^{-1}$ is not integrable in $\Omega$; hence the continuous factor multiplying it must vanish on $\partial \Omega$).
\end{proof}
Before proceeding further, some remarks are in order.
\begin{rem}
    \label{rem:hypothesesFBCondition}
    Our regularity assumptions on $u^{1/\beta}$ and the FB are necessary since otherwise the result may be false, as shown by the example $u(x,t) = (-\frac{2\gamma}{\beta}t)_+^{\beta/2}$ described later in \Cref{Subsec:SpecialSolutions}.
    
    The first assumption in \Cref{lem:FBCondition} rules out ``horizontal'' FB points (points at which, following the notation in the statement, $\nabla \tau = 0$): in such case, the set $\Omega$ appearing in the proof could be empty or the whole ball $B_r$, and thus the above argument would not work. 
    
    Regarding our assumptions on $u^{1/\beta}$, there are two main comments. On the one hand, the assumption $|\nabla (u^{1/\beta})| \not= 0$ in $\overline{\{u>0\}} \cap Q_r(x_\circ,t_\circ)$ is quite natural in view of the examples we provide later on in Subsection \ref{Subsec:SpecialSolutions}, \Cref{sec:SelfSimilar}, and \Cref{sec:TravelingWaves}. 
    On the other hand, even though we cannot expect $u^{1/\beta} \in C^1(\overline{\{u>0\}})$ in time (recall again the example $u(x,t) = (-\frac{2\gamma}{\beta}t)_+^{\beta/2}$), we notice that the previous result holds under weaker assumptions on $\partial_t u$; for instance, the last limit requires $\partial_t(u^{1/\beta})$ being locally integrable only (still assuming $\nabla (u^{1/\beta})$ continuous). 
    Furthermore, having some control over $\partial_t (u^{1/\beta})$ in terms of $|\nabla (u^{1/\beta})|$ is natural (cf. \cite[Section 8]{CafVaz95} in the case $\gamma=0$), at least at ``regular vertical'' FB points, that is, FB points where the blow-up is of the form $(\tfrac{\sqrt{2}}{\beta}x_1)_+^\beta$, up to a rotation and a translation (see Subsection \ref{Subsec:SpecialSolutions} again).    
\end{rem}
\begin{rem}
\label{remark:solutionsAreWeakSol}
We also stress that the computation \eqref{eq:ComputationWeakSolution} shows that, whenever $\partial \{u>0\}$ is locally the graph of a $C^1$ function $t = \tau(x)$ as in Lemma \ref{lem:FBCondition}, and $u$ satisfies the integrability assumptions of \Cref{def:WeakSolEIntro} and
    \[
    \begin{cases}
     \partial_t u - \Delta u = -\gamma u^{\gamma-1} \quad &\text{in } \{ u > 0\} \\
     |\nb u| = 0                                    \quad &\text{in } \partial\{ u > 0\}  
    \end{cases}
    \]
in the classical sense, then $u$ is a weak solution in the sense of \Cref{def:WeakSolEIntro}. Notice that this is true even when $\partial\{u > 0\} = \{u=0\}$, that is, $u > 0$ in ``both sides'' of $\partial\{u > 0\}$ and the FB is \emph{non-regular}, since the outer normal vector at FB points is undefined.

Indeed, to see that $u$ is a solution in the sense of domain variations (that is, \eqref{eq:FirstIVLimIntro}), one just needs to split $\{u > 0\}$ into its two connected components $\mathcal{C}_1$ and $\mathcal{C}_2$ and start from the last two terms in \eqref{eq:ComputationWeakSolution} replacing the integration on $\{u>0\}$ with the integration on $\mathcal{C}_1$ and $\mathcal{C}_2$, respectively. Notice the all the integrals are well-defined (with some abuse of notation, $\nu$ denotes the exterior unit normal to both $\partial\mathcal{C}_1$ and $\partial\mathcal{C}_2$) and are both equal to zero thanks to our assumption $|\nb u| = 0$ in $\partial\{ u > 0\}$. Then one goes backward in our computations, undoing the integration by parts in each connected component and arriving to the first line in \eqref{eq:ComputationWeakSolution}. Using the integrability assumptions on $u$ and recalling that $\mathcal{L}^{n+1}(\partial\{ u > 0\}) = 0$, our claim follows. A very similar argument shows that $u$ is a weak solution in the sense of \eqref{eq:FirstOVLimIntro} as well.

Note that this is a substantial difference between the cases $\gamma >0$ and $\gamma = 0$. When $\gamma=0$, the function $\sqrt{2}|x_1|$ is still a weak solution in the sense of domain variations,\footnote{Now each integrand in the boundary term appearing in \eqref{eq:ComputationWeakSolution} does not vanish individually, but the whole boundary integral is zero.} that is, it satisfies
\begin{equation}\label{eq:InnerVarGamma0}
\int_{\RR^{n+1}} \big( |\nabla u|^2 + 2\chi_{\{u>0\}} \big) \dv_x \Phi  -  2 \nb u \cdot D_x \Phi \cdot \nabla u - 2\partial_t u \,(\nb u \cdot \Phi) = 0,
\end{equation}
for every $\Phi \in C_c^\infty(\RR^{n+1};\RR^{n+1})$. However, it is not a weak solution in the sense of \eqref{eq:FirstOVLimIntro}. One can see this in the elliptic setting, whose solutions are stationary solutions of the parabolic equation. In dimension $n=1$ (then the argument is extended easily to higher dimensions), one can easily realize that the elliptic energy $\JJ_0$ ---see \eqref{eq:EllipticEnergy}--- of the function $\sqrt{2}|x|$ can be decreased lifting the function near $x=0$, removing the FB (this is only possible with an variation of the type $\sqrt{2}|x| + \vep \varphi$, $\varphi \geq 0$), and thus it cannot be a critical point of the energy.
\end{rem}

To conclude this section, we obtain the weak formulation of the equation of $u$, now in the sense of~\eqref{eq:FirstOVLimIntro}.

\begin{lem}\label{lem:FirstOVLim} 
Let $\gamma \in (0,1]$ and $\alpha \in (0,1)$. Let $u_\circ \in C_c^{2+\alpha}(\RR^n)$ be nonnegative and nontrivial, and let $u_{\vep_j}$ and $u$ as in \Cref{lem:StrCmp}. Then $u_+^{\gamma-1} \in L_\loc^1(Q)$ and $u$ satisfies
\begin{equation}\label{eq:FirstOVLim}
\int_Q \partial_t u \varphi + \nabla u \cdot\nabla\varphi + \gamma u_+^{\gamma-1} \varphi = 0,
\end{equation}
for every $\varphi \in C_c^\infty(Q)$.
\end{lem}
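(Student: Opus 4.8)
The plan is to pass to the limit $j\uparrow\infty$ in the weak formulation \eqref{eq:FirstOVvep} of the approximating problems. The linear terms $\int_Q\partial_t u_{\vep_j}\varphi$ and $\int_Q\nabla u_{\vep_j}\cdot\nabla\varphi$ converge thanks to the weak $L^2$-convergence of $\partial_t u_{\vep_j}$ and the local strong $L^2$-convergence of $\nabla u_{\vep_j}$ provided by \Cref{lem:StrCmp} and \Cref{lem:L2H1StgConv}; hence the entire content is the convergence of the nonlinear term $\int_Q f_{\vep_j}(u_{\vep_j})\varphi$ towards $\int_Q\gamma u_+^{\gamma-1}\varphi$, together with the integrability $u_+^{\gamma-1}\in L^1_{\loc}(Q)$. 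First I would obtain a uniform $L^1_{\loc}$-bound for the nonlinearity: testing \eqref{eq:FirstOVvep} with a nonnegative $\zeta\in C_c^\infty(Q)$ and using $f_{\vep_j}\ge0$ gives $0\le\int_Q f_{\vep_j}(u_{\vep_j})\zeta\le\|\partial_t u_{\vep_j}\|_{L^2(Q)}\|\zeta\|_{L^2}+\|\nabla u_{\vep_j}\|_{L^2(\supp\zeta)}\|\nabla\zeta\|_{L^2}$, which is bounded independently of $j$ by the energy estimates \eqref{eq:EnBound1Eps}--\eqref{eq:EnBound2Eps}. Combined with the a.e. convergence $f_{\vep_j}(u_{\vep_j})\to\gamma u_+^{\gamma-1}$ from \Cref{lem:ConvergengeNonlinearityAE} and Fatou's lemma, this yields $u_+^{\gamma-1}\in L^1_{\loc}(Q)$.

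The heart of the argument is an equi-smallness estimate for the nonlinearity on the region where $u_{\vep_j}$ is small: for every $\zeta\in C_c^\infty(Q)$ with $0\le\zeta\le1$ there is $C_\zeta>0$ such that $\int_{\{u_{\vep_j}\le2\delta\}\cap\{\zeta\equiv1\}}f_{\vep_j}(u_{\vep_j})\le C_\zeta\,\delta^{\gamma/2}$ for all $j$ and all $\delta\in(0,1]$. The key point is to localize with a \emph{monotone} cutoff. Taking $\theta\in C^\infty([0,\infty))$ with $\theta\equiv1$ on $[0,2]$, $\theta\equiv0$ on $[4,\infty)$ and $\theta'\le0$, setting $\theta_\delta(s):=\theta(s/\delta)$ and $\Theta_\delta(s):=\int_0^s\theta_\delta$, and testing \eqref{eq:FirstOVvep} with $\theta_\delta(u_{\vep_j})\zeta$ (admissible since $u_{\vep_j}$ is classical, see \Cref{rem:InteriorReg+Inner}), an integration by parts gives
\[
\int_Q f_{\vep_j}(u_{\vep_j})\,\theta_\delta(u_{\vep_j})\,\zeta \;=\; \int_Q \Theta_\delta(u_{\vep_j})\,\partial_t\zeta \;+\; \int_Q \theta_\delta'(u_{\vep_j})\,|\nabla u_{\vep_j}|^2\,\zeta \;+\; \int_Q \theta_\delta(u_{\vep_j})\,\nabla u_{\vep_j}\cdot\nabla\zeta .
\]
Since $\theta_\delta'\le0$ and $\zeta\ge0$, the ``free-boundary flux'' term in the middle has the favorable sign and may simply be discarded; the first term is $O(\delta)$ because $0\le\Theta_\delta\le4\delta$, and the last term is $O(\delta^{\gamma/2})$ because $|\nabla u_{\vep_j}|=\beta u_{\vep_j}^{\gamma/2}|\nabla(u_{\vep_j}^{1/\beta})|\le\beta C_\circ^{1/2}(4\delta)^{\gamma/2}$ on $\{u_{\vep_j}\le4\delta\}\supset\supp\theta_\delta(u_{\vep_j})$ by the optimal spatial estimate \eqref{eq:OptRegSpace}. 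Since $\theta_\delta(u_{\vep_j})\zeta\ge\chi_{\{u_{\vep_j}\le2\delta\}\cap\{\zeta\equiv1\}}$ and $f_{\vep_j}\ge0$, the claimed bound follows. Here the hypothesis $\gamma>0$ is essential: it is precisely the positivity of the exponent $\gamma/2$ that makes this error vanish.

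With these two ingredients in hand, I would conclude by splitting $\varphi=\varphi\,\eta_\delta(u)+\varphi\,(1-\eta_\delta(u))$, where $\eta_\delta:[0,\infty)\to[0,1]$ is continuous, vanishes on $[0,\delta/2]$ and equals $1$ on $[\delta,\infty)$, and $\zeta\in C_c^\infty(Q)$ is chosen with $0\le\zeta\le1$ and $\zeta\equiv1$ on $\supp\varphi$. On the compact set $\{u\ge\delta/2\}\cap\supp\varphi\subset\{u>0\}$ one has $f_{\vep_j}(u_{\vep_j})=\gamma u_{\vep_j}^{\gamma-1}\to\gamma u^{\gamma-1}$ uniformly by \Cref{lem:ConvergengeNonlinearityAE}, so $\int_Q f_{\vep_j}(u_{\vep_j})\varphi\,\eta_\delta(u)\to\int_Q\gamma u_+^{\gamma-1}\varphi\,\eta_\delta(u)$ as $j\uparrow\infty$. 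The remainder is controlled on the approximating side by the equi-smallness estimate of Step 2 (noting that $\{u<\delta\}\cap\supp\varphi\subset\{u_{\vep_j}<2\delta\}\cap\supp\varphi$ for $j$ large, by uniform convergence) and on the limiting side by $\int_{\{0<u<\delta\}\cap\supp\varphi}\gamma u^{\gamma-1}\to0$ as $\delta\downarrow0$, which uses $u_+^{\gamma-1}\in L^1_{\loc}$ and dominated convergence. Letting first $j\uparrow\infty$ and then $\delta\downarrow0$ yields $\int_Q f_{\vep_j}(u_{\vep_j})\varphi\to\int_Q\gamma u_+^{\gamma-1}\varphi$, and passing to the limit in \eqref{eq:FirstOVvep} gives \eqref{eq:FirstOVLim}. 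The main obstacle is exactly this convergence of the nonlinear term: a priori $f_{\vep_j}(u_{\vep_j})$ could concentrate extra mass on $\{u=0\}$ (cf. \Cref{rem:PhilProb}), and the monotone-cutoff estimate is what rules this out without any regularity information on the free boundary.
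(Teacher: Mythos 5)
Your overall architecture (uniform $L^1_{\loc}$ bound plus Fatou for the integrability, then splitting $\int_Q f_{\vep_j}(u_{\vep_j})\varphi$ into the region where $u$ is bounded away from zero and a small-values region) is sensible, and your Step 1 coincides with the paper's. However, the key ``equi-smallness'' estimate in your Step 2 rests on a sign error. Testing \eqref{eq:FirstOVvep} with $\theta_\delta(u_{\vep_j})\zeta$ gives, after the integrations by parts,
\[
\int_Q f_{\vep_j}(u_{\vep_j})\,\theta_\delta(u_{\vep_j})\,\zeta
=\int_Q \Theta_\delta(u_{\vep_j})\,\partial_t\zeta
\;-\;\int_Q \theta_\delta'(u_{\vep_j})\,|\nabla u_{\vep_j}|^2\,\zeta
\;-\;\int_Q \theta_\delta(u_{\vep_j})\,\nabla u_{\vep_j}\cdot\nabla\zeta ,
\]
i.e.\ the last two terms carry the opposite sign to the one you wrote. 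Since $\theta_\delta'\le 0$ and $\zeta\ge 0$, the middle term is \emph{nonnegative} on the right-hand side, so it cannot be ``discarded''; it must be estimated. And it is exactly the dangerous term: by coarea it is an average over the levels $s\in[2\delta,4\delta]$ of the flux $\int_{\{u_{\vep_j}=s\}}|\nabla u_{\vep_j}|\,\zeta$, which is precisely the quantity that, in the case $\gamma=0$, converges to a nontrivial measure on $\partial\{u>0\}$ --- it is the candidate carrier of the concentration you are trying to rule out (cf.\ \Cref{rem:PhilProb}). A direct bound via \eqref{eq:OptRegSpace} only yields $-\int_Q\theta_\delta'(u_{\vep_j})|\nabla u_{\vep_j}|^2\zeta\le C\,\delta^{\gamma-1}\,\mathcal{L}^{n+1}\big(\{2\delta\le u_{\vep_j}\le 4\delta\}\cap\supp\zeta\big)$, and no estimate of order $o(\delta^{1-\gamma})$, uniform in $j$, for the measure of these level strips is available at this stage; hence your claimed bound $\int_{\{u_{\vep_j}\le 2\delta\}\cap\{\zeta\equiv1\}}f_{\vep_j}(u_{\vep_j})\le C_\zeta\,\delta^{\gamma/2}$ for all $j$ and all $\delta$ is unjustified as stated.

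The gap is repairable, but the repair brings you essentially back to the paper's mechanism. Since you only invoke the estimate in the iterated limit ($j\uparrow\infty$ first, then $\delta\downarrow 0$), keep the flux term and bound its $\limsup$ in $j$: by local uniform convergence, $\{2\delta\le u_{\vep_j}\le 4\delta\}\cap\supp\zeta\subset\{\delta\le u\le 5\delta\}\cap\supp\zeta$ for $j$ large, and since $u^{\gamma-1}\ge(5\delta)^{\gamma-1}$ there, one gets $\delta^{\gamma-1}\mathcal{L}^{n+1}(\{\delta\le u\le 5\delta\}\cap\supp\zeta)\le C\int_{\{\delta\le u\le 5\delta\}\cap\supp\zeta}u_+^{\gamma-1}\to 0$ as $\delta\downarrow 0$, by dominated convergence and the integrability $u_+^{\gamma-1}\in L^1_{\loc}(Q)$ from your Step 1. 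This is exactly the device behind the paper's estimate \eqref{eq:LimEqEst1}; the paper organizes it with the complementary (nondecreasing, convex) truncation $\psi_\sigma'(u_j)$, which localizes away from small values so that the nonlinearity is bounded, passes $j\uparrow\infty$ first with $\sigma$ fixed, and only then removes the gradient term at the level of the limit $u$ using $|\nabla u|^2\le C u^\gamma$ and $u_+^{\gamma-1}\in L^1_{\loc}$. As written, though, your argument does not prove the lemma.
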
 
\begin{proof} Let $u_j := u_{\vep_j}$. The proof is divided in two steps as follows. 

\

\emph{Step 1: $u_+^{\gamma-1} \in L^1_\loc(Q)$}. First we notice that the family $\{f_{\vep_j}(u_j)\}_{j \in\NN}$ is uniformly bounded in $L_{\loc}^1(Q)$: this easily follows from the energy bounds \eqref{eq:EnBound1Eps} and \eqref{eq:EnBound2Eps}, and the equation of $u_j$ \eqref{eq:FirstOVvep}. Combining this with the a.e. limit in \eqref{eq:AEConvfvep} and Fatou's lemma, we deduce $u_+^{\gamma-1} \in L^1_\loc(Q)$. 

\

\emph{Step 2: Proof of \eqref{eq:FirstOVLim}.} The final part of the proof generalizes the argument in \cite[Theorem 1]{Phillips1987:art}. Let us consider a function $\psi \in C^\infty(\RR)$ satisfying $\psi',\psi'' \geq 0$, $\psi(v) = 0$ for $v \leq 1/2$ and $\psi(v) = v-1$ for $v \geq 2$. For every $\sigma > 0$, set $\psi_\sigma(v) := \sigma \psi (v/\sigma)$: in this way, $\psi_\sigma(v) \to v_+$ as $\sigma \downarrow 0$.  

\

Now, let us fix $j \in \NN$, $\sigma > 0$, and $\varphi \in C_c^\infty(Q)$. Testing the weak formulation of the equation satisfied by $u_j$, \eqref{eq:FirstOVvep},  with $ \psi_\sigma'(u_j)\varphi$, we deduce that 
\begin{equation}\label{eq:Eqpsisigma}
\int_Q  \psi_\sigma'(u_j)(\partial_t u_j \varphi + \nb u_j \cdot \nb\varphi)  = -\int_Q \psi_\sigma''(u_j) |\nb u_j|^2 \varphi - \int_Q \psi_\sigma'(u_j) f_{\vep_j} (u_j) \varphi.
\end{equation}
We next show how \eqref{eq:FirstOVLim} follows from \eqref{eq:Eqpsisigma} passing to the double limit $j\uparrow\infty$ and $\sigma \downarrow 0$.

\

\emph{Limit as $j \uparrow \infty$:} First, since $\supp \, \psi_\sigma' \subset [\sigma/2,\infty)$, we have $\psi_\sigma'(u_j) f_{\vep_j} (u_j) = \gamma \psi_\sigma'(u_j) u_j^{\gamma-1} \leq \gamma \|\psi'\|_\infty (\tfrac{\sigma}{2})^{\gamma-1}$ for every $j$ large enough and, in light of \eqref{eq:AEConvfvep}, $\psi_\sigma'(u_j) f_{\vep_j} (u_j) \to \gamma \psi_\sigma'(u) u^{\gamma-1}$ a.e. in $Q$. 
Consequently, recalling that $u_j \to u$ locally uniformly in $Q$ and $\nabla u_j \to \nb u$ in $L^2_\loc(Q)$, by \Cref{lem:L2H1StgConv}, we obtain
\[
\int_Q \psi_\sigma''(u_j) |\nb u_j|^2 \varphi +\int_Q \psi_\sigma'(u_j) f_{\vep_j} (u_j) \varphi \to \int_Q \psi_\sigma''(u) |\nb u|^2 \varphi + \gamma \int_Q \psi_\sigma'(u) u^{\gamma-1} \varphi,
\]
as $j \uparrow \infty$, by the dominated convergence theorem. 
Therefore, since $\partial_t u_j \rightharpoonup \partial_t u$ weakly in $L^2(Q)$ and, as above, $\nabla u_j \to \nb u$ locally in $L^2(Q)$, we may pass to the limit as $j \uparrow \infty$ in \eqref{eq:Eqpsisigma} to deduce
\begin{equation}\label{eq:LimEqPsi}
\int_Q  \psi_\sigma'(u)(\partial_t u \varphi + \nb u \cdot \nb\varphi) = \int_Q \psi_\sigma''(u) |\nb u|^2 \varphi + \gamma \int_Q \psi_\sigma'(u) u^{\gamma-1} \varphi.
\end{equation}

\emph{Limit as $\sigma \downarrow 0$:} 
We study the convergence of the right-hand side of \eqref{eq:LimEqPsi}.
On the one hand, since $|\nb u|^2 \leq C_\circ \beta^2 u^\gamma$ in $Q$ by \eqref{eq:OptRegSpaceLim} and $u_+^{\gamma-1} \in L^1_\loc(Q)$ (see \emph{Step 1}), we have
\begin{equation}\label{eq:LimEqEst1}
\begin{aligned}
\int_Q \psi_\sigma''(u) |\nb u|^2 \varphi &= \int_{\{0 < u < 2\sigma\}} \psi_\sigma''(u) |\nb u|^2 \varphi\leq \frac{C_\circ\beta^2\|\psi''\|_\infty}{\sigma} \int_{\{0 < u < 2\sigma\}} u^\gamma \varphi  \\
&\leq 2C_\circ \beta^2\|\psi''\|_\infty \int_{\{0 < u < 2\sigma\}} u_+^{\gamma-1} \varphi \to 0,
\end{aligned}
\end{equation}
as $\sigma \downarrow 0$.  On the other hand, since $\mathcal{L}^{n+1}(\partial\{u > 0\}) = 0$ by \eqref{eq:FB0Meas}, it is not difficult to check that $\psi_\sigma'(u) \to \chi_{\{u > 0\}}$ a.e. in $Q$ as $\sigma \downarrow 0$, and thus locally in $L^2(Q)$ by the dominated convergence theorem. Then, since $\supp \, \psi_\sigma' \subset [\sigma/2,\infty)$ and $\psi_\sigma'(u) u^{\gamma-1}_+ \leq u_+^{\gamma-1}$,
\begin{equation}\label{eq:LimEqEst2}
\int_Q \psi_\sigma'(u) u^{\gamma-1} \varphi = \int_Q \psi_\sigma'(u) u_+^{\gamma-1} \varphi \to \int_Q u_+^{\gamma-1} \varphi
\end{equation}
as $\sigma \downarrow 0$, by dominated convergence again. 

Now, we focus on the left-hand side of \eqref{eq:LimEqPsi}.
Using $\psi_\sigma'(u) \to \chi_{\{u > 0\}}$ in $L^2_\loc(Q)$ as $\sigma \downarrow 0$ and the energy estimates \eqref{eq:EnBound1Lim} and \eqref{eq:EnBound2Lim}, it follows that
\begin{equation}\label{eq:LimEqEst3}
\int_Q  \psi_\sigma'(u)(\partial_t u \varphi + \nb u \cdot \nb\varphi) \to \int_Q  \chi_{\{u>0\}}(\partial_t u \varphi + \nb u \cdot \nb\varphi) = \int_Q (\partial_t u \varphi + \nb u \cdot \nb\varphi),
\end{equation}
as $\sigma \downarrow 0$, where we have also used $|\nb u| = 0$ and $\partial_tu = 0$ a.e. in $\{u=0\}$ (again, the information $\mathcal{L}^{n+1}(\partial\{u > 0\}) = 0$ is crucial). As a consequence, the weak formulation \eqref{eq:FirstOVLim} follows by \eqref{eq:LimEqEst1}, \eqref{eq:LimEqEst2} and \eqref{eq:LimEqEst3}, passing to the limit as $\sigma \downarrow 0$ in \eqref{eq:LimEqPsi} and using the arbitrariness of $\varphi \in C_c^\infty(Q)$.
\end{proof}
As mentioned in the introduction, we show that, under suitable smoothness assumptions on $u$ and $\partial\{u > 0\}$, the weak formulation \eqref{eq:FirstOVLim} encodes the FB condition \eqref{eq:RemFBcondition} as well.
\begin{lem}\label{lem:FBConditionBis} 
Let $\gamma \in (0,1]$, let $u$ be a weak solution to \eqref{eq:EQSing} in the sense of \eqref{eq:FirstOVLim} and let $(x_\circ, t_\circ)\in \partial  \{u>0\}$. Assume that there exists $r > 0$ such that:
\

$\bullet$ The exists $\tau \in C^1(B_r(x_\circ))$ such that $|\nb \tau| \not=0$ in $B_r(x_\circ)$ and
        \[
        \{ u > 0\} \cap Q_r(x_\circ,t_\circ) = \{ (x,t): t > \tau(x) \}\cap Q_r(x_\circ,t_\circ). 
        \]
\

$\bullet$ $u^{1/\beta} \in C^1(\overline{\{u>0\}}\cap Q_r(x_\circ,t_\circ))$ and $|\nabla (u^{1/\beta})| \not= 0$ in $\overline{\{u>0\}} \cap Q_r(x_\circ,t_\circ)$.

\

$\bullet$ $t_\circ$ is a Lebesgue point for $t \to \int_{B_r(x_\circ)} u_+^{\gamma-1}(x,t) \rd x$.
    
\    

Then, the same conclusion of Lemma \ref{lem:FBCondition} holds true.
\end{lem}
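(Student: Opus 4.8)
The plan is to reduce the statement to \Cref{lem:FBCondition}: under the present hypotheses I will show that $u$ satisfies the weak formulation with respect to domain variations \eqref{eq:FirstIVLim} for all vector fields supported in $Q_r(x_\circ,t_\circ)$ --- which is all that is used in the proof of \Cref{lem:FBCondition} --- and then the equality $|\nb(u^{1/\beta})|(x_\circ,t_\circ)=\sqrt2/\beta$ follows by repeating that proof verbatim. Two preliminary observations. First, the first bullet forces $\{u>0\}\cap Q_r(x_\circ,t_\circ)$ to be open, so by standard interior parabolic regularity $u$ is smooth there and solves $\partial_t u-\Delta u=-\gamma u^{\gamma-1}$ (cf.\ \eqref{eq:EqPosSet}), while $\partial\{u>0\}\cap Q_r(x_\circ,t_\circ)$ is contained in the $C^1$-graph $\{t=\tau(x)\}$, hence has zero Lebesgue measure. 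Second, writing $v:=u^{1/\beta}$ and recalling $2(\beta-1)=\beta\gamma$, one has $\nb u=\beta\,u^{\gamma/2}\,\nb v$ in $\{u>0\}$, so the $C^1$-regularity of $v$ together with $\beta>1$ gives that $\nb u$ and $u^\gamma$ vanish continuously on $\partial\{u>0\}\cap Q_r(x_\circ,t_\circ)$, along with the pointwise bounds $|\nb u|\le C u^{\gamma/2}$ and $\big|\,2u^\gamma-|\nb u|^2\,\big|\le C u^\gamma$ there.

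To establish \eqref{eq:FirstIVLim} locally, I fix $\Phi\in C_c^\infty(Q_r(x_\circ,t_\circ);\RR^{n})$ and a cut-off $\psi_\delta(s):=\psi(s/\delta)$ with $\psi\in C^\infty(\RR)$, $\psi'\ge0$, $\psi\equiv0$ on $(-\infty,1]$ and $\psi\equiv1$ on $[2,\infty)$. Since $\psi_\delta(u)(\nb u\cdot\Phi)$ is smooth and compactly supported (its support lies in $\{u\ge\delta\}$, where $u$ is smooth), it is an admissible test function in \eqref{eq:FirstOVLim}. Plugging it in, using the classical equation on $\{u\ge\delta\}$ and the elementary identities $\nb u\cdot\nb(\nb u\cdot\Phi)=\nb u\cdot D_x\Phi\cdot\nb u+\tfrac12\,\Phi\cdot\nb|\nb u|^2$ and $\gamma u^{\gamma-1}(\nb u\cdot\Phi)=\Phi\cdot\nb(u^\gamma)$, and integrating by parts in the space variables (no boundary terms, the integrands being supported in $\{u\ge\delta\}$), one arrives at
\begin{multline*}
\int_Q \psi_\delta(u)\Big[\big(|\nb u|^2+2u^\gamma\big)\dv_x\Phi-2\,\nb u\cdot D_x\Phi\cdot\nb u-2\,\partial_t u\,(\nb u\cdot\Phi)\Big]\\
=\int_Q \psi_\delta'(u)\,(\nb u\cdot\Phi)\,\big(|\nb u|^2-2u^\gamma\big).
\end{multline*}
Now I let $\delta\downarrow0$. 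The right-hand side is supported in $\{\delta\le u\le 2\delta\}$, where $|\psi_\delta'|\le\|\psi'\|_\infty/\delta$, $|\nb u\cdot\Phi|\le C u^{\gamma/2}\le C\delta^{\gamma/2}$ and $\big|\,|\nb u|^2-2u^\gamma\,\big|\le Cu^\gamma\le C\delta^\gamma$; combining this with the co-area bound $\mathcal{L}^{n+1}(\{\delta\le u\le 2\delta\}\cap Q_r(x_\circ,t_\circ))\le C\delta^{1/\beta}$ (valid since the level sets of $v=u^{1/\beta}$ are $C^1$ hypersurfaces of uniformly bounded area, by $v\in C^1$ and $|\nb v|\neq0$), the right-hand side is $O(\delta^{-1}\cdot\delta^{3\gamma/2}\cdot\delta^{1/\beta})=O(\delta^{\gamma})\to0$, using $1/\beta=1-\gamma/2$. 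On the left-hand side, $\psi_\delta(u)\to\chi_{\{u>0\}}$ off $\partial\{u>0\}$ and the bracket is bounded in $L^1_{\loc}$ near the free boundary ($|\nb u|^2\le Cu^\gamma$ is bounded and $\partial_t u\,(\nb u\cdot\Phi)\in L^2_{\loc}$), so dominated convergence yields the integral of the bracket over $\{u>0\}\cap Q_r(x_\circ,t_\circ)$; since the bracket vanishes a.e.\ on $\{u=0\}$ and $\partial\{u>0\}\cap Q_r(x_\circ,t_\circ)$ is Lebesgue-null, this coincides with the integral over all of $Q_r(x_\circ,t_\circ)$. Hence \eqref{eq:FirstIVLim} holds for every $\Phi\in C_c^\infty(Q_r(x_\circ,t_\circ);\RR^{n+1})$, and \Cref{lem:FBCondition} applies.

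I expect the main obstacle to be the vanishing of the cut-off--derivative term $\int\psi_\delta'(u)(\nb u\cdot\Phi)(|\nb u|^2-2u^\gamma)$: this is the only place a spurious free boundary contribution could survive, and its control relies in an essential way on the quantitative $C^1$ bounds on $u^{1/\beta}$, the co-area estimate on the thin annulus $\{\delta\le u\le 2\delta\}$, and the strict inequality $\beta<2$ (equivalently $\gamma>0$), entering precisely through the exponent $3\gamma/2+1/\beta-1=\gamma>0$ --- which is consistent with the fact that no such statement can hold in general when $\gamma=0$. Finally, the Lebesgue-point hypothesis on $t\mapsto\int_{B_r(x_\circ)}u_+^{\gamma-1}(x,t)\dx$ is what legitimizes the time-localization (it is hidden in the streamlined argument above, but unavoidable if one mimics the proof of \Cref{lem:FBCondition} by inserting a temporal cut-off $\eta_\sigma\rightharpoonup^\star\delta_{t_\circ}$ and letting $\sigma\downarrow0$): the only term of \eqref{eq:FirstOVLim} that is merely $L^1$ in time is the one carrying $\gamma u_+^{\gamma-1}(\nb u\cdot\Phi)$, which near the free boundary is dominated by $C\,u_+^{\gamma-1}$, so passing to the slice $\{t=t_\circ\}$ requires exactly that $t_\circ$ be a Lebesgue point of that integral.
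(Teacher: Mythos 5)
Your proposal is correct, but it follows a genuinely different route from the paper. The paper works directly with \eqref{eq:FirstOVLim}: it uses the third bullet (the Lebesgue-point hypothesis) to pass to the time slice $t=t_\circ$ and obtain $0=\int_{\Omega}\partial_t u\,\varphi+\nb u\cdot\nb\varphi+\gamma u^{\gamma-1}\varphi\,\dx$ with $\Omega=\{u(\cdot,t_\circ)>0\}\cap B_r$, and then runs the same $\vep$-truncation as in \Cref{lem:FBCondition}, now with the weight $v^{-\beta\gamma/2}$ adapted to the first-variation form, concluding from the forced integrability of $v^{-1}(|\nb v|^2-2/\beta^2)$. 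You instead upgrade \eqref{eq:FirstOVLim} to the domain-variation identity \eqref{eq:FirstIVLim} locally in $Q_r(x_\circ,t_\circ)$, via the Pohozaev-type test function $\psi_\delta(u)(\nb u\cdot\Phi)$ and the vanishing of the commutator term on the thin set $\{\delta\le u\le2\delta\}$, and then invoke \Cref{lem:FBCondition} as a black box (correctly noting that its proof only uses fields supported in $Q_r(x_\circ,t_\circ)$, and that the time component of $\Phi$ is irrelevant). Your computation and the exponent count $-1+\tfrac{3\gamma}{2}+\tfrac1\beta=\gamma>0$ are right, and the approach has the merit of making explicit the equivalence of the two weak formulations under these regularity hypotheses, which the paper only states informally. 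Two remarks. First, your measure bound on $\{\delta\le u\le2\delta\}$ is asserted rather than proved (``level sets of uniformly bounded area'' needs the implicit-function-theorem-plus-compactness argument); a simpler and fully elementary substitute is Chebyshev with the assumed integrability $u_+^{\gamma-1}\in L^1_{\loc}$, namely $\mathcal{L}^{n+1}(\{\delta\le u\le2\delta\}\cap\supp\Phi)\le(2\delta)^{1-\gamma}\int_{\supp\Phi}u_+^{\gamma-1}$, which still yields an $O(\delta^{\gamma/2})$ error. Second, your closing claim that the Lebesgue-point hypothesis is ``hidden'' in your argument is inaccurate: your route never uses it (the only $L^1$-in-time term is absorbed into $u^\gamma$ before any time localization), so you are in fact proving the statement without the third bullet; that hypothesis is what the paper's direct time-slicing of \eqref{eq:FirstOVLim} requires, not your reduction to \Cref{lem:FBCondition}.
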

\begin{proof}
By invariance under translations, we may assume $(x_\circ,t_\circ) = (0,0)$. Since $t_\circ = 0$ is a Lebesgue point for $t \to \int_{B_r} u_+^{\gamma-1}(x,t) \rd x$, a standard argument gives us
    \begin{equation}
        0 = \int_{\Omega} \partial_t u \varphi + \nb u \cdot \nb \varphi + \gamma u^{\gamma-1} \varphi \dx, \qquad \Omega := \{x: u(x,0) > 0\} \cap B_r.
    \end{equation}
Now, let us consider $v:=u^{1/\beta}$. Using that $2(\beta-1) = \beta\gamma$, it is not difficult to check that it satisfies
\[
0 = \int_{\Omega} v^{\frac{\beta\gamma}{2}} \big( \partial_t v \varphi + \nb v \cdot \nb \varphi + \tfrac{\gamma}{\beta} v^{-1} \varphi \big) \dx.
\]
Taking 
    \begin{equation}
    \varphi :=
    \begin{cases}
    v^{-\frac{\beta\gamma}{2}} \psi & \text{ if } v\ge\varepsilon\\[4pt]
    \varepsilon^{-\frac{\beta\gamma}{2}}\,\psi & \text{ if } v < \varepsilon,
    \end{cases}
    \end{equation}
where $\psi \in C_c^\infty(B_r)$, we find
\[
\begin{aligned}
0 &= \int_{\Omega\cap\{v\geq\vep\}} \big( \partial_t v \psi + \nb v \cdot \nb \psi \big) \dx + \tfrac{\beta\gamma}{2}\int_{\Omega\cap\{v\geq\vep\}} v^{-1} \big( 2/\beta^2 - |\nb v|^2 \big)\psi \dx \\
&\quad+ \int_{\Omega\cap\{0<v<\vep\}} (v/\vep)^{\frac{\beta\gamma}{2}} \big( \partial_t v \psi + \nb v \cdot \nb \psi \big) \dx + \tfrac{\gamma}{\beta} \vep^{-\frac{\beta\gamma}{2}} \int_{\Omega\cap\{0<v<\vep\}} v^{\frac{\beta\gamma}{2}-1} \psi  \dx
\end{aligned}
\]
Now, we let $\vep \downarrow 0$. By our regularity assumptions, the first integral converge to the same integral in $\Omega$, while the third one converges to zero. Further, since $v$ behaves like the distance close to the FB and $\partial\Omega$ is $C^1$ in $B_r$, the fourth integral behaves like $\vep^{\frac{\beta\gamma}{2}}$ as $\vep \downarrow 0$ (this can be checked using a Bi-Lipschitz transformation that sends $\Omega\cap B_r$ into $\{x_1 > 0\} \cap B_r$ ), and thus the last term converges to a finite value. Consequently, $v^{-1}  \big(|\nabla v|^2-2/\beta^2\big)\psi$ must be integrable in $\Omega$ and our claim follows as in the last part of the proof of Lemma \ref{lem:FBCondition}. 
\end{proof}
%
%
%
%
%
%
%
%
%
%
%
%
\section{Weiss monotonicity formula and blow-ups}\label{sec:WeissMonotonicity}

In this section, we obtain a monotonicity formula for weak solutions $u$ to \eqref{eq:ParReacDiff} built as the limit of solutions $u_\vep$ to \eqref{eq:ProbEps}, and we exploit it to show that the blow-up limits of $u$ at FB points are parabolically $\beta$-homogeneous ``backward in time'' (see \Cref{rem:parabolicBetaHomog} below).

We stress that such monotonicity formula was previously obtained by Weiss in \cite{Weiss1999:art} for a class of solutions called ``variational solutions'', defined in a broader setting, but under some regularity and integrability assumptions on the solutions themselves. In particular, such formula can be applied to our weak solutions only if $\gamma\in (2/3,1)$ ---this is the range in which $\partial_t u $ has the right integrability to fulfill the definition of ``variational solutions'' used in the article of Weiss.

Here we follow the approach of \cite{art:Weiss2003}: our interest lies in obtaining a monotonicity formula with the natural regularity and integrability assumptions given by the structure of problem \eqref{eq:ParReacDiff} and valid for every $\gamma \in (0,1)$ and every weak solution to \eqref{eq:ParReacDiff}. 
Again, we derive a monotonicity formula for solutions $u_\vep$ to the semilinear problem \eqref{eq:ProbEps} and then show how it passes to the limit $\vep\downarrow 0$.

\subsection{Weiss-type monotonicity formula} We begin with some notation. We consider the backward heat kernel
\[
\varrho(x,t) := G(x,|t|) = \frac{1}{|4\pi t|^\frac{n}{2}} e^{-\frac{|x|^2}{4|t|}},
\]
defined for every $x \in \RR^n$ and $t < 0$, where $G$ is as in \eqref{eq:Gaussian}. For $(x_\circ,t_\circ) \in \RR^{n+1}$ and $r > 0$, we define the strip
\begin{equation}
    S_r^-(t_\circ) := \RR^n\times(t_\circ - 4r^2, t_\circ - r^2),
\end{equation}
with the convention $S_r^- := S_r^-(0)$, and the translations
\begin{equation}\label{eq:Transp0Weiss}
\begin{aligned}
v^{(x_\circ,t_\circ)}(x,t) &:= v(x + x_\circ,t + t_\circ) \\
v_{(x_\circ,t_\circ)}(x,t) &:= v(x - x_\circ,t - t_\circ),    
\end{aligned}
\end{equation}
where $v$ is a given function. If $v$ is regular enough ---for example, $v \in H^1_\loc(Q)$---, we also set
\[
Z_{(x_\circ,t_\circ)}v := (x-x_\circ)\cdot\nb v - 2(t_\circ-t)\partial_t v - \beta v,
\]
with the convention $Zv := Z_{(0,0)}v$.

Now, let $\vep > 0$. We consider the Weiss-type energies
\[
\begin{aligned}
\WW_{(x_\circ,t_\circ)}^{\,\vep}(v,r) :=  \frac{1}{r^{2 +\beta\gamma}} \int_{S_r^-(t_\circ)} \left[ |\nb v|^2 + 2F_\vep(v) \right] \varrho_{(x_\circ,t_\circ)} - \frac{\beta}{2r^{2+\beta\gamma}} \int_{S_r^-(t_\circ)} \frac{v^2}{t_\circ-t} \, \varrho_{(x_\circ,t_\circ)} 
\end{aligned}
\]
and
\[
\begin{aligned}
\WW_{(x_\circ,t_\circ)}(v,r) &:= \frac{1}{r^{2 +\beta\gamma}} \int_{S_r^-(t_\circ)} \left[ |\nb v|^2 + 2v_+^\gamma \right] \varrho_{(x_\circ,t_\circ)}  - \frac{\beta}{2r^{2+\beta\gamma}} \int_{S_r^-(t_\circ)} \frac{v^2}{t_\circ-t} \, \varrho_{(x_\circ,t_\circ)},
\end{aligned}
\]
with the conventions $\WW^{\,\vep} := \WW_{(0,0)}^{\,\vep}$ and $\WW := \WW_{(0,0)}$.

In the next proposition, we show that both $\WW_{(x_\circ,t_\circ)}^{\,\vep}$ and $\WW_{(x_\circ,t_\circ)}$ are monotone in $r$ along solutions $u_\vep$ to the approximating problem \eqref{eq:ProbEps} and limit solutions $u$ (as in \Cref{lem:StrCmp}), respectively.
\begin{prop}[Weiss Monotonicity Formula]\label{prop:WeissForSemLim}
Let $\gamma \in [0,1]$, $\alpha \in (0,1)$ and $(x_\circ,t_\circ) \in Q$. Let $u_\circ \in C_c^{2+\alpha}(\RR^n)$ be nonnegative and let $\{u_\vep\}_{\vep > 0}$ be a family of nonnegative weak solutions to \eqref{eq:ProbEps}. Then, for every $0 < R_1 < R_2 < \sqrt{t_\circ}/2$, we have
\begin{equation}\label{eq:WeissForSemilinear}
\begin{aligned}
\WW_{(x_\circ,t_\circ)}^{\,\vep}(u_\vep,R_2) - \WW_{(x_\circ,t_\circ)}^{\,\vep}(u_\vep,R_1) 
&= \int_{R_1}^{R_2} \frac{1}{r^{3 + \beta\gamma}} \left( \int_{S_r^-(t_\circ)} \frac{1}{t_\circ-t}\left[ Z_{(x_\circ,t_\circ)}u_\vep \right]^2 \varrho_{(x_\circ,t_\circ)} \right) \,  \rd r\\
&\quad + 2\beta \int_{R_1}^{R_2} \frac{1}{r^{3 + \beta\gamma}} \left(    \int_{S_r^-(t_\circ)} h_\vep(u_\vep) u_\vep^{\gamma +1} \varrho_{(x_\circ,t_\circ)} \right) \,  \rd r.
\end{aligned}
\end{equation}
Furthermore, if $\gamma \in (0,1]$ and $u$ is as in \Cref{lem:StrCmp}, then, for every $0 < R_1 < R_2 < \sqrt{t_\circ}/2$, we have
\begin{equation}\label{eq:WeissForLim}
\WW_{(x_\circ,t_\circ)}(u,R_2) - \WW_{(x_\circ,t_\circ)}(u,R_1) \geq \int_{R_1}^{R_2} \frac{1}{r^{3 + \beta\gamma}} \left(  \int_{S_r^-(t_\circ)} \frac{1}{t_\circ-t}\left[ Z_{(x_\circ,t_\circ)}u \right]^2 \varrho_{(x_\circ,t_\circ)} \right) \,  \rd r.
\end{equation}
\end{prop}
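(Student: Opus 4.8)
The plan is to prove the two assertions separately, with the first (the differential identity at the $\varepsilon$-level) being the computational heart of the matter, and the second (the inequality for the limit solution) following by passing to the limit.

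\textbf{Step 1: The monotonicity identity for $u_\vep$.} Since each $u_\vep$ is a classical solution of $\partial_t u_\vep - \Delta u_\vep = -f_\vep(u_\vep)$ in $Q$ with $f_\vep = (F_\vep)'$, I would differentiate $r \mapsto \WW^{\,\vep}_{(x_\circ,t_\circ)}(u_\vep,r)$ in $r$. By translating via \eqref{eq:Transp0Weiss} we may assume $(x_\circ,t_\circ) = (0,0)$. The standard device (as in \cite{art:Weiss2003}, \cite{Weiss1999:art}) is to rescale: write $u_\vep^r(x,t) := r^{-\beta} u_\vep(rx, r^2 t)$, which solves the rescaled equation with nonlinearity $f_{\vep/r}$, and observe $\WW^{\,\vep}_{(0,0)}(u_\vep, r) = \WW^{\,\vep/r}_{(0,0)}(u_\vep^r, 1)$. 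Then $\frac{\rd}{\rd r}$ only hits the $r$-dependence through $u_\vep^r$ and through $\vep/r$; the chain rule gives $\partial_r u_\vep^r = \tfrac{1}{r} Z u_\vep^r$ with $Z$ the generator in the statement, and the $\vep/r$-dependence produces exactly the term $h_\vep(u_\vep)u_\vep^{\gamma+1}$ after using $\partial_\vep F_\vep(u) = -\vep^{-1} u\, h_\vep(u) u^\gamma$ (differentiate $F_\vep(u) = H(u/\vep^\beta)u^\gamma$ in $\vep$: $\partial_\vep F_\vep(u) = -\beta \vep^{-\beta-1}u \cdot h(u/\vep^\beta) u^\gamma = -\beta\vep^{-1} h_\vep(u) u^{\gamma+1}$, hence the factor $2\beta$ appears once the $2F_\vep$ in the energy and the prefactor $r^{2+\beta\gamma}$ are accounted for). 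For the main ``square'' term, I would multiply the equation for $u_\vep^r$ by $Z u_\vep^r$, integrate against $\varrho$ over $S_1^-$, and integrate by parts in $x$ and $t$; the key algebraic facts are the backward-heat identity $\partial_t \varrho + \Delta\varrho = 0$ and the homogeneity relations $\dv(\varrho\, x) = (n - \tfrac{|x|^2}{2|t|})\varrho$ and $x\cdot\nabla\varrho = \tfrac{|x|^2}{2t}\varrho$, which combine to turn all lower-order terms into a perfect square $\tfrac{1}{|t|}(Zu_\vep^r)^2\varrho$. Undoing the rescaling and integrating in $r$ from $R_1$ to $R_2$ yields \eqref{eq:WeissForSemilinear}. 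The fact that all boundary contributions on $\partial S_r^-$ vanish (or cancel across the two slices $t = -r^2$ and $t = -4r^2$ after the $r$-integration) is delicate but standard; one uses the Gaussian decay \eqref{eq:GaussianBounduEps} of $u_\vep$ and its derivatives to justify the spatial integrations by parts on the unbounded slices.

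\textbf{Step 2: Passing to the limit.} Here is where the inequality (rather than equality) emerges. Fix $0 < R_1 < R_2 < \sqrt{t_\circ}/2$. In \eqref{eq:WeissForSemilinear}, the last term $2\beta\int_{R_1}^{R_2} r^{-3-\beta\gamma}\big(\int_{S_r^-(t_\circ)} h_\vep(u_\vep) u_\vep^{\gamma+1}\varrho\big)\rd r$ is nonnegative (as $h \geq 0$), so discarding it gives $\WW^{\,\vep}_{(x_\circ,t_\circ)}(u_\vep,R_2) - \WW^{\,\vep}_{(x_\circ,t_\circ)}(u_\vep,R_1) \geq \int_{R_1}^{R_2} r^{-3-\beta\gamma}\big(\int_{S_r^-(t_\circ)} \tfrac{1}{t_\circ - t}(Z u_\vep)^2 \varrho\big)\rd r$ for every $\vep = \vep_j$. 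Now I would pass to the limit $j\uparrow\infty$ along the subsequence of \Cref{lem:StrCmp}. For the left-hand side: $\nabla u_{\vep_j} \to \nabla u$ in $L^2_\loc(Q)$ by \eqref{eq:L2H1StgConv}, $F_{\vep_j}(u_{\vep_j}) = H_{\vep_j}(u_{\vep_j}) u_{\vep_j}^\gamma \to \chi_{\{u>0\}} u^\gamma = u_+^\gamma$ in $L^1_\loc$ (combining \eqref{eq:L1FvepCh} with local uniform convergence $u_{\vep_j}^\gamma\to u^\gamma$), and $u_{\vep_j}^2/(t_\circ-t) \to u^2/(t_\circ - t)$ locally uniformly; the Gaussian weight $\varrho$ is bounded on $S_r^-(t_\circ)$ away from $t = t_\circ$, and the tails are controlled uniformly in $j$ by \eqref{eq:GaussianBounduEps} together with \eqref{eq:OptRegSpaceLim}, so dominated convergence gives $\WW^{\,\vep_j}_{(x_\circ,t_\circ)}(u_{\vep_j},R_i) \to \WW_{(x_\circ,t_\circ)}(u,R_i)$ for $i = 1,2$. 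For the right-hand side: $Z_{(x_\circ,t_\circ)}u_{\vep_j} \to Z_{(x_\circ,t_\circ)}u$ in, say, $L^2_\loc$ (using weak $L^2$ convergence of $\partial_t u_{\vep_j}$ and strong $L^2_\loc$ convergence of $\nabla u_{\vep_j}$), so by weak lower semicontinuity of the (weighted) $L^2$ norm and Fatou's lemma in $r$, $\liminf_j \int_{R_1}^{R_2} r^{-3-\beta\gamma}\int_{S_r^-} \tfrac{(Z u_{\vep_j})^2}{t_\circ - t}\varrho \geq \int_{R_1}^{R_2} r^{-3-\beta\gamma}\int_{S_r^-}\tfrac{(Zu)^2}{t_\circ-t}\varrho$. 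Combining yields \eqref{eq:WeissForLim}.

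\textbf{Main obstacle.} The delicate part is Step 1: justifying the integrations by parts on the unbounded slices $S_r^-(t_\circ) = \RR^n\times(t_\circ-4r^2,t_\circ-r^2)$ and showing that all boundary terms genuinely vanish or telescope. This requires the sharp decay estimates of \Cref{rem:ExpDecay} (Gaussian bounds for $u_\vep$ and $\nabla u_\vep$) so that products like $u_\vep \nabla u_\vep \cdot x\,\varrho$ are integrable and their flux at spatial infinity is zero, and a careful bookkeeping of the interior time-boundary terms at $t = t_\circ - r^2$ and $t = t_\circ - 4r^2$ which, after the final $\rd r$-integration, must reassemble into the difference $\WW^{\,\vep}(u_\vep,R_2) - \WW^{\,\vep}(u_\vep,R_1)$ with no leftover. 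A secondary subtlety in Step 2 is that $\partial_t u$ is only in $L^2(Q)$ globally (not better), so one must keep $S_r^-(t_\circ) \subset\subset Q$ with $t_\circ - 4R_2^2 > 0$, which is exactly guaranteed by the hypothesis $R_2 < \sqrt{t_\circ}/2$, and one cannot hope to improve the inequality to an equality in the limit because the term $\int h_{\vep_j}(u_{\vep_j})u_{\vep_j}^{\gamma+1}\varrho$ may retain mass on the free boundary as $j\to\infty$.
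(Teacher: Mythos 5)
Your proposal is correct and takes essentially the same route as the paper: rescale so the strip is fixed and differentiate in $r$ (with the $\vep/r$-dependence of $F_{\vep/r}$ producing the $2\beta\, h_\vep(u_\vep)u_\vep^{\gamma+1}$ term), then pass to the limit by discarding that nonnegative term and using only weak $L^2$ convergence of $\partial_t u_{\vep_j}$ together with Fatou and lower semicontinuity; the sole technical difference is that the paper justifies the integrations by parts on the unbounded slices via a compactly supported spatial cut-off removed afterwards with the energy estimates \eqref{eq:EnBound2Eps}, whereas you invoke directly the Gaussian decay of \Cref{rem:ExpDecay}, which also works. One side remark is inaccurate: the inequality in \eqref{eq:WeissForLim} is not caused by the term $\int h_{\vep_j}(u_{\vep_j})u_{\vep_j}^{\gamma+1}\varrho$ retaining mass on the free boundary (the paper observes it in fact converges to zero), but solely by the absence of strong convergence of $\partial_t u_{\vep_j}$, which is what forces the Fatou/weak-lower-semicontinuity step on the $\left[Z_{(x_\circ,t_\circ)}u\right]^2$ term.
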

The proof of  \Cref{prop:WeissForSemLim} goes as follows. We first consider a spacial cut-off $\eta \in C_c^\infty(\RR^n)$ and, in  \Cref{lem:WeissCutOffTech}, we compute the derivative w.r.t. $r$ of the ``truncated'' Weiss energy
\[
\begin{aligned}
\WW_{(x_\circ,t_\circ)}^{\,\vep}(v,\eta_{x_\circ},r) := \frac{1}{r^{2 +\beta\gamma}} \int_{S_r^-(t_\circ)} \left[ |\nb v|^2 + 2F_\vep(v) \right] \varrho_{(x_\circ,t_\circ)} \eta_{x_\circ} - \frac{\beta}{2r^{2+\beta\gamma}} \int_{S_r^-(t_\circ)} \frac{v^2}{t_\circ-t} \, \varrho_{(x_\circ,t_\circ)} \eta_{x_\circ},
\end{aligned}
\]
along solutions to \eqref{eq:ProbEps}. This is technically involved, but all computations and integrations by parts are easily justifiable, since $\eta$ has compact support and the solutions are classical in $Q$, see \Cref{rem:InteriorReg+Inner}. Then, we show \eqref{eq:WeissForSemilinear} by keeping $\vep > 0$ fixed and letting $\eta \to 1$ locally uniformly in $\RR^n$. As a final step, we let $\vep \downarrow 0$ in \eqref{eq:WeissForSemilinear} and obtain \eqref{eq:WeissForLim} which can thus be interpreted as the limit of \eqref{eq:WeissForSemilinear}, as $\vep \downarrow 0$. Before proceeding with the proof, we discuss a couple of important issues in the following remarks.
\begin{rem}
First of all, we notice that, regarding  \eqref{eq:WeissForSemilinear}, we cannot compute the weak derivative $\frac{\rd}{\rd r}\WW_{(x_\circ,t_\circ)}(u,r)$: this is because we do not know if the weak convergence $\partial_t u_\vep \rightharpoonup \partial_tu$ in $L^2(Q)$ (given by the bound \eqref{eq:EnBound1Eps} in \Cref{prop:LimDelta}) is locally strong in $L^2(Q)$, along a suitable sequence.

Another important comment is that, contrary to \eqref{eq:WeissForSemilinear}, \eqref{eq:WeissForLim} does not hold for $\gamma = 0$: this is due to the lack of a non-degeneracy property (see \eqref{eq:NonDeg} and \eqref{eq:NonDegLimLem}) which, in turn, yields locally $L^1(Q)$-convergence of $F_\vep(u_\vep)$ to $u_+^\gamma$ along a suitable sequence (see \eqref{eq:L1FvepCh}). In other words, when $\gamma=0$, we cannot prove that $\WW_{(x_\circ,t_\circ)}^{\,\vep}(u_\vep,r) \to \WW_{(x_\circ,t_\circ)}(u,r)$ as $\vep \downarrow 0$, along a suitable sequence: as already mentioned, the case $\gamma = 0$ behaves differently and was treated in \cite{art:Weiss2003} (see also \cite{KrivenWeiss25:art} for some recent advances in this direction in the elliptic setting).    
\end{rem}
\begin{rem}\label{rem:parabolicBetaHomog}
As a final remark, we notice that the variation of $\WW_{(x_\circ,t_\circ)}(u,r)$ measures how far $u$ is of being a  parabolically $\beta$-homogeneous function w.r.t. the point $(x_\circ,t_\circ)$ ``backward in time'', that is,
\begin{equation}\label{eq:ParabolicHomDegBetaDef}
u_r^{(x_\circ,t_\circ)}(x,t) = u^{(x_\circ,t_\circ)}(x,t),   
\end{equation}
for every $(x,t) \in \RR^n \times(-\infty,0)$ and every $r > 0$, where
\[
u_r^{(x_\circ,t_\circ)}(x,t) := \frac{u^{(x_\circ,t_\circ)}(rx,r^2t)}{r^\beta} = \frac{u(x_\circ + rx,t_\circ + r^2t)}{r^\beta}.
\]
Indeed, if $r \to \WW_{(x_\circ,t_\circ)}(u,r)$ is constant in $(\rho,R)$ for every $0 < \rho < R$ fixed, we have $Z_{(x_\circ,t_\circ)}u = 0$ a.e. in $\RR^n \times (t_\circ-4R^2,t_\circ-\rho^2)$. By the arbitrariness of $0 < \rho < R$ and the definition of $Z_{(x_\circ,t_\circ)}u$, it is not difficult to check that this means 
\[
Z u^{(x_\circ,t_\circ)} = 0,
\]
a.e. in $\RR^n \times(-\infty,0)$ which, in turn, is equivalent to \eqref{eq:ParabolicHomDegBetaDef}. Notice that, taking $r := |t|^{-1/2}$, \eqref{eq:ParabolicHomDegBetaDef} yields
\[
u^{(x_\circ,t_\circ)}(x,t) = |t|^{\frac{\beta}{2}} u^{(x_\circ,t_\circ)}\big(|t|^{-\frac{1}{2}}x,-1\big) =: |t|^{\frac{\beta}{2}} U\big(|t|^{-\frac{1}{2}}x\big),
\]
a.e. in $\RR^n \times(-\infty,0)$.
We thus say that $u^{(x_\circ,t_\circ)}$ is \emph{self-similar} and $U$ is its \emph{self-similar profile}.
\end{rem}
As anticipated above, we begin with a technical lemma.
\begin{lem}\label{lem:WeissCutOffTech} Let $\gamma \in [0,1]$, $\alpha \in (0,1)$ and $(x_\circ,t_\circ) \in Q$. Let $u_\circ \in C_c^{2+\alpha}(\RR^n)$ be nonnegative and let $\{u_\vep\}_{\vep > 0}$ be a family of nonnegative weak solutions to \eqref{eq:ProbEps}. Then, for every $r \in (0,\sqrt{t_\circ}/2)$, we have
\begin{equation}\label{eq:DerWeissCutOff}
\begin{aligned}
\frac{\rd}{\rd r} \WW_{(x_\circ,t_\circ)}^{\,\vep}(u_\vep,\eta_{x_\circ},r) &= \frac{1}{r^{3 + \beta\gamma}} \int_{S_r^-(t_\circ)} \frac{1}{t_\circ-t}\left[Z_{(x_\circ,t_\circ)}u_\vep\right]^2 \varrho_{(x_\circ,t_\circ)} \eta_{x_\circ}  \\
&\quad+ \frac{2\beta}{r^{3 + \beta\gamma}}   \int_{S_r^-(t_\circ)} h_\vep(u_\vep) u_\vep^{\gamma +1} \varrho_{(x_\circ,t_\circ)} \eta_{x_\circ}  \\
&\quad+ \frac{1}{r} \,\WW_{(x_\circ,t_\circ)}^{\,\vep}(u_\vep,(x-x_\circ)\cdot\nb \eta_{x_\circ},r)  \\
&\quad - \frac{2}{r^{3 + \beta\gamma}} \int_{S_r^-(t_\circ)} Z_{(x_\circ,t_\circ)} u_\vep \, (\nb u_\vep \cdot \nb \eta_{x_\circ}) \, \varrho_{(x_\circ,t_\circ)}.
\end{aligned}
\end{equation}
\end{lem}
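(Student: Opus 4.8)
The plan is to follow the strategy of Weiss and of \cite{art:Weiss2003}: remove the $r$-dependence from the domain of integration by a parabolic rescaling, differentiate under the integral sign, and then close the identity by a Rellich--Pohozaev type integration by parts. The hard part will be the bookkeeping in this last step, where one must check that all cross-terms telescope.

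\textbf{Step 1 (reduction and rescaling).} By translation invariance of the equation we may assume $(x_\circ,t_\circ)=(0,0)$; the general case follows by applying the result to $u_\vep^{(x_\circ,t_\circ)}$ and to $\eta_{x_\circ}$. Fix $r\in(0,\sqrt{t_\circ}/2)$ and set
\[
u_r(x,t):=\frac{u_\vep(rx,r^2t)}{r^\beta},\qquad \eta_r(x):=\eta(rx).
\]
By the scaling identity $f_\vep(r^\beta w)=r^{\beta-2}f_{\vep/r}(w)$ of \eqref{eq:Scalingfeps}, the function $u_r$ is a classical solution of $\partial_t u_r-\Delta u_r=-f_{\vep/r}(u_r)$ in the relevant strip. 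Using in addition $F_\vep(r^\beta w)=r^{\gamma\beta}F_{\vep/r}(w)$, the kernel scaling $\varrho(rx,r^2t)=r^{-n}\varrho(x,t)$, and the defining relation $\beta(2-\gamma)=2$ from \eqref{eq:Beta}, a change of variables shows that all powers of $r$ cancel and
\[
\WW^{\,\vep}(u_\vep,\eta,r)=\int_{S_1^-}\big[|\nb u_r|^2+2F_{\vep/r}(u_r)\big]\varrho\,\eta_r-\frac{\beta}{2}\int_{S_1^-}\frac{u_r^2}{-t}\,\varrho\,\eta_r,
\]
where now the domain $S_1^-=\RR^n\times(-4,-1)$ is independent of $r$.

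\textbf{Step 2 (differentiation).} Since $u_\vep$ is classical in $Q$ (\Cref{rem:InteriorReg+Inner}) and $\eta$ has compact support, I would differentiate the previous display under the integral sign. The only $r$-dependences are in $u_r$, in the parameter $\vep/r$ inside $F_{\vep/r}$, and in $\eta_r$. A direct computation gives the crucial identity $\partial_r u_r=\tfrac1r\,Z u_r$, with $Z u_r=x\cdot\nb u_r+2t\,\partial_t u_r-\beta u_r$; the definitions of $H$, $h$, $F_\vep$ give $\partial_r\big(F_{\vep/r}(w)\big)=\tfrac{\beta}{r}\,h_{\vep/r}(w)\,w^{\gamma+1}$, which produces the $h_\vep$-term of \eqref{eq:DerWeissCutOff}; and $\partial_r\eta_r(x)=\tfrac1r\,\zeta_r(x)$ with $\zeta(y):=y\cdot\nb\eta(y)$, so that the contributions through $\eta_r$ assemble into $\tfrac1r\,\WW^{\,\vep}(u_\vep,\,x\cdot\nb\eta,\,r)$, the third term of \eqref{eq:DerWeissCutOff}. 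After collecting, $\tfrac{\rd}{\rd r}\WW^{\,\vep}(u_\vep,\eta,r)$ equals $\tfrac1r$ times
\[
2\int_{S_1^-}\nb u_r\cdot\nb(Zu_r)\,\varrho\,\eta_r+2\int_{S_1^-} f_{\vep/r}(u_r)\,(Zu_r)\,\varrho\,\eta_r-\beta\int_{S_1^-}\frac{u_r\,(Zu_r)}{-t}\,\varrho\,\eta_r,
\]
plus the $h_{\vep/r}$-term and the $\WW^{\,\vep}(u_\vep,x\cdot\nb\eta,r)$-term already identified.

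\textbf{Step 3 (the Rellich--Pohozaev identity and conclusion).} This is the heart of the proof. In the first integral above I integrate by parts in $x$ at fixed $t$ (there are no boundary terms, since $\eta_r$ has compact support and $u_r$ is smooth), use the equation to replace $\Delta u_r=\partial_t u_r+f_{\vep/r}(u_r)$, and use the kernel identity $\nb\varrho=\tfrac{x}{2t}\varrho$. This rewrites the sum of the first two integrals as $-2\int_{S_1^-}(Zu_r)\big(\partial_t u_r+\tfrac{x\cdot\nb u_r}{2t}\big)\varrho\,\eta_r$ minus the error $2\int_{S_1^-}(Zu_r)(\nb u_r\cdot\nb\eta_r)\,\varrho$. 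Now $\partial_t u_r+\tfrac{x\cdot\nb u_r}{2t}=\tfrac{1}{2t}\big(Zu_r+\beta u_r\big)$, so this combines with the third integral $-\beta\int_{S_1^-}\tfrac{u_r(Zu_r)}{-t}\varrho\,\eta_r$: the two $\beta u_r$-pieces cancel against each other and what remains is exactly $\int_{S_1^-}\tfrac{(Zu_r)^2}{-t}\varrho\,\eta_r$, together with the error term $-2\int_{S_1^-}(Zu_r)(\nb u_r\cdot\nb\eta_r)\,\varrho$. I expect this telescoping computation to be the only genuinely delicate point; the rest is routine. Finally, undoing the rescaling — all powers of $r$ collapse to $r^{3+\beta\gamma}$ because $1+2\beta=3+\beta\gamma$ — and translating back to $(x_\circ,t_\circ)$ (so that $-t$ becomes $t_\circ-t$, $\varrho$ becomes $\varrho_{(x_\circ,t_\circ)}$, and $\eta$ becomes $\eta_{x_\circ}$), one obtains precisely \eqref{eq:DerWeissCutOff}.
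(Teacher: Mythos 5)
Your proposal is correct and follows essentially the same route as the paper: reduce to $(x_\circ,t_\circ)=(0,0)$, rescale so the strip becomes $S_1^-$, differentiate under the integral using $r\,\partial_r u_r = Zu_r$ (with the extra $h$-term coming from the $r$-dependence of the approximation parameter and the $x\cdot\nb\eta$-term from $\partial_r\eta_r$), and close via the integration by parts with $\nb\varrho=\tfrac{x}{2t}\varrho$ and the equation, exactly as in the paper's proof. The only difference is cosmetic: the paper first normalizes the approximation parameter to $1$ by an $\vep$-rescaling and then scales by $r/\vep$, whereas you scale directly by $r$ and carry the parameter $\vep/r$, which is equivalent and slightly more streamlined.
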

\begin{proof} 
Let us fix $\vep > 0$ and set $u = u_\vep$. 
It may be also useful while reading the proof to recall that $2 + \beta \gamma = 2 \beta$.

\

\emph{Step 1: Translation invariance and scaling.} The proof uses translation invariance and scaling as follows. 
First, since
\begin{equation}\label{eq:WeissInvTr}
\WW_{(x_\circ,t_\circ)}^{\,\vep}(u,\eta_{x_\circ},r) = \WW^{\,\vep}(u^{(x_\circ,t_\circ)},\eta,r),
\end{equation}
it is enough to compute the derivative of $\WW^{\,\vep}(\tilde{u},\eta,r)$ for a function $\tilde{u}$ satisfying $\partial_t \tilde{u}- \Delta \tilde{u} = - f_\vep(\tilde{u})$ in $\RR^n \times (-t_\circ, +\infty)$. 
Second, setting 
\begin{equation}
    v(x,t) := \frac{\tilde{u}(\vep x,\vep^2t)}{\vep^\beta}, \qquad 
\end{equation}
we easily see ---recall \eqref{eq:Scalingfeps}--- that $v$ satisfies $\partial_t v - \Delta v = - f_1(v)$ in $\RR^n \times (-t_\circ/\vep^2, +\infty) $ and that 
\begin{equation}\label{eq:WeissScalingVep}
\WW^{\,\vep}(\tilde{u},\eta, r) = \WW^{\,1}(v,\eta(\vep \cdot),\tfrac{r}{\vep})
\end{equation}
holds for all $r$ with $4r^2 < t_\circ/\vep^2$.
As a consequence, since 
\begin{equation}\label{eq:WeissScalingVepDerivative}
\frac{\rd }{\rd r} \WW^{\,\vep}(\tilde{u},\eta, r) = \dfrac{1}{\vep}\frac{\rd }{\rd \rho}\Bigg|_{\rho = r/\vep} \WW^{\,1}(v,\eta(\vep \cdot),\rho) ,
\end{equation}
for our purposes it will be enough to compute $\frac{\rd }{\rd r} \WW^{\,1}(v,\xi, r)$ for $v$ satisfying $\partial_t v - \Delta v = - f_1(v)$ in $\RR^n \times (-t_\circ/\vep^2, +\infty) $ and $\xi\in C^\infty_c(\RR^n)$ and then set
\begin{equation}
    \xi(x) := \eta(\vep x).
\end{equation}
Last, defining
\begin{equation}
    v_r(x,t) := \frac{v(rx,r^2t)}{r^{\beta}}, \qquad \xi_r(x) := \xi(rx),
\end{equation}
then $v_r$ satisfies the equation $\partial_t v_r - \Delta v_r = - f_{1/r}(v_r)$ in $\RR^n \times (-4, +\infty)$ provided that $r < \sqrt{t_\circ}/ 2$. 
Therefore, since
\[
\begin{aligned}
\WW^{\,1}(v,\xi, r) &= \frac{1}{r^{2 +\beta\gamma}} \int_{S_r^-} \left[ |\nb v|^2 + 2F(v) \right] \varrho \xi  - \frac{\beta}{2r^{2+\beta\gamma}} \int_{S_r^-} \frac{v^2}{|t|}  \,  \varrho \xi , \\
&= \int_{S_1^-} \left[ |\nb v_r|^2 + 2F_{1/r}(v_r) \right] \varrho \xi_r  - \frac{\beta}{2} \int_{S_1^-} \frac{v_r^2}{|t|}  \,  \varrho \xi_r = \WW^{\,1/r}(v_r,\xi_r,1).
\end{aligned}
\]
we are left to compute the derivative of $\WW^{\,1/r}(v_r,\xi_r,1)$.

\

\emph{Step 2: Computation of $\frac{\rd}{\rd r}\WW^{\,1/r}(v_r,\xi_r,1)$.} 
Since $v_r$ is a classical solution to $\partial_t v_r - \Delta v_r = - f_{1/r}(v_r)$ in $\RR^n \times (-4, +\infty)$ and $\xi_r$ is smooth and has compact support, we have enough regularity to differentiate under the sign of integral the quantities defining $\WW^{\,1/r}(v_r,\xi_r,1)$.
First, we have
\[
\frac{\rd}{\rd r} \int_{S_1^-} |\nb v_r|^2 \varrho \xi_r = 2 \int_{S_1^-} \nb v_r \cdot \nb \left( \tfrac{\rd}{\rd r} v_r \right) \varrho \xi_r  + \int_{S_1^-} |\nb v_r|^2 \varrho \tfrac{\rd}{\rd r}\xi_r
\]
Now, integrating by parts, recalling that $\nb \varrho = -\tfrac{x}{2|t|} \varrho$, and using the equation of $v_r$, we obtain
\begin{equation}\label{eq:WeissDerGrad2}
\begin{aligned}
\frac{\rd}{\rd r} \int_{S_1^-} |\nb v_r|^2 \varrho \xi_r &= - 2 \int_{S_1^-} \left( \Delta v_r + \nb v_r \cdot \tfrac{x}{2t} \right) \tfrac{\rd}{\rd r} v_r \, \varrho \xi_r \\
&\quad\, - 2 \int_{S_1^-} (\nb v_r \cdot \nb \xi_r) \, \varrho \tfrac{\rd}{\rd r} v_r + \int_{S_1^-} |\nb v_r|^2 \varrho \tfrac{\rd}{\rd r}\xi_r  \\
&=  - 2 \int_{S_1^-} \left( \partial_t v_r + f_{1/r}(v_r) + \nb v_r \cdot \tfrac{x}{2t} \right) \tfrac{\rd}{\rd r} v_r \, \varrho \xi_r \\
&\quad\, - 2 \int_{S_1^-} (\nb v_r \cdot \nb \xi_r) \, \varrho \tfrac{\rd}{\rd r} v_r + \int_{S_1^-} |\nb v_r|^2 \varrho \tfrac{\rd}{\rd r}\xi_r.
\end{aligned}
\end{equation}
Furthermore, noticing that
\[
\frac{\rd}{\rd r} F_{1/r}(v_r) = f_{1/r}(v_r) \frac{\rd}{\rd r} v_r + \beta r^{\beta-1} h(v_r r^\beta) v_r^{\gamma +1},
\]
it follows readily that
\begin{equation}\label{eq:WeissDerFAdj}
\frac{\rd}{\rd r} \int_{S_1^-} 2 F_{1/r}(v_r) \varrho \xi_r = \int_{S_1^-} \left[ 2 f_{1/r}(v_r) \tfrac{\rd}{\rd r} v_r + 2 \beta r^{\beta-1} h(v_r r^\beta) v_r^{\gamma +1} \right] \varrho\xi_r + \int_{S_1^-} 2 F_{1/r}(v_r) \varrho \tfrac{\rd}{\rd r}\xi_r.
\end{equation}
Last, we also have
\begin{equation}\label{eq:WeissDerLastTerm}
\frac{\rd}{\rd r} \left(- \frac{\beta}{2} \int_{S_1^-} \frac{v_r^2}{|t|}  \,  \varrho \xi_r \right) =  -\beta \int_{S_1^-} \tfrac{v_r}{|t|} \tfrac{\rd}{\rd r} v_r  \,  \varrho \xi_r - \frac{\beta}{2}  \int_{S_1^-} \frac{v_r^2}{|t|}  \,  \varrho \tfrac{\rd}{\rd r} \xi_r.
\end{equation}
Consequently, combining \eqref{eq:WeissDerGrad2} with \eqref{eq:WeissDerFAdj} and \eqref{eq:WeissDerLastTerm}, and recalling that $r \frac{\rd}{\rd r} v_r = Z v_r = x\cdot\nb v_r + 2t\partial_t v_r - \beta v_r$, we deduce
\[
\begin{aligned}
\frac{\rd}{\rd r} \WW^{\,1/r}(v_r,\xi_r,1)&=  \int_{S_1^-} \tfrac{1}{|t|}\left( 2t\partial_t v_r + x\cdot\nb v_r - \beta v_r \right) \tfrac{\rd}{\rd r} v_r \,\varrho \xi_r  + 2 \beta r^{\beta-1} \int_{S_1^-} h(v_r r^\beta) v_r^{\gamma +1} \varrho \xi_r  \\
&\quad\, +  \int_{S_1^-} \left[|\nb v_r|^2 + 2F_{1/r}(v_r) \right]\varrho \tfrac{\rd}{\rd r}\xi_r - \frac{\beta}{2}\int_{S_1^-} \frac{v_r^2}{|t|}  \,  \varrho \tfrac{\rd}{\rd r} \xi_r - 2 \int_{S_1^-} (\nb v_r \cdot \nb \xi_r) \, \varrho \, \tfrac{\rd}{\rd r} v_r  \\                                                                   
&= \frac{1}{r} \int_{S_1^-} \tfrac{1}{|t|}\left( Z v_r \right)^2 \varrho \xi_r + 2 \beta r^{\beta-1} \int_{S_1^-} h(v_r r^\beta) v_r^{\gamma +1} \varrho \xi_r \\
&\quad\, + \int_{S_1^-} \left[|\nb v_r|^2 + 2F_{1/r}(v_r) \right]\varrho \, [x\cdot \nb\xi(rx)] - \frac{\beta}{2}\int_{S_1^-} \frac{v_r^2}{|t|}  \,  \varrho \, [x\cdot \nb\xi(rx)] \\
&\quad\, - \frac{2}{r} \int_{S_1^-} Zv_r (\nb v_r \cdot \nb \xi_r) \, \varrho.
\end{aligned}
\]

\

\emph{Step 3: Scaling and conclusion.} Scaling back to $v$ and using the definition of $\WW$, we obtain
\[
\begin{aligned}
\frac{\rd}{\rd r} \WW^{\,1/r}(v_r,\xi_r,1)&= \frac{1}{r^{3 + \beta\gamma}} \int_{S_r^-} \tfrac{1}{|t|} \left( Zv \right)^2 \varrho\xi + \frac{2\beta}{r^{3 + \beta\gamma}}   \int_{S_r^-} h(v) v^{\gamma +1} \varrho\xi \\
&\quad\, + \frac{1}{r^{3 + \beta\gamma}} \int_{S_r^-} \left[|\nb v|^2 + 2F(v) \right] \varrho (x\cdot\nb \xi) - \frac{\beta}{2r^{3 + \beta\gamma}} \int_{S_r^-} \frac{v^2}{|t|} \varrho (x\cdot\nb \xi) \\
&\quad\, - \frac{2}{r^{3 + \beta\gamma}} \int_{S_r^-} Zv \, (\nb v \cdot \nb \xi) \varrho  \\
&= \frac{1}{r^{3 + \beta\gamma}} \int_{S_r^-} \tfrac{1}{|t|} \left( Zv \right)^2 \varrho\xi + \frac{2\beta}{r^{3 + \beta\gamma}} \int_{S_r^-} h(v) v^{\gamma +1} \varrho\xi \\
&\quad\, + \frac{1}{r} \,\WW^1(v,x\cdot\nb \xi,r)  - \frac{2}{r^{3 + \beta\gamma}} \int_{S_r^-} Zv \, (\nb v \cdot \nb \xi) \varrho,
\end{aligned}
\]
which, in turn, yields \eqref{eq:DerWeissCutOff} setting $\xi(x) = \eta(\vep x)$, by translation ---recall \eqref{eq:WeissInvTr}--- and taking into account~\eqref{eq:WeissScalingVepDerivative}.
\end{proof}
Now, we proceed with the proof of \eqref{eq:WeissForSemilinear}, the monotonicity formula for solutions $u_\vep$.
\begin{proof}[Proof of \eqref{eq:WeissForSemilinear}] Let us fix $\vep > 0$ and set $u = u_\vep$. As above, it is enough to fix $(x_\circ,t_\circ) = (0,0)$ and recover the general case using \eqref{eq:WeissInvTr}. 

\

For $\sigma > 0$, let $\eta_\sigma(x) := \eta(\sigma x)$, where $\eta(x) := \min\{1,(2-|x|)_+\}$ as in  \Cref{lem:BdryEstimates}. By definition of $\eta_\sigma$, We have
\begin{equation}\label{eq:ConvetasigmaWeiss}
\eta_\sigma \uparrow 1 \qquad \text{and} \qquad |\nb \eta_\sigma| \to 0 \quad \text{ locally uniformly in } \RR^n,
\end{equation}
as $\sigma \downarrow 0$. 

\

Plugging $\eta_\sigma$ into \eqref{eq:DerWeissCutOff} and integrating between $R_1$ and $R_2$, we obtain
\begin{equation}\label{eq:WeissForSemCutOff}
\begin{aligned}
\WW^{\,\vep}(u,\eta_\sigma,R_2) - \WW^{\,\vep}(u,\eta_\sigma,R_1) &= \int_{R_1}^{R_2} \frac{1}{r^{3 + \beta\gamma}} \int_{S_r^-} \frac{1}{-t}\left[Zu\right]^2 \varrho \eta_\sigma  + 2\beta \int_{R_1}^{R_2} \frac{1}{r^{3 + \beta\gamma}}   \int_{S_r^-} h_\vep(u) u^{\gamma +1} \varrho \eta_\sigma  \\
&\quad+ \int_{R_1}^{R_2} \frac{1}{r} \,\WW^{\,\vep}(u,x\cdot\nb \eta_\sigma,r) - 2\int_{R_1}^{R_2} \frac{1}{r^{3 + \beta\gamma}} \int_{S_r^-} Z u \, (\nb u \cdot \nb \eta_\sigma) \, \varrho.
\end{aligned}
\end{equation}
On the one hand, the energy estimates \eqref{eq:EnBound2Eps}, the definition of $\varrho$, and \eqref{eq:ConvetasigmaWeiss}, we immediately see that 
\begin{equation}\label{eq:WeissEpsLim0}
\WW(u,\eta_\sigma,R_1) \to \WW(u,R_1), \qquad \WW(u,\eta_\sigma,R_2) \to \WW(u,R_2),    
\end{equation}
as $\sigma \downarrow 0$, by the dominated convergence theorem. 
On the other hand, first by the monotone convergence theorem, we find
\begin{equation}\label{eq:WeissEpsLim1}
\begin{aligned}
&\int_{R_1}^{R_2} \frac{1}{r^{3 + \beta\gamma}}\int_{S_r^-} \frac{1}{-t}\left[ Z u \right]^2 \varrho \eta_\sigma  \,\uparrow \, \int_{R_1}^{R_2} \frac{1}{r^{3 + \beta\gamma}}\int_{S_r^-} \frac{1}{-t}\left[ Z u \right]^2 \varrho, \\
&\int_{R_1}^{R_2} \frac{1}{r^{3 + \beta\gamma}}   \int_{S_r^-} h_\vep(u) u^{\gamma +1} \varrho \eta_\sigma \, \uparrow \, \int_{R_1}^{R_2} \frac{1}{r^{3 + \beta\gamma}}   \int_{S_r^-} h_\vep(u) u^{\gamma +1} \varrho,
\end{aligned}
\end{equation}
as $\sigma \downarrow 0$. 
Next, by the definition of $\eta_\sigma$, we have $|\nb \eta_\sigma| \leq \sigma$ and $|(x\cdot\nb \eta_\sigma)| \leq 2$ a.e. in $\RR^n$. 
This, combined with the energy estimates \eqref{eq:EnBound2Eps}, the definition of $\varrho$, and \eqref{eq:ConvetasigmaWeiss}, allows us to apply the dominated convergence theorem again to deduce
\begin{equation}\label{eq:WeissEpsLim2}
\WW^{\,\vep}(u,x\cdot\nb \eta_\sigma,r) \to 0, \qquad    \int_{S_r^-} Z u \, (\nb u \cdot \nb \eta_\sigma) \, \varrho \to 0,
\end{equation}
as $\sigma \downarrow 0$. Then, \eqref{eq:WeissForSemilinear} follows by passing to the limit as $\sigma \downarrow 0$ into \eqref{eq:WeissForSemCutOff} and using \eqref{eq:WeissEpsLim0}, \eqref{eq:WeissEpsLim1}, and~\eqref{eq:WeissEpsLim2}.
\end{proof}
Lastly, we show the monotonicity formula \eqref{eq:WeissForLim} obtained by taking the limit $\vep \downarrow 0$.
\begin{proof}[Proof of \eqref{eq:WeissForLim}] 
As always, we fix $(x_\circ,t_\circ) = (0,0)$ and recover the general case by translation. Let $\vep_j$, $u_j := u_{\vep_j}$, and $u$ as in \Cref{lem:StrCmp}. 
For each $j \in \NN$, $u_j$ satisfies the monotonicity formula \eqref{eq:WeissForSemilinear} with $\vep = \vep_j$, which gives 
\begin{equation}\label{eq:WeissForSemilinearIneq}
\WW^{\,\vep_j}(u_j,R_2) - \WW^{\,\vep_j}(u_j,R_1) \geq \int_{R_1}^{R_2}\frac{1}{r^{3 + \beta\gamma}} \left (\int_{S_r^-} \frac{1}{-t}\left[ Z u_j \right]^2 \varrho \right) \rd r, 
\end{equation}
for every $j \in \NN$ and every $0 < R_1 < R_2$.
Note that we used that the second term in the right-hand side of \eqref{eq:WeissForSemilinear} is nonnegative, but actually one can show that it converges to zero as $j \to +\infty$ (the inequality in the result will come from Fatou's lemma, as will be seen below, since there is no strong convergence of $\partial_t u_j$).  

\

By  \Cref{lem:StrCmp} and \eqref{eq:L2H1StgConv}, we know that $u_j \to u$ locally uniformly in $Q$ and locally in $L^2(0,\infty:H^1(\RR^n))$ as $j \uparrow \infty$, while $F_{\vep_j}(u_j) \to u_+^{\gamma-1}$ locally in $L^1(Q)$, by virtue of \eqref{eq:L1FvepCh}. Therefore,
\begin{equation}\label{eq:WeissForLim1}
\WW^{\,\vep_j}(u_j,R_1) \to \WW(u,R_1), \qquad \WW^{\,\vep_j}(u_j,R_2) \to \WW(u,R_2),    
\end{equation}
as $j\uparrow \infty$, by the dominated convergence theorem.

\

By the energy estimates in \Cref{prop:LimDelta}, $\partial_t u_j \rightharpoonup \partial_t u$ weakly in $L^2(Q)$ and thus, for every $\UU \subset\subset Q$, $Z u_j \rightharpoonup Z u$ weakly in $L^2(\UU)$. By the same energy estimates, we easily deduce that the family $\{[\varrho/(-t)]^{1/2} Zu_j\}_{j\in\NN}$ is uniformly bounded in $L^2(S_r^-)$ and thus there exists $w \in L^2(S_r^-)$ such that $[\varrho/(-t)]^{1/2} Zu_j \rightharpoonup w$ weakly in $L^2(S_r^-)$, up to passing to a suitable subsequence.
Then a straightforward argument shows that $w = [\varrho/(-t)]^{1/2} Zu$ a.e. in $S_r^-$. Consequently, \eqref{eq:WeissForLim} follows by passing to the limit as $j \uparrow \infty$ into \eqref{eq:WeissForSemilinearIneq}, using \eqref{eq:WeissForLim1}, Fatou's lemma, and the lower semicontinuity of the $L^2(S_r^-)$ norm.
\end{proof}
\subsection{Blow-ups and the proof of \Cref{thm:MAIN2Blowups}}
\label{Subsec:Blowups}
Let $u$ be a weak solution to \eqref{eq:ParReacDiff} given by \Cref{thm:MAIN1}. Let $(x_\circ,t_\circ) \in \partial\{u > 0\}\cap Q$ and $r_\circ > 0$ such that $Q_{r_\circ}(x_\circ,t_\circ) \subset\subset Q$. 

\

We consider the blow-up family $\{u_r\}_{r>0}$ defined by
\begin{equation}\label{eq:BUFam}
u_r^{(x_\circ,t_\circ)}(x,t) := \frac{u(x_\circ +rx,t_\circ + r^2t)}{r^\beta}, \qquad \forall \,(x,t) \in Q_{r_\circ/r},
\end{equation}
and we study the limit as $r \downarrow 0$. Notice that, since $(x_\circ,t_\circ) \in \partial\{u > 0\}\cap Q$ and $r \downarrow 0$, it is enough to take $r_\circ = 1$ (the only change is that $u_r^{(x_\circ,t_\circ)}$ is defined in $Q_{1/r}$ instead of $Q_{r_\circ/r}$). Further, since $(x_\circ,t_\circ)$ is fixed once for all, we set $u_r := u_r^{(x_\circ,t_\circ)}$, dropping the dependence on $(x_\circ,t_\circ)$.    

\

Using the properties collected in \Cref{thm:MAIN1} together with \Cref{lem:L2H1StgConv} and \Cref{prop:WeissForSemLim} and scaling, we directly deduce that $u_r$ satisfies:

\

$\bullet$ \textbf{Weak formulation:} for every $R > 0$, every $\varphi \in C_c^\infty(Q_R)$ and every $r \in (0,\frac{1}{R})$, we have
\begin{equation}\label{eq:OVBUFam}
\int_{Q_R} \partial_t u_r \varphi + \nabla u_r \cdot\nabla\varphi + \gamma (u_r)_+^{\gamma-1} \varphi = 0,
\end{equation}
see \Cref{lem:FirstOVLim}.

\

$\bullet$ \textbf{Weak formulation (domain variations):} for every $R > 0$, every $\Phi \in C_c^\infty(Q_R;\RR^n)$ and every $r \in (0,\frac{1}{R})$, we have
\begin{equation}\label{eq:IVBUFam}
\int_{Q_R} \big( |\nabla u_r|^2 + 2(u_r)_+^\gamma \big) \dv_x \Phi  -  2 \nb u_r \cdot D_x \Phi \nabla u_r - 2\partial_t u_r \,(\nb u_r \cdot \Phi) = 0,
\end{equation}
see \Cref{lem:L2H1StgConv}.

\

$\bullet$ \textbf{The free boundary has measure zero:} for every $r > 0$, we have
\begin{equation}\label{eq:FB0MeasBUFam}
\mathcal{L}^{n+1}(\partial\{u_r > 0\}) = 0,
\end{equation}
see \Cref{lem:L1ConvFBMeas0}.

\

$\bullet$ \textbf{Optimal regularity and non-degeneracy estimates:} for every $R > 0$ and every $r \in (0,\tfrac{1}{4R})$, we have
\begin{equation}\label{eq:OptGrBUFam}
c R^\beta \le \sup_{Q_R^-} u_r  \le \sup_{Q_R} u_r \le C R^\beta,
\end{equation}
and 
\begin{equation}\label{eq:OptRegEStBUFam}
\sup_{Q_R} \, |\nabla (u_r^{1/\beta})|^2 + \sup_{Q_R} \, |\partial_t (u_r^{2/\beta})| \leq C,
\end{equation}
where $C,c > 0$ are independent of $r$, $R$, and $u$, see  \Cref{lem:StrCmp} and  \Cref{lem:HausConv}.

\

$\bullet$ \textbf{Local energy inequality:} for a.e. $R > 0$, every $r \in (0,\tfrac{1}{4R})$ and every $\psi \in C_c^\infty(B_R)$, we have
\begin{equation}\label{eq:LocalEstPartialTuBUFam}
\int_{Q_R} (\partial_tu_r)^2\psi^2 + \frac{1}{2} \int_{B_R} \big[ |\nb u_r|^2 + 2(u_r)_+^\gamma \big] \psi^2 \rd x \, \bigg|_{t= -R^2}^{t= R^2}  + 2 \int_{Q_R} \partial_tu_r  \psi \,(\nb u_r\cdot \nb \psi) \leq 0,
\end{equation}
see \Cref{lem:L2H1StgConv}.

\

$\bullet$ \textbf{Weiss monotonicity formula:}
for every $0<R_1<R_2$ and every $r \in (0,\frac{1}{2R})$, 
\begin{equation}\label{eq:WeissMonBUFam}
\WW(u_r,R_2) - \WW(u_r,R_1) \geq \int_{R_1}^{R_2} \frac{1}{\rho^{3 + \beta\gamma}} \left( \int_{S_\rho^-} \frac{1}{-t}\left[ Zu_r \right]^2 \varrho\right) \rd \rho,    
\end{equation}
see \Cref{prop:WeissForSemLim}. 

\

With the previous properties at hand we can now proceed with the proof of \Cref{thm:MAIN2Blowups}. We essentially follow the proofs of Section \ref{sec:convergence}.

\

\begin{proof}[Proof of \Cref{thm:MAIN2Blowups}] Let $n \geq 1$, $\gamma \in (0,1]$, $\alpha \in (0,1)$, and let $u_\circ \in C_c^{2+\alpha}(\RR^n)$ be nonnegative. Let $u$ be a nonnegative weak solution to \eqref{eq:ParReacDiff} given by \Cref{thm:MAIN1}, $(x_\circ,t_\circ) \in \partial\{ u > 0\}\cap Q$ and let $\{u_r\}_{r>0}$ be the blow-up family defined in \eqref{eq:BUFam}. The proof is divided in several steps as follows.
    
\

\emph{Step 1: H\"older compactness, optimal growth, non-degeneracy and regularity estimates.} First we show that for every $\nu \in (0,\frac{\beta}{2})$, there exist $r_j \downarrow 0$ and a nonnegative nontrivial $u_0 \in C^{\beta/2}(\RR^{n+1})$, such that
\begin{equation}\label{eq:BULocUnifCon}
u_{r_j} \to u_0 \quad \text{locally in } C^\nu(\RR^{n+1}),
\end{equation}
as $j \uparrow \infty$. Furthermore, $u_0(0,0) = 0$ and
\begin{equation}\label{eq:OptGrNonDegBU}
c R^\beta \le \sup_{Q_R^-} u_0 \le \sup_{Q_R} u_0 \le C R^\beta,
\end{equation}
for every $R > 0$, and 
\begin{equation}\label{eq:OptRegEStBU}
\sup_{\RR^{n+1}} \, |\nabla (u_0^{1/\beta})|^2 + \sup_{\RR^{n+1}} \, |\partial_t (u_0^{2/\beta})| \leq C,
\end{equation}
where $C,c > 0$ are independent of $R$. 

\

To see this, we combine \eqref{eq:OptGrBUFam} and \eqref{eq:OptRegEStBUFam} to deduce that for every $R > 0$ and every $r \in (0,\frac{1}{4R})$, there holds 
\[
\sup_{Q_R} \, |\partial_t (u_r^{2/\beta})| + |\nabla (u_r^{2/\beta})| + u_r^{2/\beta} \leq C (1 + R + R^2),
\]
for some $C > 0$ independent of $r$ (and $R$), that is, $\{u_r^{2/\beta}\}_{r\in(0,1/(4R))}$ is uniformly bounded in $C^{0,1}(Q_R)$. Combing this with the fact that the function $s \mapsto s^{\beta/2}$ is $C^{\beta/2}([0,\infty))$, we easily see that $\{u_r\}_{r\in(0,1/(4R))}$ is uniformly bounded in $C^{\beta/2}(Q_R)$.

Thus, the existence of $u_0$ and $r_j$ as above and \eqref{eq:BULocUnifCon} directly follow by the Arzel\`a-Ascoli theorem and a standard diagonal argument. In turn, \eqref{eq:BULocUnifCon} yields \eqref{eq:OptGrNonDegBU} by passing to the limit as $j \uparrow \infty$ in \eqref{eq:OptGrBUFam} (computed at $r = r_j$) and \eqref{eq:OptRegEStBU} follows by \eqref{eq:OptRegEStBUFam} by lower semicontinuity as in the proof of Lemma \ref{lem:StrCmp}.

\

\emph{Step 2: Energy compactness and energy estimates.} Second we show that for every $R > 0$,
\begin{equation}\label{eq:WeakConvBUSeq}
u_{r_j} \rightharpoonup u_0 \quad \text{ weakly in } H^1(Q_R)
\end{equation}
as $j \uparrow \infty$, up to passing to a suitable subsequence, and
\begin{equation}\label{eq:UnifEnBdBUFam1+2}
\int_{Q_R} |\nb u_0|^2 \leq CR^{n+2\beta}, \qquad \int_{Q_R} (\partial_tu_0)^2 \leq C R^{n + 2(\beta-1)},  
\end{equation}
for some new $C > 0$ independent of $R$.

\

Let us fix $R >0$ and let $j$ large enough, such that $r_j \in (0,\frac{1}{4R})$. On the one hand, combining the growth bounds \eqref{eq:OptGrBUFam} with the regularity estimate \eqref{eq:OptRegEStBUFam}, we easily obtain $|\nb u_{r_j}|^2 \leq C\beta^2 R^{\beta \gamma}$ in $Q_R$, where $C > 0$ is independent of $j$ and $R$. 
Thus, recalling that $\beta\gamma = 2(\beta-1)$, we get
\begin{equation}\label{eq:UnifEnBdBUFam1}
\int_{Q_R} |\nb u_{r_j}|^2 \leq CR^{n+2\beta}.    
\end{equation}
On the other hand, let us consider the local energy estimate \eqref{eq:LocalEstPartialTuBUFam}, with $\psi \in C_c^\infty(B_R)$ satisfying $\psi = 1$ in $B_{R/2}$ and $|\nb \psi| \leq C_n/R$, for some $C_n > 0$ depending only on $n$. 
Using again that $\beta\gamma = 2(\beta-1)$, and \eqref{eq:OptGrBUFam} and \eqref{eq:OptRegEStBUFam} as before, we easily see that
\[
\int_{B_R} \big[ |\nb u_{r_j}|^2 + 2(u_{r_j})_+^\gamma \big] \psi^2 \rd x \, \bigg|_{t=-R^2}^{t= R^2} \leq C R^{n + 2(\beta-1)},
\]
for some new $C > 0$ independent of $j$ and $R$. 
Consequently, applying the Young's inequality in \eqref{eq:LocalEstPartialTuBUFam} and using \eqref{eq:UnifEnBdBUFam1}, we obtain $\frac{1}{2}\int_{Q_R} (\partial_tu_{r_j})^2\psi^2  \leq C R^{n + 2(\beta-1)}$,
and thus
\begin{equation}\label{eq:UnifEnBdBUFam2}
\int_{Q_{R/2}} (\partial_tu_{r_j})^2 \leq C R^{n + 2(\beta-1)},
\end{equation}
for some new constant $C$ as above. Combining \eqref{eq:UnifEnBdBUFam1}, \eqref{eq:UnifEnBdBUFam2} and \eqref{eq:OptGrNonDegBU}, we deduce that $\{u_{r_j}\}_j$ is uniformly bounded in $H^1(Q_R)$, and thus \eqref{eq:WeakConvBUSeq} and \eqref{eq:UnifEnBdBUFam1+2} by the reflexivity of $H^1(Q_R)$ and lower semicontinuity of the $L^2(Q_R)$-norm.

\

\emph{Step 3: Hausdorff convergence.} We show that
\begin{equation}\label{eq:HausConvBU}
\overline{\{u_{r_j} > 0\}} \to \overline{\{u_0 > 0\}} \quad \text{ and } \quad \{u_{r_j} = 0\} \to \{u_0 = 0\}
\end{equation}
locally Hausdorff in $\RR^{n+1}$ and 
\begin{equation}\label{eq:L1ChiBU}
\chi_{\{u_{r_j} > 0\}} \to \chi_{\{u_0 > 0\}} \quad \text{in } L^1_{\loc}(\RR^{n+1}),
\end{equation}
as $j \uparrow \infty$. Furthermore,
\begin{equation}\label{eq:FB0MeasBU}
\mathcal{L}^{n+1}(\partial\{u_0 > 0\}) = 0.
\end{equation}
The proof is an adaptation of \Cref{lem:HausConv} and  \Cref{lem:L1ConvFBMeas0} and works as follows.

\

$\bullet$ The proof of \eqref{eq:HausConvBU} uses the non-degeneracy properties for $u_{r_j}$ and $u_0$ ---see \eqref{eq:OptGrBUFam} and \eqref{eq:OptGrNonDegBU}--- and the locally uniform convergence \eqref{eq:BULocUnifCon}, as in \Cref{lem:HausConv}. In this case, it is enough to fix a compact set $\KK \subset \RR^{n+1}$ and define
\begin{equation}
    U_j := \overline{\{ u_{r_j} > 0 \}} \cap \KK
    \quad \text{ and } \quad 
    U := \overline{\{ u > 0 \}} \cap \KK,
\end{equation} 
and, for a given $\sigma \in (0,1)$, 
\begin{equation}
    U_{j,\sigma} := \{(x,t): \text{dist}((x,t),U_j) \leq \sigma \}
    \quad \text{ and } \quad 
    U_\sigma := \{(x,t): \text{dist}((x,t),U) \leq \sigma \}.
\end{equation}
Give these definitions, the proof closely follows the lines of the proof of \Cref{lem:HausConv} and we skip it.

\

$\bullet$ The proof of \eqref{eq:FB0MeasBU} uses the optimal growth and non-degeneracy properties for $u_0$ ---see \eqref{eq:OptGrNonDegBU}--- and works exactly as in  \Cref{lem:L1ConvFBMeas0}; see the proof of \eqref{eq:FB0Meas}.

\

$\bullet$ Thanks to \eqref{eq:FB0MeasBU}, \eqref{eq:L1ChiBU} follows if $\chi_{\{u_{r_j} > 0\}} \to \chi_{\{u_0 > 0\}}$ a.e. in $\RR^{n+1}$. By uniform convergence ---see \eqref{eq:BULocUnifCon}---, we easily deduce that $\chi_{\{u_{r_j} > 0\}}(x) = 1$ for every fixed $x \in \{u_0>0\}$ and $j$ large enough. 
Hence, it is enough to check that $\chi_{\{u_{r_j} > 0\}}(x) = 0$ for every fixed $x \in \text{int}(\{u_0 = 0\})$ and $j$ large enough, but this readily follows by the local Hausdorff convergence \eqref{eq:HausConvBU}, proceeding as in the proof of \eqref{eq:L1FvepCh}.

\

\emph{Step 4: Locally strong convergence in $L^2(\RR;H^1(\RR^n))$ and weak formulation (domain variations).} In this step, we prove that
\begin{equation}\label{eq:L2H1StgConvBU}
u_{r_j} \to u_0 \quad \text{ locally in } L^2(\RR;H^1(\RR^n)),
\end{equation}
as $j \uparrow \infty$, up to passing to a suitable subsequence, and $u_0$ satisfies
\begin{equation}\label{eq:FirstIVLimBU}
\int_{\RR^{n+1}} \big( |\nabla u_0|^2 + 2(u_0)_+^\gamma \big) \dv_x \Phi  -  2 \nb u_0 \cdot D_x \Phi \cdot \nabla u_0 - 2\partial_t u_0 \,(\nb u_0 \cdot \Phi) = 0,
\end{equation}
for every $\Phi \in C_c^\infty(\RR^{n+1};\RR^{n+1})$.

\

$\bullet$ To show \eqref{eq:L2H1StgConvBU}, it is enough to combine \eqref{eq:WeakConvBUSeq} with the arguments of the proof of  \Cref{lem:L2H1StgConv} (\emph{Step 2}) as follows. Let us fix an open bounded set $\UU \subset \RR^{n+1}$ and $\eta \in C_c^\infty(\UU)$. It is enough to show
\begin{equation}\label{eq:StrongConvLocBU}
\int_\UU |\nb u_j|^2 \eta  \to \int_\UU |\nb u_0|^2 \eta,
\end{equation}
as $j\uparrow\infty$. By local uniform convergence and the classical Schauder theory, we know that $u_0$ is a classical solution to $\partial_tu_0 - \Delta u_0 =-\gamma u_0^{\gamma-1}$ in $\{u_0 > 0\}$, and thus $u_\sigma := (u_0-\sigma)_+$   
is a classical solution to $\partial_t u_\sigma - \Delta u_\sigma = - \gamma (u_\sigma + \sigma)^{\gamma-1}$ in $\{u > \sigma\}$, for every $\sigma > 0$ fixed. Multiplying the equation by $\varphi = u_\sigma \eta$ and integrating by parts in space, it follows that $u_\sigma$ satisfies \eqref{eq:StrongConUsigmaRel} in $\RR^{n+1}$. Then, letting $\sigma \downarrow 0$ along a suitable sequence and using that $\nb_{x,t} u_0 \in L^2(\UU)^{n+1}$ (see \eqref{eq:UnifEnBdBUFam1+2}), we deduce that $u_0$ satisfies \eqref{eq:IntGrad2Lim} in $\RR^{n+1}$, that is,
\begin{equation}\label{eq:IntGrad2LimBU}
\int_{\RR^{n+1}} |\nb u_0|^2 \eta = -\int_{\RR^{n+1}} \big[u_0 \partial_t u_0 \eta + u_0\nb u_0 \cdot \nb \eta \big] - \gamma \int_{\RR^{n+1}} u_0^\gamma \eta.
\end{equation}
On the other hand, testing the equation of $u_{r_j}$ with $\varphi = u_{r_j}\eta$ and letting $j \uparrow \infty$, we obtain
\[
\int_{\RR^{n+1}} |\nb u_{r_j}|^2 \eta \to -\int_{\RR^{n+1}} \big[u_0 \partial_t u_0 \eta + u_0\nb u_0 \cdot \nb \eta \big] - \gamma \int_{\RR^{n+1}} u_0^\gamma \eta,
\]
by uniform convergence and \eqref{eq:WeakConvBUSeq}. Combining this with \eqref{eq:IntGrad2LimBU}, \eqref{eq:StrongConvLocBU} follows. 

\

$\bullet$ The proof of \eqref{eq:FirstIVLimBU} works exactly as in  \Cref{lem:L2H1StgConv}: it is enough to pass to the limit in \eqref{eq:IVBUFam} (with $r = r_j$) as $j \uparrow \infty$, using \eqref{eq:L2H1StgConvBU} and \eqref{eq:WeakConvBUSeq}.

\

\emph{Step 5: Weak formulation.} Now we show that $(u_0)_+^{\gamma-1} \in L_\loc^1(\RR^{n+1})$ and $u_0$ satisfies
\begin{equation}\label{eq:FirstOVLimBU}
\int_{\RR^{n+1}} \partial_t u_0 \varphi + \nabla u_0 \cdot\nabla\varphi + \gamma (u_0)_+^{\gamma-1} \varphi = 0,
\end{equation}
for every $\varphi \in C_c^\infty(\RR^{n+1})$.

\

The proof cloesely follows \Cref{lem:FirstOVLim}. Since $\{(u_{r_j})_+^{\gamma-1}\}_{j\in\NN}$ is uniformly bounded in $L^1_{\loc}(\RR^{n+1})$ ---this easily follows by the equation of $u_{r_j}$ \eqref{eq:OVBUFam} and the energy bounds \eqref{eq:UnifEnBdBUFam1} and \eqref{eq:UnifEnBdBUFam2}---, by Fatou's lemma, it is enough check that $(u_{r_j})_+^{\gamma-1} \to \gamma (u_0)_+^{\gamma-1}$ a.e. in $\RR^{n+1}$, as $j \uparrow \infty$. 
But this is an immediate consequence of the uniform convergence \eqref{eq:BULocUnifCon}, the Hausdorff convergence $\{u_{r_j} = 0\} \to \{ u_0= 0 \}$, and the fact that $\LL^{n+1}(\partial\{u_0 > 0\}) = 0$; see \eqref{eq:FB0MeasBU}.

\

Now, let us fix $R > 0$, $j \in \NN$, $\sigma > 0$, and $\varphi \in C_c^\infty(Q_R)$. Testing the equation of $u_{r_j}$ with $\eta = \psi_\sigma'(u_{r_j})\varphi$, where $\psi$ is defined exactly as in the proof of \Cref{lem:FirstOVLim} (\emph{Step 2}), we obtain
\[
\int_{Q_R}  \psi_\sigma'(u_{r_j})(\partial_t u_{r_j} \varphi + \nb u_{r_j} \cdot \nb\varphi)  = -\int_{Q_R} \psi_\sigma''(u_{r_j}) |\nb u_{r_j}|^2 \varphi - \gamma \int_{Q_R} \psi_\sigma'(u_{r_j}) (u_{r_j})_+^{\gamma-1} \varphi.
\]
Then, by the uniform convergence \eqref{eq:BULocUnifCon}, the weak convergence $\partial_tu_{r_j} \rightharpoonup \partial_tu_0$ in \eqref{eq:WeakConvBUSeq}, and the strong convergence $\nabla u_{r_j} \rightharpoonup \nabla u_0$ in \eqref{eq:L2H1StgConvBU}, we may pass to the limit as $j \uparrow \infty$ to find
\begin{equation}\label{eq:EqpsisigmaBU}
\int_{Q_R}  \psi_\sigma'(u_0)(\partial_t u_0 \varphi + \nb u_0 \cdot \nb\varphi) = \int_{Q_R} \psi_\sigma''(u_0) |\nb u_0|^2 \varphi + \gamma \int_{Q_R} \psi_\sigma'(u_0) u_0^{\gamma-1} \varphi,
\end{equation}
The final part of the argument ---that is, deriving \eqref{eq:FirstOVLimBU} by passing to the limit as $\sigma \downarrow 0$ in \eqref{eq:EqpsisigmaBU}--- uses the optimal regularity \eqref{eq:OptRegEStBU}, the fact that $(u_0)_+^{\gamma-1} \in L^1_\loc(\RR^{n+1})$, and that $\LL^{n+1}(\partial\{u_0 > 0\}) = 0$, and works exactly as the last paragraphs of the proof of \Cref{lem:FirstOVLim}.

\

\emph{Step 6: Homogeneity of $u_0$.} Finally, we show that $u_0$ is parabolically $\beta$-homogeneous w.r.t. $(0,0)$ ``backward in time'', that is,
\begin{equation}\label{eq:HomogBU}
u_0(rx,r^2t) = r^\beta u_0(x,t),
\end{equation}
for every $(x,t) \in \RR^n \times(-\infty,0)$ and every $r > 0$.

\

Let us fix $0 < R_1 < R_2$. Then, by \eqref{eq:WeissMonBUFam}, we have
\[
\WW(u_{r_j},R_r) - \WW(u_{r_j},R_1) \geq \int_{R_1}^{R_2} \frac{1}{\rho^{3 + \beta\gamma}}  \left( \int_{S_\rho^-} \frac{1}{-t}\left[ Zu_{r_j} \right]^2 \varrho \right) \rd \rho,    
\]
for every $j$. Using the strong convergence of $\nabla u_{r_j}$ in $L^2_\loc(\RR^{n+1})^n$ given by \eqref{eq:L2H1StgConvBU}, we have $\WW(u_{r_j},R_1) \to \WW(u_0,R_1)$, $\WW(u_{r_j},R_2) \to \WW(u_0,R_2)$ as $j \uparrow \infty$ while, since $\WW(u_{r_j},R) = \WW(u,r_jR)$ and $\WW$ is monotone in $r$, we have $\WW(u_{r_j},R_2) - \WW(u_{r_j},R_1) \to 0$ as $j \uparrow \infty$. 
Consequently, arguing exactly as in the proof of \eqref{eq:WeissForLim} (that is, using uniform convergence, \eqref{eq:WeakConvBUSeq}, and Fatou's lemma), we deduce $Z u_0 = 0$ a.e. in $\RR^n\times (-4 R_2^2,-R_1^2)$. By the arbitrariness of $0 < R_1 < R_2$, it follows $Z u_0$ a.e. in $\RR^n\times (-\infty,0)$ which, in turn, yield \eqref{eq:HomogBU} since $u_0$ is continuous. 
\end{proof}

As a corollary we obtain a sharp bound on the parabolic Hausdorff dimension of the free boundary. The proof is nowadays standard dimension reduction procedure and we skip it: it works exactly as in \cite[Theorem 5.2]{Weiss1999:art}.
\begin{cor}[\cite{Weiss1999:art}, Theorem 5.2]
Let $\gamma \in (0,1]$ and let $u$ be a weak solution to \eqref{eq:ParReacDiff} given by \Cref{thm:MAIN1}. Then
\[
\text{dim}_{\mathcal{P}} (\partial\{u>0\}) \leq n+1,
\]
where $\text{dim}_{\mathcal{P}}$ denotes the parabolic Hausdorff dimension. 
\end{cor}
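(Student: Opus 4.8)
The plan is to run the classical Federer dimension-reduction scheme, adapted to the parabolic scaling, exactly as in \cite[Theorem~5.2]{Weiss1999:art}; the required inputs are the Weiss monotonicity formula of \Cref{prop:WeissForSemLim} (in the form \eqref{eq:WeissForLim}) and the blow-up analysis of \Cref{thm:MAIN2Blowups}. First I would record the basic properties of the \emph{Weiss density}. For $(x_\circ,t_\circ)\in\partial\{u>0\}\cap Q$ set $\Theta(x_\circ,t_\circ):=\lim_{r\downarrow0}\WW_{(x_\circ,t_\circ)}(u,r)$; the limit exists because $r\mapsto\WW_{(x_\circ,t_\circ)}(u,r)$ is nondecreasing by \eqref{eq:WeissForLim}, and it is finite and locally bounded thanks to the optimal growth, regularity and compact-support properties of $u$ (\Cref{thm:MAIN1}$(ii)$, $(iii)$, $(vi)$), which bound $\WW_{(x_\circ,t_\circ)}(u,r)$ uniformly for $r$ bounded and $(x_\circ,t_\circ)$ in a compact set. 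Moreover, for each fixed $r>0$ the map $(x_\circ,t_\circ)\mapsto\WW_{(x_\circ,t_\circ)}(u,r)$ is continuous, since the translations $(x_\circ,t_\circ)\mapsto\nabla u(\cdot+x_\circ,\cdot+t_\circ)$ and $(x_\circ,t_\circ)\mapsto u_+^\gamma(\cdot+x_\circ,\cdot+t_\circ)$ are continuous into $L^2_\loc$ and $L^1_\loc$ respectively (recall \eqref{eq:Transp0Weiss}) and the kernel $\varrho$ is smooth on the strip $S_r^-$; being a monotone limit of continuous functions, $\Theta$ is therefore upper semicontinuous on $\partial\{u>0\}\cap Q$. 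Finally, if $u_{r_j}\to u_0$ is a blow-up at $(x_\circ,t_\circ)$ as in \Cref{thm:MAIN2Blowups}, then $\WW(u_{r_j},r)=\WW_{(x_\circ,t_\circ)}(u,r_j r)\to\Theta(x_\circ,t_\circ)$, while the strong $L^2(H^1)$ convergence gives $\WW(u_{r_j},r)\to\WW(u_0,r)$, so $\WW(u_0,r)\equiv\Theta(x_\circ,t_\circ)$ for every $r>0$, consistently with the parabolic $\beta$-homogeneity of $u_0$ (cf. \Cref{rem:parabolicBetaHomog}).

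Next I would set up the reduction. The class of (iterated) blow-ups is stable under further blow-ups and parabolic dilations: all the estimates of \Cref{sec:OptRegNonDeg} and \Cref{sec:WeissMonotonicity} are local and parabolically scale-invariant, so the proof of \Cref{thm:MAIN2Blowups} applies verbatim to any blow-up $u_0$ and shows that every blow-up of $u_0$ at one of its free boundary points is again a parabolically $\beta$-homogeneous weak solution to \eqref{eq:EQSing} in $\RR^{n+1}$ carrying a monotone Weiss density as above. For such a homogeneous $u_0$ with $0\in\partial\{u_0>0\}$ define the spine $L(u_0):=\{\xi\in\RR^{n+1}:\Theta_{u_0}(\xi)=\Theta_{u_0}(0)\}$. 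Using the rigidity built into \eqref{eq:WeissForLim} --- equality throughout the inequality forces $u_0$ to be parabolically $\beta$-homogeneous with respect to the base point, cf. \Cref{rem:parabolicBetaHomog} --- together with the elementary fact that a function which is parabolically $\beta$-homogeneous with respect to two distinct points is translation-invariant along the line joining them, one shows in the standard way that $L(u_0)$ is a linear subspace, that $u_0$ is translation-invariant along $L(u_0)$, and that $u_0$ is still $\beta$-homogeneous with respect to every point of $L(u_0)$. The crucial point is that $L(u_0)\neq\RR^{n+1}$: otherwise $u_0$ would be simultaneously constant and $\beta$-homogeneous, hence $u_0\equiv0$ on $\RR^n\times(-\infty,0)$ and, by continuity, $u_0\equiv0$, contradicting $0\in\partial\{u_0>0\}$. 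Since a proper linear subspace of $\RR^{n+1}=\RR^n_x\times\RR_t$ has parabolic Hausdorff dimension at most $n+1$ (a $k$-dimensional subspace has parabolic dimension $k$ if it lies in a time slice and $k+1$ otherwise, the value $n+2$ being attained only by $\RR^{n+1}$ itself), we conclude $\dim_{\mathcal{P}}L(u_0)\le n+1$ for every nontrivial homogeneous blow-up.

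Finally, these ingredients feed the parabolic Federer dimension-reduction argument exactly as in \cite[Theorem~5.2]{Weiss1999:art}: if $\mathcal{H}^{n+1+\alpha}_{\mathcal{P}}(\partial\{u>0\}\cap Q)>0$ for some $\alpha>0$, one selects a point of positive upper $\mathcal{H}^{n+1+\alpha}_{\mathcal{P}}$-density and blows up there; using the Hausdorff convergence of the free boundaries in \Cref{thm:MAIN2Blowups} and the lower semicontinuity of Hausdorff measure, one obtains a $\beta$-homogeneous blow-up $u_0$ with $\mathcal{H}^{n+1+\alpha}_{\mathcal{P}}(\partial\{u_0>0\})>0$. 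Since $\dim_{\mathcal{P}}L(u_0)\le n+1<n+1+\alpha$, there is a free boundary point of $u_0$ off the spine carrying positive density, and blowing up $u_0$ there produces a homogeneous solution whose spine strictly contains $L(u_0)$ and hence has strictly larger parabolic dimension. Iterating, the parabolic dimension of the spine would exceed $n+1$ after finitely many steps, contradicting the bound just established; therefore $\dim_{\mathcal{P}}(\partial\{u>0\}\cap Q)\le n+1$, and covering $Q$ by countably many compact sets concludes the proof. I expect the only genuine difficulty to be the faithful implementation of this covering/Whitney argument in the parabolic metric --- in particular the compactness of the blow-up family at varying base points, the rigidity lemma identifying $L(u_0)$, and the bookkeeping of parabolic Hausdorff dimensions of (possibly tilted) affine subspaces --- but every analytic input (uniform estimates, homogeneity of blow-ups, monotonicity formula, measure-zero free boundary) is supplied by \Cref{thm:MAIN1}, \Cref{thm:MAIN2Blowups} and \Cref{prop:WeissForSemLim}, so the argument transfers without change.
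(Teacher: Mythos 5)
Your proposal is correct and follows essentially the same route as the paper, which does not spell out an argument at all but defers to the standard parabolic Federer dimension-reduction of \cite[Theorem 5.2]{Weiss1999:art}, fed by exactly the ingredients you list (the Weiss monotonicity formula \eqref{eq:WeissForLim}, the homogeneity and compactness of blow-ups from \Cref{thm:MAIN2Blowups}, and the uniform growth/non-degeneracy estimates). The only point to watch is the rigidity step for base points separated in time, where ``translation invariance along the joining line'' should be interpreted as time-independence of the homogeneous solution, as in Weiss's original argument.
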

\subsection{Examples of homogeneous solutions}
\label{Subsec:SpecialSolutions}
Below we give some examples of $\beta$-homogeneous weak solutions. 

\

(i) \emph{Space-independent solutions.} Let $t_\circ \in \RR$. The function
\[
T(t) = \big[-\tfrac{2\gamma}{\beta}(t - t_\circ)\big]_+^{\beta/2}, \quad  t \in \RR,
\]
is a weak solution to \eqref{eq:EQSing} in $\RR^{n+1}$ in the sense of Definition \ref{def:WeakSolEIntro}; the proof is a direct computation. Notice that it can be easily obtained as the limit of the singular perturbation problem \eqref{eq:PerProbEps}. Indeed, let $\vep \in (0,1)$ and consider the initial-value problem 
\begin{equation}
\begin{cases}
T_\vep' = - f_\vep(T_\vep) \quad \text{ in } \RR \\
T_\vep(0) = \vep^\beta,
\end{cases}    
\end{equation}
where $f_\vep$ is defined as in Section \ref{subsub:RegApproach}. By the classical ODE's theory the above problem has a unique positive solution $T_\vep = T_\vep(t)$ defined in the whole $\RR$ and satisfying $T_\vep' < 0$. Consequently, by definition of $f_\vep$ and the monotonicity of $T_\vep$, we have $T_\vep \leq \vep^\beta$ in $[0,\infty)$, $T_\vep' = -\gamma T_\vep^{\gamma-1}$ in $(-\infty,0)$ and a direct integration shows
\[
T_\vep(t) = \big(\vep^2 -\tfrac{2\gamma}{\beta}t  \big)^{\beta/2}, \qquad t < 0.
\]
Passing to the limit as $\vep \downarrow 0$, $T_\vep \to \big(-\tfrac{2\gamma}{\beta}t  \big)_+^{\beta/2}$ locally uniformly in $\RR$, which is $T$ with $t_\circ = 0$.

\

(ii) \emph{One-dimensional time-independent solutions and extensions.} Let $-\infty \leq a \leq b \le \infty$ and $e \in \RR^n$ be a unit vector. The functions
\[
\psi_0(x) = (\tfrac{\sqrt{2}}{\beta})^\beta\big[ (a - (e\cdot x))_+^\beta + ((e\cdot x) - b)_+^\beta \big]
\]
are weak solutions to \eqref{eq:EQSing} in $\RR^{n+1}$ in the sense of Definition \ref{def:WeakSolEIntro}, with the convention $\psi_0(x) = (2/\beta)^\beta [(e\cdot x) - b]_+^\beta$ if $a = -\infty$ and $\psi_0(x) = (2/\beta)^\beta[a - (e\cdot x)]_+^\beta$ if $b = \infty$. 
The proof of this fact is a direct consequence of \Cref{remark:solutionsAreWeakSol}.

Even in this case, the function $\psi_0$ can be obtained as a limit of a singular semilinear problem. To see this, fix $\vep \in (0,1)$ and consider the initial-value problem
\begin{equation}
\begin{cases}
\psi_\vep' = \sqrt{2F_\vep(\psi_\vep)}  \quad \text{ in } \RR \\
\psi_\vep(0) = \vep^\beta,
\end{cases}    
\end{equation}
where $F_\vep$ is defined as in Section \ref{subsub:RegApproach}. Again, the classical ODE's theory shows the existence and uniqueness of a positive solution $\psi_\vep = \psi_\vep(x)$ defined in the whole $\RR$, satisfying $\psi_\vep' >  0$ and $\psi_\vep'' = f_\vep(\psi_\vep)$ in $\RR$. Using again the definition of $f_\vep$ and the monotonicity of $\psi_\vep$, we deduce $\psi_\vep \leq \vep^\beta$ in $(-\infty,0]$, $\psi_\vep' = \sqrt{2\psi_\vep^\gamma}$ in $(0,\infty)$. By direct integration again, we find
\[
\psi_\vep(x) = \big(\vep +\tfrac{\sqrt{2}}{\beta}x  \big)^{\beta}, \qquad x > 0,
\]
and taking the limit as $\vep \downarrow 0$, it follows $\psi_\vep \to (\tfrac{\sqrt{2}}{\beta})^\beta x_+^\beta$ locally uniformly in $\RR$, which is $\psi_0$ with $a = -\infty$ and $b=0$, up to a rotation.

Notice that one can easily extend the above families of solutions by adding fictitious variables as follows. Let $k \in \NN$ such that $k \le n-1$, $e \in \RR^{n-k}$ be a unit vector and let $x = (y,z) \in \RR^{n-k}\times\RR^k$. Then, the functions $\psi(x) := \psi_0(y)$ are weak solutions to \eqref{eq:EQSing} in $\RR^{n+1}$ in the sense of Definition \ref{def:WeakSolEIntro}.

\

(iii) \emph{Radial time-independent solutions and extensions.} Let $n \ge 2$ and $x_\circ \in\RR^n$. The function
\[
u(x) = c |x-x_\circ|^\beta, \qquad c = \bigg[\frac{\gamma}{\beta(n + \beta -2)}\bigg]^{\beta/2}.
\]
is a weak solution to \eqref{eq:EQSing} in $\RR^{n+1}$ in the sense of Definition \ref{def:WeakSolEIntro}. As above, one can build other solutions by adding fictitious variables: if $k \in \NN$ with $0 \le k \le n-2$, $x = (y,z) \in \RR^{n-k}\times\RR^k$ and $x_\circ = (y_\circ,z_\circ)$, the function
\[
u(x) = c |y-y_\circ|^\beta, \qquad c = \bigg[\frac{\gamma}{\beta(n - k + \beta -2)}\bigg]^{\beta/2}.
\]
is a weak solution to \eqref{eq:EQSing} in $\RR^{n+1}$ in the sense of Definition \ref{def:WeakSolEIntro}. 
The proof is a quite standard ``cut-off argument'' near $\{y=y_\circ\}$ and we omit the details.

\

As an application of the solutions constructed in the examples in (i) and (ii) above, we state the following corollary, which completes the proof of Theorem \ref{thm:MAIN1}. The proof is essentially given in \cite[Corollary 1]{Phillips1987:art}.
\begin{cor}[\cite{Phillips1987:art}, Corollary 1]\label{cor:CmptSupp}
Let $\gamma \in (0,1]$ and $\alpha \in (0,1)$. Let $u_\circ \in C_c^{2+\alpha}(\RR^n)$ be nonnegative, and let $u$ as in \Cref{lem:StrCmp}. Then $u$ has compact support in $\overline{Q}$.  
\end{cor}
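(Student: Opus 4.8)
The plan is to compare $u$ from above with the space-independent supersolution $T(t)$ and the one-dimensional stationary supersolutions $\psi_0$ constructed in Examples (i) and (ii), using comparison at the level of the approximating problems $\eqref{eq:ProbEps}$ and then passing to the limit $\vep\downarrow 0$. First I would fix a large ball $B_\rho \supset \supp u_\circ$ and a large time $t_1 > 0$ to be chosen; the goal is to show $u \equiv 0$ outside $\overline{B_{\rho_1}}\times[0,t_1]$ for suitable $\rho_1,t_1$, and also $u \equiv 0$ for $t \geq t_2$ for some finite $t_2$.

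For the decay in time: since $u_\circ$ has compact support and is bounded, $\|u_\circ\|_{L^\infty(\RR^n)} =: M_\circ < \infty$. I would note that $T_\vep(t) := \big(\vep^2 + M_\circ^{2/\beta}-\tfrac{2\gamma}{\beta}t\big)_+^{\beta/2}$ — or more precisely the solution of the ODE $T_\vep' = -f_\vep(T_\vep)$ started at $T_\vep(0) = (\vep^2 + M_\circ^{2/\beta})^{\beta/2}$, studied exactly as in Example (i) — is a (spatially constant, hence super-)solution of $\eqref{eq:ProbEps}$ lying above $u_\circ$ at $t=0$; since $f_\vep$ is Lipschitz the comparison principle from $\eqref{rem:ExpDecay}$ gives $u_\vep \leq T_\vep$ in $Q$. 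Passing to the limit $\vep\downarrow 0$ via $\eqref{lem:StrCmp}$ yields $u(x,t) \leq \big(M_\circ^{2/\beta}-\tfrac{2\gamma}{\beta}t\big)_+^{\beta/2}$, which vanishes for $t \geq \tfrac{\beta}{2\gamma}M_\circ^{2/\beta}=:t_2$. This handles compactness in the time direction.

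For compactness in space: fix $t_1 := t_2$ from above, so we only need control on $[0,t_1]$. Pick a unit vector $e$ and a level $a = a(e)$ with $\{x : e\cdot x > a\}\cap \supp u_\circ = \varnothing$. The idea is to use the translated stationary supersolution $x \mapsto \psi_{0,\vep}(e\cdot x - a)$, where $\psi_{0,\vep}$ solves $\psi_\vep' = \sqrt{2F_\vep(\psi_\vep)}$ as in Example (ii), but this is only a supersolution for the \emph{elliptic} operator $-\Delta$, so I would instead use a moving/tilted barrier of parabolic self-similar form. A cleaner route: use the Gaussian bound $\eqref{eq:GaussianBounduEps}$, $u_\vep(x,t) \leq M G(x,t+T)$, which already forces $u_\vep$ (hence $u$) to be superexponentially small for $|x|$ large at each fixed $t$; then combine this with the optimal growth estimate $\eqref{eq:OptGrLimLem}$ and a barrier argument as in $\eqref{lem:ConvergengeNonlinearityAE}$ to upgrade "superexponentially small" to "identically zero": since $u$ satisfies the FB structure, at any point where $u$ is small compared to the natural scale it must in fact be $0$ by the non-degeneracy $\eqref{eq:NonDegLimLem}$ (if $u(x_\circ,t_\circ)>0$ then $\sup_{Q_r^-(x_\circ,t_\circ)} u \geq cr^\beta$, contradicting the Gaussian smallness for $(x_\circ,t_\circ)$ far enough out). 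Making this quantitative: for $|x|$ large enough, $M G(x,t_1+T) < c\,r^\beta$ for all relevant $r$, forcing $u \equiv 0$ there. Together with the time bound this gives $\supp u \subset \overline{B_{\rho_1}}\times[0,t_1]$, a compact subset of $\overline{Q}$.

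The main obstacle is the spatial direction: the Gaussian bound $\eqref{eq:GaussianBounduEps}$ gives fast decay but not compact support by itself, and one genuinely needs the finite-propagation feature coming from the absorption term $-f_\vep(u_\vep)\leq 0$. I expect the cleanest argument is the barrier comparison in Example (ii) adapted to the parabolic setting — build an explicit $\vep$-independent-in-the-limit supersolution of the form $\overline u(x,t) = (\tfrac{\sqrt2}{\beta})^\beta\big((e\cdot x) - b(t)\big)_+^\beta$ with $b(t)$ chosen so that $\overline u$ is a supersolution (this forces $b'(t)\geq$ some constant, i.e. the free boundary of the barrier moves at finite speed) and $\overline u|_{t=0}\geq u_\circ$; then comparison plus rotating $e$ over the sphere confines $\supp u_\vep$, uniformly in $\vep$, to a bounded set for $t\in[0,t_1]$. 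The routine check is that such a $b(t)$ exists with $b(0)$ finite, which follows from a one-dimensional ODE computation exactly parallel to Example (ii); I would not grind through it here. This is, in spirit, the argument of \cite[Corollary 1]{Phillips1987:art}.
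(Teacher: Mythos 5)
Your treatment of the time direction coincides with the paper's: compare $u_\vep$ with the spatially constant ODE barrier $T_\vep$ of Example (i), shifted so that it dominates $\|u_\circ\|_{L^\infty(\RR^n)}$ at $t=0$, invoke the comparison principle of \Cref{rem:ExpDecay} (valid since $f_\vep$ is Lipschitz), and let $\vep \downarrow 0$; this yields extinction in finite time, exactly as in the paper.

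The spatial confinement is where your proposal has genuine gaps. You discarded the direct barrier of Example (ii) on a false premise: since $\psi_\vep'' = f_\vep(\psi_\vep)$ on all of $\RR$, the function $x \mapsto \psi_\vep(a_\circ - e\cdot x)$ is an exact \emph{time-independent} solution of the equation in \eqref{eq:ProbEps} (a stationary supersolution of the autonomous elliptic problem is automatically a parabolic supersolution), so the same $\vep$-level comparison applies, uniformly in $\vep$; choosing $a_\circ$ so that this barrier lies above $u_\circ$ and letting $\vep\downarrow 0$ gives $u \le (\tfrac{\sqrt2}{\beta})^\beta (a_\circ - e\cdot x)_+^\beta$, and varying $e$ bounds the support in space --- this is precisely the paper's (Phillips') proof. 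Your first substitute, the Gaussian bound plus non-degeneracy, does not close the argument near $t=0$: the estimate \eqref{eq:NonDegLimLem} is interior (it needs $Q_{4r}(x_\circ,t_\circ)\subset\subset Q$ and a backward cylinder $Q_r^-$), so for small $t_\circ$ you must take $r \lesssim \sqrt{t_\circ}$, the threshold $c\,r^\beta \sim c\, t_\circ^{\beta/2}$ degenerates, and the exclusion region only begins at $|x| \gtrsim \sqrt{\log(1/t_\circ)}$, which blows up as $t_\circ \downarrow 0$; hence no uniform spatial bound down to the initial time, and no compact support, without an extra argument. Your second substitute, the moving sharp barrier $\overline u = (\tfrac{\sqrt2}{\beta})^\beta (e\cdot x - b(t))_+^\beta$, has two problems: in $\{\overline u > 0\}$ one computes $\partial_t \overline u - \Delta \overline u + \gamma\, \overline u^{\gamma-1} = -(\tfrac{\sqrt2}{\beta})^\beta \beta\, b'(t)\,(e\cdot x - b(t))^{\beta-1}$, so the supersolution inequality forces $b' \le 0$ (your sign is reversed); and, more importantly, there is nothing to compare it with: the paper only has comparison for the Lipschitz problems \eqref{eq:ProbEps}, and the sharp profile is \emph{not} a supersolution of \eqref{eq:ProbEps} where $0 < \overline u \ll \vep^\beta$, because there $f_\vep(\overline u) = \overline u^{\gamma-1}\big[ s\,h(s) + \gamma H(s) \big] < \gamma\, \overline u^{\gamma-1}$ (with $s = \overline u/\vep^\beta \downarrow 0$), while $\Delta \overline u = \gamma\,\overline u^{\gamma-1}$ exactly. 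This is exactly why the comparison must be run with the $\vep$-adapted profile $\psi_\vep$ and the sharp barrier only recovered in the limit.
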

\begin{proof}
Let $\vep_j$, $u_j := u_{\vep_j}$, and $u$ be as in \Cref{lem:StrCmp}. Let $T_j := T_{\vep_j}$ and $\psi_j := \psi_{\vep_j}$ be as in examples (i) and (ii), respectively. Fix an arbitrary unit vector $e \in \RR^n$,  and take $t_\circ > 0$ and $a_\circ > 0$ such that
\[
u_\circ(x,0) \leq \min\{ T_j(-t_\circ),\psi_j(a_\circ - (e\cdot x)) \}.
\]
Then, by the comparison principle, we deduce
\[
0 \le u_j(x,t) \leq \min\{ T_j(t-t_\circ),\psi_j(a_\circ - (e\cdot x))\},  
\]
for every $j$ and every $(x,t) \in Q$. The thesis follows by passing to the limit as $j \uparrow\infty$ and using the arbitrariness of $e$, together with the explicit expressions of $T$ and $\psi_0$.
\end{proof}

\begin{rem}
    The above corollary holds true in the case $\gamma = 0$ as well (taking into account the second comment after \Cref{thm:MAIN1}), but with a different proof, see \cite[Theorem 1.6]{CafVaz95}. 
    Indeed, our argument cannot work since the solution built in (i) is specific of the case $\gamma >0$. Note also that the solutions described in (iii) also require $\gamma >0$. 
\end{rem}
%
%
%
%
%
\section{Self-similar solutions}\label{sec:SelfSimilar}
In this section, we consider the problem of constructing parabolically $\beta$-homogeneous weak solutions to \eqref{eq:EQSing}, also called \emph{self-similar} solutions (see Remark \ref{rem:parabolicBetaHomog}), which play an important role in the study of the blow-up limits, as stated in \Cref{thm:MAIN2Blowups}.

First, in \Cref{Subsec:SSForward} we build radial self-similar solutions with \emph{unbounded} support; our construction works for every $\gamma \in [0,1]$ and, in addition, when $\gamma = 0,1$, the solutions can be written explicitly in terms of special functions (Kummer's and Tricomi's functions). 
However, it has to be stressed that such solutions are parabolically $\beta$-homogeneous ``forward in time'', so it is not clear if they can show up as blow-up limits of general weak solutions.
As will be seen, some arguments use crucially the sign of the coefficients appearing in the ODE \eqref{eq:SSProfile0} below, and thus they are not directly applicable to do an analogous argument to build $\beta$-homogeneous ``backward in time'' solutions, where one has to analyze a different ODE ---see \eqref{eq:SSProfileShriniking} in \Cref{Subsec:SSBackward}.

Second, in \Cref{Subsec:SSBackward} we consider radial solutions with \emph{bounded} support. In this case the analysis seems much harder: when $\gamma = 0,1$, we complement the theory in \cite{CafVaz95} ($\gamma = 0$), giving an explicit formula for the self-similar profile in terms of special functions while, when $\gamma = 1$, we show non-existence of self-similar profiles. What happens in the range $\gamma \in (0,1)$ is left as an open problem.

\subsection{Self-similar solutions with unbounded support.}
\label{Subsec:SSForward}
In this subsection we study the existence of self-similar solutions to \eqref{eq:EQSing} ``forward in time'', that is, solutions with form
\begin{equation}\label{eq:SSAnsatz}
u(x,t) = t^\frac{\beta}{2} U \big(t^{-\frac{1}{2}} x \big), \qquad t > 0
\end{equation}
for some $U:\RR^n \to [0,\infty)$, called self-similar profile. Plugging the ansatz \eqref{eq:SSAnsatz} into the equation of $u$, we easily find
\[
\Delta U - \frac{\xi}{2}\cdot\nb U + \frac{\beta}{2}U = \gamma U^{\gamma-1} \quad \text{ in } \{U > 0\},    
\]
where the gradient and the Laplacian are taken w.r.t. $\xi := |t|^{-\frac{1}{2}} x$.  Taking into account the free boundary condition \eqref{eq:FBCondIntro}, it is natural to look for nontrivial profiles $U: \RR^n \to [0,\infty)$ satisfying 
\begin{equation}\label{eq:DerFBSelfS}
\begin{cases}
\Delta U + \frac{\xi}{2}\cdot\nb U - \frac{\beta}{2}U = \gamma U^{\gamma-1} \quad &\text{ in } \{U > 0\} \\
|\nb(U^{1/\beta})| =  \frac{\sqrt{2}}{\beta} \quad &\text{ in } \partial\{ U > 0 \}, 
\end{cases}
\end{equation}
Specifically, we study the existence of \emph{radial} profiles $U = U(r)$ ($r := |\xi|$) constructed as follows: we look for $R > 0$ and $U \in C^2((R,\infty);\RR_+)$ such that
\begin{equation}\label{eq:SSProfile0}
\begin{cases}
U'' + \big( \frac{n-1}{r} + \frac{r}{2} \big) U' - \frac{\beta}{2}U = \gamma U^{\gamma-1} \quad \text{ in } (R,\infty) \\
U(R) = 0, (U^{1/\beta})'(R) = \frac{\sqrt{2}}{\beta},
\end{cases}
\end{equation}
with $U$ extended to zero in $[0,R)$. Then, $\{ U = 0\} = \{r \leq R\}$  which, in terms of $u$, means
\begin{equation}\label{eq:PosSetSSGammaPosShriniking}
\{u = 0\} = \{(x,t): t > 0, \,|x|^2 \leq R^2|t|\},
\end{equation}
and thus each time-slice of $\{u = 0\}$ is a ball of radius $R\sqrt{|t|}$: the positivity set $\{u(t) > 0\}$ is unbounded for every $t > 0$ and the contact set $\{u(t) = 0\}$ expands in time. 

Further, as a consequence of the limit in \eqref{eq:AsymptBehavSS} below, our construction will allow to extend $u$ up to $t=0$, by setting
\begin{equation}\label{eq:ExtOfuSSt0}
u(x,0) := \lim_{t\downarrow 0} \, t^\frac{\beta}{2} U \big(t^{-\frac{1}{2}} |x| \big) = c|x|^\beta,   
\end{equation}
for some suitable $c > 0$.

\

The plan has three key steps. First of all, we show that the auxiliary problem
\begin{equation}\label{eq:SSProfile}
\begin{cases}
U'' + \big( \frac{n-1}{r} + \frac{r}{2} \big) U' - \frac{\beta}{2}U = \gamma U^{\gamma-1} \quad \text{ in } (R,\infty) \\
U(R) = U'(R) = 0,
\end{cases}
\end{equation}
has at least one solution: this is not a trivial fact, since the right-hand side of the equation becomes singular when $r \sim R^+$ and the classical ODE's theory does not apply. In the second step, we prove that the solution $U$ satisfies the FB condition in \eqref{eq:SSProfile0}, which is also non-trivial, since one must characterize the behavior of $U$ close to $R$. Lastly, we study the asymptotic behavior of $U$ as $r \uparrow \infty$: as anticipated, this is crucial to extend the self-similar solution up to $t=0$ as in \eqref{eq:ExtOfuSSt0}. Given this, we can state the main result of this section.
\begin{thm}\label{thm:ExSSProfiles}
Let $\gamma \in [0,1]$. Then, for every $R > 0$, there exists a solution $U$ to \eqref{eq:SSProfile0} which is positive, increasing, and satisfies
\begin{equation}\label{eq:AsymptBehavSS}
\lim_{r\to\infty} \frac{U(r)}{r^\beta} = c    
\end{equation}
for some $c > 0$. 
\end{thm}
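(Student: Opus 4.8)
The plan is to work with the three-step scheme announced right before the statement: (1) solve the auxiliary singular problem \eqref{eq:SSProfile}; (2) upgrade the solution to one satisfying the free boundary condition $(U^{1/\beta})'(R) = \tfrac{\sqrt{2}}{\beta}$, i.e.\ a solution to \eqref{eq:SSProfile0}; (3) analyse the asymptotics as $r \to \infty$ to get \eqref{eq:AsymptBehavSS}. Throughout it suffices to treat $\gamma \in (0,1)$, since for $\gamma = 0,1$ the equation for $U$ reduces (after the substitution $s = -r^2/4$ or similar) to a confluent hypergeometric equation and everything can be read off the explicit Kummer/Tricomi representations, as the authors indicate.

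\emph{Step 1: existence for the auxiliary problem \eqref{eq:SSProfile}.} The right-hand side $\gamma U^{\gamma-1}$ blows up as $U \downarrow 0$, so I would not attack \eqref{eq:SSProfile} directly but rather regularise. Fix $R > 0$ and, for $\delta > 0$, consider the problem with initial data $U_\delta(R) = \delta$, $U_\delta'(R) = 0$ (or alternatively replace $\gamma U^{\gamma-1}$ by the smooth truncation $f_\vep$ from Section~\ref{subsub:RegApproach}); the classical ODE theory gives a unique local solution, which is increasing and convex as long as it stays positive (from the equation, $U_\delta'' = \gamma U_\delta^{\gamma-1} + \tfrac{\beta}{2}U_\delta - (\tfrac{n-1}{r} + \tfrac r2)U_\delta' > 0$ wherever $U_\delta' \ge 0$, so $U_\delta' > 0$ for $r > R$ and the solution never vanishes again). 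A Gronwall/comparison argument against super- and subsolutions of power type shows the $U_\delta$ are locally uniformly bounded in $C^1$ away from $r = R$, with a uniform modulus of continuity up to $r = R$ coming from the monotonicity and an integral bound on $U_\delta'$ (test the equation against $r^{n-1}e^{r^2/4}$ and integrate — the weight $r^{n-1}e^{r^2/4}U_\delta'$ is monotone, giving $U_\delta'(r) \le C r \big(\int_R^r U_\delta^{\gamma-1}\big)$, and the right side is controlled because $U_\delta$ grows at least quadratically off $R$, exactly as in Lemma~\ref{Lemma:ODELowerBoundPowers}). Passing $\delta \downarrow 0$ (Arzelà–Ascoli plus a diagonal argument) yields a limit $U \in C([R,\infty)) \cap C^2((R,\infty))$ solving the ODE in $(R,\infty)$ with $U(R) = 0$; one must then check $U$ is genuinely positive for $r > R$, which follows from the lower bounds of Lemma~\ref{Lemma:ODELowerBoundPowers} applied near $R$ (after rescaling) since those bounds are uniform in $\delta$ and survive the limit. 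The condition $U'(R) = 0$ then comes from the quadratic lower bound $U(r) \ge c(r-R)^2$ near $R$ together with $U' \ge 0$.

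\emph{Step 2: the free boundary condition, and Step 3: asymptotics.} To pass from \eqref{eq:SSProfile} to \eqref{eq:SSProfile0} I would use the standard ODE phase-space trick: multiply the equation by $U'$ and rewrite, or better, work with $v := U^{1/\beta}$. Since $2(\beta-1) = \beta\gamma$, a direct computation turns the ODE into one for $v$ whose right-hand side is bounded near $r = R$, and the energy identity $\tfrac{d}{dr}\big[(v')^2 - \tfrac{2}{\beta^2}\big] = (\text{bounded})$ combined with the behaviour $v \sim c(r - R)$ (from $U \sim c(r-R)^\beta$, which itself follows by matching the quadratic-for-$U^{2/\beta}$ behaviour) forces $(v')^2 \to 2/\beta^2$ as $r \downarrow R$, i.e.\ $(U^{1/\beta})'(R) = \tfrac{\sqrt 2}{\beta}$; the sign is fixed because $U$ is increasing. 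I expect \textbf{this matching near $R$ — establishing the precise rate $U(r) \sim c(r-R)^\beta$, not merely $U(r) \gtrsim (r-R)^2$ — to be the main obstacle}, since the singular nonlinearity makes the local analysis at the free boundary delicate; the cleanest route is probably to integrate the equation for $w := U^{2/\beta}$ (which satisfies a genuinely bounded right-hand side, in the spirit of Remark~\ref{rem:TransfLinftyRHS}) and show $w(r) = \tfrac{2}{\beta^2}(r-R)^2 + o((r-R)^2)$ by a fixed-point/bootstrap argument on the integral equation. Finally, for \eqref{eq:AsymptBehavSS}, substitute $U(r) = r^\beta \Psi(r)$ into the ODE: the leading terms produce $\tfrac r2 \cdot \beta r^{\beta-1}\Psi - \tfrac\beta2 r^\beta\Psi = 0$ at highest order, so the dominant balance is neutral and $\Psi$ is slowly varying; a more careful expansion (or, for $\gamma = 0,1$, the known asymptotics of Kummer's $M$) shows $\Psi(r) \to c$ with the correction decaying like $r^{-2}$, and one checks $c > 0$ by monotonicity of $U/r^\beta$ or by a Grönwall bound on $\Psi$. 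This last step also justifies the extension \eqref{eq:ExtOfuSSt0} up to $t = 0$, completing the proof.
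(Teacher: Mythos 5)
Your overall architecture is the same as the paper's (regularize the singular ODE, analyse the boundary behaviour at $r=R$, then the asymptotics as $r\to\infty$), and your variant regularization (positive data $U_\delta(R)=\delta$, $U_\delta'(R)=0$, instead of keeping zero data and replacing $\gamma U^{\gamma-1}$ by $\gamma(U+\delta)^{\gamma-1}$ as in Lemma \ref{lem:ApproxSS}) is legitimate. But Step 1 has two concrete gaps. First, you claim $U'(R)=0$ follows from the quadratic \emph{lower} bound $U\geq c(r-R)^2$ together with $U'\geq0$: this is backwards, since a lower bound can never force the derivative to vanish at $R$. What is needed is a uniform-in-$\delta$ \emph{upper} bound $U_\delta\leq C(r-R)^{1+\sigma}$ near $R$, and this is precisely the delicate point; the paper obtains a linear bound by comparison with an explicit solution of the linear problem \eqref{eq:SuperSolUpperbnd} and then upgrades it to $C(r-R)^\beta$ through the energy inequality $(V')^2\leq\tfrac{\beta}{2}V^2+2V^\gamma$ (multiply the equation by $V'$ and integrate, using the sign of the drift). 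Second, your control of $U_\delta'$ via $U_\delta'(r)\lesssim r\int_R^r U_\delta^{\gamma-1}$, with the integrand estimated by the quadratic lower bound, fails for $\gamma\leq 1/2$: there $\int_R^r(s-R)^{2(\gamma-1)}\,\rd s$ diverges (the true rate $(r-R)^{\beta(\gamma-1)}$, with $\beta(\gamma-1)=\beta-2>-1$, is integrable, but you do not have it at this stage). Also note that Lemma \ref{Lemma:ODELowerBoundPowers} concerns superlinear powers $\alpha>1$ and no drift, so it does not apply here; the uniform positivity is proved in the paper by a direct contradiction argument exploiting $V_\delta''(R)=\gamma\delta^{\gamma-1}\uparrow\infty$.

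In Step 2 your argument presupposes $v=U^{1/\beta}\sim c(r-R)$, which is essentially the statement to be proved, and it does not identify the constant; the missing mechanism is the two-sided inequality $2(1-\vep)U^\gamma\leq(U')^2\leq2(1+\vep)U^\gamma$ near $R$ (coming from $U''\approx\gamma U^{\gamma-1}$ after multiplying by $U'$ and integrating), which integrates directly to the free boundary condition in \eqref{eq:SSProfile0}; your $w=U^{2/\beta}$ bootstrap could likely be made rigorous, but it is not carried out. The more serious gap is Step 3: ``the dominant balance is neutral, $\Psi$ is slowly varying, and $c>0$ by monotonicity of $U/r^\beta$'' is an assertion, not a proof. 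There is no evident monotonicity of $U/r^\beta$, and the only a priori growth bound available from Step 1 is exponential, so one must genuinely rule out both $U/r^\beta\to0$ and $U/r^\beta\to\infty$. The paper does this by setting $s=r^2/4$, $U(r)=e^{-s}V(s)$, so that $V$ solves a non-homogeneous confluent hypergeometric equation with source $e^s g(s)$, $g(s)=\gamma U^{\gamma-1}(2\sqrt{s})$, and then applying variation of parameters together with the Kummer/Tricomi asymptotics \eqref{eq:AsymMUsLarge} and the Wronskian \eqref{eq:WronskianMU} to conclude $U(r)\sim cr^\beta$ with $c>0$. Some quantitative substitute for this analysis is required for \eqref{eq:AsymptBehavSS}; as written, your Step 3 does not establish it.
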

\begin{rem}
We notice that, in view of \Cref{remark:solutionsAreWeakSol}, if $U$ is a solution to \eqref{eq:SSProfile0} as in the statement above and $\gamma \in (0,1]$, then $u$ defined as in \eqref{eq:SSAnsatz} is a weak solution to \eqref{eq:EQSing} in $Q$ in the sense of \Cref{def:WeakSolEIntro}. If $\gamma = 0$, $u$ is a solutions in the sense of domain variations in $Q$, see \eqref{eq:InnerVarGamma0}.
\end{rem}
We first consider the limit cases $\gamma = 0,1$, where the analysis is easier and the solutions are essentially explicit in terms of special functions.
\begin{lem}\label{lem:SSProfileUnBdd0}
Let $\gamma = 0$. Then, for every $R > 0$, \eqref{eq:SSProfile0} has exactly one solution $U$ given by
\begin{equation}\label{eq:SelfSimProfGamma0}
U(r) = \frac{2\sqrt{2}}{R \mathbf{W}(\frac{R^2}{4})} e^{\frac{R^2-r^2}{4}} \big[ \mathbf{M}(\tfrac{n+1}{2},\tfrac{n}{2},\tfrac{R^2}{4}) \mathbf{U}(\tfrac{n+1}{2},\tfrac{n}{2},\tfrac{r^2}{4}) - \mathbf{U}(\tfrac{n+1}{2},\tfrac{n}{2},\tfrac{R^2}{4}) \mathbf{M}(\tfrac{n+1}{2},\tfrac{n}{2},\tfrac{r^2}{4}) \big ]\end{equation}
where $s \mapsto \mathbf{M}(\tfrac{n+1}{2},\tfrac{n}{2},s)$ and $s \mapsto \mathbf{U}(\tfrac{n+1}{2},\tfrac{n}{2},s)$ are the Kummer's and Tricomi's functions, respectively, and $\mathbf{W}$ is the Wronskian of $\mathbf{M}$ and $\mathbf{U}$. 

Furthermore, $U$ is positive and increasing in $(R,\infty)$, and
\begin{equation}\label{eq:AsymptBehavSS0}
\lim_{r\to\infty} \frac{U(r)}{r} = \tfrac{R^{n-1}}{2^{n-1/2}}\,\mathbf{U}(\tfrac{n+1}{2},\tfrac{n}{2},\tfrac{R^2}{4}).
\end{equation}
\end{lem}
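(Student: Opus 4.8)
The plan is to use that, for $\gamma = 0$, we have $\beta = 1$ and the right-hand side $\gamma U^{\gamma-1}$ of \eqref{eq:SSProfile0} vanishes on $\{U>0\}$, so the problem reduces to the \emph{linear} second-order ODE
$$U'' + \Big(\frac{n-1}{r} + \frac{r}{2}\Big)U' - \frac{1}{2}\,U = 0 \qquad \text{in } (R,\infty),$$
supplemented by the Cauchy data $U(R) = 0$, $U'(R) = \sqrt{2}$ (indeed $\beta = 1$ turns $(U^{1/\beta})'(R) = \sqrt{2}/\beta$ into $U'(R) = \sqrt{2}$). Since $r = R > 0$ is an ordinary point and the coefficients are smooth on $(R,\infty)$, the Cauchy–Lipschitz theorem yields a unique global solution on $(R,\infty)$, so \emph{existence and uniqueness are immediate}; the task is to identify this solution explicitly and to verify its qualitative properties (positivity, monotonicity, and the asymptotics), which will also confirm a posteriori that $\{U>0\} = (R,\infty)$, consistently with the extension of $U$ by zero on $[0,R)$.

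To obtain the formula \eqref{eq:SelfSimProfGamma0} I would reduce the equation to a confluent hypergeometric (Kummer) equation. The change of variable $s := r^2/4$ turns it into $sU_{ss} + (\tfrac{n}{2}+s)U_s - \tfrac12 U = 0$, and the further substitution $U = e^{-s}V$ produces Kummer's equation $sV_{ss} + (\tfrac{n}{2}-s)V_s - \tfrac{n+1}{2}V = 0$, whose two independent solutions are $\mathbf{M}(\tfrac{n+1}{2},\tfrac{n}{2},s)$ and $\mathbf{U}(\tfrac{n+1}{2},\tfrac{n}{2},s)$. Hence $U(r) = e^{-r^2/4}\big[c_1\mathbf{M}(\tfrac{n+1}{2},\tfrac{n}{2},\tfrac{r^2}{4}) + c_2\mathbf{U}(\tfrac{n+1}{2},\tfrac{n}{2},\tfrac{r^2}{4})\big]$; imposing $U(R)=0$ fixes the ratio $c_1:c_2$, and imposing $U'(R)=\sqrt{2}$ fixes the remaining constant. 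Differentiating at $r=R$ and recognizing the combination $\mathbf{M}\mathbf{U}' - \mathbf{M}'\mathbf{U}$ evaluated at $S := R^2/4$ as the Wronskian $\mathbf{W}(S)$ gives exactly \eqref{eq:SelfSimProfGamma0}.

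Positivity and monotonicity I would obtain by a contradiction argument on the ODE itself: since $U(R)=0$ and $U'(R)=\sqrt{2}>0$, the function $U$ is positive and strictly increasing just to the right of $R$; if $r_0 := \inf\{r>R : U'(r)=0\}$ were finite then $U(r_0)>0$, and the equation would force $U''(r_0) = \tfrac12 U(r_0) > 0$, making $r_0$ a strict local minimum of $U$, in contradiction with $U'>0$ on $(R,r_0)$. Thus $U'>0$, hence $U>0$, on all of $(R,\infty)$. For the limit \eqref{eq:AsymptBehavSS0} I would invoke the standard large-argument asymptotics $\mathbf{M}(a,b,s)\sim\frac{\Gamma(b)}{\Gamma(a)}e^{s}s^{a-b}$ and $\mathbf{U}(a,b,s)\sim s^{-a}$: once the exponential $e^{-r^2/4}$ cancels the exponential growth of $\mathbf{M}$, only the term containing $\mathbf{M}(r^2/4)$ survives, it grows like $s^{(n+1)/2-n/2} = s^{1/2} = r/2$, and inserting the explicit value $\mathbf{W}(S) = -\tfrac{\Gamma(n/2)}{\Gamma((n+1)/2)}S^{-n/2}e^{S}$ collapses the Gamma factors and exponentials, leaving precisely $\tfrac{R^{n-1}}{2^{n-1/2}}\,\mathbf{U}(\tfrac{n+1}{2},\tfrac{n}{2},\tfrac{R^2}{4})$. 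The only slightly delicate point is keeping the normalization bookkeeping consistent — in particular the sign convention for $\mathbf{W}$ must be paired correctly with the two limiting behaviors so that the leading coefficient of $U/r$ comes out positive — but beyond that the computation is routine.
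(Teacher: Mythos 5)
Your proposal is correct and follows essentially the same route as the paper: the substitution $s=r^2/4$, $U=e^{-s}V$ reduces the ($\gamma=0$, $\beta=1$) linear problem to Kummer's equation with $a=\tfrac{n+1}{2}$, $b=\tfrac{n}{2}$, the constants are fixed by $U(R)=0$, $U'(R)=\sqrt{2}$ via the Wronskian, and the limit \eqref{eq:AsymptBehavSS0} follows from the large-$s$ asymptotics of $\mathbf{M}$, $\mathbf{U}$ together with \eqref{eq:WronskianMU}. Your explicit ODE argument for positivity and monotonicity (which the paper leaves as ``not difficult to check'') is the standard one and is fine.
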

\begin{proof} 
For every $R > 0$, it is not difficult to check that \eqref{eq:SSProfile0} for $\gamma = 0$ (thus $\beta = 1$) has a unique solution $U$ which is positive and increasing in $(R,\infty)$. Now, setting $s := \tfrac{r^2}{4}$ and $e^{-s}V(s) := U(r)$, we get
\[
sV'' + (\tfrac{n}{2} - s) V' - \tfrac{n+1}{2} V = 0 \quad \text{ in } (S,\infty), \quad \text{ where } S := R^2/4,
\]
that is, $V$ is a solution to the Confluent Hypergeometric Equation with parameters $a = \tfrac{n+1}{2}$ and $b = \tfrac{n}{2}$ (see \cite[Section 13]{AS1972} and \cite[Chapter 1]{Slater1960:book}). By \cite[Table 1]{Mathews2022:art} (see Case 5.A. and Case 1.C.), we have
\[
V(s) = p \mathbf{M}(\tfrac{n+1}{2},\tfrac{n}{2},s) + q \mathbf{U}(\tfrac{n+1}{2},\tfrac{n}{2},s),
\]
where $\mathbf{M}$ and $\mathbf{U}$ denote the Kummer's and Tricomi's functions, respectively, and $p,q \in \RR$ are free parameters. Therefore
\[
U(r) = e^{-\frac{r^2}{4}} \big[ p \mathbf{M}(\tfrac{n+1}{2},\tfrac{n}{2},\tfrac{r^2}{4}) + q \mathbf{U}(\tfrac{n+1}{2},\tfrac{n}{2},\tfrac{r^2}{4}) \big].
\]
Fixing $p$ and $q$ according to the initial conditions, \eqref{eq:SelfSimProfGamma0} easily follows. Finally, using that (see \cite[Formula~13.4.8 and Formula~13.1.8]{AS1972})
\begin{equation}\label{eq:AsymMUsLarge}
\begin{aligned}
\mathbf{M}(a,b,s) &= \tfrac{\Gamma(b)}{\Gamma(a)} e^s s^{a-b} \big[ 1 + (a-1)(a-b) s^{-1} + O(s^{-2}) \big], \\
\mathbf{U}(a,b,s) &= s^{-a} \big[ 1 + O(s^{-1}) \big], 
\end{aligned}    
\end{equation}
as $s \uparrow \infty$ and that (see \cite[Formula 13.2.34]{DLMF})
\begin{equation}\label{eq:WronskianMU}
\mathbf{W}(s) = - \frac{\Gamma(b)}{\Gamma(a)} \frac{e^s}{s^b},
\end{equation}
where $\Gamma$ denotes the Gamma function, the limit in \eqref{eq:AsymptBehavSS0} follows. 

\end{proof}
\begin{lem}\label{lem:SSProfileUnBdd1}
Let $\gamma = 1$. Then, for every $R > 0$, \eqref{eq:SSProfile0} has exactly one solution $U$ given by
\begin{equation}\label{eq:SelfSimProfGamma1}
U(r) = e^{-\frac{r^2}{4}} \big[ p(\tfrac{r^2}{4})\mathbf{M}(\tfrac{n+2}{2},\tfrac{n}{2},\tfrac{r^2}{4}) + q(\tfrac{r^2}{4})\mathbf{U}(\tfrac{n+2}{2},\tfrac{n}{2},\tfrac{r^2}{4}) \big],
\end{equation}
where $\mathbf{M}$ and $\mathbf{U}$ are as in \Cref{lem:SSProfileUnBdd0}, and
\begin{equation}
p(s) = \frac{\Gamma(\frac{n+2}{2})}{\Gamma(\frac{n}{2})} \int_{R^2/4}^s \tau^{\frac{n}{2}-1}\mathbf{U}(\tfrac{n+2}{2},\tfrac{n}{2},\tau) \, \rd \tau, \qquad q(s) = - \frac{\Gamma(\frac{n+2}{2})}{\Gamma(\frac{n}{2})} \int_{R^2/4}^s \tau^{\frac{n}{2}-1}\mathbf{M}(\tfrac{n+2}{2},\tfrac{n}{2},\tau) \, \rd \tau.    
\end{equation}
Furthermore, $U$ is positive and increasing in $(R,\infty)$, and
\begin{equation}\label{eq:AsymptBehavSS1}
\lim_{r\to\infty} \frac{U(r)}{r^2} = \frac{1}{4}  \int_{R^2/4}^\infty \tau^{\frac{n}{2}-1}\mathbf{U}(\tfrac{n+2}{2},\tfrac{n}{2},\tau) \, \rd \tau.
\end{equation}
\end{lem}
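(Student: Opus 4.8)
The plan is to exploit the fact that $\gamma=1$ makes the problem \emph{linear}. Indeed, here $\beta=2$, so $\tfrac{\beta}{2}U=U$ and $\gamma U^{\gamma-1}\equiv 1$, and the equation in \eqref{eq:SSProfile0} is the linear inhomogeneous ODE
\[
LU := U'' + \Big(\frac{n-1}{r} + \frac{r}{2}\Big) U' - U = 1 \qquad \text{in } (R,\infty),
\]
with coefficients smooth on $[R,\infty)$ since $R>0$. First I would show that, for solutions of $LU=1$ with $U\ge 0$ near $R$, the free boundary condition $(U^{1/2})'(R)=\tfrac{\sqrt2}{2}$ together with $U(R)=0$ is equivalent to $U(R)=U'(R)=0$: if $U'(R)=a\neq 0$ then $U(r)=a(r-R)+o(r-R)$, which is negative just to the right of $R$ when $a<0$ and forces $(U^{1/2})'(R)=+\infty$ when $a>0$; whereas if $U'(R)=0$ the equation at $r=R$ gives $U''(R)=1$, hence $U(r)=\tfrac12(r-R)^2+o((r-R)^2)$ and $(U^{1/2})'(R)=\tfrac1{\sqrt2}=\tfrac{\sqrt2}{2}$. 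Thus \eqref{eq:SSProfile0} is equivalent to the Cauchy problem $LU=1$, $U(R)=U'(R)=0$ (i.e. \eqref{eq:SSProfile} with $\gamma=1$), which by the standard theory of linear ODEs has exactly one solution on $[R,\infty)$; this already yields the uniqueness part of the statement.

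To produce the explicit formula I would pass, exactly as in the proof of \Cref{lem:SSProfileUnBdd0}, to the variable $s:=r^2/4$ and set $U(r)=e^{-s}V(s)$. A direct computation turns $LU=1$ into the confluent hypergeometric equation with an inhomogeneity,
\[
s V'' + \Big(\tfrac{n}{2}-s\Big) V' - \tfrac{n+2}{2}\,V = e^{s},
\]
i.e. parameters $a=\tfrac{n+2}{2}$, $b=\tfrac{n}{2}$. Then I would solve it by variation of parameters on the fundamental system $\{\mathbf M(a,b,\cdot),\mathbf U(a,b,\cdot)\}$ using the Wronskian \eqref{eq:WronskianMU}: once the lower limit of integration is chosen to be $R^2/4$, this produces precisely the particular solution $e^{-s}\big(p(s)\mathbf M(a,b,s)+q(s)\mathbf U(a,b,s)\big)$ with $p,q$ as in the statement. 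That choice gives $p(R^2/4)=q(R^2/4)=0$, hence $U(R)=0$, and since $p'(s)\mathbf M(a,b,s)+q'(s)\mathbf U(a,b,s)=\tfrac{\Gamma(a)}{\Gamma(b)}s^{b-1}\big(\mathbf U\,\mathbf M-\mathbf M\,\mathbf U\big)=0$, differentiating at $s=R^2/4$ gives $U'(R)=0$ as well. By the uniqueness of the previous paragraph, this is the solution of \eqref{eq:SSProfile0}, proving \eqref{eq:SelfSimProfGamma1}.

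Positivity and monotonicity I would read off directly from the equation: $U(R)=U'(R)=0$ and $U''(R)=1>0$, so $U,U'>0$ just to the right of $R$; if $r_0:=\inf\{r>R:U'(r)=0\}$ were finite, then at $r_0$ the equation gives $U''(r_0)=1+U(r_0)>0$ (since $U(r_0)>0$), contradicting that $U'$ vanishes there while being positive to its left. Hence $U'>0$ on $(R,\infty)$, so $U$ is increasing and positive there. Finally, for \eqref{eq:AsymptBehavSS1} I would feed the expansions \eqref{eq:AsymMUsLarge} into the formula. Since $a-b=1$, one has $\mathbf M(a,b,s)\sim\tfrac{\Gamma(b)}{\Gamma(a)}e^{s}s$ and $\mathbf U(a,b,s)\sim s^{-a}$, so $\tau^{b-1}\mathbf U(a,b,\tau)\sim\tau^{-2}$ is integrable at infinity and $p(s)\to p_\infty:=\tfrac{\Gamma(a)}{\Gamma(b)}\int_{R^2/4}^{\infty}\tau^{b-1}\mathbf U(a,b,\tau)\,\rd\tau\in(0,\infty)$, while $\tau^{b-1}\mathbf M(a,b,\tau)\sim\tfrac{\Gamma(b)}{\Gamma(a)}\tau^{a-1}e^{\tau}$ gives $q(s)\sim-\tfrac{\Gamma(a)}{\Gamma(b)}\cdot\tfrac{\Gamma(b)}{\Gamma(a)}s^{a-1}e^{s}$ and hence $e^{-s}q(s)\mathbf U(a,b,s)=O(s^{-1})$; together with $e^{-s}\mathbf M(a,b,s)/s\to\tfrac{\Gamma(b)}{\Gamma(a)}$ this yields $U(r)/s\to p_\infty\tfrac{\Gamma(b)}{\Gamma(a)}=\int_{R^2/4}^{\infty}\tau^{n/2-1}\mathbf U(\tfrac{n+2}{2},\tfrac n2,\tau)\,\rd\tau$, and since $r^2=4s$ we get \eqref{eq:AsymptBehavSS1}. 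The one genuinely technical point is this last asymptotic bookkeeping — in particular verifying that the rapidly growing factor $q(s)$ contributes nothing to the limit — but it is routine given the cited expansions; the conceptual heart of the proof is simply that for $\gamma=1$ the problem linearizes and the free boundary condition holds automatically.
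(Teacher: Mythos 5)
Your proposal is correct and follows essentially the same route as the paper: reduce to the linear Cauchy problem $U(R)=U'(R)=0$ (the paper likewise notes $U''(R)=1$ gives the free boundary condition), pass to $s=r^2/4$, $U=e^{-s}V(s)$ to get the inhomogeneous confluent hypergeometric equation with $a=\tfrac{n+2}{2}$, $b=\tfrac n2$, solve by variation of parameters with the Wronskian \eqref{eq:WronskianMU}, and extract \eqref{eq:AsymptBehavSS1} from the expansions \eqref{eq:AsymMUsLarge}, checking that the exponentially growing $q(s)$ is cancelled by $\mathbf U$. The only differences are that you spell out details the paper leaves implicit (the equivalence of the FB condition with $U'(R)=0$, and the first-critical-point argument for monotonicity), which is fine.
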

\begin{proof} We first notice that, for every $R > 0$, \eqref{eq:SSProfile} (with $\gamma = 1$ and $\beta = 2$) has a unique solution $U$ which is positive and increasing in $(R,\infty)$. Further, it satisfies $U''(R) = 1$ and thus $U(r) \sim \frac{1}{2}(r-R)^2$ as $r \downarrow R$, which is exactly the FB condition in \eqref{eq:SSProfile} when $\beta = 2$.

Now, as above, we set $s := \tfrac{r^2}{4}$, $e^{-s}V(s) := U(r)$ and $S = R^2/4$ and deduce that
\[
\begin{cases}
sV'' + (\tfrac{n}{2} - s) V' - \tfrac{n+2}{2} V = e^s \quad \text{ in } (S,\infty) \\
V(S) = V'(S) = 0
\end{cases}
\]
that is, $V$ satisfies a non-homogeneous Confluent Hypergeometric Equation with parameter $a = \frac{n+2}{2}$ and $b = \frac{n}{2}$. By \cite[Table 1]{Mathews2022:art} (see Case 1.A. and Case 5.C.), $s \mapsto \mathbf{M}(\tfrac{n+2}{2},\tfrac{n}{2},s)$ and $s \mapsto \mathbf{U}(\tfrac{n+2}{2},\tfrac{n}{2},s)$ are two independent solutions to the associated homogeneous equation and thus, by the method of ``variation of parameters'', we find
\[
V(s) = p(s)\mathbf{M}(\tfrac{n+2}{2},\tfrac{n}{2},s) + q(s)\mathbf{U}(\tfrac{n+2}{2},\tfrac{n}{2},s),
\]
where $p$ and $q$ are uniquely determined by $p(S) = q(S) = 0$ (this easily follows by imposing the initial conditions) and
\[
p'(s) = \frac{\Gamma(\frac{n+2}{2})}{\Gamma(\frac{n}{2})} s^{\frac{n}{2}-1}\mathbf{U}(\tfrac{n+2}{2},\tfrac{n}{2},s), \qquad q'(s) = - \frac{\Gamma(\frac{n+2}{2})}{\Gamma(\frac{n}{2})} s^{\frac{n}{2}-1}\mathbf{M}(\tfrac{n+2}{2},\tfrac{n}{2},s), 
\]
where we have used \eqref{eq:WronskianMU} and the equation of $V$. Furthermore, in light of \eqref{eq:AsymMUsLarge}, we have
\[
p'(s) \sim \frac{\Gamma(\frac{n+2}{2})}{\Gamma(\frac{n}{2})} s^{-2}, \qquad q'(s) \sim - s^{\frac{n}{2}} e^s,
\]
as $s \uparrow \infty$ and therefore,
\[
p(s) \to \frac{\Gamma(\frac{n+2}{2})}{\Gamma(\frac{n}{2})} \int_S^\infty s^{\frac{n}{2}-1}\mathbf{U}(\tfrac{n+2}{2},\tfrac{n}{2},s) \, \rd s =: \frac{\Gamma(\frac{n+2}{2})}{\Gamma(\frac{n}{2})} \, p_\infty        \quad \text{ as } s\uparrow\infty,
\]
while, by the de l'H\^opital's theorem, $q(s) \sim - s^{\frac{n}{2}}e^s$ as $s \uparrow \infty$. Consequently, by \eqref{eq:AsymMUsLarge} again, 
\[
V(s) \sim \big( p_\infty s - \tfrac{1}{s} \big) e^s,
\]
as $s \uparrow \infty$. Re-writing everything in terms of $U$ and $r$, both \eqref{eq:SelfSimProfGamma1} and \eqref{eq:AsymptBehavSS1} follow.
\end{proof}
\begin{rem} The solution $U$ found above can be made even more explicit (see \cite[Section 2]{CafPetSha04}):
\[
U(r) = (2n+r^2) \bigg( \frac{1}{2n+R^2} - 2 R^n e^{\frac{R^2}{4}} \int_R^r \frac{e^{-\frac{\tau^2}{4}}}{\tau^{n-1}(2n+\tau^2)^2} \, \rd \tau  \bigg) - 1.
\]
\end{rem}
Finally, we consider the range $\gamma \in (0,1)$. As anticipated, the proof of  \Cref{thm:ExSSProfiles} is split in two main steps. Respect to the limit cases $\gamma = 0,1$, we essentially proceed via qualitative methods.
\begin{lem}\label{lem:ApproxSS} Let $R > 0$ and $\gamma \in (0,1)$. Then there exists a solution $U$ to \eqref{eq:SSProfile}.
\end{lem}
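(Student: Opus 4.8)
The plan is to obtain a solution to the singular problem \eqref{eq:SSProfile} as a limit of regularized problems, exactly mirroring the approximation philosophy used throughout the paper. First I would fix a small parameter $\vep > 0$ and replace the singular nonlinearity $\gamma U^{\gamma-1}$ by $f_\vep$ (or, more simply, by $\gamma (U + \vep)^{\gamma - 1}$, which is bounded and Lipschitz in $U \geq 0$ on the relevant range). For this regularized ODE the right-hand side is continuous and locally Lipschitz, so the classical Cauchy–Lipschitz theorem produces a unique solution $U_\vep \in C^2([R,\infty))$ of
\[
U_\vep'' + \Big( \tfrac{n-1}{r} + \tfrac{r}{2} \Big) U_\vep' - \tfrac{\beta}{2} U_\vep = \gamma (U_\vep + \vep)^{\gamma-1}, \qquad U_\vep(R) = U_\vep'(R) = 0,
\]
defined on a maximal interval $(R, R_\vep)$. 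The first task is to show $R_\vep = \infty$ and to establish $\vep$-uniform bounds: since the right-hand side is nonnegative at $r = R$ and $U_\vep(R) = U_\vep'(R) = 0$, an elementary phase-plane/comparison argument (multiply by $r^{n-1}e^{r^2/4}$ to get a divergence form, integrate) shows $U_\vep' > 0$ and $U_\vep > 0$ for $r > R$, so $U_\vep$ is increasing, and in particular $\gamma(U_\vep + \vep)^{\gamma-1} \leq \gamma \vep^{\gamma-1}$ only near $R$ while $\leq \gamma U_\vep^{\gamma-1}$ away from it. The growth-at-infinity control and global existence follow by comparing $U_\vep$ from above with (a multiple of) the explicit polynomial supersolution $c r^\beta$ — the same function appearing in \eqref{eq:AsymptBehavSS} — which one checks is a supersolution of the regularized equation for $c$ large, using $2(\beta-1) = \beta\gamma$; this gives $0 \le U_\vep(r) \le C(1 + r^\beta)$ with $C$ independent of $\vep$, hence $R_\vep = \infty$.

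Next I would derive $\vep$-uniform local $C^1$ bounds. Away from $r = R$, once $U_\vep \geq \delta > 0$ on a compact set the right-hand side is uniformly bounded, so interior ODE estimates give $C^2_{\loc}$ bounds there; near $r = R$ the key is an optimal-growth estimate for $U_\vep$ of the form $U_\vep(r) \leq C\big((r - R)^2 + \vep\big)^{1/(2-\gamma)}$, which is the ODE analogue of \Cref{lem:OptGr} and can be proved by constructing an explicit barrier (a quadratic-type supersolution of the singular-perturbed equation, with parameters tuned exactly as in \Cref{Lemma:ODEGrowth}). From this, $|(U_\vep^{1/\beta})'|$ is bounded uniformly near $R$, and hence $U_\vep^{1/\beta}$ is uniformly Lipschitz on compact subsets of $[R,\infty)$. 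With these bounds, Arzelà–Ascoli and a diagonal argument extract a subsequence $\vep_j \downarrow 0$ with $U_{\vep_j} \to U$ in $C^1_{\loc}([R,\infty))$ and in $C^2_{\loc}((R,\infty) \cap \{U > 0\})$, where $U$ is nonnegative, nondecreasing, satisfies $U(R) = U'(R) = 0$, and solves the singular equation $U'' + (\tfrac{n-1}{r} + \tfrac{r}{2})U' - \tfrac{\beta}{2}U = \gamma U^{\gamma-1}$ classically in $\{U > 0\}$. It remains to check that $U > 0$ on all of $(R,\infty)$, i.e. that the free boundary of the profile is exactly $\{R\}$ and the solution does not degenerate: this follows from a non-degeneracy estimate $\sup_{(R, R+r)} U_\vep \geq c\, r^\beta$ (the ODE analogue of \Cref{lem:NonDeg}, again by a barrier from below using $h'(0) > 0$ through the structure of $f_\vep$, or directly from the positivity of the right-hand side and the equation in divergence form), which passes to the limit.

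The main obstacle, as flagged in the paragraph preceding the lemma, is handling the singularity of $r \mapsto r^{\gamma-1}$ — equivalently $U \mapsto U^{\gamma-1}$ — as $r \downarrow R$, since there the classical ODE theory genuinely fails and one cannot simply "solve the equation." The delicate point is obtaining the $\vep$-uniform upper barrier near $R$ of the sharp form $\big((r-R)^2 + \vep\big)^{1/(2-\gamma)}$: this is what guarantees both that $U_{\vep_j}$ converges to something with $U'(R) = 0$ (rather than blowing up) and that the limit solves the correct singular equation up to the boundary. I expect this barrier construction — choosing the right comparison function and verifying the differential inequality with the exponents $\beta = 2/(2-\gamma)$ matched correctly — to be the technical heart of the proof; the rest (global existence, monotonicity, compactness) is routine ODE analysis. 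Note that this lemma only claims a solution of \eqref{eq:SSProfile}; upgrading to the free-boundary condition $(U^{1/\beta})'(R) = \sqrt{2}/\beta$ in \eqref{eq:SSProfile0} and the asymptotics \eqref{eq:AsymptBehavSS} are the content of subsequent steps in the proof of \Cref{thm:ExSSProfiles}, so I would not address them here.
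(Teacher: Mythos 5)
Your overall scheme (regularize the nonlinearity, prove $\vep$-uniform bounds near $r=R$ and uniform positivity away from it, then pass to the limit) is exactly the paper's strategy, and your ``more simply'' option $\gamma(U+\vep)^{\gamma-1}$ is precisely the regularization the paper uses. However, two of your concrete ingredients fail as stated. First, the primary option of regularizing with $f_\vep$ cannot work: since $f_\vep(0)=0$ and $f_\vep$ is Lipschitz for fixed $\vep$, the unique solution of the regularized Cauchy problem with data $U(R)=U'(R)=0$ is $U\equiv 0$, so the scheme would converge to the trivial function. The whole point of the $(U+\delta)^{\gamma-1}$ regularization is that its value at $U=0$ is $\gamma\delta^{\gamma-1}\to\infty$, which is what the paper exploits (Step 3 of its proof) to get the uniform lower bound $V_\delta(r)\geq \min\{(r-R)^2,(R_0-R)^2\}$ by a contradiction argument using $V_\delta''(R)=\gamma\delta^{\gamma-1}$; for the same reason your suggestion to get nondegeneracy ``using $h'(0)>0$ through the structure of $f_\vep$'' is off-target here (your alternative, integrating the divergence form $\big(r^{n-1}e^{r^2/4}V'\big)'\geq r^{n-1}e^{r^2/4}\gamma(V+\delta)^{\gamma-1}$ and using monotonicity, can indeed be made to give $V_\delta(r)+\delta\gtrsim (r-R)^\beta$, but you would need to carry that out).

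Second, the claimed upper barrier is wrong in the stated direction: for $\phi=cr^\beta$ the drift and zeroth-order terms cancel and one gets $\phi''+(\tfrac{n-1}{r}+\tfrac r2)\phi'-\tfrac\beta2\phi=c\beta(n+\beta-2)r^{\beta-2}$, while $\gamma\phi^{\gamma-1}=\gamma c^{\gamma-1}r^{\beta-2}$; the supersolution inequality thus holds only for $c$ \emph{small} (with equality exactly at the homogeneous solution of Subsection 5.3(iii)), not for $c$ large, so this route to global existence and uniform growth bounds does not close — and comparison is anyway delicate because the derivative of the full zeroth-order nonlinearity blows up near $U=0$ as $\vep\downarrow 0$. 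The paper avoids all barriers for the upper bounds: multiplying the equation by $V_\delta'$ and integrating gives $(V_\delta')^2\leq\tfrac\beta2 V_\delta^2+2V_\delta^\gamma$, hence locally uniform (exponential) bounds and, combined with a comparison with an explicit \emph{linear} ODE near $R$, the sharp decay $V_\delta\leq C(r-R)^\beta$, $V_\delta'\leq C(r-R)^{\beta-1}$, which is what guarantees $U(R)=U'(R)=0$ in the limit. Your sharp barrier $C\big((r-R)^2+\vep\big)^{1/(2-\gamma)}$ is plausible but is only asserted by analogy with \Cref{lem:OptGr} and \Cref{Lemma:ODEGrowth} (the latter concerns a different ODE with reversed roles), so as written the two quantitative pillars of the argument — the uniform upper bound near $R$ and the uniform positivity — are not actually established.
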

\begin{proof} Let us fix $R > 0$ and $\gamma \in (0,1)$.

\

\emph{Step 1: Approximation.} Let us consider the family $\{V_\delta\}_{\delta \in (0,1)}$ made by the solutions to the Cauchy problem
\begin{equation}\label{eq:SSProfileDelta}
\begin{cases}
V'' + \big( \frac{n-1}{r} + \frac{r}{2} \big) V' - \frac{\beta}{2}V = \gamma (V + \delta)^{\gamma-1} \quad \text{ in } (R,\infty) \\
V(R) = V'(R) = 0.
\end{cases}
\end{equation}
For every fixed $\delta > 0$, set $V := V_\delta$. Since $V''(R) = \gamma \delta^{\gamma-1} > 0$, we have $V,V' > 0$ in some maximal interval $(R,R_{\textrm{max}})$. Actually, $R_{\textrm{max}} = \infty$: if not, it must be $V(R_{\textrm{max}}) > 0$ and $V'(R_{\textrm{max}}) = 0$, and thus, using the equation, $V''(R_{\textrm{max}}) > 0$ which is impossible by definition of $R_{\textrm{max}}$. Consequently, $V,V' > 0$ in the maximal interval of definition of the solution, which by monotonicity and the decay of the right-hand side of the equation it can be easily shown to be $(R,\infty)$.

\

\emph{Step 2: Uniform $L^\infty$ bounds.} In this step, we show that for every $R_\star > R$, 
\begin{equation}\label{eq:LinftyUnifBnd}
\{V_\delta\}_{\delta \in (0,1)} \, \text{ is uniformly bounded in } \, L^\infty((R,R_\star)).     
\end{equation}
Multiplying the equation in \eqref{eq:SSProfileDelta} by $V'$, integrating between $R$ and $r$ and using that $(U+\delta)^\gamma \leq U^\gamma + \delta^\gamma$, we find
\begin{equation}\label{eq:DiffIneqUnif}
(V')^2 \leq \tfrac{\beta}{2} V^2 + 2V^\gamma  \quad \text{ in } (R,\infty),     
\end{equation}
and therefore 
\begin{equation}\label{eq:DifInqLinfinity}
\int_0^{V(r)} \big( \tfrac{\beta}{2} s^2 + 2s^\gamma \big)^{-\frac{1}{2}} \rd s \leq r-R  \qquad \forall \, r \in (R,\infty).     
\end{equation}
On the one hand, if $r \in \{V > 1\}$, we have
\[
\int_0^{V(r)} \big( \tfrac{\beta}{2} s^2 + 2s^\gamma \big)^{-\frac{1}{2}} \rd s \ge \int_1^{V(r)} \big( \tfrac{\beta}{2} s^2 + 2s^\gamma \big)^{-\frac{1}{2}} \rd s \ge
k \int_1^{V(r)} s^{-1} \rd s = k \log V(r),
\]
where we have set $k = (\tfrac{\beta}{2} + 2)^{-\frac{1}{2}}$ and used that $\tfrac{\beta}{2}s^2 + 2s^\gamma \leq ks^2$ whenever $s \geq 1$. As a consequence,
\begin{equation}\label{eq:LinfinitySS2}
V(r) \leq e^{(r-R)/k}  \qquad \forall \, r \in \{V > 1\}.
\end{equation}

On the other hand, if $r \in \{V \leq 1\}$, we have $0 < s < V(r) \leq 1$ and thus $\tfrac{\beta}{2}s^2 + 2s^\gamma \leq (\tfrac{\beta}{2} + 2)s^\gamma$. Consequently, 
\[
\int_0^{V(r)} \big( \tfrac{\beta}{2} s^2 + 2s^\gamma \big)^{-\frac{1}{2}} \rd s \ge k \int_0^{V(r)} s^{-\frac{\gamma}{2}} \rd s = \beta k V^{\frac{1}{\beta}}(r),
\]
which yields
\begin{equation}\label{eq:LinfinitySS1}
V(r) \leq ( \beta k )^\beta (r-R)^\beta \qquad \forall \, r \in \{V\leq 1\}.
\end{equation}
Combining \eqref{eq:LinfinitySS2} with \eqref{eq:LinfinitySS1} and using that $V \geq 0$, the uniform bound \eqref{eq:LinftyUnifBnd} follows. 

\

\emph{Step 3: Uniform positivity.} Now, we show that there exist $R_0 > R$ and $\delta_0 \in (0,1)$ such that
\begin{equation}\label{eq:UnifPositivity}
V_\delta(r) \geq (\min\{r-R,R_0-R\})^2 \qquad \forall\, r>R, \; \forall \delta \in (0,\delta_0).
\end{equation}
Notice that, since $V' > 0$, it is enough to prove that there exist $R_0 > R$ and $\delta_0 \in (0,1)$ such that
\[
V_\delta(r) \geq (r-R)^2 \qquad \forall\, r \in (R,R_0), \; \forall \delta \in (0,\delta_0).
\]
Assume not. Then, given any $R_0 > R$ and $\delta_0 \in (0,1)$, there are $\delta \in (0,\delta_0)$ and $r_0 \in (R,R_0)$ such that $V(r_0) \leq (r_0-R)^2$. On the other hand, since $V''(R) = \gamma\delta^{\gamma-1}$, there is $R_\delta > R$ such that $V(r) \geq \frac{\gamma}{4}\delta^{\gamma-1}(r-R)^2$ for every $r \in (R,R_\delta]$. Therefore, taking $\delta_0$ small enough, we may assume $R_\delta < r_0 < R_0$. But then $V(R_\delta) \geq \frac{\gamma}{4}\delta^{\gamma-1}(r-R)^2 > (r_0-R)^2 \geq V(r_0)$, which is in contradiction with $V' > 0$. 

\

\emph{Step 4: Power decay as $r\downarrow R$.} In this step, we prove first that there exists $R_1 > R$ and $C > 0$ such that
\begin{equation}\label{eq:LinearBdSS}
V_\delta(r) \leq C(r-R) \qquad \forall\, r \in (R,R_1), \; \forall \delta \in (0,1).
\end{equation}
This will be obtained via comparison principle and the uniform bounds obtained in \emph{Step 2} as follows. First we notice that, by \eqref{eq:LinfinitySS1} and \eqref{eq:LinfinitySS2}, if $R_1 = R + \vep$ and $\vep > 0$ is small enough (depending only $\gamma$), then $V(R_1) \leq 2$ (and $V(R) = 0$ by construction). Further, since $V,V'>0$, we have
\[
-V'' - c V' + \tfrac{\beta}{2}V \leq 0 \quad \text{ in } (R,R_1),
\]
where $c := \frac{n-1}{R} + \frac{R_1}{2}$. Thus, if $W$ is a solution to
\begin{equation}\label{eq:SuperSolUpperbnd}
\begin{cases}
W'' + c W' - \tfrac{\beta}{2}W = 0 \quad \text{ in } (R,R_1) \\
W(R)=0, \; W(R_1)=2,
\end{cases}
\end{equation}
we deduce $V \leq W$ in $[R,R_1]$ by comparison, and therefore \eqref{eq:LinearBdSS} follows if $W(r) \leq C(r-R)$ for every $r \in (R,R+\vep)$ and some $C > 0$. Now, \eqref{eq:SuperSolUpperbnd} can be explicitly solved: setting
\[
\lambda_1 := \frac{\sqrt{c^2+2\beta} + c}{2}, \qquad \lambda_2 := \frac{\sqrt{c^2+2\beta} - c}{2},
\]
it is not difficult to check that
\[
W(r) = \frac{2}{e^{\lambda_2\vep} - e^{-\lambda_1\vep}} \big[ e^{\lambda_2(r-R)} - e^{-\lambda_1(r-R)} \big], \qquad r \in (R,R_1).
\]
Noticing that $W'(R) = \frac{2(\lambda_1 + \lambda_2)}{e^{\lambda_2\vep} - e^{-\lambda_1\vep}} > 0$, our claim \eqref{eq:LinearBdSS} follows.

Notice that, by \eqref{eq:LinearBdSS}, we have $V_\delta \leq 1$ in $(R,\tilde{R}_0)$ for some $\tilde{R}_0 \in (R,R_1)$ independent of $\delta$ and thus, by \eqref{eq:LinfinitySS1} and \eqref{eq:DiffIneqUnif}, we deduce the existence of $C > 0$ (depending only on $\gamma$ and $R$) such that
\begin{equation}\label{eq:DecayBdsApprox}
V_\delta(r) \leq C(r-R)^\beta, \qquad  V_\delta'(r) \leq C(r-R)^{\beta-1} \qquad \forall \, r\in (R,\tilde{R}_0), \; \forall \, \delta \in (0,1).  
\end{equation}

\

\emph{Step 5: Compactness and passage to the limit.} Let us fix $\vep > 0$ small. 
Then \eqref{eq:LinftyUnifBnd} and \eqref{eq:UnifPositivity} imply that $C_\vep^{-1} \leq (V_\delta + \delta)^{\gamma-1} \leq C_\vep$ in $(R+\vep,R+\frac{1}{\vep})$ for some $C_\vep > 0$ and every $\delta \in (0,\delta_0)$, where $\delta_0$ is as in \eqref{eq:UnifPositivity}. Consequently, by elliptic regularity, we deduce that $\{V_\delta\}_{\delta\in(0,1)}$ is uniformly bounded in $C^{2,\alpha}((R+\frac{\vep}{2}, R + \frac{1}{2\vep}))$ and, so a standard diagonal argument combined with the Arzel\`a-Ascoli theorem shows that the exist $U \in C^2((R,\infty))$ and a sequence $\delta_j \downarrow 0$ such that
\[
U_{\delta_j} \to U \quad \text{ in } C_\loc^2((R,\infty)),
\]
as $j \uparrow \infty$. Passing to the limit as $j\uparrow\infty$ into \eqref{eq:DecayBdsApprox} (with $\delta = \delta_j$), we immediately see that $U$ and $U'$ can be continuously extended up to $r=R$ by setting $U(R) = U'(R) =0$. On the other hand, passing to the limit as $j\uparrow\infty$ into the equation of $V_{\delta_j}$, we deduce that $U$ is a solution to \eqref{eq:SSProfile} and our statement follows.
\end{proof}
\begin{lem}\label{lem:SelfSim2} 
Let $R > 0$, $\gamma \in (0,1)$, and let $U$ be a solution to \eqref{eq:SSProfile}. Then $U,U' > 0$ in $(R,\infty)$, and both the FB condition in \eqref{eq:SSProfile0} and \eqref{eq:AsymptBehavSS} hold.
\end{lem}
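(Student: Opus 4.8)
The plan is to prove the three assertions in turn: strict positivity and monotonicity of $U$; the free boundary condition $(U^{1/\beta})'(R)=\sqrt2/\beta$, which is the main difficulty because the nonlinearity $\gamma U^{\gamma-1}$ is singular at $r=R$; and the asymptotic law \eqref{eq:AsymptBehavSS}. For the first point it is convenient to write the ODE in \eqref{eq:SSProfile} in divergence form,
\[
\big(\mu(r)U'\big)' \,=\, \mu(r)\Big(\tfrac\beta2 U + \gamma U_+^{\gamma-1}\Big),\qquad \mu(r):=r^{n-1}e^{r^2/4},
\]
and, using $\mu(R)U'(R)=0$, to integrate it into $\mu(r)U'(r)=\int_R^r\mu(s)\big(\tfrac\beta2 U(s)+\gamma U(s)^{\gamma-1}\big)\,\rd s$. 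Since $U$ is obtained as a $C^2_{\loc}$-limit of the functions $V_{\delta_j}$ of \Cref{lem:ApproxSS}, the uniform positivity \eqref{eq:UnifPositivity} passes to the limit, so $U>0$ — hence $U'>0$ by the identity above — on a right neighbourhood of $R$. Moreover, at any critical point $r_\star$ of $U$ with $U(r_\star)>0$ the equation gives $U''(r_\star)=\gamma U(r_\star)^{\gamma-1}+\tfrac\beta2 U(r_\star)>0$, so every such point is a strict local minimum; in particular $U$ admits no interior local maximum in $\{U>0\}$. A standard maximal-interval argument based on these two facts shows that the connected component of $\{U>0\}$ containing that neighbourhood of $R$ is all of $(R,\infty)$ and that $U'>0$ there.

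For the free boundary condition, put $w:=U^{1/\beta}$; using $2(\beta-1)=\beta\gamma$, a direct computation rewrites \eqref{eq:SSProfile} as
\[
(\beta-1)(w')^2 + w\,w'' + \Big(\tfrac{n-1}{r}+\tfrac r2\Big)w\,w' - \tfrac12 w^2 \,=\, \tfrac\gamma\beta \qquad\text{in }(R,\infty),
\]
with $w(R)=0$. From the a priori estimates \eqref{eq:DecayBdsApprox} (passed to the limit $\delta_j\downarrow0$) we have $U(r)\le C(r-R)^\beta$ and $U'(r)\le C(r-R)^{\beta-1}$ near $R$; inserting the first bound into the divergence-form representation on an interval where $\mu$ is bounded between positive constants, and integrating once more, upgrades it to a matching lower bound $U(r)\ge c\,(r-R)^\beta$. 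Since $w'=\tfrac1\beta U^{-\gamma/2}U'$ and the exponent $\beta-1-\tfrac{\gamma\beta}2$ vanishes, these two-sided bounds force $w'$ to be bounded near $R$; thus $w$ is Lipschitz up to $R$ and $w\,w',\,w^2\to0$ as $r\downarrow R$. Writing $w\,w''=(w\,w')'-(w')^2$, integrating the $w$-equation from $R$ to $r$, dividing by $(r-R)$ and letting $r\downarrow R$ — together with the energy inequality $(U')^2\le\tfrac\beta2 U^2+2U^\gamma$, obtained by multiplying \eqref{eq:SSProfile} by $U'$ and integrating as for \eqref{eq:DiffIneqUnif}, and which yields $\limsup_{r\downarrow R}w'\le\sqrt2/\beta$ — one concludes that $w(r)/(r-R)$ tends to the value $L$ with $(\beta-1)L^2=\gamma/\beta$, i.e. $L=\sqrt2/\beta$. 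This is precisely $(U^{1/\beta})'(R)=\sqrt2/\beta$, the free boundary condition in \eqref{eq:SSProfile0}.

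Finally, for the asymptotic behaviour: being increasing, $U$ tends to a limit, which must be $+\infty$ (otherwise $\tfrac\beta2 U+\gamma U^{\gamma-1}$ stays bounded below by a positive constant and the divergence form forces $U'\gtrsim 1/r$, hence $U\to\infty$). As $\gamma-1<0$, the term $\gamma U^{\gamma-1}$ is then bounded for $r$ large, so the equation is a lower-order perturbation of the linear one. Following the substitution $s:=r^2/4$, $U=e^{-s}V$ of the explicit cases (see \Cref{lem:SSProfileUnBdd0,lem:SSProfileUnBdd1}), $V$ solves
\[
sV'' + \Big(\tfrac n2-s\Big)V' - \tfrac{n+\beta}2 V \,=\, \gamma\, e^{(2-\gamma)s}V^{\gamma-1} \qquad\text{in }(R^2/4,\infty),
\]
a confluent hypergeometric equation whose right-hand side is of lower order along solutions with $U\sim r^\beta$; treating it as a forcing and invoking the large-$s$ asymptotics \eqref{eq:AsymMUsLarge} of Kummer's and Tricomi's functions, one first obtains $c_1 r^\beta\le U(r)\le c_2 r^\beta$ for $r$ large and then the existence of the limit $U(r)/r^\beta\to c\in(0,\infty)$, which is \eqref{eq:AsymptBehavSS}. (Alternatively one may use the super/subsolutions $a\,r^\beta\pm b$, together with $\mathcal L(r^\beta)=\beta(n+\beta-2)r^{\beta-2}$ for $\mathcal L:=\partial_{rr}+(\tfrac{n-1}r+\tfrac r2)\partial_r-\tfrac\beta2$, once a preliminary polynomial bound is available.) This third step is routine; the real obstacle is the analysis near $r=R$ in the second step, where the singular nonlinearity rules out classical ODE theory and the identification of the limiting slope of $U^{1/\beta}$ hinges on combining the divergence-form representation, the energy inequality and a careful passage to the limit.
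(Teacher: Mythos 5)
Your overall architecture matches the paper's (monotonicity and $U\to\infty$ from the divergence form, asymptotics via $s=r^2/4$, $U=e^{-s}V$ and variation of parameters with the nonlinearity treated as a forcing), but the decisive step — the free boundary condition — is not actually proved as written. After substituting $w=U^{1/\beta}$, writing $w\,w''=(ww')'-(w')^2$, integrating from $R$ to $r$ and dividing by $r-R$, what you get is
\begin{equation}
(\beta-2)\,\frac{1}{r-R}\int_R^r (w')^2\,\rd s\;+\;\frac{w(r)\,w'(r)}{r-R}\;=\;\frac{\gamma}{\beta}+o(1),\qquad r\downarrow R,
\end{equation}
and you combine this only with $\limsup_{r\downarrow R} w'\le \sqrt2/\beta$ and the two-sided linear bounds on $w$. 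For $\gamma\in(0,1)$ one has $\beta-2<0$, and these facts are compatible with a \emph{non-convergent} $w'$: setting $Q:=w/(r-R)$ and using Cauchy--Schwarz on the averaged term, the identity only yields the one-sided constraint $(2-\beta)Q^2-\tfrac{\sqrt2}{\beta}Q+\tfrac{\gamma}{\beta}\le o(1)$, whose admissible set is a whole interval between $\sqrt2/\beta$ and $\tfrac{\gamma(2-\gamma)}{2\sqrt2(1-\gamma)}$ (it degenerates to a point only when $\gamma=2/3$, and when $\beta=2$, i.e.\ $\gamma=1$, which is excluded here). So nothing in the listed ingredients forces $Q$ or $w'$ to have a limit, let alone the limit $\sqrt2/\beta$; the final ``one concludes'' is an assertion, not a deduction. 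The missing ingredient is a matching \emph{pointwise lower} bound $\liminf_{r\downarrow R}(w')^2\ge 2/\beta^2$. This is exactly how the paper closes the step: near $R$ the terms $\tfrac\beta2 U$ and $(\tfrac{n-1}{r}+\tfrac r2)U'$ are negligible compared with $\gamma U^{\gamma-1}$ (your own bounds $U\le C(r-R)^\beta$, $U'\le C(r-R)^{\beta-1}$ already give this, since $U^{\gamma-1}\gtrsim(r-R)^{\beta-2}$), hence $U''\ge\gamma(1-\vep)U^{\gamma-1}$ on $(R,R_\vep)$; multiplying by $U'>0$ and integrating from $R$ gives $(U')^2\ge 2(1-\vep)U^\gamma$, the counterpart of your energy inequality, and the two pointwise bounds integrate directly to $\tfrac{\sqrt2}{\beta}(1-\vep)\le U^{1/\beta}(r)/(r-R)\le\tfrac{\sqrt2}{\beta}(1+\vep)$. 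With that added, your Step 2 works; without it, there is a genuine gap.

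Two secondary points. The lemma concerns an arbitrary solution of \eqref{eq:SSProfile}, whereas you invoke \eqref{eq:UnifPositivity} and \eqref{eq:DecayBdsApprox}, which are properties of the approximating family $V_\delta$ of \Cref{lem:ApproxSS}; the paper's proof is intrinsic, and indeed the upper bounds $U\le C(r-R)^\beta$, $U'\le C(r-R)^{\beta-1}$ follow for $U$ itself from the energy inequality $(U')^2\le\tfrac\beta2 U^2+2U^\gamma$ exactly as in the proof of \Cref{lem:ApproxSS}, so you should derive them that way rather than through the approximation. As for Step 3, your main route is the paper's (one must check that $g(s)=\gamma U^{\gamma-1}(2\sqrt s)$ is integrable near $s=R^2/4$, which uses the FB behaviour just established, and tends to $0$ at infinity, which uses $U\to\infty$); but your parenthetical barrier alternative with $a r^\beta\pm b$ is doubtful as stated, because the nonlinearity $\gamma u^{\gamma-1}$ is \emph{decreasing} in $u$ and the naive comparison argument for $\mathcal L u=\gamma u^{\gamma-1}$ does not go through without extra work.
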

\begin{proof} Let $U$ be a solution to \eqref{eq:SSProfile}. Again we divide the proof in some steps.

\

\emph{Step 1: Positivity, monotonicity and limit at $r\uparrow \infty$.} Since $U''(r) > 0$ as $r \downarrow R$, we deduce $U,U' > 0$ in $(R,R_{\textrm{max}})$ for some maximal $R_{\textrm{max}} > R$. Then the same argument of \emph{Step 1} in the proof of  \Cref{lem:ApproxSS} shows that $U,U' > 0$ in $(R,\infty)$. In particular, it follows that $U(r) \to \ell$ as $r \uparrow \infty$, for some $\ell \in (0,\infty]$.

In fact, we have $\ell = \infty$. Indeed, assume by contradiction $\ell \in \RR$. Then, directly from the equation of $U$, we obtain
\[
r^{1-n} e^{-\frac{r^2}{4}} \big( r^{n-1} e^{\frac{r^2}{4}} U' \big)' = U'' + \big( \tfrac{n-1}{r} + \tfrac{r}{2} \big) U' \geq \gamma\ell^{\gamma-1} 
\]
for some $\ell > 0$. Integrating and using that $U(R) = U'(R) = 0$, it follows
\[
U(r) \geq \gamma\ell^{\gamma-1} \int_R^r s^{1-n} e^{-\frac{s^2}{4}} \int_R^s \rho^{n-1} e^{\frac{\rho^2}{4}} \rd\rho\, \rd s,
\]
and thus, using that $\int_R^s \rho^{n-1} e^{\frac{\rho^2}{4}} \rd\rho \sim 2 s^{n-2} e^{\frac{s^2}{4}}$ as $s \uparrow \infty$ (this is easily follows by the de l'H\^opital's theorem), we deduce $\ell = \infty$, a contradiction.

\

\emph{Step 2: Proof of the FB condition in \eqref{eq:SSProfile0}.} Let us fix $\vep > 0$. Then, the same argument of \emph{Step 2} in the proof of  \Cref{lem:ApproxSS} shows that $U$ satisfies \eqref{eq:DiffIneqUnif}. Consequently, since $U(R) = 0$ and $\gamma < 2$, there exists $R_\vep > R$ such that
\[
(U')^2 \leq 2(1+\vep)U^\gamma \quad \text{ in } (R,R_\vep).
\]
On the other hand, since $U(R) = U'(R) = 0$, the equation of $U$ yields $U'' \geq \gamma(1-\vep)U^{\gamma-1}$ in $(R,R_\vep)$ and therefore, multiplying by $U'> 0$, we deduce
\[
(U')^2 \geq 2(1-\vep)U^\gamma \quad \text{ in } (R,R_\vep).
\]
Combining the above two inequalities and integrating, we find
\[
\frac{\sqrt{2}}{\beta}(1-\vep) (r-R) \leq U^{1/\beta}(r) \leq \frac{\sqrt{2}}{\beta}(1+\vep) (r-R) \qquad \forall \, r \in (R,R_\vep)  
\]
and our claim follows by dividing by $r-R$ and using the arbitrariness of $\vep$. 

\

\emph{Step 3: Proof of \eqref{eq:AsymptBehavSS}.} We proceed as in the proof of  \Cref{lem:SSProfileUnBdd1}, setting $s := \tfrac{r^2}{4}$, $e^{-s}V(s) := U(r)$ and $S := R^2/4$, to obtain
\[
\begin{cases}
sV'' + (\tfrac{n}{2} - s) V' - \tfrac{n+\beta}{2} V = e^s g(s) \quad \text{ in } (S,\infty) \\
V(S) = V'(S) = 0,
\end{cases}
\]
where $g(s) := \gamma U^{\gamma-1}(2\sqrt{s})$. Since $U$ is smooth in $(R,\infty)$ and $U^{1/\beta}(r) \sim (\sqrt{2}/\beta)(r-R)$ as $r \downarrow R$ (see \emph{Step 2} above), it easily follows that $g \in L_\loc^1([S,\infty))\cap C^\infty((S,\infty))$, and thus $V$ satisfies a non-homogeneous Confluent Hypergeometric Equation with parameter $a = \frac{n+\beta}{2}$ and $b = \frac{n}{2}$. By \cite[Table 1]{Mathews2022:art} (see Case 1.A. and Case 1.C.) and the method of ``variation of parameters'', it must hold that
\[
V(s) = p(s)\mathbf{M}(\tfrac{n+\beta}{2},\tfrac{n}{2},s) + q(s)\mathbf{U}(\tfrac{n+\beta}{2},\tfrac{n}{2},s),
\]
where $p$ and $q$ satisfy 
\[
p'(s) = \frac{\Gamma(\frac{n+\beta}{2})}{\Gamma(\frac{n}{2})} s^{\frac{n}{2}-1}\mathbf{U}(\tfrac{n+\beta}{2},\tfrac{n}{2},s)g(s), \qquad q'(s) = - \frac{\Gamma(\frac{n+\beta}{2})}{\Gamma(\frac{n}{2})} s^{\frac{n}{2}-1}\mathbf{M}(\tfrac{n+\beta}{2},\tfrac{n}{2},s)g(s), 
\]
with $p(S) = q(S) = 0$; this is not obvious, but can be checked by imposing the initial conditions and using the fact that $g \in L_\loc^1([S,\infty))$. By \eqref{eq:AsymMUsLarge} again, it holds that 
\[
p'(s) \sim \frac{\Gamma(\frac{n+\beta}{2})}{\Gamma(\frac{n}{2})} s^{-\frac{2+\beta}{2}} g(s) \quad \text{ and } \quad q'(s) \sim - s^{\frac{n+\beta}{2}-1} e^s g(s) \quad \text{ as } s \uparrow \infty,
\]
and since $g$ is bounded in $[S+1,\infty)$ and $\beta > 0$, we deduce $p(s) \uparrow \big[ \Gamma(\tfrac{n+\beta}{2})/\Gamma(\tfrac{n}{2})\big]p_\infty > 0$ as $s \uparrow \infty$, while $q(s) \sim - s^{\frac{n+\beta}{2}-1} e^s g(s)$ as $s \uparrow \infty$, by the de l'H\^opital's theorem and the definition of $g$ (here we use the fact that $g(s) \downarrow 0$ as $s\uparrow\infty$).
Consequently, proceeding similarly to the end of the proof of  \Cref{lem:SSProfileUnBdd1}, we obtain
\[
V(s) \sim \big( p_\infty s^{\frac{\beta}{2}} - \tfrac{g(s)}{s} \big) e^s,
\]
as $s \uparrow \infty$ and \eqref{eq:AsymptBehavSS} follows with $c = \frac{p_\infty}{4}$.
\end{proof}

\subsection{Self-similar solutions with bounded support.} 
\label{Subsec:SSBackward}
As mentioned at the beginning of the section, we study the existence of self-similar solutions to \eqref{eq:EQSing} ``backward in time'' with bounded support, that is, solutions with form 
\begin{equation}\label{eq:SSAnsatzShrinking}
u(x,t) = |t|^\frac{\beta}{2} U \big(|t|^{-\frac{1}{2}} x \big), \qquad t < 0
\end{equation}
where the profile $U:\RR^n \to [0,\infty)$ has bounded support. Notice that $u$ is an \emph{ancient} solution (that is, defined for $t \in (-\infty,0)$) with \emph{extinction time} $t = 0$: as mentioned in the introduction, such solutions are closely related to the \emph{shrinkers} in the MCF theory.

Proceeding as in the previous section, it is natural to look for nontrivial profiles $U: \RR^n \to [0,\infty)$ satisfying 
\begin{equation}\label{eq:DerFBSelfSShrinking}
\begin{cases}
\Delta U - \frac{\xi}{2}\cdot\nb U + \frac{\beta}{2}U = \gamma U^{\gamma-1} \quad &\text{ in } \{U > 0\} \\
|\nb(U^{1/\beta})| =  \frac{\sqrt{2}}{\beta} \quad &\text{ in } \partial\{ U > 0 \}, 
\end{cases}
\end{equation}
where the gradient and the Laplacian are taken w.r.t. $\xi := |t|^{-\frac{1}{2}} x$. 
Precisely, we study the existence of \emph{radial} profiles $U = U(r)$ ($r := |\xi|$) obtained as follows: we look for $\ell,R > 0$ and $U \in C^2([0,R);\RR_+)$ such that
\begin{equation}\label{eq:SSProfileShriniking}
\begin{cases}
U'' + \big( \frac{n-1}{r} - \frac{r}{2} \big) U' + \frac{\beta}{2}U = \gamma U^{\gamma-1} \quad \text{ in } (0,R) \\
U(0) = \ell, \; U'(0) = 0 \\
U(R) = 0, \; (U^{1/\beta})'(R) = -\frac{\sqrt{2}}{\beta},
\end{cases}
\end{equation}
with $U$ extended to zero in $[R,\infty)$. Then,
\begin{equation}\label{eq:PosSetSSGammaPosShriniking}
\{u > 0\} = \{(x,t): t < 0, \,|x|^2 < R^2|t|\},
\end{equation}
and thus each time-slice of $\{u > 0\}$ is a ball of radius $R\sqrt{|t|}$, collapsing to a point at $t = 0$.

Establishing existence (or non-existence) of solutions to \eqref{eq:SSProfileShriniking} seems to be a highly nontrivial problem ---as mentioned, some methods of the previous section do not apply here. 
For this reason, we restrict ourselves to the limit cases $\gamma = 0,1$: we will see that, if $\gamma = 0$, \eqref{eq:SSProfileShriniking} has exactly one explicit solution while, if $\gamma = 1$, solutions do no exist. The methods we use involve special functions, linearity, and seem not easily adaptable to treat the full range $\gamma \in (0,1)$.

\

The case $\gamma = 0$, was first treated in \cite[Section 1]{CafVaz95}: the authors proved existence of a unique solution via qualitative methods. Here we complement their analysis giving an explicit formula for the solution in terms of special functions.

\begin{lem}\label{lem:ShrinkersGamma0}
Let $\gamma = 0$ and $n\geq 1$. Then \eqref{eq:SSProfileShriniking} has exactly one solution $(\ell,R,U)$ given by
\begin{equation}\label{eq:ShrinkProfGamma0}
U(r) = \ell \, \mathbf{M}(-\tfrac{1}{2},\tfrac{n}{2},\tfrac{r^2}{4}),
\end{equation}
where $s \mapsto \mathbf{M}(-\tfrac{1}{2},\tfrac{n}{2},s)$ is the Kummer's function, and
\begin{equation}\label{eq:RadiusHeightShrinkProfGamma0}
R = 2 \sqrt{s_\star} , \qquad \frac{1}{\ell} = - \sqrt{\frac{s_\star}{2}} \frac{\rd}{\rd s}\mathbf{M}(-\tfrac{1}{2},\tfrac{n}{2},s_\star), 
\end{equation}
where $s_\star > 0$ is the only positive zero of $\mathbf{M}$.
\end{lem}
\begin{proof}
Let $U$ be a solution to the ODE in \eqref{eq:SSProfileShriniking} with $\gamma = 0$. Setting $s := \tfrac{r^2}{4}$ and $V(s) := U(r)$, we easily find
\[
sV'' + (\tfrac{n}{2} - s) V' + \tfrac{1}{2} V = 0 \quad \text{ in } (0,S),
\]
where $S := R^2/4$, that is, $V$ is a solution to the Confluent Hypergeometric Equation with parameters $a = -\tfrac{1}{2}$ and $b = \tfrac{n}{2}$ (see \cite[Section 13]{AS1972} and \cite[Chapter 1]{Slater1960:book}). By \cite[Table 1]{Mathews2022:art} (see Case 1.A. and Case 1.C.), we have
\[
V(s) = p \mathbf{M}(-\tfrac{1}{2},\tfrac{n}{2},s) + q \mathbf{U}(-\tfrac{1}{2},\tfrac{n}{2},s),
\]
where $\mathbf{M}$ and $\mathbf{U}$ denote the Kummer's and Tricomi's functions, respectively, and $p,q \in \RR$ are free parameters, and therefore
\[
U(r) = p \mathbf{M}(-\tfrac{1}{2},\tfrac{n}{2},\tfrac{r^2}{4}) + q \mathbf{U}(-\tfrac{1}{2},\tfrac{n}{2},\tfrac{r^2}{4}).
\]

Now, let us assume first $n \geq 2$. Then, \cite[Table 1]{Mathews2022:art} and \cite[Section 1.5]{Slater1960:book} yield $|\mathbf{U}(-\tfrac{1}{2},\tfrac{n}{2},s)| \to +\infty$ as $s \downarrow 0$ which, in turn, forces $q = 0$. Further, since $\mathbf{M}(-\tfrac{1}{2},\tfrac{n}{2},0) = 1$ and we require $U(0)=\ell$, then necessarily $p = \ell$.
Thus $U$ is as in \eqref{eq:ShrinkProfGamma0}. Noticing that $\frac{\rd}{\rd s} \mathbf{M}(-\tfrac{1}{2},\tfrac{n}{2},0) = -\frac{1}{n}$, we also have $U'(0) = 0$ and thus it is sufficient to check that there are $R,\ell > 0$ such that the last two conditions in \eqref{eq:SSProfileShriniking} are satisfied. 
Since by \cite[Formula 13.9.1]{DLMF} (see also \cite[Chapter 6]{Slater1960:book}), $\mathbf{M}(-\tfrac{1}{2},\tfrac{n}{2},s)$ has exactly one positive zero $s_\star$, $U$ has exactly one zero at $R = 2 \sqrt{s_\star}$. Further, $s_\star$ is simple, that is, $\frac{\rd}{\rd s}\mathbf{M}(-\tfrac{1}{2},\tfrac{n}{2},s_\star) < 0$ (this can be easily checked by a contradiction argument, using that $s \mapsto \mathbf{M}(-\tfrac{1}{2},\tfrac{n}{2},s)$ is analytic and differentiating its equation). Consequently, taking $\ell$ as in \eqref{eq:RadiusHeightShrinkProfGamma0}, we find $U'(R) = -\sqrt{2}$ as claimed.

\

Lastly, when $n = 1$, we have $\mathbf{U}(-\tfrac{1}{2},\tfrac{1}{2},s) = \sqrt{s}$, i.e., $U(r) = p \mathbf{M}(-\tfrac{1}{2},\tfrac{1}{2},\tfrac{r^2}{4}) + \frac{q}{2} r$. Thus, since $U'(0) = 0$, we deduce $q = 0$ and the second part of the argument follows as above. Equivalently, one can directly integrate the equation of $U$ to find 
\[
U(r) = \ell 
\Big( 1 - \tfrac{1}{2} \int_0^r\int_0^\tau e^{\frac{\rho^2}{4}} \rd \rho \, \rd \tau \Big),
\]
where $R$ is the only solution to $\int_0^R\int_0^r e^{\tau^2/4} \rd \tau \rd r= 2$ and $\ell$ satisfies $\ell \int_0^R e^{r^2/4} \rd r = 2 \sqrt{2}$. 
\end{proof}
The case $\gamma = 1$ is easier. We show that any solution to \eqref{eq:SSProfileShriniking} must be a parabola with vertex at $r = 0$ which, clearly, cannot vanish quadratically at its only positive zero.
\begin{lem}\label{lem:ShrinkersGamma1}
Let $\gamma = 1$. Then \eqref{eq:SSProfileShriniking} does not have solutions.
\end{lem}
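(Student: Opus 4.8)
For $\gamma = 1$ we have $\beta = 2$, and the crucial feature of this case is that the right-hand side $\gamma U^{\gamma-1} \equiv 1$ is a \emph{constant}, so the ODE in \eqref{eq:SSProfileShriniking} is a regular (non-singular) linear equation on $(0,R)$,
\[
U'' + \Big( \frac{n-1}{r} - \frac{r}{2} \Big) U' + U = 1 .
\]
As in the proof of \Cref{lem:ShrinkersGamma0}, the substitution $s := r^2/4$ turns the homogeneous part into the Confluent Hypergeometric Equation with parameters $a = -1$, $b = n/2$; since $a$ is a negative integer, Kummer's function terminates, $\mathbf{M}(-1,\tfrac{n}{2},s) = 1 - \tfrac{2s}{n}$, which corresponds to the quadratic polynomial $r \mapsto 1 - \tfrac{r^2}{2n}$. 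Combined with the particular solution $U \equiv 1$, this shows that
\[
\widetilde U(r) := \ell - \frac{\ell-1}{2n}\, r^2
\]
solves the ODE and satisfies $\widetilde U(0) = \ell$, $\widetilde U'(0) = 0$; in fact one can simply verify this directly without invoking special functions.

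The key step is a rigidity claim: any $U \in C^2([0,R))$ solving the ODE above with $U(0) = \ell$ and $U'(0) = 0$ coincides with $\widetilde U$. The cleanest route is to write the equation in divergence form, $(r^{n-1} \ee^{-r^2/4} U')' = r^{n-1} \ee^{-r^2/4}(1 - U)$, apply it to $V := U - \widetilde U$ (which solves the homogeneous version with $V(0) = V'(0) = 0$), integrate from $0$ — the boundary term vanishing since $V \in C^2([0,R))$ with $V'(0) = 0$ — and run a short Gronwall/contraction argument on $[0,r_0]$ for $r_0$ small to get $V \equiv 0$ near the origin, then extend to all of $(0,R)$ by the standard uniqueness theory for the regular ODE. (Equivalently, one can argue that the second linearly independent solution of the homogeneous equation fails to be $C^2$ at the regular singular point $r = 0$ with vanishing derivative: it is unbounded as $r \downarrow 0$ when $n \ge 2$, and odd in $r$ with non-vanishing derivative at the origin when $n = 1$; but the first route avoids the dimension-dependent case analysis.)

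Once $U = \widetilde U = \ell - \tfrac{\ell-1}{2n} r^2$ is established, the remaining requirements of \eqref{eq:SSProfileShriniking} cannot all hold. The condition $U(R) = 0$ reads $\tfrac{\ell-1}{2n} R^2 = \ell$, which, since $\ell = U(0) > 0$ and $R > 0$, forces $\ell > 1$ and $R^2 = \tfrac{2n\ell}{\ell-1}$; then $U(r) = \tfrac{\ell-1}{2n}(R-r)(R+r)$ vanishes to \emph{exactly first order} at $r = R$. (It cannot vanish to second order: a quadratic polynomial with a double root at $R$ is $c(r-R)^2$, incompatible with $U'(0) = 0$ unless $c = 0$, i.e. $U \equiv 0$, contradicting $U(0) = \ell > 0$.) Consequently $U^{1/\beta}(r) = U^{1/2}(r) \sim \big(\tfrac{(\ell-1)R}{n}\big)^{1/2}(R-r)^{1/2}$ as $r \uparrow R$, so $(U^{1/2})'(r) \to -\infty$ as $r \uparrow R$, which contradicts the free boundary condition $(U^{1/\beta})'(R) = -\tfrac{\sqrt 2}{\beta} = -\tfrac{1}{\sqrt 2}$. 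Hence \eqref{eq:SSProfileShriniking} has no solution for $\gamma = 1$.

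The main obstacle is the rigidity step: pinning down that the only admissible $C^2$ solution near $r = 0$ is the explicit quadratic, uniformly in $n$ (the special-function picture has mild dimension-dependent degeneracies, which the divergence-form/Gronwall argument sidesteps). The reduction to the Confluent Hypergeometric Equation and the final order-of-vanishing obstruction to the free boundary condition are elementary.
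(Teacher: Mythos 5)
Your proposal is correct and takes essentially the same route as the paper: both reduce the problem to the explicit quadratic $U(r) = \ell - \tfrac{\ell-1}{2n}\,r^2$ (the paper via the terminating Kummer series $\mathbf{M}(-1,\tfrac{n}{2},s) = 1 - \tfrac{2s}{n}$ as in \Cref{lem:ShrinkersGamma0}, you via direct verification plus a divergence-form/Gronwall uniqueness argument near $r=0$) and then observe that such a parabola either never vanishes ($\ell \le 1$) or vanishes only to first order at its positive zero ($\ell>1$), which is incompatible with the free boundary condition $(U^{1/2})'(R) = -\tfrac{1}{\sqrt{2}}$. The only difference is how uniqueness of the solution with data $U(0)=\ell$, $U'(0)=0$ is justified, and both arguments are sound.
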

\begin{proof}
Assume by contradiction $U$ is a solution to \eqref{eq:SSProfileShriniking}. Then, $\tilde{U} := U - 1$ is a solution to 
\[
\tilde{U}'' + \Big( \frac{n-1}{r} - \frac{r}{2} \Big) \tilde{U}' + \tilde{U} = 0 \quad \text{ in } (0,R),
\]
with $\tilde{U}(0) = \ell - 1$ and $\tilde{U}'(0) = 0$. We set as before $s := \tfrac{r^2}{4}$ and $V(s) := \tilde{U}(r)$, to find
\[
sV'' + (\tfrac{n}{2} - s) V' + V = 0 \quad \text{ in } (0,S),
\]
where $S = R^2/4$, which is the Confluent Hypergeometric Equation with parameters $a = -1$ and $b = \tfrac{n}{2}$. Then, proceeding exactly as in the proof of  \Cref{lem:ShrinkersGamma0}, we find
\[
U(r) = (\ell - 1) \mathbf{M}(-1,\tfrac{n}{2},\tfrac{r^2}{4}) + 1.
\]
But since 
\[
\mathbf{M}(a,b,s) := \sum_{k=0}^\infty \frac{(a)_k}{(b)_k} \frac{s^k}{k!} = 1 + \frac{a}{b}s + \frac{a(a+1)}{b(b+1)} \frac{s^2}{2} + \frac{a(a+1)(a+2)}{b(b+1)(b+2)} \frac{s^3}{6} \ldots,
\]
for every $a \in \RR$ and every $b \in \RR\setminus\{0,-1,-2,\ldots\}$, it follows that $\mathbf{M}(-1,b,s) = 1 - s/b$ and thus
\[
U(r) = \ell - \frac{\ell - 1}{2n} r^2.
\]
Consequently, if $\ell \in (0,1]$, $U$ is positive everywhere while, if $\ell > 1$, $U'(R) < 0$, where $R > 0$ is the only positive zero of $U$, contradicting the last condition in \eqref{eq:SSProfileShriniking}.
\end{proof}
As already mentioned, the above methods seem not apply when $\gamma \in (0,1)$. Motivated by some partial analytic results and some numerical computations, we close the section with the following conjecture.
\begin{conj}
Let $\gamma \in (0,1)$. Then \eqref{eq:SSProfileShriniking} does not have solutions.    
\end{conj}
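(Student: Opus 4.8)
I would argue by contradiction: suppose $(\ell,R,U)$ solves \eqref{eq:SSProfileShriniking} for the given $\gamma\in(0,1)$. The first point is that, for $\gamma\in(0,1)$, the free boundary condition $(U^{1/\beta})'(R)=-\tfrac{\sqrt2}{\beta}$ is \emph{not} an extra constraint but is \emph{automatic}. Indeed, near $R$ the coefficient $\tfrac{n-1}{r}-\tfrac r2$ and the term $\tfrac\beta2U$ are bounded while $\gamma U^{\gamma-1}\to\infty$, so the equation forces $U''\sim\gamma U^{\gamma-1}$; the admissible solutions of this asymptotic balance are exactly $U(r)\sim(\tfrac{\sqrt2}{\beta})^{\beta}(R-r)^{\beta}$ (the exponent $\beta=\tfrac{2}{2-\gamma}$ and the constant being uniquely pinned down once one uses $\beta\gamma=2(\beta-1)$), the only alternatives being a \emph{linear} vanishing $U(r)\sim a(R-r)$ with $a>0$, for which $(U^{1/\beta})'(R)=-\infty$, or $U$ staying bounded away from $0$. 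Hence \eqref{eq:SSProfileShriniking} is equivalent to the shooting problem: find $\ell>0$ such that the solution of the initial value problem $U(0)=\ell$, $U'(0)=0$ reaches $0$ at a finite radius while remaining positive before it. Introducing the orbit energy $E(r):=\tfrac12(U'(r))^2+\tfrac\beta4U(r)^2-U(r)^\gamma$, the equation gives
\[
E'(r)=\Big(\tfrac r2-\tfrac{n-1}{r}\Big)(U'(r))^2,\qquad E(0)=\tfrac\beta4\ell^2-\ell^\gamma,
\]
and the sharp vanishing rate above forces $E(R)=0$; so the task is to prove that no $\ell>0$ produces an orbit that lands with $E=0$.

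The second step is to assemble integral identities for $U$. Writing $w:=r^{n-1}e^{-r^2/4}$, so that the equation reads $(wU')'+\tfrac\beta2wU=\gamma wU^{\gamma-1}$, and pairing it with $1$, with $U$, and with $U^\gamma$ over $(0,R)$ --- all boundary terms vanishing since $U(R)=U'(R)=0$ --- yields
\[
\tfrac\beta2\int_0^R wU=\gamma\int_0^R wU^{\gamma-1},\qquad \tfrac\beta2\int_0^R wU^2=\int_0^R w(U')^2+\gamma\int_0^R wU^\gamma,
\]
together with the Pohozaev-type relation obtained by pairing with $w\,rU'$, and the energy identity $\tfrac\beta4\ell^2-\ell^\gamma=\int_0^R(\tfrac{n-1}{r}-\tfrac r2)(U')^2\,\rd r$ coming from integrating $E'$. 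The plan is to combine these with the qualitative shape of the reaction $g(s):=\gamma s^{\gamma-1}-\tfrac\beta2s$, which on $(0,\infty)$ is increasing and concave, positive for $s<(2\gamma/\beta)^{1/(2-\gamma)}$ and negative above, in order to confine $\ell$ to an empty set. For $n=1$ the drift $\tfrac{n-1}{r}-\tfrac r2=-\tfrac r2$ has a sign, $E$ is nondecreasing, hence $E\le0$ and $U\le(4/\beta)^{\beta/2}$ on $[0,R]$; feeding this into the weighted identities together with the strong absorption $\gamma U^{\gamma-1}$ near the free boundary should exclude every admissible $\ell$. For $n\ge2$ one splits $(0,R)$ at $r=\sqrt{2(n-1)}$, where the drift changes sign and $E$ switches from decreasing to increasing, and estimates the weighted identities on each piece.

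The hard part is precisely the absence of any favourable structure. The natural functional whose critical points are the profiles, $G(U)=\int_{\RR^n}\big(\tfrac12|\nabla U|^2-\tfrac\beta4U^2+U_+^\gamma\big)e^{-|\xi|^2/4}\,\rd\xi$, is \emph{unbounded below}: the drift Laplacian $\Delta-\tfrac\xi2\cdot\nabla$ annihilates constants, so on them the quadratic part of $G$ equals $-\tfrac\beta4\int U^2e^{-|\xi|^2/4}<0$; hence neither a minimization nor a mountain-pass scheme is available. Moreover, for $n\ge2$ the drift $\tfrac{n-1}{r}-\tfrac r2$ is sign-indefinite, which rules out the comparison and maximum-principle shortcuts that made the unbounded-support problem \eqref{eq:SSProfile0} tractable, and $\gamma U^{\gamma-1}$ is a genuinely singular (strong-absorption) nonlinearity. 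Unlike the limit cases $\gamma\in\{0,1\}$, where the equation linearises and the profile is read off from confluent hypergeometric functions, for $\gamma\in(0,1)$ one must control the global competition between the singular absorption $\gamma U^{\gamma-1}$ and the destabilising zeroth-order term $\tfrac\beta2U$ along the entire orbit. A possible alternative route --- a continuity/bifurcation argument showing that the explicit $\gamma=0$ shrinker \eqref{eq:ShrinkProfGamma0} does not persist for $\gamma>0$ --- runs into the singular linearisation at $\gamma=0$ and does not appear to be any easier. For these reasons the statement is left as a conjecture.
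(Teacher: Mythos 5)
This statement is not proved in the paper: it is stated explicitly as a conjecture, motivated only by partial analytic results and numerical computations, so there is no proof of record to compare yours against. Your submission, as you yourself concede in the last sentence, is not a proof either, and read as one it has a concrete gap: the exclusion step is never performed. The reductions you do carry out are essentially sound --- the problem is equivalent to a shooting problem in which the orbit launched from $(U,U')=(\ell,0)$ must land at $(0,0)$ at a finite radius $R$ (a transversal crossing gives linear vanishing, for which $(U^{1/\beta})'$ blows up, while a tangential landing forces the dead-core rate $U\sim(\sqrt2/\beta)^\beta(R-r)^\beta$ and hence the FB condition in \eqref{eq:SSProfileShriniking}); the energy law $E'=(\tfrac r2-\tfrac{n-1}{r})(U')^2$ with $E(R)=0$, and the identities obtained by pairing $(wU')'+\tfrac\beta2 wU=\gamma wU^{\gamma-1}$, $w=r^{n-1}e^{-r^2/4}$, with $1$ and with $U$ are correct (the singular integral $\int_0^R wU^{\gamma-1}$ converges since $\beta-2>-1$). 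But from there the argument stops: for $n=1$ you obtain only the a priori bound $U\le(4/\beta)^{\beta/2}$ and assert that this ``should exclude every admissible $\ell$''; for $n\ge2$ you propose to split at $r=\sqrt{2(n-1)}$ and ``estimate the weighted identities on each piece''. In neither case is any incompatibility actually derived, and that is exactly where a nonexistence proof would have to live.

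Two further cautions. First, saying the free boundary condition is ``automatic'' is misleading as written: it is automatic only after imposing the additional landing condition $U'(R^-)=0$, which is precisely the over-determination that makes the problem rigid; your own reformulation $E(R)=0$ expresses this, so the two statements should be kept consistent. Second, your description of $g(s)=\gamma s^{\gamma-1}-\tfrac\beta2 s$ is incorrect: on $(0,\infty)$ this function is decreasing and convex (not increasing and concave), with $g(s)\to+\infty$ as $s\downarrow0$; since your plan leans on the ``shape of the reaction'', this needs fixing before any quantitative attempt. Moreover, any successful argument must use $\gamma>0$ in an essential, quantitative way: a solution does exist at $\gamma=0$ (\Cref{lem:ShrinkersGamma0}) and fails to exist at $\gamma=1$ for structural reasons specific to linearity (\Cref{lem:ShrinkersGamma1}), while your energy law and integral identities are soft enough that they do not by themselves distinguish small $\gamma>0$ from $\gamma=0$. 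Your diagnosis of the obstructions (sign-indefinite drift for $n\ge2$, a Gaussian-weighted functional unbounded below, loss of the confluent hypergeometric structure available at $\gamma=0,1$) is accurate and consistent with why the statement is left open; concluding that it remains a conjecture is the correct outcome, but the text between the correct identities and that conclusion does not constitute progress toward a proof.
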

%
%
%
%
%
%
%
%
%
%
%
\section{Traveling waves}\label{sec:TravelingWaves}
In this final section, we study the existence of nonnegative weak solutions to \eqref{eq:EQSing}
%
%
with traveling wave (TW) form, that is, 
\begin{equation}\label{eq:TWAnsatz}
u(x,t) = \phi(e\cdot x-ct),
\end{equation}
where $\phi: \RR \to [0,\infty)$ is the solution's profile, $c \in \RR$ is the profile's speed, and $e$ is a fixed unit vector: $u$ is an \emph{eternal} solution (i.e., defined for all times $t\in \RR$) identified by a fixed profile which travels along the direction $e$ with speed $c$. Specifically, we look for nonzero profiles $\phi: \RR \to [0,\infty)$ satisfying 
\begin{equation}\label{eq:DerFBTW}
\begin{cases}
c\phi' + \phi'' = \gamma \phi^{\gamma-1} \quad &\text{ in } \{ \phi > 0 \} \\
|(\phi^{1/\beta})'| =  \frac{\sqrt{2}}{\beta} \quad &\text{ in } \partial\{ \phi > 0 \}, 
\end{cases}
\end{equation}
where $\phi'$ denotes differentiation w.r.t. $\xi := e\cdot x - ct$ and the derivative at FB points is taken from \emph{inside} $\{\phi > 0\}$, i.e., for every $\xi_0 \in \partial\{ \phi > 0 \}$, we require 
\[
\lim_{\substack{\xi \to \xi_0 \\ \xi \in \{\phi > 0\}}} |(\phi^{1/\beta})'(\xi)| =  \frac{\sqrt{2}}{\beta}.
\]
Notice that this is nothing more than the natural FB condition in \eqref{eq:FBCondIntro}. 
\begin{defn}
If $(\phi,c)$ satisfies \eqref{eq:DerFBTW}, we say that $\phi$ is an admissible profile and $c$ is an admissible speed. Implicitly, positive and identically zero profiles are not admissible profiles.   
\end{defn}
Let us begin with the limit cases $\gamma = 0$ and $\gamma = 1$, where the admissible profiles are fully explicit.

\

\emph{The case $\gamma = 0$.} If $\gamma = 0$ (and $\beta = 1$), the analysis of \eqref{eq:DerFBTW} is quite simple (see \cite{CafVaz95}), since the equation can be explicitly integrated. It can be easily checked that for every $c \in \RR$, there are exactly two admissible profiles $\phi_c^+$ and $\phi_c^-$ satisfying:

\

$\bullet$  $\phi_c^+ = 0$ in $(-\infty,0]$ and $(\phi_c^+)' > 0$ in $(0,\infty)$, given by
    \[
    \phi_c^+(\xi) =
    \begin{cases}
     \sqrt{2}\,\xi_+                     \quad &\text{ if } c = 0 \\
     \frac{\sqrt{2}}{c}(1-e^{-c\xi})_+ \quad &\text{ if } c > 0 \\
     \frac{\sqrt{2}}{|c|}(e^{|c|\xi}-1)_+ \quad &\text{ if } c < 0.
    \end{cases}
    \]
\

$\bullet$ $\phi_c^- = 0$ in $[0,\infty)$ and $(\phi_c^-)' < 0$ in $(-\infty,0)$, given by
    \[
    \phi_c^-(\xi) =
    \begin{cases}
     \sqrt{2}(-\xi)_+                     \quad &\text{ if } c = 0 \\
     \frac{\sqrt{2}}{c}(e^{-c\xi}-1)_+ \quad &\text{ if } c > 0 \\
     \frac{\sqrt{2}}{|c|}(1-e^{|c|\xi})_+ \quad &\text{ if } c < 0.
    \end{cases}
    \]
Furthermore, any other admissible profile with speed $c$ has the form $\psi_c (\xi) = a \phi_c^+(\xi - \xi_+) + b \phi_c^-(\xi - \xi_-)$, for some $a,b \in \{0,1\}$ with $a^2 + b^2 \not= 0$ and $\xi_\pm \in \RR$ with $\xi_- \leq \xi_+$. This last part is not obvious: we refer the reader to the final part of the proof of  \Cref{thm:ExTW} below, where we treat the range $\gamma \in (0,1]$, since the same argument applies to the case $\gamma = 0$.
%
%
%
%

\

\emph{The case $\gamma = 1$.} Also the case $\gamma = 1$ (that is, $\beta = 2$) can be treated easily, since we can essentially reduce the analysis of \eqref{eq:DerFBTW} to the case $\gamma = 0$. 
Indeed, for $c\neq 0$, $\tilde{\phi}$ is a solution to the equation in \eqref{eq:DerFBTW} with $\gamma = 0$ if and only if $\phi(\xi) = \tilde{\phi}(\xi) + \frac{1}{c}\xi$ is a solution to the equation in \eqref{eq:DerFBTW} with $\gamma = 1$ (the case $c=0$ deals with the simple equation $\phi'' = 1$). Then, imposing $\phi(0) = 0$ and $|(\phi^{1/2})'(0)| = \frac{\sqrt{2}}{2}$, we obtain that for every $c \in \RR$, there are exactly two admissible profiles $\phi_c^+$ and $\phi_c^-$ satisfying:

\

$\bullet$ $
\phi_c^+ = 0$ in $(-\infty,0]$ and $(\phi_c^+)' > 0$ in $(0,\infty)$, given by
    \[
    \phi_c^+(\xi) =
    \begin{cases}
     \frac{1}{2}\xi_+^2                     \quad &\text{ if } c = 0  \vspace{2mm}\\
     \begin{cases}
     \frac{1}{c^2}(e^{-c\xi} - 1 + c\xi)  &\text{ if } \xi > 0 \\
     0                                    &\text{ if } \xi \le 0 
     \end{cases}
    \quad &\text{ if } c \not= 0
         \end{cases}
    \]

\

$\bullet$ $\phi_c^- = 0$ in $[0,\infty)$ and $(\phi_c^-)' < 0$ in $(-\infty,0)$, given by
    \[
    \phi_c^-(\xi) =
    \begin{cases}
     \frac{1}{2}(-\xi)_+^2                     \quad &\text{ if } c = 0  \vspace{2mm}\\
     \begin{cases}
     0                                    &\text{ if } \xi \ge 0 \\
     \frac{1}{c^2}(e^{-c\xi} - 1 + c\xi)  &\text{ if } \xi < 0 
     \end{cases}
    \quad &\text{ if } c \not= 0.
         \end{cases}
    \]
As is the case $\gamma = 0$, any other admissible profile with speed $c$ has the form $\psi_c (\xi) = a \phi_c^+(\xi - \xi_+) + b \phi_c^-(\xi - \xi_-)$, for some $a,b \in \{0,1\}$ with $a^2 + b^2 \not= 0$ and $\xi_\pm \in \RR$ with $\xi_- \leq \xi_+$, and the proof works exactly as in  \Cref{thm:ExTW}. Notice that the analysis we present below works in the case $\gamma = 1$ as well, and yields essentially the same result using qualitative methods only.
%
%

\

Given this, we can state and prove the main result of the section (Theorem \ref{thm:ExTW}): it provides a full classification of the admissible profiles in the range $\gamma \in (0.1]$. As mentioned above, the final step of the proof (see \emph{Step 3} below) works in the case $\gamma = 0$ as well and thus, thanks to the independent analysis of the case $\gamma = 0$ presented above, the admissible profiles are classified for every $\gamma \in [0,1]$.
\begin{thm}\label{thm:ExTW}
Let $\gamma \in (0,1]$ and $c_\beta := (\tfrac{2}{\beta^2})^{\frac{\beta}{2}}$. Then, for every $c \in \RR$, there are exactly two admissible profiles $\phi_c^+$ and $\phi_c^-$ satisfying: 

\

$\bullet$ $\phi_c^+ = 0$ in $(-\infty,0]$, $(\phi_c^+)' > 0$ in $(0,\infty)$, and $\phi_c^+(\xi) \to \infty$ as $\xi \to \infty$. In addition:

    \smallskip

    \begin{itemize}
        \item[--] If $c = 0$, we have $\phi_0^+(\xi) = c_\beta \xi_+^\beta$. 
        \item[--] If $c > 0$, then $\phi_c^+(\xi) \sim \big( \tfrac{2\gamma}{\beta c} \xi \big)^{\beta/2}$ as $\xi \to \infty$. 
        \item[--] If $c < 0$, $\phi_c^+(\xi) \sim e^{-c\xi}$ as $\xi \to +\infty$, up to a multiplicative constant.
    \end{itemize}

\

$\bullet$ $\phi_c^- = 0$ in $[0,+\infty)$, $(\phi_c^-)' < 0$ in $(-\infty,0)$, and $\phi_c^-(\xi) \to \infty$ as $\xi \to -\infty$. In addition:

    \smallskip

    \begin{itemize}
        \item[--] If $c = 0$, we have $\phi_0^-(\xi) = c_\beta (-\xi)_+^\beta$. 
        \item[--] If $c > 0$, then $\phi_c^-(\xi) \sim e^{-c\xi}$ as $\xi \to -\infty$, up to a multiplicative constant.
        \item[--] If $c < 0$, $\phi_c^-(\xi) \sim \big( \tfrac{2\gamma}{ \beta c} |\xi| \big)^{\beta/2}$ as $\xi \to -\infty$.
    \end{itemize}
Furthermore, for every admissible profile $\psi_c$ with speed $c$ there are $a,b \in \{0,1\}$ with $a^2 + b^2 \not= 0$, and $\xi_\pm \in \RR$ with $\xi_- \leq \xi_+$, such that
\begin{equation}\label{eq:AllTWProfiles}
\psi_c (\xi) = a \phi_c^+(\xi - \xi_+) + b \phi_c^-(\xi - \xi_-).
\end{equation}
\end{thm}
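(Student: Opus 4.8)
The plan is to treat the ODE on each connected component of $\{\psi_c>0\}$ separately, and then use qualitative (phase-plane) arguments to pin down the global structure. First I would establish existence and uniqueness of the ``one-sided'' profiles $\phi_c^\pm$. Fix the speed $c$ and consider the equation $\phi''+c\phi'=\gamma\phi^{\gamma-1}$ on an interval $(0,\infty)$ with the FB conditions $\phi(0)=0$, $(\phi^{1/\beta})'(0^+)=\sqrt2/\beta$. The singularity of $\phi^{\gamma-1}$ at $\phi=0$ means the classical Cauchy theory does not apply directly; I would handle this exactly as in \Cref{lem:ApproxSS}: approximate by $\phi''+c\phi'=\gamma(\phi+\delta)^{\gamma-1}$ with $\phi(0)=\phi'(0)=0$, show $\phi_\delta,\phi_\delta'>0$ on the maximal interval (the sign of $\phi''$ near $0$ forces this and rules out a first critical point), derive uniform bounds via multiplication by $\phi_\delta'$ (which gives $(\phi_\delta')^2\le \tfrac{2}{?}\phi_\delta^\gamma+\dots$ type inequalities after integrating, modulo the drift term $c\phi_\delta'$ which has a sign one must track), pass to the limit $\delta\downarrow0$ on compact subsets, and recover the FB condition $(\phi^{1/\beta})'(0^+)=\sqrt2/\beta$ from the asymptotics $(\phi')^2\sim 2\phi^\gamma$ as $\xi\downarrow0$ (the linear drift $c\phi'$ is lower order near the free boundary since $\phi\to0$, $\phi'\to0$). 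Uniqueness of the one-sided profile follows because $\phi^{1/\beta}$ behaves like $\tfrac{\sqrt2}{\beta}\xi$ near $0$, which fixes the initial data of the regularized problem in the limit.

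Next I would determine the asymptotic behavior as $\xi\to+\infty$, splitting into the three cases $c=0$, $c>0$, $c<0$. For $c=0$ the equation $\phi''=\gamma\phi^{\gamma-1}$ has the explicit scaling solution $\phi_0^+(\xi)=c_\beta\xi^\beta$ with $c_\beta=(2/\beta^2)^{\beta/2}$, and a direct check that this satisfies both the equation ($\beta(\beta-1)c_\beta^{2/\beta}=\gamma$, using $\beta\gamma=2(\beta-1)$) and the FB condition identifies it. For $c\ne0$ I would linearize/compare: when $c<0$ the drift $c\phi'<0$ dominates (recall $\phi'>0$), so for large $\phi$ the equation is asymptotically $\phi''+c\phi'\approx 0$, giving exponential growth $\phi\sim Ae^{-c\xi}$; one makes this rigorous by a barrier argument trapping $\phi$ between multiples of $e^{-c\xi}$ once $\phi$ is large (the term $\gamma\phi^{\gamma-1}\to0$ since $\gamma<1$, so it is a negligible perturbation). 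When $c>0$ the drift is a damping term and the balance is between $c\phi'$ and $\gamma\phi^{\gamma-1}$, i.e. $c\phi'\sim\gamma\phi^{\gamma-1}$, which integrates to $\phi\sim(\tfrac{2\gamma}{\beta c}\xi)^{\beta/2}$; again I would make this precise by a super/sub-solution sandwich, checking that $(\tfrac{2\gamma}{\beta c}\xi)^{\beta/2}$ is an approximate solution and using monotonicity/comparison. The profile $\phi_c^-$ is obtained from $\phi_c^+$ by the reflection $\xi\mapsto-\xi$, $c\mapsto-c$, which maps the equation to itself; this gives all the stated $\xi\to-\infty$ asymptotics for free.

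The final step, the classification formula \eqref{eq:AllTWProfiles}, is where the real argument lies and I expect it to be the main obstacle. Let $\psi_c$ be any admissible profile with speed $c$. The open set $\{\psi_c>0\}$ is a disjoint union of open intervals; on each such interval $\psi_c$ solves the ODE and vanishes (with the prescribed normal slope) at each finite endpoint. I would first argue that $\{\psi_c>0\}$ cannot contain a bounded component $(\xi_-,\xi_+)$: on such a component $\psi_c>0$ with $\psi_c(\xi_\pm)=0$, so $\psi_c$ attains an interior maximum at some $\xi_0$ where $\psi_c'(\xi_0)=0$, $\psi_c''(\xi_0)\le0$, but the equation gives $\psi_c''(\xi_0)=\gamma\psi_c(\xi_0)^{\gamma-1}>0$, a contradiction — hence every component is a half-line $(\xi_+,\infty)$ or $(-\infty,\xi_-)$. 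Moreover there can be at most one component of each type: two right half-lines would overlap, and a left half-line to the right of a right half-line would again overlap; if a left half-line $(-\infty,\xi_-)$ sits to the left of a right half-line $(\xi_+,\infty)$ with $\xi_-\le\xi_+$ we get the general form with $a=b=1$, while $\xi_->\xi_+$ is impossible by the same overlap obstruction. On each half-line the solution is, by the uniqueness established above (applied after translating the endpoint to $0$), exactly a translate of $\phi_c^+$ or $\phi_c^-$; the only subtlety is ruling out a nontrivial multiplicative factor, but the FB condition $(\psi_c^{1/\beta})'=\sqrt2/\beta$ at the endpoint fixes the scale, and a positive profile (no free boundary at all) is excluded by hypothesis. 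Assembling these observations yields \eqref{eq:AllTWProfiles}; I would also remark that the identical argument applies verbatim when $\gamma=0$, completing the classification in that case too. The delicate points to be careful about are the interior-maximum argument when the component might be a half-line on which $\psi_c$ could a priori be bounded (one must invoke the $\xi\to\infty$ asymptotics, which show $\phi_c^\pm\to\infty$, so boundedness on a half-line is also impossible and no second half-line of the same orientation can coexist), and the rigorous comparison arguments underpinning the $c>0$ asymptotics, where the sublinear reaction term must be controlled against the drift uniformly for large $\xi$.
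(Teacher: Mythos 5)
Your route (regularization for existence, comparison arguments for the asymptotics, and the interior-maximum argument to rule out bounded components of $\{\psi_c>0\}$) is a genuinely different and mostly viable alternative to the paper's proof, which instead works entirely in the phase plane for $(U,V)=(\phi^{1/\beta},(\phi^{1/\beta})')$, classifies \emph{all} trajectories of \eqref{eq:ODETW}, and identifies the admissible profiles with the saddle separatrices at $P^\pm=(0,\pm\sqrt2/\beta)$ (explicit integration when $c=0$). Your interior-maximum step and the asymptotic balances ($c\phi'\sim\gamma\phi^{\gamma-1}$ for $c>0$, $\phi''+c\phi'\approx0$ for $c<0$) are correct and consistent with the paper's conclusions.

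The genuine gap is the uniqueness of the one-sided profiles, which is exactly what the word ``exactly'' in the statement and the classification \eqref{eq:AllTWProfiles} hinge on. Your justification --- ``uniqueness follows because $\phi^{1/\beta}$ behaves like $\tfrac{\sqrt2}{\beta}\xi$ near $0$, which fixes the initial data of the regularized problem in the limit'' --- only shows that the limit of your regularized family is \emph{one} solution with the prescribed free boundary behavior; it does not exclude a second positive solution on $(0,\infty)$ with $\phi(0)=0$ and $(\phi^{1/\beta})'(0^+)=\sqrt2/\beta$ that agrees with yours only to leading order at the free boundary. The data $(\phi,\phi')(0)=(0,0)$ together with the singular right-hand side $\gamma\phi^{\gamma-1}$ is outside Cauchy--Lipschitz theory, and matching the leading asymptotics does not by itself pin the solution down (the linearization around $c_\beta\xi^\beta$ is an Euler-type equation with a $\xi^{-2}$ coefficient, so one must actually check that no admissible higher-order perturbation survives). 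The paper closes precisely this point structurally: in the $(U,V)$ plane the free boundary condition forces the trajectory to emanate from (resp.\ enter) the critical point $P^+$ (resp.\ $P^-$), these are saddles for $c\neq0$, and the separatrix is unique, while for $c=0$ the first-order ODE for $V(U)$ is integrated explicitly and the only trajectories reaching $P^\pm$ are $V\equiv\pm\sqrt2/\beta$; all other trajectories give either positive profiles or profiles vanishing linearly, hence non-admissible. To complete your argument you would need an equivalent ingredient --- e.g.\ rewrite the problem as the scalar ODE \eqref{eq:ODETW} for $V$ as a function of $U$ and prove uniqueness of the trajectory leaving $(0,\sqrt2/\beta)$ into $\{U>0\}$ (a stable/unstable-manifold or monotone-separatrix argument), or a direct ODE uniqueness proof at the degenerate endpoint. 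Without this, both the count ``exactly two'' and the representation \eqref{eq:AllTWProfiles} remain unproved.
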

\begin{rem}\label{rem:TWWeakSolutions}
Similar to what happens for the self-similar solutions constructed in Section \ref{sec:SelfSimilar}, thanks to \Cref{remark:solutionsAreWeakSol}, if $\psi_c$ is an admissible profile and $\gamma \in (0,1]$, then $u(x,t) = \psi_c(e\cdot x - ct)$ is a weak solution to \eqref{eq:EQSing} in $\RR^{n+1}$ in the sense of \Cref{def:WeakSolEIntro}. If $\gamma = 0$, $u$ is a solution in the sense of domain variations in $Q$, see \eqref{eq:InnerVarGamma0}.
\end{rem}
\begin{proof}[Proof of \Cref{thm:ExTW}]
Let $\phi$ be a solution to the equation in \eqref{eq:DerFBTW} such that, for $\xi_0 \in \RR$, it holds $(\phi(\xi_0),\phi'(\xi_0)) = (\phi_0,\phi_1)$ with $\phi_0 > 0$, $\phi_1 \in \RR$ ---note that any admissible profile must satisfy this for some values $\xi_0, \phi_0,\phi_1 \in \RR$ with $\phi_0>0$.
By the standard ODE theory, such solution exists (and it is positive) in a maximal nonempty interval $I := (m_-, m_+)$ with $\xi_0\in I$ for which, if $m_\pm \neq \pm \infty$, then $\phi(m_\pm) = 0$ or $\phi(\xi) \to +\infty$ as $\xi \to m_\pm$.
If $I= \RR$, $\phi$ is not an admissible profile. 

Hence, our goal is to find solutions $\phi$ attaining the level $0$ at some finite value (either $m_-$ or $m_+$), which can be assumed to be $\xi=0$ by translation invariance.
This will be accomplished with a phase-plane analysis as follows.

\

Let us set $U = \phi^{1/\beta}$. Whenever $U > 0$, we have
\begin{equation}
    \begin{split}
        \phi' &= \beta U^{\beta-1}U', \\
        \phi'' &= \beta(\beta-1)U^{\beta-2} (U')^2 + \beta U^{\beta-1} U''.
    \end{split}
\end{equation}
Plugging such expressions into the equation of $\phi$ and recalling that $\beta - 2 = \beta(\gamma-1)$, we obtain
\[
c U U' + (\beta-1)(U')^2 + U U'' = \frac{\gamma}{\beta}, 
\]
which, since $\beta-1 = \tfrac{\beta\gamma}{2}$, is equivalent to
\begin{equation}\label{eq:SysTW0}
\begin{cases}
U'= V \\
UV'= \tfrac{\gamma}{\beta} - c UV - \tfrac{\gamma\beta}{2} V^2.
\end{cases}
\end{equation}
Now, let $\xi = \xi(\tau)$ be the solution to $\frac{\rd \xi}{\rd \tau} = U(\xi)$ with $\xi(0) = \xi_0$ (local existence and uniqueness follow since $U$ is positive and smooth close to $\xi_0$). Using this re-parametrization, the above system becomes
\begin{equation}\label{eq:SysTW}
\begin{cases}
\dot{U} = UV \\
\dot{V} = \tfrac{2}{\beta\gamma} \big(  \tfrac{2}{\beta^2} - \nu UV - V^2\big),
\end{cases}
\end{equation}
where $\nu := \tfrac{2c}{\beta\gamma}$ and $\dot{U}$ denotes differentiation w.r.t. $\tau$. 
Whenever $UV\not=0$, the trajectories of \eqref{eq:SysTW} ---and thus the trajectories of \eqref{eq:SysTW0}--- are the graphs of the solutions to
\begin{equation}\label{eq:ODETW}
\frac{2}{\beta\gamma}\frac{\rd V}{\rd U} = \frac{2/\beta^2 - \nu UV - V^2}{UV}.
\end{equation}
To find admissible profiles, we look for trajectories in the region $\{U>0\}$ of the phase-plane $(U,V)$ that ``reach'' the critical points $(0, \pm\sqrt{2}/\beta) =:P^\pm$.
In the remaining part of the analysis, we should distinguish between the cases $c<0$, $c=0$, and $c>0$.
Note, however, that we may restrict to $c\geq 0$, since the case $c<0$ can be reduced to $c>0$ changing $c$ by $-c$ and $V$ by $-V$ ---i.e., the phase portrait in the case $c<0$ is simply the one for $c>0$ reflected evenly across $V=0$ (reversing the direction in which trajectories are followed). 
This symmetry is illustrated in \Cref{fig:Trajectories}, in which a summary of the phase plane analysis performed next is described.

\begin{figure}[!ht]
\includegraphics[scale=0.43]{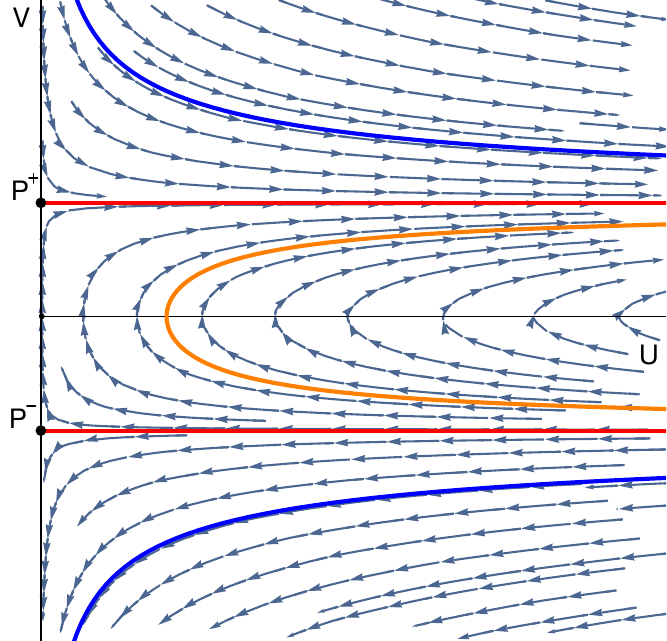} \quad
\includegraphics[scale=0.43]{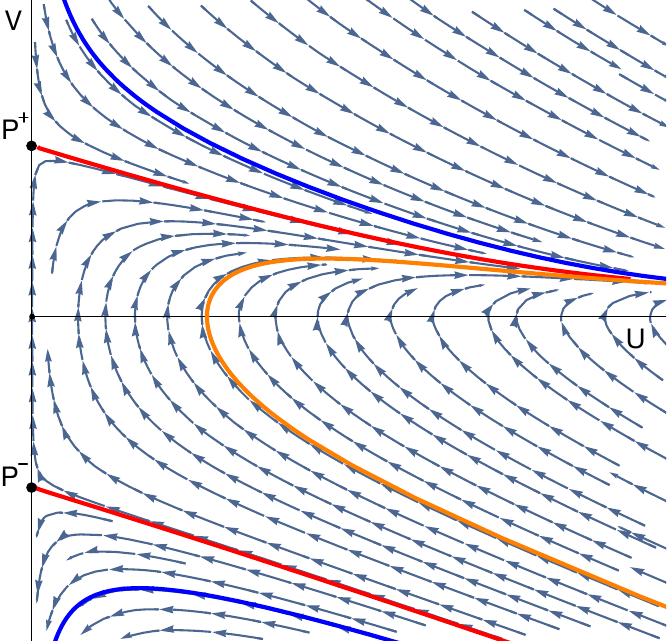} \quad
\includegraphics[scale=0.43]{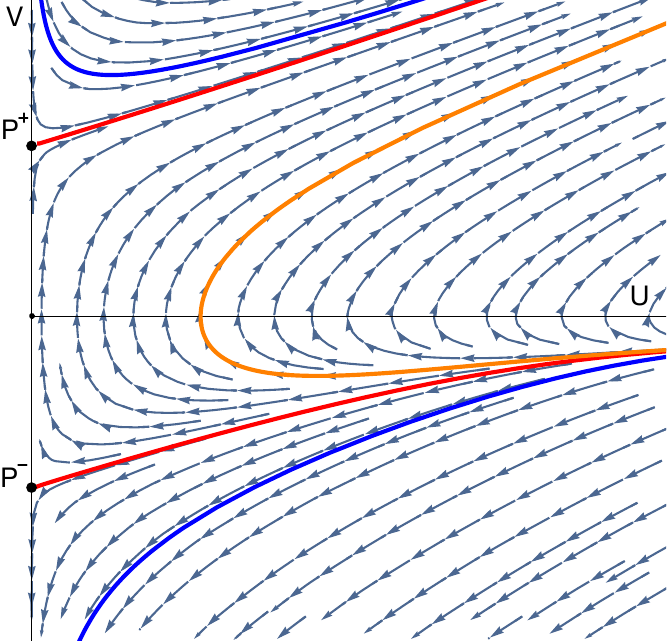}
\caption{Some trajectories in the phase-plane $(U,V)$ for $c=0$ (left), $c > 0$ (middle), and $c < 0$ (right).
The admissible profiles are painted in red. In orange, examples of positive profiles, and in blue, examples of profiles reaching zero linearly ---and thus, not satisfying the FB condition \eqref{eq:DerFBTW}.}
\label{fig:Trajectories}
\end{figure}

\

\emph{Step 1: Case $c = 0$.} If $c = 0$, then $\nu = 0$ and \eqref{eq:ODETW} has two critical points, $P^\pm$, and two constant solutions $V^\pm = \pm (\sqrt{2}/\beta)$. 
Substituting into the first equation of \eqref{eq:SysTW0} and recalling that we may assume $U(0) = 0$ by translation, we immediately find that $U(\xi) = \pm (\sqrt{2}/\beta) \xi$. Since $U := \phi^{1/\beta}$, such trajectories correspond to the stationary waves $\phi_0^\pm$ in the statement. 

\medskip

Whenever $V \not= V^{\pm}$, we may directly integrate \eqref{eq:ODETW}:
\[
\frac{-2V}{2/\beta^2 - V^2} \rd V = -\beta\gamma \frac{\rd U}{U},
\]
to obtain
\[
V^2 = \frac{2}{\beta^2} \pm k U^{-\beta\gamma},
\]
where $k > 0$ is a free constant. With the above explicit formula, we can classify all the remaining trajectories and the corresponding profiles, which are not admissible:

\

(i) The trajectories of the branch $V^2 = 2/\beta^2 - k_0 U^{-\beta\gamma}$ satisfy $V^- < V < V^+$ and correspond to \emph{positive} profiles satisfying $\phi(\xi) \sim c_\beta |\xi|^\beta$ as $\xi \to \pm\infty$. Up to translations, one may assume that each $\phi$ attains its global minimum at $\xi = 0$ and that $\phi' > 0$ in $(0,\infty)$, while $\phi' < 0$ in $(-\infty,0)$.

\

(ii) The trajectories of the branch $V^2 = 2/\beta^2 + k_0 U^{-\beta\gamma}$ satisfy $V^2 > 2/\beta^2$ and $|V|\to +\infty$ as $U\downarrow 0$: they are not admissible profiles.
    Indeed, since $V^2 \sim k_0 U^{-\beta\gamma}$ as $U \downarrow 0$, from the first equation of \eqref{eq:SysTW0} and using that $\beta-1 = \tfrac{\beta\gamma}{2}$, we deduce $U' \sim U^{1-\beta}$ (up to a multiplicative constant) which implies $\phi(\xi) = U^\beta(\xi) \sim \pm \xi$ as $\xi \to 0^\pm$. Thus, the FB law in \eqref{eq:DerFBTW} is not satisfied.

\

\emph{Step 2: Case $c \not= 0$.} Recall that if $c \not= 0$, then $\nu \not= 0$. 
As before, the system \eqref{eq:SysTW} has two critical points $P^\pm = (0,\pm \sqrt{2}/\beta)$. Linearizing around $P^\pm$, it is not difficult to check that both $P^+$ and $P^-$ are \emph{saddle}-type points. Thus, there is a trajectory $V^+ = V^+(U)$ ``going out'' from $P^+$ and a trajectory $V^- = V^-(U)$ ``entering'' in $P^-$. 
For the remaining analysis, we restrict ourselves to the case $\nu>0$ since the conclusions for $\nu<0$ will follow taking into account the symmetry mentioned before \emph{Step 1}.

To complete the phase-plane analysis, we study the \emph{null-isoclines}, i.e., the solutions to  
\[
\nu UV + V^2 = \frac{2}{\beta^2}, \quad  \text{ with } \nu >0.
\]
From this one easily shows that there are two curves in the region $\{U>0\}$ where $\rd V/\rd U = 0$.
The first one is given by a decreasing function $v^+_0 = v^+_0(U) > 0$ with $v^+_0(0) = P^+$ and $v^+_0(U) \to 0$ as $U \to \infty$.
The second one is given by a decreasing function $v^-_0 = v^-_0(U) < 0$ with $v^-_0(0) = P^-$ and $v^-_0(U) \to -\infty$ as $U \to \infty$. 
Having this, with standard ODE arguments we can easily show that $V^+ = V^+(U)$ is positive and decreasing, with $V^+(U) > v_0^+(U)$ for $U>0$, and $V^+(U) \downarrow 0$ as $U \to \infty$.
Similarly, $V^- = V^-(U)$ is negative and decreasing, with $V^-(U) > v_0^-(U)$ and $V^-(U) \to -\infty$ as $U \to \infty$. 
The remaining trajectories (which will not be admissible) behave as follows:

\

(i) There is a family of trajectories lying between the graphs of $V^-$ and $V^+$. Similarly as in the case $c=0$, they correspond to \emph{positive} profiles satisfying $\phi(\xi) \to \pm \infty$ as $\xi \to \pm \infty$ (clearly, when $c\not=0$ the behavior at $\xi = \pm \infty$ changes, according to the analysis above).

\

(ii) There is a family of trajectories satisfying $V > V^+$ and one satisfying $V < V^-$.
    Proceeding similarly as in the case $c=0$, taking into account that
    and $\rd V/\rd U \to \pm \infty$ as $U \downarrow 0$ and
    \begin{equation}
        \frac{2}{\beta\gamma}\frac{\rd V}{\rd U} = \frac{2/\beta^2 - \nu UV - V^2}{UV} \sim \frac{2/\beta^2 - V^2}{UV},
    \end{equation} 
    one can see that the profiles behave \emph{linearly} close to the FB. 
    The growth at infinity is the same as for the positive profiles.

\

Recall that the qualitative behavior of the trajectories in the phase-plane $(U,V)$ is given in \Cref{fig:Trajectories}. It remains to analyze the admissible profiles given by $V^\pm$.

By translation invariance, we may assume that $U(0)=0$ and therefore, proceeding as in the case $\nu=0$, we obtain $V^\pm \sim \pm \sqrt{2}/\beta$ as $U \downarrow  0$, which implies $U(\xi) \sim \pm (\sqrt{2}/\beta) \xi$ as $\xi \to 0^{\pm}$. 
We now examine the behavior of $V^\pm$ as $U \to \infty$. 
An elementary argument shows that $\rd V^+/\rd U \to 0$ as $U \to \infty$ and thus, passing to the limit into \eqref{eq:ODETW}, we deduce $V^+ \sim (2/(\beta^2\nu)) U^{-1}$ as $U \to \infty$ which, in turn, shows $U(\xi) \sim (2/(\beta\sqrt{\nu})) \xi^{1/2}$ as $\xi \to \infty$. 
For what concerns $V^-$, since $UV^-(U) \to -\infty$ as $U \to \infty$, \eqref{eq:ODETW} yields
\[
\frac{2}{\beta\gamma}\frac{\rd V^-}{\rd U} \sim - \nu - \frac{V^-}{U} \quad \text{ as } U \to \infty.
\]
Then, direct integration (recalling that $\beta-1 = \tfrac{\beta\gamma}{2} = c/ \nu$) gives $V^-(U) \sim -(c/\beta) U$ as $U \to \infty$, which, in light of the first equation of \eqref{eq:SysTW0}, gives $U(\xi) \sim e^{-\frac{c}{\beta}\xi}$ as $\xi \to -\infty$ up to a multiplicative constant. 
Therefore, the trajectories $V^\pm$ correspond to the profiles $\phi_c^\pm$ for $c>0$ described in the statement. 

\

\emph{Step 3: Proof of \eqref{eq:AllTWProfiles}.} The proof of \eqref{eq:AllTWProfiles} is an almost immediate consequence of the analysis above. Indeed, if $\psi_c$ is an admissible profile (with speed $c$), then it is positive at some $\xi_0$ and  either $\psi_c'(\xi_0) > 0$ and $\psi_c(\xi) = \phi_c^+(\xi-\xi_+)$ in $[\xi_+,\infty)$ for some $\xi_+ < \xi_0$, or $\psi_c'(\xi_0) < 0$ and $\psi_c(\xi) = \phi_c^-(\xi-\xi_-)$ in $(-\infty,\xi_-]$ for some $\xi_- > \xi_0$ (if $\psi_c'(x_0) = 0$, $\psi_c$ is a positive profile, i.e., not admissible).

If $\psi_c(\xi) = \phi_c^+(\xi-\xi_+)$ in $[\xi_+,\infty)$, then either $\psi_c = 0$ in $(-\infty,\xi_+]$ or $\psi_c > 0$ somewhere in $(-\infty,\xi_+)$. In the first case, we conclude. In the second, we must have $\psi_c(\xi) = \phi_c^-(\xi-\xi_-)$ in $(-\infty,\xi_-]$ for some $\xi_- \leq \xi_+$.
Indeed, it cannot be $\xi_- > \xi_+$: if so, $\psi_c(\xi) = \phi_c^+(\xi-\xi_+) = \phi_c^-(\xi-\xi_-)$ in $(\xi_+,\xi_-)$ which is impossible by definition of $\phi_c^+$ and $\phi_c^-$. 
\end{proof}
\begin{rem}\label{rem:CollTW}
Besides their self-interest, TWs allow us to build weak solutions with nontrivial \emph{singular} FB points. Below, we present two families of solutions exhibiting singular FB: the first one is characterized by a ``lower dimensional'' singular set, while the other has a FB made by singular points only. As mentioned in the introduction, such kind of singularities do \emph{not} appear in the Mean Curvature Flow theory (this is because surfaces flowing by mean curvature enjoy the Strong Maximum Principle, see for instance \cite{Mantegazza:book}). Notice that, by Remark \ref{remark:solutionsAreWeakSol}, when $\gamma = 0$, the solutions we construct below are weak solutions in the sense of \eqref{eq:FirstOVLimIntro}.

\begin{figure}[!ht]\label{fig:CollTW}
\includegraphics[scale=0.45]{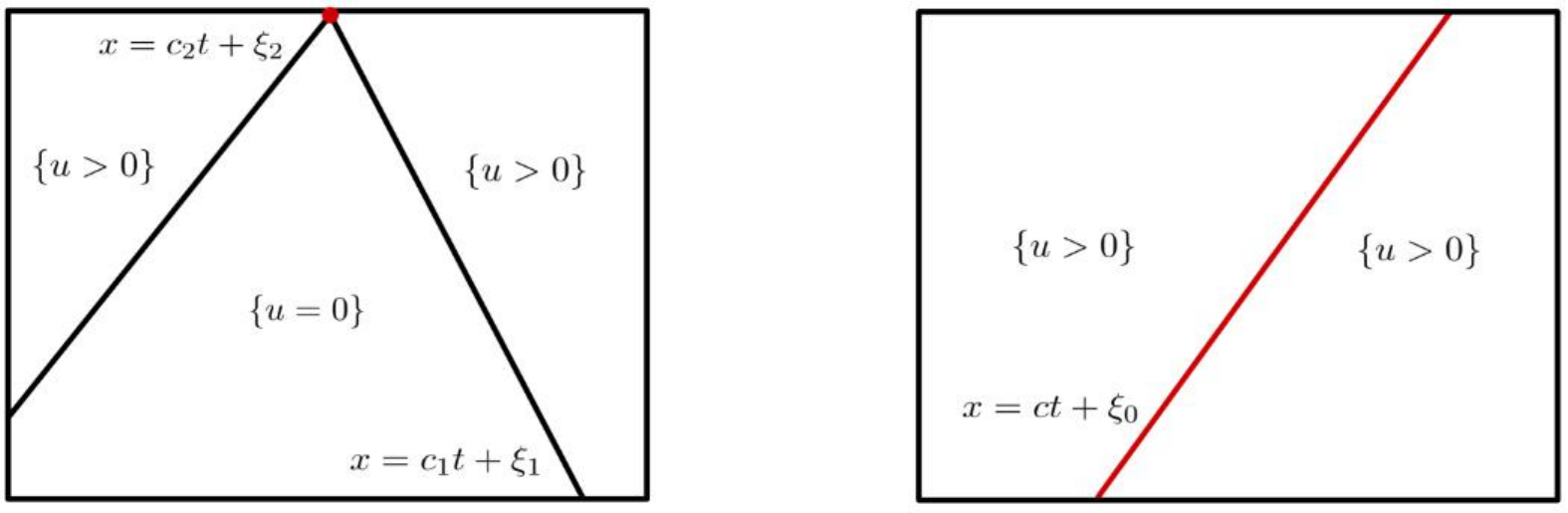}
\caption{The positivity sets of the ``colliding TWs'' (left) and the ``TWs sliding on a line'' (right).}
\end{figure}

The main examples are the so-called ``colliding traveling waves'' (see \cite{CafVaz95,art:Weiss2003} in the case $\gamma=0$). They can be constructed as follows. Fix $n=1$, $c_1 < 0 < c_2$, $\xi_2 < \xi_1$ and let $\phi_{c_1}^+$, $\phi_{c_2}^-$ as in  \Cref{thm:ExTW}. Then, by Remark \ref{rem:TWWeakSolutions}, 
\[
u(x,t) = \phi_{c_2}^-(x-c_2t - \xi_2) + \phi_{c_1}^+(x - c_1t - \xi_1)
\]
is a weak solution to \eqref{eq:EQSing} (in the sense of \Cref{def:WeakSolEIntro}) in $\RR\times (-\infty,t_\star)$, where $t_\star := \frac{\xi_1-\xi_2}{c_2-c_1}$. Further,
\[
\{u > 0\} = \big\{(x,t)\in \RR \times \RR: t_\star > t > \min\big\{\tfrac{x-\xi_1}{c_1},\tfrac{x-\xi_2}{c_2} \big\} \big\},
\]
and thus the FB is the \emph{cone} 
\begin{equation}\label{eq:ConicalFBCollTW}
t = \min\big\{\tfrac{x-\xi_1}{c_1},\tfrac{x-\xi_2}{c_2} \big\}, \qquad t \leq t_\star,
\end{equation}
with vertex $V = (x_\star,t_\star)$ (where $x_\star := \frac{\xi_1c_2 - \xi_2c_1}{c_2-c_1}$) and opening $\alpha := \pi - |\arctan(1/c_1)| - |\arctan(1/c_2)|$: thus $V$ is a ``conical'' \emph{singular} FB point, corresponding to the point where the waves $\phi_{c_1}^+$ and $\phi_{c_2}^-$ ``collide''. Notice that if $n \ge 2$, a similar construction works as well. The function 
\begin{equation}\label{eq:ConicalFBCollTWNDim}
u(x,t) = \phi_{c_2}^-(x_1-c_2t - \xi_2) + \phi_{c_1}^+(x_1 - c_1t - \xi_1)
\end{equation}
is still a weak solution, and its FB is given by \eqref{eq:ConicalFBCollTW}: depending on $n$, the \emph{singular} set $\{t = t_\star\} \cap \partial\{u > 0\}$ is a \emph{line} ($n=2$), a \emph{plane} ($n=3$) and so on. In all this cases, it is not difficult to check that the blow-up at singular FB points is $u_0(x,t) = (\sqrt{2}/\beta)^\beta |x_1|^\beta$ while, at \emph{regular} FB points (that is, $\{t < t_\star\} \cap \partial\{u > 0\}$), the blow-up is either $u_0(x,t) = (\sqrt{2}/\beta)^\beta (x_1)_+^\beta$ or $u_0(x,t) = (\sqrt{2}/\beta)^\beta (-x_1)_+^\beta$.
 
\

The second family of solutions with singular FB is even easier to construct. For $n=1$, fix $c,\xi_0 \in \RR$ and let $\phi_c^+$, $\phi_c^-$ as in  \Cref{thm:ExTW}. Then, by   \Cref{rem:TWWeakSolutions} again,
\[
u(x,t) = \phi_c^-(x-ct - \xi_0) + \phi_c^+(x - ct - \xi_0)
\]
is a weak solution to \eqref{eq:EQSing} (in the sense of \Cref{def:WeakSolEIntro}) in $\RR\times \RR$, with
\[
\{u > 0\} = \big\{(x,t): x \not= ct + \xi_0 \big\},
\]
and thus the FB is the line $x = ct +\xi_0$. Since the exterior normal vector at FB points is not defined, \emph{all} FB points are singular; we call such solutions ``TWs sliding on a line''. Even in this case, one can easily generalize the construction to higher dimensions by adding fictitious variables as in \eqref{eq:ConicalFBCollTWNDim}. In all this cases, the blow-up at FB points is $u_0(x,t) = (\sqrt{2}/\beta)^\beta |x_1|^\beta$.
\end{rem}
%
%
%
%
%
%
%
%
%
%
%
%
%
%
%
%


%
%

\end{document}